\newtheorem{theorem}{Theorem}
\newtheorem{lemma}[theorem]{Lemma}
\newtheorem{corollary}[theorem]{Corollary}
\newtheorem{problem}[theorem]{Problem}
\newtheorem{remark}[theorem]{Remark}
\newtheorem{remarks}[theorem]{Remarks}
\def\un{\mathbf{1}}
\newcommand{\R}{{\mathbb R}}
\newcommand{\N}{{\mathbb N}}
\newcommand{\E}{{\mathcal E}}
\newcommand{\cI}{{\mathcal I}}
\newcommand{\cJ}{{\mathcal J}}
\newcommand{\Id}{\hbox{Id}}
\newcommand{\U}{\text{{\bf 1}}}
\newcommand{\V}{\text{{\bf 0}}}
\newcommand{\OO}{\mathbb{O}}
\newcommand{\sW}{{\mathsf W}}
\newcommand{\I}{{\text{Id}}}
\renewcommand{\S}{{\mathbb S}}
\renewcommand{\(}{\left(}
\renewcommand{\)}{\right)}
\newcommand{\p}{\partial}
\newcommand{\var}{\varepsilon}
\newcommand{\dd}{\, \mathrm{d}}
\newcommand{\cU}{\mathcal U}
\newcommand{\cT}{\mathcal T}
\newcommand\ds{{\frac{\mathrm d}{\mathrm ds}}}
\definecolor{darkgreen}{rgb}{0,0.4,0}
\begin{document}

\title[Quantitative De Giorgi methods]{Introduction to quantitative De Giorgi methods}
\author[G.~Brigati]{Giovanni Brigati}
\address[G.~Brigati]{Institute of Science and Technology Austria (ISTA), Am Campus 1, 3400 Klosterneuburg, Austria}
\email{giovanni.brigati@ist.ac.at}
\author[C.~Mouhot]{Cl\'ement Mouhot}
\address[C.~Mouhot]{Department of Pure Mathematics and Mathematical Statistics, University of Cambridge, Wilberforce Road, Cambridge CB3 0WA, UK}
\email{c.mouhot@dpmms.cam.ac.uk}

\begin{abstract}
  The theory of De Giorgi (1958) and Nash (1959) solves Hilbert's 19th problem and constitutes a major advance in the analysis of PDEs in the 20th century. This theory concerns the Hölder regularity of solutions to elliptic and parabolic equations with non-regular coefficients, and it was extended by Moser (1960) to include the Harnack inequality. This course reviews the classical De Giorgi method in the elliptic and parabolic cases and introduces its recent extension to hypoelliptic equations which appear naturally in kinetic theory. The simplest case is the Kolmogorov equation with a rough diffusion coefficients matrix in the kinetic variable. We present compactness arguments but emphasize the recently developed quantitative methods based on the construction of trajectories. These lecture notes are self-contained and can be used as a general introduction to the topic.
\end{abstract}

\keywords{elliptic regularity; De Giorgi method; parabolic regularity; ultraparabolic equation; rough coefficients; hypoelliptic regularity; kinetic theory; Boltzmann equation; Landau equation; Harnack inequality}

\maketitle

\setcounter{tocdepth}{2}
\tableofcontents

\section{Introduction}
\label{Sec:Intro}

This section presents the motivation and scientific context. The new theory we will present in the last section can be called \textbf{regularity theory of kinetic equations with rough coefficients}. But it is a new extension of a classical one. It has three independent origins which we briefly summarise in the three next subsections. 

\subsection{The solution to Hilbert's $19$th Problem}
At the ICM (International Congress of Mathematicians) of 1900, \cite{Hilbert1900}, D.~Hilbert posed twenty-three mathematical problems to the mathematical community for the incoming century. The $19$th of them asked \textit{which are the regular variational problems}.

Hilbert defined a \textbf{regular variational problem} as  a variational problem whose Euler-Lagrange equation is an elliptic partial differential equation with analytic coefficients. Therefore, concretely Hilbert's $19$th problem asks whether, for the class of nonlinear partial differential equations arising as Euler-Lagrange equations of such variational problem, solutions are analytic; this implies that the local minimisers of the original variational problem exists and are analytic. The underlying motivation is that Laplace's equation admits analytic solutions only, and, at the same time, it is the Euler--Lagrange equation associated with the minimisation of the Dirichlet energy over a bounded domain. Hilbert's $19$th problem consists therefore in determining how general this observation is.

Hilbert's question can be reduced to investigating the smoothness of local minimisers, as analyticity follows by studying the convergence of the Taylor series, as observed by Bernstein~\cite{bernstein1904nature}. Let $\cU$ be a bounded domain of $\mathbb R^n$. Let $\mathrm{L}: \mathbb R^n \to \mathbb R$ be a \textbf{Lagrangian} function, assumed to be convex. We then consider the variational problem 
\begin{equation}
  \label{eq:pbvar}
    \min_{u} \, \E(u) := \int_{\cU} \mathrm L(\nabla_x u) \dd x, 
\end{equation}
where $u: \cU \to \mathbb R$ varies in an appropriate functional space (which can and often does include suitable boundary conditions). If the Lagrangian is given by
\begin{equation}
  \label{eq:dirichlet-lag}
  \mathrm{L}(p) = \frac{1}{2} \,|p|^2,
\end{equation}
then $\E$ is the Dirichlet energy, and solutions to \eqref{eq:pbvar} satisfy the linear Laplace equation
\begin{equation*}
  \Delta u = 0 \ \text{ on } \ \cU
\end{equation*}
with appropriate boundary conditions. If
\begin{equation}
  \label{eq:minimal-lag}
  \mathrm{L}(p) = \sqrt{1+|p|^2}
\end{equation}
is the area element of the surface defined by the height function $u$, then $\E$ is the area functional, and the variational problem corresponds to finding \textbf{minimal surfaces}. The nonlinear Euler-Lagrange equation is then
\begin{equation}
  \label{eq:min-surf}
  \nabla_x \cdot \( \frac{\nabla_x u}{\sqrt{1+|\nabla_x u|^2}} \) = 0 \ \text{ on } \ \cU
\end{equation}
with, again, appropriate boundary conditions. 

In general, we derive the Euler-Lagrange equations associated with \eqref{eq:pbvar}. Let $u$ be a solution to \eqref{eq:pbvar}, then for all $w \in \mathrm{C}^\infty_c(\cU)$ we have 
\begin{equation*}
  \forall \,  \var>0, \int_{\cU} \mathrm L\( \nabla_x u+\var \nabla_x w\) \dd x \geq \int_{\cU} \mathrm L(\nabla_x u) \dd x.
\end{equation*}
By Taylor-expanding the left-hand side of the last display, and dividing by $\var$, we get 
\begin{equation*}
  \int_{\cU} \Big[ \nabla \mathrm L(\nabla_x u) \, \cdot \, \nabla_x w \Big] \dd x = 0,
\end{equation*}
which is the weak formulation for (the notation ``$:$'' stands for contracting tensors)
\begin{equation}
  \label{eq:EL1}
  \nabla_x \, \cdot \, \big[ \nabla \mathrm L (\nabla_x u) \big] = \left[\nabla^2 L(\nabla_x u)\right] : \nabla^2_x u = 0
\end{equation}
in the space of distributions $\mathcal D'(\cU)$. This is an elliptic PDE in non-divergence form in general. Let us reason a priori and consider a partial derivative $f = \partial_{x_k} u$ for $k=1,\dots,n$. Then, if $u$ solves \eqref{eq:EL1}, the new unknown $f$ solves (repeated indices are summed)
\begin{equation}
  \label{eq:EL2}
    \partial_{x_i} \cdot (a_{ij}(x) \, \partial_{x_j} f) =0, \quad \text{ with } \quad a_{ij}(x) = \left[\nabla^2 \mathrm{L}(\nabla_x u (x)) \right]_{ij},
\end{equation}
which is a second-order partial differential equation in divergence form. Note that when the matrix of coefficients $A = (a_{ij})$ is definite positive---which corresponds to the strict convexity of $L$---then $u$ satisfies the maximum principle and Bernstein's method allows estimating the gradient of $u$ in a pointwise manner (see for instance~\cite{zbMATH00947827}). Therefore, given suitable boundary conditions, we can assume $\nabla u \in L^\infty$.

In these lecture notes, we focus on the \textbf{interior regularity} of solutions to equations of the form~\eqref{eq:EL2}, and of their kinetic counterparts. This means working away from the boundary, thus we will not discuss further boundary conditions. The regularity at the boundary is an interesting problem, still currently mostly open in kinetic theory. Solving the $19$th Hilbert problem consists in establishing the analytic regularity of solutions to~\eqref{eq:EL1}-\eqref{eq:EL2}. The difficulty is that the matrix of coefficients $A = (a_{i,j})$ depends on the gradient of the solution $\nabla u$, resulting in a non-linear bootstrap. 

We make the following hypotheses on $\mathrm{L}$.
\begin{align}
  \label{eq:H1}\tag{H1}
  &\mathrm{L} \in \mathrm{C}^{\infty}(\mathbb R^n), \\
  \label{eq:H2}\tag{H2}
  &\exists \, \Lambda >0 \, : \, \forall \, p \in \mathbb R^n, \ \Lambda^{-1} \le \nabla^2 \mathrm{L}(p) \le \Lambda.
\end{align}
The first hypothesis can be relaxed appropriately when one is interested in establishing only finite regularity of the solution $u$. The second hypothesis is a uniform coercivity/ellipticity condition on the matrix $A$, combined with an upper bound. In terms of $L$, it corresponds to a uniform convexity combined with a subquadratic control on the growth, together with a pointwise bound on the second derivative. 

Going back to our two examples, the Dirichlet Lagrangian~\eqref{eq:dirichlet-lag} yields $A = \nabla^2 L= \I$ which satisfies our assumption, and the minimal surface Lagrangian~\eqref{eq:minimal-lag} yields
\begin{equation*}
  \nabla^2 _p L(p) = \frac{\I + \( |p|^2 \I - p \otimes p \)}{\(1+|p|^2\)^{\frac32}} 
\end{equation*}
which satisfies a modified form of the hypothesis when we restrict $p$ to a bounded region (this turns out to be sufficient for the local regularity theory).

Here are chronologically the main steps in the resolution of Hilbert's $19$th problem:
\begin{itemize}
\item[1904:] Bernstein \cite{bernstein1904nature} proves in dimension $n=2$ that analyticity follows from $C^3$ regularity for solutions to \eqref{eq:pbvar}-\eqref{eq:EL1}, under our assumption on $L$, see also \cite{petrowsky1939analyticite} for general dimensions.
\item[1934:] Schauder \cite{schauder1934lineare} proves that if $A \in C^\alpha _{\mathrm{loc}}(\cU)$ for $\alpha \in (0,1)$ (Hölder regularity), then the solution $u$ to the non-divergence elliptic equation~\eqref{eq:EL1} is $C^{2+\alpha}_{\mathrm{loc}}(\cU)$. A bootstrap argument can then be implemented by differentiating the equation (the Schauder theory survives when lower order terms are added). It shows that solutions $u$ are $\mathrm{C}^\infty$. Since we know already $\nabla_x u \in L^\infty$, the missing link is the Hölder regularity of $\nabla_x u$ (implying that of $A$), which triggers Schauder's estimates. 
\item[1938:] Morrey \cite{morrey1938solutions} gives a solution which is specific to the two-dimensional case $n=2$.
\item[1957:] De Giorgi and Nash independently \cite{de1957sulla,nash1958continuity} solve the problem. De Giorgi shows that weak $L^2$ solutions to
  \begin{equation}
    \label{eq:EL3}
    \begin{cases}
      & \nabla_x \cdot \(A \, \nabla_x f\) =0 \ \text{on } \ \cU, \\[1mm]
      & A \ \text{ symmetric measurable and } \  \Lambda^{-1} \le A \le \Lambda \ \text{ for some } \ \Lambda >0,
    \end{cases}
  \end{equation} 
  are such that $f \in  \mathrm{C}^{\alpha}_{\mathrm{loc}}(\cU)$ for some $\alpha >0$. Such an elliptic PDE is said to have \textbf{rough coefficients}. Nash proves a similar theorem for linear parabolic equations with rough coefficients. 
\item[1960:] Moser~\cite{moser1960new,MR159138,moser1964harnack} introduces an alternative approach closer to ``energy estimates'' in PDE analysis, which contains many influencing ideas. The related Kruzhkov contribution~\cite{kruzhkov1963priori} simplifies this approach. In particular, these papers contain the first proof of the \textbf{Harnack inequality} for elliptic and parabolic equations with rough coefficients.
\end{itemize}

Together, the last steps of this program are referred to as the \textbf{De Giorgi--Nash--Moser theory} (DGNM). The viewpoint adopted in this theory, which had a huge influence in PDE analysis, is that \emph{the nonlinearity is traded for the understanding of a linear problem with \textbf{low regularity coefficients}}.

Nash performed integral estimates on the fundamental solution to \eqref{eq:EL3} and its logarithm, while De Giorgi studied the level sets, see for instance~\cite{chiarenza1986harnack,de2016john} for modern presentations. De Giorgi's proof is elliptic, while Nash's proof is parabolic, but both approaches can treat both cases. They also both adapt thoroughly to the following \emph{hypoelliptic equations of type I} (see the next subsection)
\begin{equation*}
  \sum_{i,j=1}^m D_i \left(a_{ij} D_j f\right) = 0
\end{equation*}
where $(D_i)_{i=1}^m$ is a collection of first-order differential operators such that they generate a Lie algebra of full rank
\begin{equation*} \mathrm{Lie}\(D_1,\cdots,D_m\)=\mathrm{span}\(\partial_{x_1},\cdots,\partial_{x_n}\),
\end{equation*}
where the Lie bracket of the Lie algebra is the commutator of vector fields. The hypotheses on the matrix $A =(a_{i,j})_{1\le i,j \le m}$ are the same as in~\eqref{eq:EL3}. This case is important in sub-Riemannian geometry, see for instance~\cite{agrachev2019comprehensive}, but will not be discussed in these notes.

The classical DGNM theory had two important restrictions. The first one is that it deals with divergence-form PDEs. This is not merely a technical restriction, as all approaches---by De Giorgi, Nash or Moser---crucially rely on \textbf{energy estimates}. These energy estimates are also called \textbf{Caccioppoli inequalites}: see~\eqref{eq:ee2} in the elliptic case and~\eqref{eq:caccio-para} in the parabolic case and~\eqref{eq:caccio-kin} in the kinetic (hypoelliptic) case.

A similar result of Hölder regularity was later obtained by Krylov and Safonov~\cite{krylov1980certain} for solutions to non-divergence equations of the form 
\begin{equation*}
  a_{ij}(x) \, \partial^2_{x_i x_j} f = 0,
\end{equation*}
with a symmetric matrix $A =(a_{i,j})_{i,j}$ of rough coefficients that satisfies $\Lambda^{-1} \le A \le \Lambda$ for some $\Lambda>0$. We also refer to~\cite{MR164135,MR1487894,MR420016} for important previous contributions. The methods are however different in the non-divergence case.

The second restriction of the classical DGNM theory is that it does not cover the \emph{hypoelliptic case of type II} (see the next subsection) which is ubiquitious in physics, and such an extension is the object of these lecture notes.

We however also focus in these notes on obtaining constructive proofs with quantitative estimates on the constants and exponents. This leads us to revisit certain results even in the classical elliptic and parabolic cases.

There are two motivations behind this. The first motivation is rooted in mathematical physics. The classical DGNM theory was developed to solve Hilbert's $19$th problem about the regularity of energy minimisers. And the extension of this classical theory to the hypoelliptic equations appearing in kinetic theory was developed to obtain the conditional regularity of solutions to the Landau equation (and to the Boltzmann equation for long-range interactions, in the non-local case). In both problems, a mathematical solution is only truly satisfying if the constants of regularity obtained are of orders compatible with the physics itself. And it is difficult to ensure this without developing quantitative methods of proof. The second motivation is internal to mathematics. The search for quantitative methods often deepens the understanding of the mathematical structures and suggests new connexions. In our case, it has lead us to connecting the control of integral oscillation (through inequalities of Poincaré-type) to that of constructing particular trajectories solving a control problem. 

\subsection{Hörmander's hypoellipticity theory}

The DGNM theory revolutionised the study of elliptic and parabolic equations. It was however not covering a fundamental class of related equations, whose importance has been recognised at least since Kolmogorov~\cite{kolmogoroff1934zufallige}. In this note, Kolmogorov investigates a model equation driven by a first-order differential operator combined with diffusion only in certain directions. Let $x,v$ be the variables respectively in $\R^d$. Then, \textbf{Kolmogorov's equation} reads, in its simplest version:
\begin{equation}
    \label{eq:VFP}
    \begin{cases}
    \partial_t f + \, v\cdot \nabla_x f = \Delta_v f, \quad f=f(t,x,v), \\[2mm]
    f(0,\cdot,\cdot) = f_0 \ \text{ which is a probability density on } \R^{2d}.
    \end{cases}
\end{equation}

This equation corresponds to the evolution of the law of the random process 
\begin{equation}\label{eq:langevin}
    \begin{cases}
        \mathrm dX_t  =  V_t \dd t, \\
        \mathrm dV_t  \, =  \mathrm dW_t,
    \end{cases}
\end{equation}
where $(W_t)_t$ is a standard $d-$dimensional Brownian motion. Indeed, \eqref{eq:langevin} is a time-integrated Brownian motion, which gives its name to the note~\cite{kolmogoroff1934zufallige}.

Kolmogorov computes an explicit formula for the fundamental solution to~\eqref{eq:VFP}, by combining Fourier analysis and Duhamel's principle. Given the initial data $f_0 = \delta_{x_0,v_0}$ in~\eqref{eq:VFP}, one gets 
\begin{equation}\label{eq:kol}
  G(t,x,v) = \left( \frac{\sqrt{3}}{2 \, \pi \, t^2} \right)^d \, \mathrm{exp} \left( - 3 \, \frac{\left|x-x_0 - t \, (v+v_0)/2 \right|^2}{t^3} - \frac{|v-v_0|^2}{4t} \right).
\end{equation}

The fundamental solution~\eqref{eq:kol} is smooth in all variables for $t>0$, and therefore all solutions to~\eqref{eq:VFP} are smooth as well for $t>0$. However, ellipticity in $x$ fails for~\eqref{eq:VFP}. Intuitively, the noise in $v$ is ``transferred'' to the $x$ variable by the transport drift operator $v \cdot \nabla_x$, see Figure~\ref{fig:fig2}.
\begin{figure}
  \includegraphics[scale=0.7]{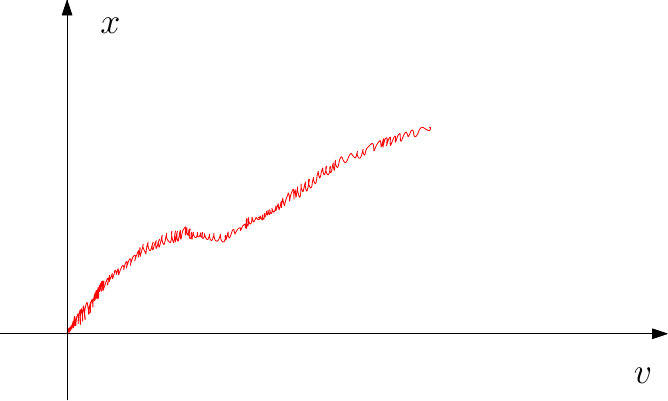}
  \caption{Noise in $v$ propagates to the $x$ variable thanks to the transport drift $v \cdot \nabla_x$.}
  \label{fig:fig2}
\end{figure}

After some seminal works in specific cases such as~\cite{nirenberg1963solvability}, a complete understanding of this phenomenon was developed by Hörmander~\cite{hormander1967hypoelliptic} in 1967. His starting point was Kolmogorov's calculation showing that for second-order operators with smooth coefficients, while ellipticity is a sufficient condition for the smoothness of the fundamental solution (he called the latter property ``hypoellipticity''), it is \emph{not} necessary. For the Kolmogorov equation, regularisation is produced by the interaction between the vector fields  $\nabla_v$ and $v \cdot \nabla_x$.

More precisely, we say that a linear operator $L$ is \textbf{hypoelliptic} if 
\begin{equation*}
  L u \in \mathrm{C}^\infty_{\mathrm{loc}}(\cU) \ \Longrightarrow \ u \in \mathrm{C}^\infty_{\mathrm{loc}}(\cU),
\end{equation*}
in an open subset $\cU \subset \R^n$ (again, we do not address the issue of the boundaries). When restricting to second-order operators with smooth coefficients, the standard \textbf{ellipticity}, i.e. the coercivity of the principal symbol, implies this condition. It is however not necessary. Consider, for $m \in \N^*$, a family $(X_i)_{i=1}^m$ of smooth vector fields and the operator 
\begin{equation}
  \label{eq:hor}
  L = X_0 + \sum_{i=1}^m X_i^2.
\end{equation}
The main theorem of \cite{hormander1967hypoelliptic} is a necessary and sufficient condition for operators of the form~\eqref{eq:hor} to be hypoelliptic: the Lie algebra of $X_0,\dots,X_m$ must satisfy
\begin{equation*}
  \dim \mathrm{Lie}(X_0,\cdots,X_m)= n.
\end{equation*}
When $X_0=0$ (no first-order term) such operators write as a sum of squares and we call them \textbf{hypoelliptic operators of type I}; otherwise we call them \textbf{hypoelliptic operators of type II}. Hypoelliptic operators of type II have also been called \textbf{ultraparabolic} in the literature.

The Kolmogorov operator is the archetypical example of the type II. Let 
\begin{equation}
  \label{eq:Kop}
  \mathcal{K} := \partial_t + v \, \cdot \, \nabla_x + \sum_{i=1}^d \partial^\star_{v_i} \partial_{v_i},
\end{equation}
be the operator driving \eqref{eq:VFP}, with $t \in \R$, $x,v\in \R^d$.
This has the form~\eqref{eq:hor} with
\begin{equation*}
  n=2d+1, \quad X_0 = \partial_t + v \, \cdot \, \nabla_x, \ \text{ and } \ X_i = \partial_{v_i} \ \text{ for } \  i=1,\cdots,d.
\end{equation*}

The fundamental computation is  
\begin{equation*}
  [X_0,X_i] = - \partial_{x_i}, \quad \forall \,  \, i=1,\cdots,d
\end{equation*}
which allows recovering all $\partial_{x_i}$'s, and finally $\partial_t$ is recovered by linear combination. Hence,
\begin{equation*}
  \dim \mathrm{Lie}(X_0,X_1,\cdots,X_d)=n
\end{equation*}
at every point and $\mathcal{K}$ is hypoelliptic. By inspecting the $(x,v)$-Fourier transform of the Kolmogorov fundamental solution, one sees that hypoelliptic operators have in general \emph{Gevrey} rather than \emph{analytic} regularisation (with exponent $2/3$ in the case of the Kolmogorov operator, but this exponent depends on the Lie algebra).

\subsection{Mathematical kinetic theory}

The bedrock of kinetic theory is Newton's classical mechanics. Along Newton's theory, decisive progresses were later made for electrico-magnetic forces (Amp\`ere, Faraday, Maxwell\dots), large velocities and large scales (Lorentz, Poincar\'e, Einstein, Minskowski\dots), small scales and quantum physics (Planck, Einstein, Bohr, Heisenberg, Schr\"odinger, Dirac, Bose\dots). However, all these works concern the study of a single object (a planet, an electron,\dots) or a small number of those, such as the planets in the solar system or the electrons in an atom. Many natural phenomena pertain to systems made of a very large number of objects (or particles, or agents\dots). The observable universe contains $10^9$--$10^{12}$ galaxies, each of them made of billions of stars (about $10^{10}$ in the Milky Way). A gas is composed of about $10^{23}$ molecules per mole (the Avogadro constant). Crowds, flocks of birds, shoals of fishes, are made up of thousands of individuals (sentient or not). In such situations, it is often impossible to track each particle/molecule/indidividual.

In the 1867 paper~\cite{maxwell1867iv} (and in subsequent papers), Maxwell laid the mathematical foundations of statistical mechanics. The statistical approach was developed to understand these large systems through their statistical distributions in a way that is rigorously connected to the microscopic dynamics. This is in contrast with the previously invented fluid mechanics, which combines conservation laws and adhoc closure laws to derive evolution equations at the macroscopic level. The theory invented by Maxwell is called \emph{kinetic} due to the presence of a ``kinetic'' variable, the microsopic velocity, which is inaccessible to observation. We have therefore three scales of description of complex systems, ordered by increasing level of detail: 
\begin{enumerate}
    \item a \textbf{macroscopic} scale (fluid dynamics, observable);
    \item a \textbf{mesoscopic} scale (statistical mechanics, where the behaviour of the unknown in some variables is only observable on average);
    \item a \textbf{microscopic} scale (Newton's mechanics, fully not observable).
\end{enumerate}

At the mesoscopic scale, the fundamental equation---and also the oldest one---is the \textbf{(Maxwell-)Boltzmann equation}, invented in~\cite{maxwell1867iv,boltzmann1872weitere}: \begin{equation}\label{eq:boltz}
  \partial_t f + v\, \cdot \, \nabla_x f = Q(f,f).
\end{equation}
The unknown is a non-negative time-dependent probability distribution function $f=f(t,x,v)$, with $x \in \Omega \subset \mathbb R^d$, $v \in \mathbb R^d$, and $\Omega$ a spatial domain with suitable boundary conditions. The  \textbf{collision operator} $Q$ is given by 
 \begin{multline*}
   Q(f,f) (t,x,v) = \\
   \int_{v_\star \in \mathbb R^d} \, \int_{\sigma \in \S^{d-1}} \mathrm{B}(v-v_\star,\omega) \, \left[ f(t,x,v') \, f(t,x,v'_\star) - f(t,x,v) \, f(t,x,v_\star)  \right] \dd \omega \dd v_\star,
 \end{multline*}
 where the variables $v_\star$, $v'$, $v'_\star$ are related to $v$ as follows (given $\sigma \in \S^{d-1}$)
 \begin{equation*}
   v' = \frac{v+v_\star}{2} + \frac{|v-v_\star|}{2} \sigma, \qquad 
   v'_\star = \frac{v+v_\star}{2} - \frac{|v-v_\star|}{2} \sigma.
 \end{equation*}
 This is a parametrisation of the degrees of freedom left over once the conservation laws of the elastic binary collision between particles with identical mass---see Figure~\ref{fig:fig1}---have been taken into account:
 \begin{equation*}
   \begin{cases}
     v + v_\star = v' + v'_\star, \qquad &\text{(conservation of momentum)} \\[2mm]
     |v|^2+|v_\star|^2 = |v'|^2 + |v'_\star|^2, \qquad &\text{(conservation of kinetic energy)}.
   \end{cases}
\end{equation*}
Observe that $Q(f,f)$ statistically accounts for collisions at time $t$, at a point $x$, between two particles with pre-collisional velocities $v',v'_\star$, and post-collisional velocities $v,v_\star$, see Figure~\ref{fig:fig1}. The collision kernel $\mathrm{B}$ encodes the frequencies at which collisions with different pairs of velocities occur, and $Q(f,f)$ collects the weighted balance-sheet of all possible collisions in the velocity space. $Q$ is a bilinear nonlocal operator acting only in the kinetic variable $v$.
\begin{center}
  \begin{figure}
    \begin{minipage}{6in}
      \centering
      \raisebox{0.65\height}{\includegraphics[height=0.9in]{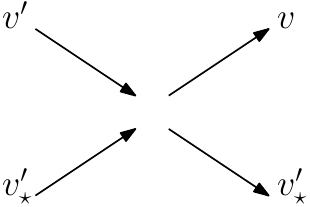}}
      \hspace*{3cm}
      \includegraphics[height=2in]{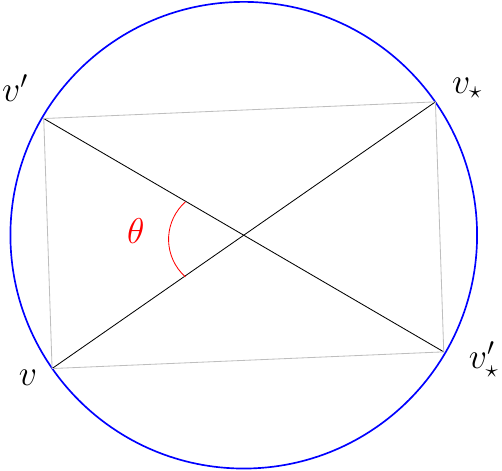}
    \end{minipage}
    \caption{A collision between two particles.}
    \label{fig:fig1}
  \end{figure}
\end{center}

By invariance (see~\cite{bobylev1976fourier}), the collision kernel $B$ is a function of $|v-v_\star|$ and $\cos \theta$, so that $\mathrm{B} = \mathrm{B}(|v-v_\star|,\cos \theta)$, 
where $\theta$ is the angle between $v - v_\star$ and $v'-v'_\star$, i.e. between $v-v_\star$ and $\sigma$. The computation of Maxwell shows that $\mathrm{B} = |v-v_\star|$ for hard spheres interacting by elastic collisions. When particles interact through a repulsive force proportional to $r^{-\alpha}$ with $r$ the distance between particles and $\alpha > 2$, we have in dimension $d=3$
\begin{equation*}
  B = |v-v_\star|^\gamma \, b(\cos \theta), \quad b(\cos \theta) \sim_{\theta \to 0} \theta^{-2-2s}, 
\end{equation*}
with
\begin{equation*}
  \gamma = \frac{\alpha-5}{\alpha -1}, \qquad s = \frac{1}{\alpha-1}.
\end{equation*}
Except in the hard-sphere case, the kernel $B$ is therefore \emph{not} integrable on $\S^{d-1}$:
\begin{align*}
  \int_{\S^2} B
  & = |v-v_\star|^\gamma \int_{\S^2} b(\cos \theta) \dd \sigma \\
  & = 2 \pi |v-v_\star|^\gamma \int_0 ^\pi b(\cos \theta) \sin \theta \dd \theta \\
  & \approx 2\pi |v-v_\star|^\gamma \int_0 ^\pi \theta^{-1-2s} \dd \theta = + \infty.
\end{align*}
This singularity in $\theta$ implies that $Q$ manifests non-local fractional ellipticity. Artificially removing such singularity is called \textbf{Grad's cutoff}, and accordingly the non-modified regime is called \textbf{non-cutoff long-range interaction}. The challenge of extending the De Giorgi--Nash--Moser theory to the Boltzmann equation with non-cutoff long-range interaction was, to our knowledge, first raised in~\cite{peccot-villani}. 

It helps intuition if one draws an analogy with local differential operators. The dependence of $\mathrm B$ on $|v-v_\star|$ is analogous to the growth or decay of the coefficients of a local differential operator, while the singularity of order $2s$ at $\theta \sim 0$ is reminiscent of the order of differentiability for this operator.

It means heuristically that one expects the collision operator $Q$ to share similar features with the following rough approximation
\begin{equation*}
  Q \approx \nabla_v^s \cdot \, \( A \, \nabla_v^s \) \ + \text{  lower order terms,}
\end{equation*}
with $A$ growing like $|v|^\gamma$ at large $v$. This analogy is backed up by the regularity estimates~\cite{MR1765272,MR2679369} and by the spectral gap estimates~\cite{MR2322149,MR2784329}. Moreover, for $\gamma = 0$, Fourier methods apply just like they do for constant coefficients PDEs. And when $\alpha \to 2$ and $s\to1$ the Boltzmann kernel indeed resembles more and more a local diffusion operator (with diffusion coefficients depending on the solution in a non-local manner). The limit case $\alpha = 2$ corresponds to the \textbf{Coulomb repulsive interaction}; the Boltzmann operator is ill-defined in this case, and must be modified, as we discuss below.

Another fundamental observation is that Boltzmann's equation~\eqref{eq:boltz} ``contains'' the compressible hydrodynamic equations on the macroscopic fields (local density, local momentum and local temperatures):
\begin{equation*}
  \rho = \int_{\R^d} f \dd v, \qquad \rho \, u = \int_{\R^d} f\, v \dd v, \qquad \rho \, T = \frac{1}{d} \int_{\R^d} f \, |v-u|^2 \dd v.
\end{equation*}
More precisely: such dynamics can, at least formally, be derived from the Boltzmann equation in certain scaling limits. Viscous incompressible hydrodynamics can also be obtained by a scaling limit. Hence, the Cauchy problem for \eqref{eq:boltz} is expected to be at least difficult as the Cauchy problem for the Euler and Navier-Stokes systems.

Going back to the limit case $\alpha=2$, one sees that the singular integral does not make sense anymore, and the Boltzmann operator must be modified. Instead, one must use the \textbf{Landau equation} (invented by Landau~\cite{Landau1936,LANDAUPAPERS} in 1936)
\begin{equation}
  \label{eq:land}
  \partial_t f + \, v \cdot \nabla_x f = Q_{\mathrm{L}}(f,f),
\end{equation}
with the \textbf{Landau operator} (since it does not act on the variables $t,x$, these variables are omitted from the formula for clarity)
\begin{equation}
  \label{QLC}
    \begin{aligned}
    &Q_{\mathrm{L}}(f,f) (v) = \\
    &\nabla_v \cdot \left( \int_{v_\star \in \R^d} |v-v_\star|^{-1} \, \Pi_{(v -v^\star)^\perp} \Big[ f(v_\star) \, \nabla_v \, f(v) - f(v) \, \nabla_{v_\star} f(v_\star) \Big] \dd v_\star\right).
    \end{aligned}
\end{equation}
On can rewrite $Q_{\mathrm{L}}$ as a non-linear drift-diffusion operator with coefficients that depend in a non-local manner on the solution:
\begin{equation}
  \label{eq:land2}
  Q_{\mathrm{L}}(f,f) = \nabla_v \cdot \left( A[f] \, \nabla_v f + B[f] \, f \right).
\end{equation}

The global Cauchy theory for the Landau--Coulomb equation is open in general, but has recently been solved in the spatially homogeneous case~\cite{guillen2025landauequationdoesblow}, following many previous partial progresses~\cite{MR1646502,MR1737547,MR1737548,MR2502525,fournier2010uniqueness,MR3375485,MR3158719,silvestre2017upper,golse:hal-02145096} (this non-exaustive list is restricted to works dealing with the spatially homogeneous Landau equation). We give two open problems hereby motivating a regularity theory for nonlinear hypoelliptic PDEs.

We denote by \textbf{hydrodynamic fields} the functions
\begin{equation*}
  (t,x) \quad \mapsto \quad \int_{\R^d} f \dd v, \quad 
  \int_{\R^d} v f  \dd v, \quad 
  \int_{\R^d} |v|^2 f \dd v, \quad 
  \int_{\R^d} f \ln f \dd v.
\end{equation*}

\begin{problem}[Conditional regularity for the Boltzmann equation]
  Show that a solution on $[0,T]$ to~\eqref{eq:boltz} with non-cutoff long-range interactions, whose hydrodynamic fields satisfy suitable a priori pointwise bounds on $[0,T]$, is smooth on $(0,T]$. 
\end{problem}

This problem is now solved for $\alpha >3$:
  \begin{itemize}
  \item $L^\infty$ bounds hold on $f$ conditionally to bounds on the hydrodynamic fields by~\cite{silvestre2017upper},
  \item polynomial decay estimates hold on $f$ conditionally to bounds on the hydrodynamic fields by~\cite{imbert2020decay},
  \item finally weak Harnack inequality, H\"older regularity, Schauder estimates, and smoothness are proved, conditionally to bounds on the hydrodynamic fields, in the series of papers~\cite{imbert2019weak,imbert2021schauderii,imbert2022global}.
  \end{itemize}
  For $\alpha \in (2,3)$ the problem is open.

\begin{problem}[Conditional regularity for the Landau equation] 
  Show that a solution on $[0,T]$ to~\eqref{eq:land}, which satisfies suitable a priori pointwise bounds on the hydrodynamic fields, is smooth on $(0,T]$.
\end{problem}

This second problem in roughly the limit case $\alpha=2$ of the first problem. It is still open, but a partial answer has been obtained for less singular, mollified versions, of the Landau operator. Consider for $\gamma \ge -3$, a modification of the operator~\eqref{QLC} where the singular term $|v-v_\star|^{-1}$ in the integrand is replaced by $|v-v_\star|^{\gamma+2}$. The original case of Coulombian interactions corresponds to $\gamma = -3$. For $\gamma \geq -2$, we have the following results:
\begin{itemize}
    \item $L^\infty$ bounds hold on $f$ conditionally to bounds on the hydrodynamic fields by~\cite{silvestre2017upper},
    \item Harnack inequality and Hölder regularity hold on $f$ conditionally to bounds on the hydrodynamic fields by~\cite{golse2019harnack}, 
    \item Smoothness is proved conditionally to bounds on the hydrodynamic fields by~\cite{henderson2020c,imbert2021schauder}.
\end{itemize}
For $\gamma \in [-3,2)$ the problem is open. 

To finish this brief overview, let us add that many other problems in mathematical physics would certainly benefit from a robust and quantitative theory combining the approaches of De Giorgi--Nash--Moser and Hörmander. We will present some steps towards such theory, but much remains to be explored; in particular the boundary regularity and non-divergence equations. 
\medskip

\subsection{Notation} We denote by $n$ or $d$ the dimension. We denote by $\lesssim$ and $\gtrsim$ inequalities with a constant independent of the parameters at hand, or $\lesssim_k$ for an inequality with a constant depending on a parameter $k$. Given $\cU \subset \R^n$ open set, we write $\tilde \cU \subset \subset \cU$ when $\tilde \cU$ is an open subset of $\cU$ so that $\tilde \cU$ has compact closure included in $\cU$. We finally denote $f_+ := \max(f,0)$ for a scalar function $f$, and $B_r:=B(0,r)$ the ball of $\R^n$ with radius $r>0$.

\subsection{Acknowledgements}

These lecture notes are an expanded version of the material that the second author taught in various contexts: at Université Paris-Dauphine when he was a ``PSL Visiting fellow'' in 2023-2024, at the summer school ``Festum Pi'' in Crete in 2024, and at IHES in March 2025 as a ``Cours de l'IHES''. The support of these institutions, as well as the feedback from students and colleagues who attended these lectures are gratefully acknowledged. In particular, the first author typed a partial first draft of these notes based on the lectures at University Paris-Dauphine. Helpful extensive comments on a preliminary version of these notes were provided by Amélie Loher, Lukas Niebel, and an anonymous referee. The second author also greatly benefited from the discussions over the years on the De Giorgi theory with Francesca Anceschi, Helge Dietert, François Golse, Jessica Guerand, Cyril Imbert, Amélie Loher, Lukas Niebel, Sergio Polidoro, Annalaura Rebucci, Luis Silvestre, Alexis Vasseur and Rico Zacher. Several parts of the material presented in these lecture notes originate from collaborations with these colleagues. The first author acknowledges the support of a Maria Skłodowska Curie Postdoctoral Fellowship.


\section{The elliptic case}

This section is partly inspired by the lecture notes~\cite{vasseur2016giorgi,zbMATH07604914}. This is a concise self-contained account of the regularity theory of De Giorgi in~\cite{de1957sulla}, together with more recent quantitative alternative arguments for the control of oscillations. We also include brief presentations of the alternative approaches of Moser and Kruzhkov~\cite{moser1960new,kruzhkov1963priori}, highlighting the similarities and differences with that of De Giorgi. 

\subsection{The result of De Giorgi}

Let us go back to Hilbert's $19^{th}$ problem. We consider $\cU$ an open subset of $\mathbb R^n$, a Lagrangian $L : \R^n \to \R$ and the minimisation problem~\eqref{eq:pbvar}. When $L(p)=|p|^2/2$, then $\E$ 
is the standard Dirichlet form and the Euler-Lagrange equation of the minimisation problem is simply the Laplace equation. Weak solutions to the Laplace equation are harmonic in $\cU$, from which one deduces the  smoothness and real analyticity in $\cU$ (interior regularity). Hilbert's question asked whether this interior regularity hold for more general Lagrangians $\mathrm{L}$ that satisfy~\eqref{eq:H1}-\eqref{eq:H2}. The original motivation of De Giorgi was to answer this question.

\begin{lemma}
  The following two steps reduce Hilbert's $19$th problem to the regularity theory of elliptic PDEs with rough coefficients:
  \begin{enumerate}[(i)]
  \item Let $\mathrm{L} \in C^2(\R^n)$ so that $\nabla^2 \mathrm{L} \le \Lambda$ on $\R^n$ for some $\Lambda >0$. Then all minimisers of~\eqref{eq:pbvar} in $\mathrm{H}^1(\cU)$ are also weak solutions to the Euler--Lagrange equation~\eqref{eq:EL1}.
  \item Let $\mathrm{L} \in C^2(\R^n)$ so that $\Lambda^{-1} \le \nabla^2 \mathrm{L} \le \Lambda$ on $\R^n$ for some $\Lambda >0$. Let $u \in \mathrm{H}^1(\cU)$ be a weak solution to~\eqref{eq:EL1}, $\tilde{\cU} \, \subset \subset \cU$, and $i=1,\cdots,n$. Then $u \in H^2(\tilde{\cU})$ and $f:=\partial_{x_i} u \in H^1(\tilde \cU)$ is a weak solution to~\eqref{eq:EL2} on $\tilde \cU$.
\end{enumerate}

\end{lemma}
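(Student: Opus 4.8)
The plan is to prove (i) and (ii) separately, with the main effort going into the difference-quotient argument of (ii).

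\medskip

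For part (i), the plan is to fix $w \in C^\infty_c(\cU)$ (by density, $w \in H^1_0(\cU)$) and consider the function $\phi(\var) := \E(u + \var w) = \int_\cU \mathrm{L}(\nabla_x u + \var \nabla_x w)\dd x$. Because $u$ is a minimiser, if $\phi$ is differentiable at $\var = 0$ then $\phi'(0) = 0$. The task is therefore to justify differentiating under the integral sign. The upper bound $\nabla^2 \mathrm{L} \le \Lambda$ gives, via Taylor's theorem with integral remainder, the bound $|\mathrm{L}(p + q) - \mathrm{L}(p) - \nabla \mathrm{L}(p)\cdot q| \le \tfrac{\Lambda}{2}|q|^2$, and also $|\nabla \mathrm{L}(p) - \nabla \mathrm{L}(0)| \le \Lambda |p|$, so that $|\nabla \mathrm{L}(\nabla_x u)| \lesssim 1 + |\nabla_x u| \in L^2(\cU)$ locally (in fact on $\supp w$) since $u \in H^1$. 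Then the difference quotient $\var^{-1}(\mathrm{L}(\nabla_x u + \var\nabla_x w) - \mathrm{L}(\nabla_x u))$ is dominated, uniformly in $\var \in (0,1)$, by $|\nabla \mathrm{L}(\nabla_x u)\cdot \nabla_x w| + \tfrac{\Lambda}{2}|\nabla_x w|(|\nabla_x w| + \text{const})$, which is integrable on $\supp w$; dominated convergence yields $\phi'(0) = \int_\cU \nabla \mathrm{L}(\nabla_x u)\cdot \nabla_x w \dd x = 0$, i.e. the weak form of~\eqref{eq:EL1}. The same computation with $-w$ gives the reverse inequality, so equality holds and $u$ is a weak solution.

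\medskip

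For part (ii), the plan is the classical Nirenberg difference-quotient method. Fix $i$, a direction $e_i$, and a cutoff $\zeta \in C^\infty_c(\cU)$ with $\zeta \equiv 1$ on a neighbourhood of $\tilde\cU$. For $h \ne 0$ small, write $\tau_h v(x) := v(x + h e_i)$ and $D^h v := h^{-1}(\tau_h v - v)$. Testing the weak formulation $\int \nabla \mathrm{L}(\nabla_x u)\cdot \nabla_x w = 0$ against $w = D^{-h}(\zeta^2 D^h u)$ (legitimate for $|h|$ small since this lies in $H^1_0(\cU)$), and using the discrete integration-by-parts identity $\int (D^{-h}g)\, w = -\int g\, (D^h w)$, one moves the difference quotient onto the nonlinear term to get $\int D^h[\nabla \mathrm{L}(\nabla_x u)]\cdot \nabla_x(\zeta^2 D^h u) = 0$. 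The key algebraic fact is that $D^h[\nabla \mathrm{L}(\nabla_x u)] = \bar A_h \nabla_x(D^h u)$ where $\bar A_h(x) := \int_0^1 \nabla^2\mathrm{L}\big(\nabla_x u(x) + s\,(\tau_h\nabla_x u(x) - \nabla_x u(x))\big)\dd s$ satisfies $\Lambda^{-1} \le \bar A_h \le \Lambda$ pointwise by~\eqref{eq:H2}. Expanding $\nabla_x(\zeta^2 D^h u) = \zeta^2 \nabla_x(D^h u) + 2\zeta (D^h u)\nabla_x\zeta$ and using the ellipticity lower bound on the good term together with Cauchy–Schwarz and Young's inequality on the cross term, one absorbs and arrives at a Caccioppoli-type estimate $\int \zeta^2 |\nabla_x(D^h u)|^2 \lesssim \int |\nabla_x\zeta|^2 |D^h u|^2 \lesssim \|\nabla_x u\|_{L^2(\supp\zeta)}^2$, uniformly in $h$. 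The standard characterisation of $H^1$ by uniformly bounded difference quotients then gives $\partial_{x_i}\nabla_x u \in L^2(\tilde\cU)$, hence $u \in H^2(\tilde\cU)$ and $f := \partial_{x_i} u \in H^1(\tilde\cU)$. Finally, to see $f$ solves~\eqref{eq:EL2} weakly, take any $w \in C^\infty_c(\tilde\cU)$, plug $D^{-h}w$ as test function into~\eqref{eq:EL1}, transfer the quotient to get $\int \bar A_h \nabla_x(D^h u)\cdot\nabla_x w = 0$, and pass to the limit $h \to 0$: $D^h u \to \partial_{x_i} u = f$ strongly in $H^1_{\mathrm{loc}}$ (now available) and $\bar A_h \to A := \nabla^2\mathrm{L}(\nabla_x u)$ boundedly a.e., yielding $\int A \nabla_x f \cdot \nabla_x w = 0$, which is the weak form of $\partial_{x_i}(a_{ij}\partial_{x_j} f) = 0$.

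\medskip

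The main obstacle is the uniform-in-$h$ Caccioppoli estimate in (ii): one must be careful that the "frozen" coefficient matrix is the averaged Hessian $\bar A_h$ evaluated along the segment between $\nabla_x u(x)$ and $\tau_h\nabla_x u(x)$ — this is exactly where~\eqref{eq:H2} is used, and it is what makes the nonlinear term behave like a divergence-form operator with rough (merely bounded, elliptic) coefficients — and that the absorption of the cross term does not consume the $h$-independence. The passage to the limit at the end is comparatively soft once $u \in H^2_{\mathrm{loc}}$ is known, since it upgrades weak convergence of $D^h u$ to strong $H^1_{\mathrm{loc}}$ convergence.
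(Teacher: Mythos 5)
Your proposal is correct and follows essentially the same route as the paper: part (i) by the first-variation/Taylor-expansion argument using only the upper bound $\nabla^2\mathrm{L}\le\Lambda$ to control the quadratic remainder, and part (ii) by the Nirenberg difference-quotient method with the test function $\tau_{-h}(\eta^2\tau_h u)$, the averaged Hessian along the segment giving an elliptic matrix with the same bounds, and the uniform Caccioppoli estimate followed by passage to the limit in the weak formulation. No gaps.
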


\begin{proof}
  (i) Given $\var>0$ and $w \in C^\infty_c(\cU),$ we have
  \begin{equation*}
    \int_\cU \mathrm{L}(\nabla_x u \pm \var \nabla_x w) \dd x \geq \int_{\cU} \mathrm{L}(\nabla_x u) \dd x.
  \end{equation*}
  Note that integrands are integrable because $\nabla_x u$, $\nabla_x w \in L^2(\cU)$ and $\mathrm{L}$ is sub-quadratic. Then, for all $x \in \cU$, we have by Taylor expansion
  \begin{equation*}
    \mathrm{L}(\nabla_x u(x) \pm \var \, \nabla_x w(x)) \le \mathrm L(\nabla_x u(x)) \pm \var \nabla \mathrm{L}(\nabla_x u(x)) \cdot \nabla_x w(x) + \Lambda \, \frac{\var^2}{2} \, |\nabla_x w(x)|^2.
  \end{equation*}
  The last two equations yield
  \begin{equation*}
    \pm \int_{\cU} \nabla \mathrm{L}(\nabla_x u) \cdot \nabla_x w \dd x \geq - \Lambda \, \frac{\var}{2} \, \int_{\cU}  |\nabla_x w|^2 \dd x.
  \end{equation*}
  By letting $\var \to 0$, we deduce
  \begin{equation*}
    \forall \,  w \in C^\infty_c(\cU), \quad  \int_{\cU} \nabla \mathrm{L}(\nabla_x u) \cdot \nabla_x w \dd x =0,
  \end{equation*}
  which is the weak form of~\eqref{eq:EL1}.
  \medskip

  \noindent
  (ii) By symmetry, a covering argument, translation and dilation, it is enough to prove the claim when $\cU=B_3$, $\tilde \cU=B_1$, $i=1$.
  For $-1 \le h \le 1,$ consider 
  \begin{equation*}
    \tau_hu(x) := \frac{u(x+h\, e_1)-u(x)}{h},
  \end{equation*}
  and $\eta$ a smooth function such that
  \begin{equation*}
    \eta \equiv 1 \text{ on } B_1, \quad \eta \equiv 0 \text{ on } B_2^c, \quad \text{ and } \ |\nabla_x \eta| \le 2.
  \end{equation*}
  The operators $\eta \, \tau_h,$ and $\tau_h \,( \eta \, \cdot )$ map $L^2(B_3)$ to itself and map $H^1(B_3)$ to $H^1_0(B_3)$. We then test~\eqref{eq:EL1} against $\phi := \tau_{-h} \left( \eta^2 \tau_h u \right)$. Note that squaring the localisation function $\eta$ is standard in energy estimates and related to the $L^2$ structure. We obtain:
\begin{align*}
    & 0 = \int_{B_3} \nabla_x \phi \cdot\nabla L(\nabla_x u) \dd x = \\
    &- \int_{B_3} \eta \, \nabla_x (\tau_h u  ) \cdot \eta \, \tau_h (\nabla L(\nabla_x u)) \dd x - 2 \int_{B_3} (\nabla_x \eta) \, (\tau_h u)  \,\eta \tau_h (\nabla L(\nabla_x u)) \dd x =\\ & - \int_{B_3} \eta \, \nabla_x (\tau_h u) \, \left( \int_0^1 \nabla^2 L (\nabla_x u + \theta \, ( \nabla_x u(x +h \, e_i) - \nabla_x u(x) )) \, d \theta \right) \, \eta  \nabla_x (\tau_h u) \dd x + \\
    &-2 \int_{B_3} (\nabla_x \eta) (\tau_h u) \left( \int_0^1 \nabla^2 L (\nabla_x u + \theta \, ( \nabla_x u(x +h \, e_i) - \nabla_x u(x) )) \, d \theta \right) \, \eta \, \nabla_x (\tau_h u) \dd x.
\end{align*}
Using $\Lambda^{-1} \le \nabla^2 L \le \Lambda$, we deduce
\begin{equation*}
  \frac{1}{\Lambda} \, \int_{B_3} \eta^2 |\nabla_x \tau_h u|^2 \dd x \le 2 \Lambda \left( \int_{B_3} |\nabla_x \eta|^2 \, |\tau_h u|^2 \dd x \right)^{1/2} \, \left(\int_{B_3} \eta^2 \, |\nabla_x \tau_h u|^2 \dd x  \right)^{1/2},
\end{equation*}
which shows
\begin{equation*}
  \int_{B_3} \eta^2 |\nabla_x \tau_h u|^2 \dd x \lesssim \, \Lambda^4 \int_{B_3} |\nabla_x \eta|^2 \, |\tau_h u|^2 \dd x \le \Lambda^4 \, \|u\|^2_{H^1(B_3)}.
\end{equation*}
By letting $h \to 0$, we deduce $f = \partial_i u \in H^1(B_1)$ and
\begin{equation*}
  \|f\|_{H^1(B_1)} \lesssim \Lambda^2 \|u\|_{H^1(B_3)}
\end{equation*}
(see~\cite{MR2597943} for results on generalised derivatives). Then given $\phi \in H^1_0(B_1)$, we have $\tau_{-h} \phi \in H^1_0(B_2)$ and 
\begin{equation*}
  0 = - \int_{B_2} \tau_h \phi \, \nabla_x \cdot (\nabla L(\nabla_x u)) \dd x = \int_{B_2} \nabla_x \phi \cdot \tau_h (\nabla L(\nabla_x u)) \dd x.
\end{equation*}
Finally, by letting $h \to 0$, we deduce 
\begin{equation*}
  0 = \int_{B_1} \nabla_x \phi \cdot \partial_{x_1} \nabla L(\nabla_x u) \dd x =  \int_{B_1} \nabla_x \phi \cdot  \nabla^2 L (\nabla_x u) \nabla_x f \dd x =  \int_{B_1} \nabla_x \phi \cdot A \nabla_x f \dd x,
\end{equation*}
which is the weak form of~\eqref{eq:EL4} on $B_1$ for $f$.
\end{proof}

As we have discussed above, the previous works of Bernstein, Schauder and others have showed that if $u \in C^{1+\alpha}_{\mathrm{loc}}(\cU)$ solves~\eqref{eq:EL1}, then $u$ is smooth and real analytic in $\cU$. The solution to Hilbert's $19$th problem is therefore reduced to proving $\nabla_x u \in C^\alpha_{\mathrm{loc}}(\cU)$ for some $\alpha \in (0,1)$, i.e. that any weak $H^1$ solution $f$ to~\eqref{eq:EL2} is Hölder continuous.

Let us recall the definition of Hölder spaces. For any open set $\cU \subset \mathbb R^n$, we define the \textbf{Hölder norm} $C^\alpha(\cU)$ as follows:
\begin{equation}
  \label{eq:holder}
  \|f\|_{C^\alpha(\cU)} := \|f\|_{L^\infty(\cU)} + [f]_{C^\alpha(\cU)}, \qquad [f]_{C^\alpha(\cU)} := \sup_{x,y \in \cU} \, \frac{|f(x)-f(y)|}{|x-y|^\alpha},
\end{equation}
where $[f]_{C^\alpha(\cU)}$ is the \textbf{Hölder seminorm} and $\alpha \in (0,1)$. The standard notation $C^\alpha_{\mathrm{loc}}(\cU)$ denotes the set of functions that are in $C^\alpha_{\mathrm{loc}}(\tilde \cU)$ for every $\tilde \cU \subset \subset \cU$. 

\begin{theorem}[Elliptic De Giorgi--Nash--Moser]
  \label{thm:dgnm2}
  Given $\Lambda>0$, there is $\alpha \in (0,1)$, depending only $\Lambda>0$ and the dimension $n$, and there is $C>0$, depending only on $\Lambda$ and the dimension $n$, so that the following holds:
  \smallskip

  Let $f \in H^1(\cU)$ be a weak solution on the open set $\cU \subset \R^n$ to the equation
  \begin{equation}
    \label{eq:EL4}
    \nabla_x \cdot ( A \, \nabla_x f ) =0,
  \end{equation}
  with $A : \cU \to \mathbb R^{2n}$ a symmetric matrix with measurable entries so that
  \begin{equation*}
     \quad \Lambda^{-1} \le A(x) \le \Lambda \quad \text{for almost every } x \in \cU.
  \end{equation*}

  Then, we have $f \in C^\alpha_{\mathrm{loc}}(\cU)$.
  \smallskip

  Moreover, given $\delta>0$, there is $C>0$, depending only on $\Lambda$, $\delta$ and the dimension $n$, so that for any $\tilde{\cU} \subset \subset \cU$ so that  $\mathrm{dist}(\tilde{\cU},\cU) \ge \delta$, we have
  \begin{equation*}
    \|f\|_{C^\alpha(\tilde{\cU})} \le C \|f\|_{\mathrm{L}^2(\cU)}.
  \end{equation*}
\end{theorem}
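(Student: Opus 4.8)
The plan is to follow the classical De Giorgi scheme in three stages: (1) prove a Caccioppoli (energy) inequality; (2) run the first De Giorgi lemma — an $L^2$-to-$L^\infty$ gain via nonlinear iteration on truncations; (3) run the second De Giorgi lemma — a decay of oscillation, obtained from an intermediate-value / isoperimetric-type lemma on level sets — and iterate it dyadically to get Hölder continuity. The quantitative dependence of $\alpha$ and $C$ on $\Lambda$ and $n$ is tracked throughout; this is exactly what the ``quantitative'' viewpoint of these notes insists on. Since the statement is local and the equation is invariant under translation and dilation (and multiplication of $f$ by a constant), a covering argument reduces everything to proving a bound of the form $\|f\|_{C^\alpha(B_{1/2})} \le C \|f\|_{L^2(B_1)}$ for weak solutions on $B_1$.

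First I would establish the Caccioppoli inequality: for any cutoff $\eta \in C_c^\infty(B_1)$ and any $k \in \R$, testing \eqref{eq:EL4} against $\eta^2 (f-k)_+$ and using $\Lambda^{-1} \le A \le \Lambda$ gives
\begin{equation*}
  \int_{B_1} \eta^2 |\nabla_x (f-k)_+|^2 \dd x \;\lesssim\; \Lambda^4 \int_{B_1} |\nabla_x \eta|^2 \, (f-k)_+^2 \dd x,
\end{equation*}
the inequality labelled \eqref{eq:ee2} in the notes. Then for the first lemma I would set up the decreasing sequence of levels $k_m = M(1 - 2^{-m})$ and radii $r_m = \tfrac12(1+2^{-m})$, define the energy quantities $U_m = \int_{B_{r_m}} (f-k_m)_+^2$, and combine the Caccioppoli inequality on the annulus with the Sobolev inequality (to pass from $L^2$ of the gradient to a higher $L^p$ norm) and Chebyshev on the level set $\{f>k_m\}$ to obtain a recursion of the form $U_{m+1} \le C^m \Lambda^{\beta} M^{-\gamma} U_m^{1+\nu}$ for explicit $\nu>0$. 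The fast-geometric-convergence lemma then yields: if $\|f_+\|_{L^2(B_1)}$ is below a threshold $\varepsilon_0(\Lambda,n)$, then $f \le M$ on $B_{1/2}$; rescaling in $M$ gives $\|f\|_{L^\infty(B_{1/2})} \lesssim \|f\|_{L^2(B_1)}$ for any solution (applying this also to $-f$).

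Next comes the oscillation-decay step, which is where the real work lies. Normalising so that $\operatorname{osc}_{B_1} f \le 2$, i.e. $-1 \le f \le 1$ on $B_1$ (after the $L^\infty$ bound), one wants to show $\operatorname{osc}_{B_{1/2}} f \le (1-\theta) \operatorname{osc}_{B_1} f$ for some $\theta = \theta(\Lambda,n) \in (0,1)$. The mechanism is the De Giorgi ``isoperimetric'' lemma: if $\{f \le 0\}$ occupies at least half the measure of $B_1$, one shows via the Caccioppoli inequality (controlling $\|\nabla_x (f-k)_+\|_{L^2}$ on the intermediate strip $0<f<1/2$) together with a De Giorgi–Poincaré inequality on level sets that the ``bad'' set $\{f \ge 1/2\}$ has small measure; then the conclusion $f \le 1 - \theta$ on $B_{1/2}$ follows from the first lemma applied to $(f - 1/2)_+$ once that measure is below the threshold $\varepsilon_0$. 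The quantitative route here — which the notes emphasise as the alternative to compactness — replaces the measure-theoretic De Giorgi lemma by a quantitative Poincaré-type inequality, or equivalently by an explicit intermediate-value estimate; either way one extracts an explicit $\theta$. Iterating the oscillation decay on the dyadic balls $B_{2^{-k}}$ gives $\operatorname{osc}_{B_{2^{-k}}} f \le C (2^{-k})^\alpha$ with $\alpha = \log_2 \tfrac{1}{1-\theta}$, hence $f \in C^\alpha$ with the claimed quantitative bound; restoring the general geometry by covering and rescaling finishes the proof.

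The main obstacle is the oscillation-decay lemma, and more precisely the quantitative De Giorgi isoperimetric inequality with a constant depending only on $\Lambda$ and $n$: one must show that a function whose gradient is $L^2$-controlled, which is $\ge 1/2$ on a set of non-negligible measure and $\le 0$ on a set of non-negligible measure, necessarily spends a definite amount of measure in the intermediate strip, with all constants explicit. The compactness proof would obtain this by contradiction, but then loses quantitativeness; carrying it out with an honest Poincaré inequality on the level sets (or via the trajectory/segment-integration argument the introduction alludes to) is the delicate point, and it is where the exponent $\alpha$ is ``born''.
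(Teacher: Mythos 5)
Your overall architecture (Caccioppoli inequality, first De Giorgi lemma by nonlinear iteration on truncations, oscillation decay from an isoperimetric/intermediate-value lemma, dyadic iteration to H\"older, covering and rescaling) is exactly the route the notes follow. There is, however, a genuine gap in your oscillation-decay step. You claim that a single application of the isoperimetric lemma, under the sole hypothesis $|B_1 \cap \{f\le 0\}| \ge \tfrac12 |B_1|$, shows that $|\{f \ge 1/2\}|$ is small enough to trigger the first lemma. It does not: the isoperimetric lemma gives a \emph{lower} bound on the measure of the intermediate strip $\{0<f<1/2\}$ in terms of the measures of $\{f\le 0\}$ and $\{f\ge 1/2\}$; it yields no smallness of $\{f\ge 1/2\}$ in one shot (a subsolution close to $1$ on almost half of $B_1$ and $\le 0$ on most of the other half is consistent with your hypotheses, and for it $|\{f\ge 1/2\}|$ is of order $|B_1|$). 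The missing ingredient is the finite De Giorgi induction over the rescaled truncations $g_k := 2^k\,[\,f - (1-2^{-k})\,]$: each $g_k$ is a subsolution bounded by $1$ with $|\{g_k\le 0\}|\ge \mu|B_1|$, and as long as $\|(g_k)_+\|_{L^2(B_1)}>\delta$ the isoperimetric lemma forces $|\{0<g_k<1/2\}|\ge \alpha |B_1|$; since $\{g_{k+1}\le 0\} \supset \{g_k\le 0\}\cup\{0<g_k<1/2\}$, the measure of the sublevel sets increases by at least $\alpha|B_1|$ at each step, so after at most $k_0 \le (1-\mu)/\alpha$ steps one has $\|(g_{k_0+1})_+\|_{L^2(B_1)}\le \delta$, and only then does the first lemma apply, giving $\sup_{B_{1/2}} f \le 1 - 2^{-k_0-2}$.

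This matters quantitatively as well as logically: the correct conclusion is $f \le 1-\lambda$ with $\lambda \sim 2^{-(1-\mu)/\alpha}$, exponentially small in $1/\alpha$ (hence in a power of $\Lambda$), not the fixed $\theta$ of order $1/4$ that your one-step version would produce; this finite induction is precisely where the H\"older exponent $\alpha = -\ln\nu/(2\ln 2)$ inherits its (likely structural) exponential dependence on $\Lambda$. The rest of your plan --- the recursion $U_{m+1}\le C^m U_m^{1+\nu}$ for the first lemma, the dyadic iteration of the oscillation decay, the reduction to the unit ball --- matches the notes, as does your identification of the quantitative isoperimetric lemma (provable constructively by the $L^1$ Poincar\'e inequality or by integration along segments) as the crux.
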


\begin{remarks}
  \begin{enumerate}
  \item Notice that the strict convexity
    $\nabla^2 \mathrm{L} \geq \Lambda^{-1}$ is necessary. In dimension
    $n=1$, let $\mathrm L$ be a double-well shaped function
    \begin{equation*}
      \mathrm{L}(p) = |p|^2 - 2 |p|.
    \end{equation*}
    Then, one can check that $u(x)=|x|$ is
    a minimiser of~\eqref{eq:pbvar} in $H^1((-2,2))$, but $u'$ is not Hölder
    continuous.
  \item Nash proved a similar result for parabolic equations with rough coefficients, which includes this theorem as a particular case. We discuss the parabolic case in the next section. Moser and Kruzhkov later also recovered this theorem by other proofs.
  \item We emphasize that this is \textbf{not} a perturbation result around the case of constant coefficients: the equation may not be locally close to an elliptic PDE with constant coefficients, whichever the scale one zooms in. The entries of $A$ being merely measurable, they may oscillate fast at \textbf{every} scale.
  \end{enumerate}
\end{remarks}

\subsection{The first De Giorgi Lemma}

The argument of De Giorgi is divided into two parts, corresponding roughly first to the control of the $L^\infty$ part of the Hölder norm at every scale (first De Giorgi Lemma) and second to the control of the oscillation and thus the Hölder semi-norm (second De Giorgi Lemma). The first lemma plays a key role in the proof of the second lemma.

\begin{lemma}[First De Giorgi Lemma: gain of integrability]
  \label{lem:dg1}
  Given $\Lambda>0$, there is $\delta >0$, depending only on $\Lambda$ and the dimension $n$, so that the following holds:
  \smallskip
  
  Let $f \in H^1(B_1)$ be a weak subsolution to
  \begin{equation*}
    \nabla_x \cdot ( A \nabla_x f ) \ge 0
  \end{equation*}
  with $A$ symmetric measurable so that
  \begin{equation*}
    \forall \, x \in \cU, \quad \Lambda^{-1} \le A(x) \le \Lambda.
  \end{equation*}

  Then, we have
  \begin{equation*}
    \|f_+\|_{\mathrm{L}^2(B_1)} \le \delta \quad \implies \quad  \|f_+\|_{\mathrm{L}^\infty(B_{1/2})} \le \frac{1}{2}.
  \end{equation*}
\end{lemma}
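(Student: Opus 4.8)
The plan is to run the classical De Giorgi iteration on a decreasing sequence of truncations and balls. Fix the increasing levels $C_k := \frac{1}{2}(1 - 2^{-k})$, so $C_0 = 0$ and $C_k \uparrow \frac12$, and the shrinking radii $r_k := \frac12(1 + 2^{-k})$, so $r_0 = 1$ and $r_k \downarrow \frac12$. Set $f_k := (f - C_k)_+$ and introduce the energy quantity $U_k := \int_{B_{r_k}} f_k^2 \dd x$ (or one could work with $\sup_t$-type quantities, but in the elliptic case a single integral suffices). The goal is to prove a nonlinear recursion of the form $U_{k+1} \le C\, b^k \, U_k^{1+\beta}$ for some $C, b > 1$ and $\beta > 0$ depending only on $n$ and $\Lambda$; a standard fast-geometric-convergence lemma then guarantees $U_k \to 0$ provided $U_0 = \|f_+\|_{L^2(B_1)}^2 \le \delta^2$ is small enough, which gives $f_+ \le \frac12$ a.e.\ on $B_{1/2}$.

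The two analytic ingredients feeding the recursion are: (a) the Caccioppoli (energy) inequality, obtained by testing the subsolution inequality $\nabla_x\cdot(A\nabla_x f)\ge 0$ against $\eta_k^2 f_k$ where $\eta_k$ is a cutoff equal to $1$ on $B_{r_{k+1}}$, supported in $B_{r_k}$, with $|\nabla_x \eta_k| \lesssim 2^k$; using $\Lambda^{-1}\le A\le\Lambda$ and absorbing gives $\int_{B_{r_{k+1}}} |\nabla_x f_k|^2 \dd x \lesssim \Lambda^2 4^k \int_{B_{r_k}} f_k^2 \dd x$; and (b) the Sobolev inequality $\|g\|_{L^{2^*}}\lesssim \|g\|_{H^1}$ with $2^* = \frac{2n}{n-2}$ (for $n\ge 3$; for $n\le 2$ pick any fixed exponent $>2$), applied to $g = \eta_k f_k$. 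The crucial gain comes from the measure of the super-level set: on $B_{r_{k+1}}$ one has $\{f_{k+1}>0\} = \{f > C_{k+1}\} \subset \{f_k > C_{k+1}-C_k\} = \{f_k > 2^{-(k+2)}\}$, so by Chebyshev $|\{f_{k+1}>0\}\cap B_{r_{k+1}}| \le 2^{2(k+2)} U_k$. Then Hölder's inequality interpolates: $U_{k+1} = \int f_{k+1}^2 \le \|f_{k+1}\|_{L^{2^*}}^2 \, |\{f_{k+1}>0\}|^{2/n} \lesssim \big(4^k U_k\big)\cdot\big(4^k U_k\big)^{2/n}$, i.e.\ $U_{k+1} \le C\, b^k\, U_k^{1 + 2/n}$, which is the desired recursion with $\beta = 2/n$.

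The main obstacle—really the only subtle point—is bookkeeping the constants so that the threshold $\delta$ depends only on $\Lambda$ and $n$ and not on $f$ or $\cU$: one must track that the Caccioppoli constant is a fixed power of $\Lambda$, that the Sobolev constant depends only on $n$, and that the geometric factor $b^k$ from $|\nabla\eta_k|^2 \sim 4^k$ combines cleanly with the Chebyshev factor $4^k$ into a single $b^k$ with $b$ absolute. Given the recursion $U_{k+1}\le C b^k U_k^{1+\beta}$, the elementary lemma states that if $U_0 \le C^{-1/\beta} b^{-1/\beta^2}$ then $U_k\to 0$; unwinding, this fixes $\delta := \big(C^{-1/\beta} b^{-1/\beta^2}\big)^{1/2}$, depending only on $n,\Lambda$, and completes the proof. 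A minor technical care is needed in step (a) to justify testing against $\eta_k^2 f_k$ when $f\in H^1$ only (truncations of $H^1$ functions are $H^1$, and $f_k$ has the right support so $\eta_k^2 f_k \in H^1_0(B_{r_k})$ is admissible), and to handle the chain rule $\nabla_x f_k = \un_{\{f>C_k\}}\nabla_x f$ which holds a.e.
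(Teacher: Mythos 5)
Your proposal is correct and follows essentially the same route as the paper's De Giorgi-version proof: the same sequences of levels and radii, the Caccioppoli inequality obtained by testing against $\eta_k^2 f_k$, the Sobolev embedding, the Chebyshev estimate on super-level sets, and the resulting superlinear recursion $U_{k+1}\le C b^k U_k^{1+2/n}$ closed by the fast-geometric-convergence lemma. The constant-tracking and the technical remarks on truncations of $H^1$ functions match the paper's treatment as well.
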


\begin{remarks}
  \begin{enumerate}
  \item This lemma is scale-invariant provided that the integrals are normalised: 
    \begin{equation*}
      \forall \, r \in (0,1], \qquad \left(\int_{B_r} f_+^2 \, \frac{dx}{|B_r|}\right)^{1/2} \le \delta \, \implies \|f_+\|_{\mathrm{L}^\infty(B_{r/2})} \le \frac{1}{2}.
    \end{equation*} 
  \item By covering, translation and dilation, this result implies that, when the Lagrangian $\mathrm{L}$ satisfies~\eqref{eq:H1}-\eqref{eq:H2}, any minimiser $u$ of~\eqref{eq:pbvar} in $H^1(\cU)$ is in $W^{1,\infty}_{\mathrm{loc}}(\cU)$.
  \item Lemma \ref{lem:dg1} implies that $f \in H^1(\cU)$ weak solution to~\eqref{eq:EL4} is $\mathrm{L}^\infty_{\mathrm{loc}}(\cU)$. As soon as $n \ge 1$, this is not implied by Sobolev's embeddings. Let us also emphasize that even in situations where the $L^\infty$ bound on $f$ can be obtained by maximum principle arguments, this lemma proves a stronger result because of the the scale-invariant estimates. 

  \item The super-exponential convergence of the De Giorgi--Moser iteration below is reminiscent of Newton's iterative method, used in the KAM Theorem~\cite{MR68687,MR163025,MR147741} and in the so-called Nash--Moser implicit function theorem~\cite{MR75639,MR199523,MR206461}.
  \end{enumerate}
\end{remarks}

\begin{proof}[Proof of Lemma \ref{lem:dg1} (De Giorgi's version)]
  The proof combines an iterative scheme, a localised energy estimate, the use of the Sobolev embedding and the Chebyshev inequality.
  \medskip
  
  \noindent
  \textbf{A. The iterative scheme.} For all $k\geq0$, we consider balls with radii
  \begin{equation*}
    r_k = \frac12 + \frac{1}{2^{k+1}} \ \text{ so that } \ r_0=1 \text{ and } r_k \downarrow \frac12
  \end{equation*}
  (see Figure~\ref{fig:iteration}), and energy levels
  \begin{equation*}
    e_k := \frac12 - 2^{-k-1} \ \text{ so that } e_0=0 \text{ and } e_k \uparrow \frac12.
  \end{equation*}

  \begin{figure}
    \includegraphics[scale=0.8]{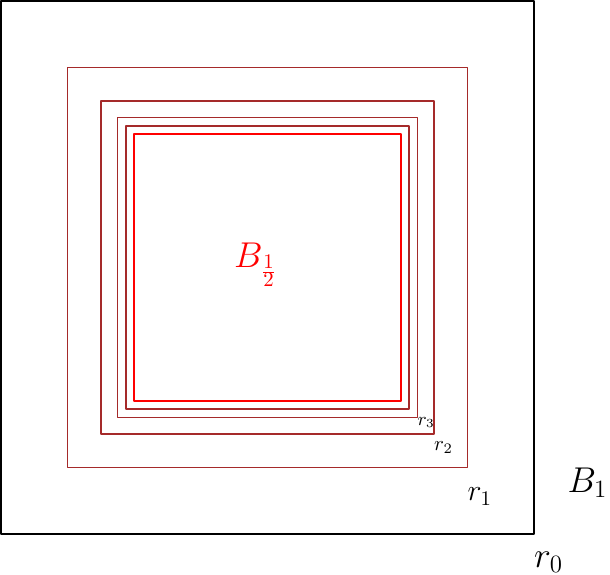}
    \caption{The sequence of balls in the De Giorgi--Moser iteration.}
    \label{fig:iteration}
  \end{figure}
  The goal is to establish estimates by induction on 
  \begin{equation*}
    \forall \,  k \geq 0, \qquad E_k := \int_{B_{r_k}} (f-e_k)_+^2 \dd x.
  \end{equation*}
  Indeed $E_0 = \| f \|_{L^2(B_1)}^2$, and if we prove that there are $C>0$ and $\beta>1$ such that 
  \begin{equation}
    \label{eq:ee1}
    \forall \,  k\geq 1, \quad E_k \le C^k \, E_{k-1}^\beta
  \end{equation}
  then we deduce by induction 
  \begin{equation*}
    E_k \le C^{k + \beta (k-1) + \beta^2 (k-2) + \cdots + \beta^{k-1}} \, E_0^{\beta^k} \le \left(C' \, E_0 \right)^{\beta^k}.
  \end{equation*}
  where we have used 
  \begin{equation*}
    k + \beta (k-1) + \cdots + \beta^{k-1} = \frac{k+1}{1-\beta} + \beta^k \, \frac{\beta}{(1-\beta)^2} \lesssim \beta^k.
  \end{equation*}
  If $E_0 \, C'<1$, which is ensured by $\|f_+\|_{\mathrm{L}^2(B_1)} \le \delta$ with $\delta< 1/C'$, then
  \begin{equation*}
    \int_{B_{\frac12}} \( f - \frac12 \)_+ ^2 \dd x = E_\infty = \lim_{k \to \infty} \, E_k =0,
  \end{equation*}
  which concludes the proof.

  The rest of the proof is devoted to establishing~\eqref{eq:ee1}. This inequality is nonlinear, due to the hidden collision of scalings in the equation which relates the $L^2$ norms of $f$ and of the gradient of $f$: this collision of scalings is revealed by the energy estimate, Sobolev's inequality and Chebyshev's inequality. 
  \medskip
  
  \noindent
  \textbf{B. Localised energy estimate.} The following inequality is a localised version of the by-now standard $L^2 \to H^1$ ellipticity estimate, which still holds for divergence-form elliptic equations with rough coefficients:
  \begin{equation}
    \label{eq:ee2}
    \forall \,  \phi \in C^1_c(B_1), \quad \int_{B_1} |\nabla_x (\phi f_+)|^2 \dd x \le \Lambda^2 \, \|\nabla_x \phi\|^2_{\mathrm{L}^\infty(B_1)} \, \int_{B_1 \cap \mathrm{supp} \,\phi} f_+^2 \dd x.
  \end{equation}
  It is also called \textbf{Caccioppoli inequality}, in memory of~\cite{MR46536} where Caccioppoli introduced such inequalities in his study of elliptic equations. The motivation of Caccioppoli was to provide an $L^2$ analogue of the Cauchy inequalities for holomorphic functions, and mean value type inequalities for harmonic functions. Holomorphic functions indeed satisfy the Cauchy-Riemann equations, an elliptic system, and the Cauchy integral formula implies a pointwise control on the first derivative of a function on a small ball by the values of the function itself on a larger ball, with a constant that involves the difference in the radii of the two balls. Similar estimates can be derived for harmonic functions thanks to the mean value property. Caccioppoli worked on Hilbert's $19$-th problem in the 1930s\footnote{See the biographical note at \href{https://mathshistory.st-andrews.ac.uk/Biographies/Caccioppoli/}{https://mathshistory.st-andrews.ac.uk/Biographies/Caccioppoli/}.}: he extended the method of Bernstein beyond the two-dimensional case, and reduced the assumption from $C^3$ to $C^2$ regularity on the minimiser to~\eqref{eq:pbvar}. 
  
  To prove it, we integrate $\nabla_x \cdot (A \nabla_x f) \ge 0$ against $\phi^2 \, f_+$ to get 
  \begin{align*}
    0 \le & \int_{B_1} \nabla_x \cdot (A \nabla_x f) \, \phi^2 \, f_+ \dd x = - \int_{B_1} (A \nabla_x f_+) \cdot \nabla_x (\phi^2 f_+) \dd x \\
    =  & - \int_{B_1} (A \nabla_x f_+ ) \cdot \nabla_x (\phi f_+) \, \phi \dd x - \int_{B_1} (A \nabla_x f_+ \cdot \nabla_x \phi) \, (f_+ \, \phi) \dd x  \\
    =  & - \int_{B_1} A \nabla_x(\phi f_+) \cdot \nabla_x (\phi f_+) \dd x \\
    & \qquad - \int_{B_1} A \nabla_x \phi \cdot \nabla_x (f_+ \phi) \, f_+ \dd x - \int_{B_1} (A \nabla_x f_+ \cdot \nabla_x \phi) (f_+ \phi) \dd x \\
    =  &- \int_{B_1} A \nabla_x (\phi f_+) \cdot \nabla_x (\phi f_+) \dd x   +  \int_{B_1} (A \nabla_x \phi \cdot \nabla_x \phi) f_+^2\dd x,
  \end{align*}
  (the integration by parts have no boundary terms since $\phi$ being zero on $\partial B_1$), which implies
  \begin{equation*}
    \int_{B_1} A \nabla_x (\phi f_+) \cdot \nabla_x (\phi f_+) \dd x \le \int_{B_1} (A \nabla_x \phi \cdot \nabla_x \phi) f_+^2\dd x.
  \end{equation*}
  We deduce~\eqref{eq:ee2} thanks to the inequality $\Lambda^{-1} \le A \le \Lambda$ almost everywhere on the matrix $A$.
  \medskip

  \noindent \textbf{C. Sobolev's embedding.} Let $(\phi_k)_k \subset C^\infty_c(B_1)$ be a family of cutoff functions such that
  \begin{equation*}
    \forall \,  k\geq1, \quad \mathbf 1_{B_{r_k}} \le \phi_k \le \mathbf 1_{B_{r_{k+1}}}, \qquad |\nabla_x \phi_k| \le c_0 2^k,\end{equation*} 
for some $c_0$ which does not depend on $k$. We bound from above
\begin{equation*}
  E_k = \int_{B_{r_k}} (f-e_k)_+^2 \dd x \le \int_{B_1} \phi_k^2 \, (f-e_k)_+^2 \dd x.
\end{equation*}
For all $k\geq 1$, the function $(f-e_k)$ also satisfies~\eqref{eq:EL4} by linearity and because constants are solutions. We then apply~\eqref{eq:ee2} with $\phi:=\phi_k$ to get
\begin{equation}
    \label{eq:ee3}
    \int_{B_1} |\nabla_x (\phi_k \, (f-e_k)_+) |^2 \dd x \le \Lambda^2 \, \|\nabla_x \phi_k\|^2_{\mathrm{L}^\infty(B_1)} \, \int_{B_1 \cap \mathrm{supp} \, \phi_k} (f-e_k)_+^2 \dd x.
\end{equation}
Let us define the following integrability exponent
\begin{equation*}
  p_* := \frac{2d}{d-2} \text{ for } d \geq 3, \ \text{ or any } \ p_* \in (2,\infty) \text{ for } d=1,2.
\end{equation*}
Then, Sobolev's inequality~\cite{MR2597943} applied to the function $\phi_k \, (f-e_k)_+$ over $ B_1$ combined with the localised energy estimate~\eqref{eq:ee3} yields
\begin{align*}
  \|\phi_k \, (f-e_k)_+ \|^2_{L^{p_*}(B_1)}
  & \le C_{\mathrm{Sob}}^2 \, \int_{B_1} |\nabla_x (\phi_k (f)e_k)_+)|^2 \dd x \\ & \le C_{\mathrm{Sob}}^2 \, \Lambda^2 \, c_0^2 \, 2^{2k} \int_{B_1 \cap \mathrm{supp}\,\phi} (f-e_k)_+^2 \dd x.
\end{align*}
On $\mathrm{supp} \, \phi_k \subset B_{r_{k-1}}$, we have $\phi_{k-1} \equiv 1$ so 
\begin{align*}
  (f-e_k)_+ \le \, (f-e_{k-1})_+ \, \phi_{k-1}
\end{align*}
and we deduce
\begin{align*}
  \|\phi_k \, (f-e_k)_+ \|^2_{L^{p_*}(B_1)}
  & \le C_{\mathrm{Sob}}^2 \, \Lambda^2 \, c_0^2 \, 2^{2k} \int_{B_1 \cap \mathrm{supp}\,\phi} (f-e_k)_+^2 \dd x \\
  & \le C_{\mathrm{Sob}}^2 \, \Lambda^2 \, c_0^2 \, 2^{2k} E_{k-1}.
\end{align*}
\smallskip

\noindent
\textbf{D. Chebyshev's inequality.} We apply H\"older's inequality with $p_*$ and its conjugate exponent. (To fix ideas, we write things for $d \ge 3$ and $p_*=2d/(d-2)$, the case of $d=1$ or $2$ is an easy variation.) 
\begin{equation}
    \label{eq:ee4}
    \forall \,  k\geq 1, \quad E_k = \int_{B_1} (\phi_k \, (f-e_k)_+)^2 \dd x \le \|\phi_k (f-e_k)_+\|^2_{L^{p_*}(B_1)} \, |\left\{ \phi_k (f-e_k)_+ > 0\right\}|^{\frac2n}.
\end{equation}
Equation \eqref{eq:ee4} is the key-step of the proof: together with Chebychev's inequality, it will allow us to exploit the nonlinearity hidden in the relation between $f$ and $\nabla_x f$ (Caccioppoli's inequality) and Sobolev's inequality.

We observe that
\begin{equation*}
  \left\{ \phi_k (f-e_k)_+ > 0\right\} \subset \left\{ \phi_{k-1} \, (f-e_{k-1})_+ \geq 2^{-k-1}\right\}
\end{equation*}
since $\phi_{k-1} \equiv 1$ on the support of $\phi_k$, and  
\begin{equation*}
  f \geq e_k = 1 - 2^{-k-1} \ \Longrightarrow \ f - e_{k-1} = f - \left(\frac{1}{2} - 2^{-k}\right) \geq 2^{-k} - 2^{-k-1} = 2^{-k-1}.
\end{equation*}
Therefore, by Chebyshev's inequality:
\begin{align*}
  |\left\{ \phi_k (f-e_k)_+ > 0\right\}|
  & \le \left|\left\{ \phi_{k-1} \, (f-e_{k-1})_+ \geq 2^{-k-1}\right\}\right| \\
  & \le 2^{2(k+1)} \int_{B_1} \left(\phi_{k-1} (f-e_{k-1})_+\right)^2 \dd x \\
  & =  2^{2(k+1)} \, E_{k-1}.
\end{align*}
The last display, combined with \eqref{eq:ee4}, gives 
\begin{equation*}E_k \le \, C_{\mathrm{Sob}}^2 \, c_0^2 \,\Lambda^2 \, 2^{2k + 2(k+1) \, \frac{2}{d} } \, E_{k-1}^{1+\frac{2}{d}},\end{equation*}
which proves \eqref{eq:ee1} with $\beta \geq 1+\frac{2}{d} > 1$.
\end{proof}

\begin{proof}[Proof of Lemma \ref{lem:dg1} (Moser's version)]
  The proof by Moser~\cite{moser1960new} of the first De Giorgi lemma is also based on an iterative scheme. The proofs of De Giorgi and Moser are conceptually close, so that they are often collectively close the ``De Giorgi--Moser iteration''. The main idea of Moser is to fully exploit the fact that the energy estimate and the Sobolev embedding both continue to yield the same inequality for weak subsolutions, not just for weak solutions, i.e. for functions $f$ so that
  \begin{equation}
    \label{eq:subeq-ell}
    \nabla_x \cdot (A \nabla_x f) \ge 0.
  \end{equation}
  
  Moser's iteration uses the same sequence of radii, but instead of considering energy levels to measure integrability, it considers a \textit{sequence of Lebesgue exponents}
  \begin{equation*}
    p_k := 2 \( \frac{p_*}{2} \)^k
  \end{equation*}
  with
  \begin{equation*}
    p_*:= 2 + \frac{4}{d-2} \text{ for } d \ge 3 \ \text{ or } \ p_* \in (2,+\infty) \text{ in } d=1,2.
  \end{equation*}
  So we have $p_0=2$ and $p_k \uparrow \infty$. Instead of $E_k$, the proof establishes estimates by induction on the following Lebesgue norms:
  \begin{equation*}
    \forall \, k \ge 0, \quad F_k := \| f_+\|_{\mathrm{L}^{p_k}(B_{r_k})}.
  \end{equation*}
  Note that $F_0 = \|f_+\|_{L^2(B_1)}$ and $F_\infty = \|f_+\|_{L^\infty(B_{1/2})}$. 

  The key remark is that if $f$ is a subsolution~\eqref{eq:subeq-ell}  and $\Phi : \R \to \R$ is $C^2$ with $\Phi', \Phi''\ge 0$, then $\Phi(f)$ is also a subsolution:
  \begin{align*}
    \nabla_x \cdot (A \nabla_x \Phi(f)) = \Phi''(f) A \nabla_x f \cdot \nabla_x f + \Phi'(f) \nabla_x \cdot ( A \nabla_x f) \ge 0.
  \end{align*}
  By approximation it implies in particular that $f_+$ is a subsolution, and then that any $f_+^{p_k/2}$ is also a subsolution, since
  \begin{equation*}
    \Phi(z) := z^{\frac{p_k}{2}} 
  \end{equation*}
  satisfies the assumptions above for $p_k/2 >1$. Applying \eqref{eq:ee2}---which is valid for subsolutions---to $f_+^{p_k/2}$, we find
  \begin{equation*}
    \forall \,  k \geq 1, \quad  \left\|\nabla_x \(\phi_k \, f_+^{\frac{p_k}2}\)\right\|_{\mathrm{L}^2(B_1)} \lesssim \Lambda^2 2^k \, F_k^{\frac{p_k}2},
  \end{equation*}
  where $(\phi_k)_k$ is the same cutoff family as in the proof of De Giorgi. Sobolev's inequality then yields
  \begin{equation*}
    \|f_+\|^{\frac{p_k}2}_{\mathrm{L}^{p_k \, \left( 1 + \frac{2}{d-2} \right)}(B_{r_{k+1}})} \lesssim \Lambda^2 2^k \,  F_k^{\frac{p_k}2}.
  \end{equation*}
  This shows
  \begin{equation*}
    F_{k+1} \le \Lambda^2 C^{\frac{2}{p_k}} \, 2^{\frac{2}{p_k}} \, F_k,
  \end{equation*}
  for an explicit $C= C(n,\Lambda)$. Since the Cauchy product $\prod_{k \ge 0} C^{\frac{2}{p_k}} 2^{\frac{2}{p_k}}$ is converging, we deduce $F_\infty \lesssim \Lambda^2 F_0$, which concludes the proof. Note that this approach makes it clear that $\delta \sim \Lambda^{-2}$ in Lemma~\ref{lem:dg1}.
\end{proof}

\subsection{The improved first De Giorgi lemma}

In fact, the first De Giorgi lemma can be improved so that the initial integrability assumption can be a positive power as low as wanted. This only requires one more iteration argument.

\begin{lemma}[Improved first De Giorgi Lemma]
  \label{lem:dg1+}
  Given $\Lambda>0$ and $\zeta_0 \in (0,2]$, there is $C>0$, depending only on $\Lambda$, $\zeta_0$ and the dimension $n$, so that the following holds:
  \smallskip
  
  Let $f$ be a weak subsolution in $H^1(B_R)$ to
  \begin{equation*}
    \nabla_x \cdot ( A \nabla_x f ) \ge 0
  \end{equation*}
  with $A$ symmetric measurable so that
  \begin{equation*}
    \Lambda^{-1} \le A(x) \le \Lambda \ \text{ for almost every } x \in B_R.
  \end{equation*}

  Then, for any $\zeta \in (\zeta_0,2]$ and $0<r<R$, we have
  \begin{equation*}
    \|f\|_{L^\infty(B_r)} \le C (R-r)^{-\frac{n}{\zeta}} \left( \frac{1}{|B_R|} \int_{B_R} f^\zeta \dd x \right)^{\frac{1}{\zeta}}.
  \end{equation*}
\end{lemma}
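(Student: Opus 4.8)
The plan is to bootstrap the basic first De Giorgi Lemma (Lemma~\ref{lem:dg1}) in two stages. First I would record the scale-invariant form already stated in the remarks following Lemma~\ref{lem:dg1}: if $f$ is a subsolution on $B_\rho$ and $\left(\fint_{B_\rho} f_+^2 \, \mathrm dx\right)^{1/2} \le \delta$ then $\|f_+\|_{L^\infty(B_{\rho/2})}\le \tfrac12$, with $\delta \sim \Lambda^{-2}$. By the linearity of the equation (and the fact that $\Lambda^{-1}\le A\le\Lambda$ is preserved under $f \mapsto cf$ and truncations), this upgrades immediately to: for any subsolution on a ball $B_\rho(x_0)$,
\begin{equation*}
  \|f_+\|_{L^\infty(B_{\rho/2}(x_0))} \le \frac{C_1}{|B_\rho|^{1/2}} \left( \int_{B_\rho(x_0)} f_+^2 \, \mathrm dx \right)^{1/2},
\end{equation*}
with $C_1 = C_1(n,\Lambda)$. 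So the $\zeta=2$ case is essentially Lemma~\ref{lem:dg1} plus scaling, and the remaining task is to replace the $L^2$ norm on the right by an $L^\zeta$ norm for small $\zeta>0$.

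The second stage is the interpolation/iteration argument that trades an $L^\infty$ bound on a slightly smaller ball against an $L^\zeta$ bound on a slightly larger one. Fix $\zeta\in(\zeta_0,2]$ and, WLOG by translation and dilation, take $R=1$; I want to control $\|f_+\|_{L^\infty(B_r)}$. Introduce radii $\rho_k = r + (1-r)(1-2^{-k})$ interpolating from $\rho_0=r$ up to $\rho_\infty=1$, and set $M_k := \|f_+\|_{L^\infty(B_{\rho_k})}$. Apply the $\zeta=2$ estimate above on a ball of radius comparable to $\rho_{k+1}-\rho_k \sim (1-r)2^{-k}$ centred at an arbitrary point of $B_{\rho_k}$; covering $B_{\rho_k}$ by such balls contained in $B_{\rho_{k+1}}$ gives
\begin{equation*}
  M_k \le \frac{C_1}{\big((1-r)2^{-k}\big)^{n/2}} \left( \int_{B_{\rho_{k+1}}} f_+^2 \, \mathrm dx \right)^{1/2}.
\end{equation*}
Now split $f_+^2 = f_+^\zeta \cdot f_+^{2-\zeta} \le f_+^\zeta \, M_{k+1}^{2-\zeta}$ under the integral, so that, writing $I := \int_{B_1} f_+^\zeta \, \mathrm dx$,
\begin{equation*}
  M_k \le C_1 \, (1-r)^{-n/2} \, 2^{kn/2} \, I^{1/2} \, M_{k+1}^{(2-\zeta)/2}.
\end{equation*}
The exponent $\theta := (2-\zeta)/2 \in [0,1)$ is strictly less than $1$ precisely because $\zeta>\zeta_0>0$, and in fact $\theta \le 1-\zeta_0/2$ uniformly; this is the structural gain that makes the iteration close. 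Taking logarithms, $a_k := \log M_k$ satisfies $a_k \le A + kB + \theta\, a_{k+1}$ with $A,B$ depending on $C_1,(1-r),I,n,\zeta$. Iterating $N$ times and using $\sum_{j\ge0}\theta^j = (1-\theta)^{-1}\le 2/\zeta_0$ and $\sum_{j\ge0} j\theta^j < \infty$, one gets $a_0 \le \frac{1}{1-\theta}(A + \text{const}\cdot B) + \theta^N a_N$. Provided $f\in H^1(B_1)$ so that $M_N=\|f_+\|_{L^\infty(B_{\rho_N})}$ is finite (indeed bounded uniformly in $N$, e.g.\ by $\|f_+\|_{L^\infty(B_{(1+r)/2})}$ which is finite by the already-established local $L^\infty$ bound), the term $\theta^N a_N \to 0$. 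Unwinding the constants and reintroducing the normalisation $\fint_{B_1} = |B_1|^{-1}\int_{B_1}$ yields
\begin{equation*}
  \|f_+\|_{L^\infty(B_r)} \le C\,(1-r)^{-n/\zeta}\left( \frac{1}{|B_1|}\int_{B_1} f_+^\zeta \, \mathrm dx\right)^{1/\zeta},
\end{equation*}
and rescaling from $R=1$ to general $R$ turns $(1-r)$ into $(R-r)$ and $|B_1|$ into $|B_R|$.

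The main obstacle is bookkeeping rather than conceptual: one must verify that the various accumulated geometric constants ($2^{kn/2}$ factors, the $\sum j\theta^j$ series, the $(1-\theta)^{-1}$ blow-up as $\zeta\downarrow\zeta_0$) combine into a single constant of the advertised form $C(n,\Lambda,\zeta_0)\,(R-r)^{-n/\zeta}$, uniformly for $\zeta\in(\zeta_0,2]$ — here it is essential that $\theta$ is bounded away from $1$ by $\zeta_0$, otherwise the constant would degenerate. A secondary point to handle carefully is the a priori finiteness of $M_N$: this is legitimate because Lemma~\ref{lem:dg1} already gives $f_+\in L^\infty_{\mathrm{loc}}(B_R)$, so $\theta^N a_N$ genuinely vanishes in the limit and the iteration is not merely formal. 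One should also note the estimate is stated for $f$ itself (not $f_+$) on the right, which is harmless since $f\le f_+$ and the left-hand side involves $\|f\|_{L^\infty}$ — if $f$ is a subsolution, $f\le f_+$ pointwise so $\|f\|_{L^\infty(B_r)}$ is controlled by $\|f_+\|_{L^\infty(B_r)}$, and $\int f^\zeta \le \int f_+^\zeta$ when $f$ is assumed nonnegative, or one simply restates with $f_+$ throughout.
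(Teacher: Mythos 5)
Your argument is correct in substance and follows the same route as the paper: the rescaled first De Giorgi lemma plus a covering gives the $L^2\to L^\infty$ bound between two concentric balls, the splitting $f_+^2\le f_+^{\zeta}\,M^{2-\zeta}$ gives the interpolation, and an iteration over a sequence of radii increasing towards $R$ closes the argument. The only real difference is mechanical: you iterate the multiplicative recursion $M_k\le C_k I^{1/2} M_{k+1}^{1-\zeta/2}$ directly by taking logarithms, whereas the paper first converts it into the additive recursion $\mathcal N(r)\le\tfrac12\mathcal N(r')+C(r'-r)^{-n/\zeta}I^{1/\zeta}$ via Young's inequality and then sums a geometric series with ratio $\alpha=(3/2)^{\zeta/n}$. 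The two devices are interchangeable, need the same structural input ($\zeta\ge\zeta_0>0$ keeps the exponent $\theta=(2-\zeta)/2$ away from $1$, resp.\ keeps the series summable), and produce a constant of the same form.

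One claim needs fixing. Your justification that $\theta^N a_N\to 0$ is backwards: since $\rho_N\uparrow 1$, the quantity $M_N=\|f_+\|_{L^\infty(B_{\rho_N})}$ dominates, rather than is dominated by, $\|f_+\|_{L^\infty(B_{(1+r)/2})}$, and Lemma~\ref{lem:dg1} only gives local boundedness, with no rate for the possible blow-up of $\|f_+\|_{L^\infty(B_\rho)}$ as $\rho\uparrow R$. The clean repair is either to let the radii increase only up to $(R+r)/2$, where $f_+$ is genuinely bounded by the already-established interior estimate (this merely replaces $R-r$ by $(R-r)/2$ in the constants), or to prove the estimate between $B_r$ and $B_{R'}$ for $R'<R$ and let $R'\uparrow R$ at the end. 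To be fair, the paper's own proof glosses over exactly the same point when it discards the term $\mathcal N(r_{\ell+1})/2^{\ell}$ as $\ell\to\infty$, so this is a shared imprecision rather than a defect specific to your write-up.
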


\begin{proof}
  By using the scaling of the equation in the first De Giorgi lemma, one can prove the following inequality between two general balls $B_r$ and $B_R$:
  \begin{equation*}
     \|f\|_{L^\infty(B_r)} \lesssim (R-r)^{-\frac{n}{2}} \left( \frac{1}{|B_R|} \int_{B_R} f^2 \dd x \right)^{\frac{1}{2}}.
  \end{equation*}
  Together, the  Young inequality and the inequality
  \begin{equation*}
    \left( \frac{1}{|B_R|} \int_{B_R} f^2 \dd x \right)^{\frac{1}{2}} \lesssim \left( \frac{1}{|B_R|} \int_{B_R} f^\zeta \dd x \right)^{\frac12} \|f\|^{1-\frac{\zeta}{2}}_{L^\infty(Q_R)},
  \end{equation*}
  imply that the function $\mathcal N(r):=\|f\|_{L^\infty(Q_r)}$ verifies the following inequality for $0<r<r'<R$ and some $C>0$:
  \begin{equation*}
    \mathcal N(r) \le \frac{1}{2} \mathcal N(r') + C  (r'-r)^{-\frac{n}{\zeta}} \left( \frac{1}{|B_R|} \int_{B_R} f^\zeta \dd x \right)^{\frac1\zeta}.
  \end{equation*}
  
  We define the increasing sequence of radii:
  \begin{equation*}
    r_{k+1} := r_k + \var \(1-\alpha^{-1}\) \alpha^{-k}
  \end{equation*}
  for $k \ge 0$, some $\alpha>1$, $\var := (R-r)$ and $r_0:=r$. The sequence converges to $r_\infty=R$. Note that the scaling ratio $\alpha>1$ between the successive balls is not $2$ as in the proof of Lemma~\ref{lem:dg1}; it will be chosen later so as to make the geometric growth slow enough.

  Then, the previous display implies
  \begin{equation*}
    \mathcal N(r_k) \le \frac{1}{2} \mathcal N(r_{k+1}) + C \var^{-\frac{n}{\zeta}} (\alpha-1)^{-\frac{n}{\zeta}} \alpha^{\frac{kn}{\zeta}} \left( \frac{1}{|B_R|} \int_{B_R} f^\zeta \dd x \right)^{\frac1\zeta},
  \end{equation*} 
  so that for any $\ell \ge 1$,
  \begin{equation*}
    \mathcal N(r) \le \frac{\mathcal N(r_{\ell+1})}{2^\ell} + C  \var^{-\frac{n}{\zeta}} (\alpha-1)^{-\frac{n}{\zeta}} \left( \sum_{k=1}^\ell 2^{-k} \alpha^{\frac{kn}{\zeta}} \right) \left( \frac{1}{|B_R|} \int_{B_R} f^\zeta \dd x \right)^{\frac1\zeta}.
  \end{equation*}

  The series in the right-hand side converges for the choice $\alpha := (3/2)^{\zeta/n}$:
  \begin{equation*}
    \sum_{k=1}^\ell 2^{-k} \alpha^{\frac{kn}{\zeta}} \le \frac{2}{2-\alpha^{n/\zeta}} =4, 
  \end{equation*}
  and we finally deduce by letting $\ell \to \infty$:
  \begin{equation*}
    \mathcal N(r) \lesssim \var^{-\frac{n}{\zeta}} (\alpha-1)^{-\frac{n}{\zeta}} \left( \frac{1}{|B_R|} \int_{B_R} f^\zeta \dd x \right)^{\frac1\zeta},
  \end{equation*}
  from which the conclusion follows (using that $\zeta \ge \zeta_0>0$).
\end{proof}

\begin{remark}
  On can track the dependency in $\zeta$ of the constant in this proof: at first order the constant diverges as $\zeta \to 0$ like $\zeta \exp[ - n (\ln \zeta)/\zeta]$.
\end{remark}

\subsection{The second De Giorgi Lemma}

This second lemma concerns regularity, not just integrability. Since the coefficients of equation \eqref{eq:EL4} have no regularity, it is not possible to access the regularity of $f$ by differentiating the equation. The approach of De Giorgi is to study the \textit{pointwise oscillation} of the solution. The oscillation compares locally the supremum of $f$ with its infimum. De Giorgi proves that such oscillation decreases geometrically with the scaling when one zooms in on a point.

The overall structure of the proof of the second De Giorgi lemma is as follows:
\begin{equation}
  \label{eq:structure-lemmas}
  \boxed{
  \begin{array}{l}
     \text{Intermediate value Lemma } \Longrightarrow \text{ Decrease of supremum Lemma } \\[2mm]
    \Longrightarrow \text{ Reduction of oscillation Lemma } \Longrightarrow \text{ Local Hölder regularity}.
  \end{array}
  }
\end{equation}

\begin{lemma}[Second De Giorgi Lemma: reduction of oscillation]\label{lem:dg2}
  Given $\Lambda>0$, there is $\nu \in (0,1)$, depending only on $\Lambda$ and the dimension $n$, so that the following holds:
  \smallskip
  
  Let $f$ be a weak solution in $H^1(B_3)$ to
  \begin{equation*}
    \nabla_x \cdot ( A \nabla_x f ) =0
  \end{equation*}
  where $A$ is symmetric measurable and so that
  \begin{equation*}
    \Lambda^{-1} \le A(x) \le \Lambda \ \text{ for almost every } x \in B_3.
  \end{equation*}
  Then, we have 
  \begin{equation}
    \label{eq:dg2}
    \mathrm{osc}_{B_{\frac12}} f \le \nu \, \mathrm{osc}_{B_2} f,
  \end{equation}
  where we denote $\mathrm{osc}_B f := \sup_B f - \inf_B f$.
\end{lemma}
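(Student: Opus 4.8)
The plan is to follow the classical three-step structure indicated in~\eqref{eq:structure-lemmas}, building the reduction of oscillation from a decrease-of-supremum statement, which in turn rests on an intermediate-value (or ``isoperimetric'' / De Giorgi) lemma. First I would normalise: by linearity and adding constants, it suffices to treat a solution $f$ on $B_3$ with $\mathrm{osc}_{B_2} f \le 2$, say with $-1 \le f \le 1$ on $B_2$ after rescaling the range; the goal becomes showing $\mathrm{osc}_{B_{1/2}} f \le 2\nu$ for some universal $\nu \in (0,1)$. The key dichotomy is that at least one of the sets $\{f \le 0\} \cap B_1$ or $\{f \ge 0\} \cap B_1$ occupies at least half the measure of $B_1$; without loss of generality say $|\{f \le 0\} \cap B_1| \ge \tfrac12 |B_1|$, so that $f_+ \le 1$ is ``often small'' in $B_1$.

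The heart of the argument is the \textbf{intermediate value lemma}: if a subsolution $f \le 1$ on $B_1$ is such that $\{f \le 0\}$ has a fixed positive proportion of the mass of $B_1$, then for every $\theta \in (0,1)$ one can find a threshold level between $0$ and $1$ at which the ``middle'' level set $\{\theta \le f \le 1-\theta'\}$ (appropriately chosen) has small measure — quantitatively, one shows $|\{f \ge 1-\lambda\} \cap B_{1/2}|$ can be made as small as one likes by taking finitely many dyadic sublevels and using the Caccioppoli inequality~\eqref{eq:ee2} together with a Sobolev/Poincaré estimate on the truncations $(f - e_k)_+ \wedge (\text{const})$. The mechanism is that between consecutive truncation levels the function has controlled $H^1$ energy, and a De Giorgi isoperimetric inequality (relating the measures of $\{f \le a\}$, $\{f \ge b\}$ and $\{a < f < b\}$ via the $L^1$ norm of $\nabla f$ on the intermediate band) forces the intermediate bands to shrink; summing the energies over $k$ bands, one band must have measure $\lesssim 1/\sqrt{k}$, which can be driven below the threshold $\delta$ (or rather $\delta^2$) of the \textbf{first De Giorgi Lemma}~\ref{lem:dg1}. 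Feeding that small measure into (a rescaled, shifted version of) Lemma~\ref{lem:dg1}, or its improved form Lemma~\ref{lem:dg1+}, upgrades ``$f$ close to $1$ only on a set of tiny measure in $B_{1/2}$'' to the pointwise bound $f \le 1 - \tau$ on $B_{1/4}$ (say) for a universal $\tau > 0$ — this is the \textbf{decrease of supremum lemma}.

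Having the decrease of supremum, the \textbf{reduction of oscillation} is immediate: in the case $|\{f \le 0\}\cap B_1| \ge \tfrac12|B_1|$ we get $\sup_{B_{1/4}} f \le 1 - \tau$ while $\inf_{B_{1/4}} f \ge -1$, hence $\mathrm{osc}_{B_{1/4}} f \le 2 - \tau = (1 - \tau/2)\,\mathrm{osc}_{B_2} f$ after undoing the normalisation; in the other case apply the same to $-f$ (which solves the same type of equation since $A$ is the same). A final covering/inclusion remark handles the discrepancy between the radius $1/4$ produced by the iteration and the radius $1/2$ stated in the lemma — one simply works on $B_{3}\supset B_2 \supset B_1 \supset B_{1/2}$ with slightly adjusted intermediate radii, or rescales, so that the ball on which the supremum decreases contains $B_{1/2}$; this is harmless because all constants are universal. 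Thus $\nu := 1 - \tau/2 \in (0,1)$ depends only on $\Lambda$ and $n$, as required.

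The main obstacle I expect is the intermediate value lemma — specifically, proving the De Giorgi isoperimetric inequality controlling $|\{a<f<b\}\cap B_{1/2}|$ from below in terms of $\|\nabla f\|_{L^1}$ on that band and the measures of the two ``tails'', and then carefully choosing the band levels and counting so that the resulting small measure is genuinely below the absolute threshold $\delta$ from Lemma~\ref{lem:dg1} \emph{independently of how close we started}. Managing the interplay of the two radii (where the half-mass condition holds, versus where the energy estimate has room, versus where we ultimately want the pointwise bound) and keeping every constant dependent only on $\Lambda$ and $n$ is the delicate bookkeeping; everything else is a routine application of Caccioppoli~\eqref{eq:ee2}, Sobolev, Chebyshev, and Lemma~\ref{lem:dg1}.
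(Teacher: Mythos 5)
Your architecture is exactly the paper's: normalise so that the solution $g$ is valued in $[-1,1]$ with $\mathrm{osc}_{B_2}\,g=2$, split according to which of $\{g\le 0\}$ or $\{g\ge 0\}$ fills half of $B_1$, run the chain intermediate value lemma $\Rightarrow$ decrease of supremum $\Rightarrow$ reduction of oscillation, apply the whole thing to $-g$ in the second case, and set $\nu=1-\tau/2$. Your finite iteration is a mild variant of the paper's: you pigeonhole on the disjoint bands $\{e_k<f<e_{k+1}\}$ to find one of measure $\le |B_1|/K$, whereas the paper bounds the largest $k_0$ for which all tails stay above threshold; the two are equivalent. One imprecision in your bookkeeping: it is not the \emph{band} whose measure ends up $\lesssim K^{-1/2}$ — by disjointness one band has measure $\le |B_1|/K$, and it is the isoperimetric inequality (fed with the lower-tail bound $|\{g_k\le 0\}|\ge\mu|B_1|$ and the uniform Caccioppoli bound on $\|\nabla (g_k)_+\|_{L^2(B_1)}$ coming from $(g_k)_+\le 1$ on $B_2$) that converts this into $|\{f\ge e_{k+1}\}\cap B_1|\lesssim K^{-1/2}$; that upper-tail measure is what must fall below $\delta^2$ before Lemma~\ref{lem:dg1} can be invoked, and it must be invoked on the rescaled unknown $g_{k+1}=2^{k+1}[f-(1-2^{-k-1})]$, not on $f$ itself, which is why the resulting $\lambda$ is only $2^{-K-2}$.

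The genuine gap is that the De Giorgi isoperimetric inequality (Lemma~\ref{lemma5}) is only named, not proved, and it is the one non-routine ingredient of the whole argument: it cannot follow from Caccioppoli, Sobolev and Chebyshev alone, since the analogous statement is \emph{false} in $H^s$ for $s<1/2$ (indicator functions then being admissible), and it is precisely the assertion that an $H^1$ function cannot pass from the value $0$ to the value $1/2$, each attained on a set of positive measure, without paying a definite measure of intermediate values. The paper supplies three proofs, none using the PDE, only $g_+\in H^1(B_1)$: a compactness argument showing a contradiction sequence would converge to a non-constant indicator function in $H^1(B_1)$; a constructive argument joining each point of $\{g\le 0\}$ to each point of $\{g\ge 1/2\}$ by a straight segment, integrating $|\nabla\tilde g|$ along it and controlling the resulting Riesz potential by a rearrangement inequality; and a softer argument from the $L^1$ Poincaré inequality~\eqref{eq:poincare-l1} applied to the truncation $\tilde g$, using that $\nabla\tilde g=0$ a.e.\ on $\{\tilde g=0\}\cup\{\tilde g=1/2\}$. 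Any one of these completes your plan; without one, it is a correct skeleton rather than a proof.
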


Note that this lemma requires $f$ to be \textbf{both} a subsolution and a supersolution, i.e. to be a solution. Intuitively, being a subsolution and taking values far below the supremum imposes decrease of the supremum when zooming in on a point (see Lemma~\ref{lemma4}), while being a supersolution and taking values far above the infimum imposes increase of the infimum when zooming in on a point (which is Lemma~\ref{lemma4} applied to the opposite of the supersolution). Both are combined to obtain the reduction of oscillation.

Let us show that Lemma~\ref{lem:dg2} implies the H\"older regularity, which thus concludes the proof of Theorem~\ref{thm:dgnm2}.
\begin{corollary}
  Given $\Lambda>0$, there is $\alpha \in (0,1)$, depending only on $\Lambda$ and the dimension $n$, so that the following holds:
  \smallskip
  
  Let $f$ be a weak solution in $H^1(\cU)$ with $\cU \subset \R^n$ open set, to
  \begin{equation*}
    \nabla_x \cdot ( A \nabla_x f ) =0
  \end{equation*}
  where $A$ is symmetry measurable so that
  \begin{equation*}
    \Lambda^{-1} \le A(x) \le \Lambda \ \text{ for almost every } x \in \cU.
  \end{equation*}

  Then, we have $f \in C^\alpha_{\mathrm{loc}}(\cU)$.
  \smallskip
  
  Moreover, given any $\delta>0$,  there is $C>0$, depending on $\Lambda$, $\delta$ and the dimension $n$, so that for any $\tilde{\cU} \subset \subset \cU$ with $\text{dist}(\tilde \cU,\p \cU)\ge \delta$, we have 
  \begin{equation}
    \label{eq:gain-reg-loc}
    \|f\|_{C^\alpha(\tilde{\cU})} \le C \|f\|_{L^2(\cU)}.
  \end{equation}
\end{corollary}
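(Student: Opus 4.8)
The plan is to deduce the corollary from the reduction-of-oscillation Lemma~\ref{lem:dg2} by a standard iteration on dyadic (or rather $4^{-k}$-ratio) balls, combined with a rescaling that restores the hypotheses of the lemma at every scale. First I would fix $\tilde{\cU} \subset\subset \cU$ with $\mathrm{dist}(\tilde{\cU},\partial\cU) \ge \delta$ and a point $x_0 \in \tilde{\cU}$. For $\rho \le \delta/3$, the rescaled function $g(y) := f(x_0 + \rho y)$ solves $\nabla_y \cdot (\tilde A \nabla_y g) = 0$ on $B_3$ with $\tilde A(y) = A(x_0 + \rho y)$ still symmetric, measurable and satisfying $\Lambda^{-1} \le \tilde A \le \Lambda$ (the divergence-form structure and the ellipticity bounds are scale-invariant). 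Applying Lemma~\ref{lem:dg2} to $g$ gives $\mathrm{osc}_{B_{\rho/2}(x_0)} f = \mathrm{osc}_{B_{1/2}} g \le \nu \, \mathrm{osc}_{B_2} g = \nu \, \mathrm{osc}_{B_{2\rho}(x_0)} f$, with $\nu \in (0,1)$ depending only on $\Lambda$ and $n$.

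Iterating this with $\rho = \rho_k := 4^{-k} \rho_0$ for a suitable starting radius $\rho_0 \sim \delta$ (so that $B_{2\rho_0}(x_0) \subset \cU$), I would get $\mathrm{osc}_{B_{\rho_{k+1}}(x_0)} f \le \nu \, \mathrm{osc}_{B_{4\rho_{k+1}}(x_0)} f$, hence by induction $\mathrm{osc}_{B_{\rho_k}(x_0)} f \le \nu^k \, \mathrm{osc}_{B_{\rho_0}(x_0)} f$. Choosing $\alpha \in (0,1)$ so that $\nu = 4^{-\alpha}$, i.e. $\alpha := \log_4(1/\nu) = -\ln\nu / \ln 4$, this reads $\mathrm{osc}_{B_{\rho_k}(x_0)} f \le (\rho_k/\rho_0)^\alpha \, \mathrm{osc}_{B_{\rho_0}(x_0)} f$. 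A routine interpolation between consecutive scales then upgrades this dyadic decay to the continuous statement: for any $0 < r \le \rho_0$, picking $k$ with $\rho_{k+1} \le r \le \rho_k$, one has $\mathrm{osc}_{B_r(x_0)} f \le \mathrm{osc}_{B_{\rho_k}(x_0)} f \lesssim (r/\rho_0)^\alpha \, \mathrm{osc}_{B_{\rho_0}(x_0)} f$. Finally, $\mathrm{osc}_{B_{\rho_0}(x_0)} f \le 2\|f\|_{L^\infty(B_{\rho_0}(x_0))}$, and the improved first De Giorgi Lemma~\ref{lem:dg1+} (or already Lemma~\ref{lem:dg1} by scaling) bounds $\|f\|_{L^\infty(B_{\rho_0}(x_0))} \lesssim \rho_0^{-n/2}\|f\|_{L^2(B_{2\rho_0}(x_0))} \le C(\delta,n)\,\|f\|_{L^2(\cU)}$, since $f$ is a solution hence both a sub- and supersolution, so $\pm f$ are both controlled.

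From the uniform modulus of continuity $|f(x) - f(y)| \le \mathrm{osc}_{B_{|x-y|}(x)} f \lesssim |x-y|^\alpha \,\|f\|_{L^2(\cU)}$ valid for all $x,y \in \tilde{\cU}$ with $|x-y|$ small (and trivially for $|x-y|$ not small, absorbing into the $L^\infty$ bound), I would read off $[f]_{C^\alpha(\tilde{\cU})} \lesssim \|f\|_{L^2(\cU)}$; combined with the $L^\infty$ bound this gives \eqref{eq:gain-reg-loc}, and letting $\tilde{\cU}$ exhaust $\cU$ yields $f \in C^\alpha_{\mathrm{loc}}(\cU)$.

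The only genuinely delicate point is purely bookkeeping: making sure the constant tracks correctly through the rescaling and that $\alpha$ and $C$ depend only on $\Lambda$, $n$ (and $\delta$ for $C$). In particular one must check that $\mathrm{osc}$ behaves well under the truncation to $f_+$ used in Lemma~\ref{lem:dg1+} — this is why one applies the $L^\infty$ estimate to both $f$ and $-f$ — and that the starting radius $\rho_0$ can be chosen proportional to $\delta$ uniformly in $x_0 \in \tilde{\cU}$. None of these steps is conceptually hard once Lemma~\ref{lem:dg2} is in hand; the real content of the theorem is entirely in the second De Giorgi lemma, and this corollary is the soft "iteration of oscillation decay $\Rightarrow$ Hölder" argument that is the classical endgame of the De Giorgi scheme.
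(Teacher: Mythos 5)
Your proposal is correct and follows essentially the same route as the paper: rescale to restore the hypotheses of Lemma~\ref{lem:dg2} at each scale, iterate the oscillation reduction with ratio $4$ to get geometric decay $\nu^k$, convert to a Hölder exponent via $4^\alpha \nu = 1$, and control the starting oscillation by the $L^\infty$ bound from the first De Giorgi lemma, itself controlled by the $L^2$ norm. The only cosmetic difference is that the paper phrases the iteration as a recursive sequence of rescaled functions $g_k$ rather than a sequence of shrinking radii, which is an equivalent bookkeeping choice.
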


\begin{proof}
  We already have that $f \in \mathrm{L}^\infty_{\mathrm{loc}}(\cU)$ by Lemma~\ref{lem:dg1} with the estimate
  \begin{equation}
    \label{eq:gain-localised}
    \forall \, \hat{\cU} \subset \subset \cU, \quad \|f\|_{L^\infty(\hat{\cU})} \le C \|f\|_{L^2(\cU)}
  \end{equation}
  for a constant $C>0$ depending on $\Lambda$, $\text{dist}(\hat \cU,\p \cU)$ and the dimension $n$.
  
  Let $\mathcal{\tilde{U}} \subset\subset \mathcal{\hat U} \subset \subset \cU$ so that
  \begin{equation}
    \label{eq:lien-dist}
    d_0 := \text{dist}(\mathcal{\tilde{U}},\partial\cU) \le \frac12 \text{dist}(\mathcal{\tilde{U}},\partial\mathcal{\hat U}).
  \end{equation}
  Given $x_0 \in \mathcal{\tilde{U}},$ we define the sequence of centred rescaled functions
  \begin{align*}
    & \forall \,  k \geq 1, \ \forall \,  y \in B_1, \quad \tilde g_k(y) := g_{k-1}\(\frac{y}{4}\) \\
    & \forall \, y \in B_1, \quad g_0(y) := f\(x_0 + \frac{d_0y}{4}\).
  \end{align*}

  Note that each function $g_k$ is a rescaling of $f$ and  solves
  \begin{equation*}
    \nabla_x \cdot [A_k \nabla_x g_k] = 0,
  \end{equation*}
  in $B_2$, with the diffusion matrix
  \begin{equation*}
    A_k(y) := A\(x_0 +  d_0 2^{-2k} y\).
  \end{equation*}
  This new matrix is still measurable and satisfies
  \begin{equation*}
    \Lambda^{-1} \le A_k(x) \le \Lambda \ \text{ for almost every } x \in B_2.
  \end{equation*}

  Therefore, we can apply Lemma~\ref{lem:dg2} to each $g_k$ and deduce
  \begin{equation*}
    \forall \,  k \geq 1, \quad \mathrm{osc}_{B_{\frac12}} g_k \, \le \, \nu \, \mathrm{osc}_{B_2} g_k.
  \end{equation*}
  Combined with the observation
  \begin{equation*}
    \forall \,  k \geq 1, \quad \mathrm{osc}_{B_2} \, g_{k+1} = \mathrm{osc}_{B_{\frac12}} g_k,
  \end{equation*}
  we obtain by induction
  \begin{equation*}
    \mathrm{osc}_{B_{4^{-k}}} g_1 \le \nu^{k-1} \mathrm{osc}_{B_2} g_1 \le 2 \, \nu^{k-1} \, \|f\|_{L^\infty(\hat \cU)}.
  \end{equation*}

  Then, given $x \in B(x_0,1/4)$ we pick up $k \ge 0$ so that
  \begin{equation*}
    |x-x_0| \in \left[4^{-k-2},4^{-k-1}\right]
  \end{equation*}
  and write 
  \begin{equation*}
    |f(x) - f(x_0)|\le 2 \, \nu^{k-1} \, \|f\|_{L^\infty(\hat \cU)},
  \end{equation*}
  so that (using $|x-x_0| \ge 4^{-k-2}$)
  \begin{equation*}
    |f(x) - f(x_0)| \lesssim \(4^\alpha \nu\)^k \, |x-x_0|^\alpha \, \|f\|_{L^\infty(\hat \cU)}.
  \end{equation*}
  We then choose
  \begin{equation*}
    \alpha := - \frac{\ln \nu}{2\, \ln 2}
  \end{equation*}
  so that  $4^\alpha \nu =1$, and deduce $f$ is $C^\alpha$ at $x_0$. The $\alpha$ does \emph{not} depend on $x_0$, but only on the constant $\nu$ of Lemma~\ref{lem:dg2}.

  We have therefore proved
  \begin{equation*}
    \|f\|_{C^\alpha(\tilde{\cU})} \le C \|f\|_{L^\infty(\hat \cU)}
  \end{equation*}
  with a constant depending only on $\nu$ and $d_0$. Combined with~\eqref{eq:gain-localised} and~\eqref{eq:lien-dist}, it implies~\eqref{eq:gain-reg-loc} and concludes the proof.
\end{proof}

We shall show that Lemma~\ref{lem:dg2} is implied by the following lemma.
\begin{lemma}[Decrease of supremum---measure-to-pointwise upper bound]\label{lemma4}
  Given $\Lambda>0$ and $\mu>0$, there is $\lambda \in (0,1)$, depending only on $\Lambda$, $\mu$ and the dimension $n$ such that the following holds:
  \smallskip
  
  Let $f \le 1$ be a weak subsolution in $H^1(B_2)$ to
  \begin{equation*}
    \nabla_x \cdot [A \nabla_x f ] \geq 0,
  \end{equation*}
  where $A$ is symmetric  measurable so that
  \begin{equation*}
    \Lambda^{-1} \le A(x) \le \Lambda \ \text{ for almost every } x \in \cU.
  \end{equation*}
  We assume that $f$ satisfies the following condition in measure:
  \begin{equation*}
    |B_1 \cap \{f \le 0\}| \ge \mu |B_1|.
  \end{equation*}

  Then, we have 
  \begin{equation*}
    \sup_{B_{\frac12}} \, f \le 1-\lambda.
  \end{equation*}
\end{lemma}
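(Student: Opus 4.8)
The plan is to reduce Lemma~\ref{lemma4} to the first De Giorgi lemma (Lemma~\ref{lem:dg1}). Concretely, if we can show that the superlevel set $\{f \ge 1 - \eta\} \cap B_1$ has arbitrarily small Lebesgue measure provided $\eta$ is chosen to be a small enough constant depending only on $\Lambda,\mu,n$, then a suitable amplitude rescaling of $f$ satisfies the $L^2$-smallness hypothesis of Lemma~\ref{lem:dg1} on $B_1$, and the pointwise bound on $B_{1/2}$ follows. The one genuinely new ingredient is a \emph{De Giorgi isoperimetric (intermediate value) inequality}, which I would establish first: for $w \in H^1(B_1)$ and reals $a < b$,
\begin{equation*}
  (b-a)\,\big|\{w \ge b\} \cap B_1\big| \;\le\; \frac{C_n}{\big|\{w \le a\} \cap B_1\big|}\int_{\{a < w < b\}\cap B_1} |\nabla_x w| \dd x .
\end{equation*}
This is proved by the ``transfer from measure to pointwise'' mechanism: setting $v := \min(b,\max(a,w)) - a$, so that $v \equiv 0$ on $\{w \le a\}$, $v \equiv b-a$ on $\{w \ge b\}$ and $\nabla_x v = \nabla_x w\,\mathbf 1_{\{a < w < b\}}$, one integrates $|\nabla_x v|$ along the segment joining a point where $v = 0$ to a point where $v = b - a$, then integrates the resulting inequality over both level sets and bounds a Riesz potential $\int_{B_1}|x-z|^{1-n}\dd z \lesssim_n 1$.

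With this at hand, I would run a dyadic-in-level iteration. Set $c_k := 1 - 2^{-k}$ (so $c_0 = 0$ and $c_k \uparrow 1$), $g_k := (f - c_k)_+$, and $\mu_k := \big|\{f \ge c_k\} \cap B_1\big|$, a non-increasing sequence. Since $z \mapsto (z - c_k)_+$ is convex and nondecreasing, $g_k$ is again a weak subsolution (approximate by smooth convex nondecreasing functions and use the computation in the Moser proof of Lemma~\ref{lem:dg1}), and $g_k \le 2^{-k}$ because $f \le 1$. Applying the Caccioppoli inequality~\eqref{eq:ee2} to $g_k$ with a cutoff equal to $1$ on $B_1$ and supported in $B_{3/2} \subset B_2$ gives
\begin{equation*}
  \int_{B_1} |\nabla_x g_k|^2 \dd x \;\lesssim_n\; \Lambda^2 \int_{B_{3/2}} g_k^2 \dd x \;\lesssim_n\; \Lambda^2\, 2^{-2k}.
\end{equation*}
Feeding $w = f$, $a = c_k$, $b = c_{k+1}$ (so $b - a = 2^{-k-1}$) into the isoperimetric inequality, using $\big|\{f \le c_k\} \cap B_1\big| \ge \big|\{f \le 0\} \cap B_1\big| \ge \mu|B_1|$ (this is where the measure hypothesis enters, uniformly in $k$), then Cauchy--Schwarz on the annular set $\{c_k < f < c_{k+1}\} \cap B_1$, whose measure is at most $\mu_k - \mu_{k+1}$ and on which $\nabla_x f = \nabla_x g_k$, together with the Caccioppoli bound, I obtain the recursion
\begin{equation*}
  \mu_{k+1}^2 \;\lesssim_n\; \frac{\Lambda^2}{\mu^2}\,(\mu_k - \mu_{k+1}).
\end{equation*}

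Since $(\mu_k)$ is non-increasing, summing this inequality over $k = 0,\dots,m-1$ telescopes the right-hand side and yields $m\,\mu_m^2 \lesssim_n \Lambda^2\mu^{-2}|B_1|$, hence $\mu_m \lesssim_n \Lambda\,\mu^{-1}\,m^{-1/2} \to 0$. Let $\delta = \delta(\Lambda,n) > 0$ be the constant of Lemma~\ref{lem:dg1}, and fix $m = m(\Lambda,\mu,n)$ large enough that $4\mu_m \le \delta^2$. The function $h := 2^{m+1}(f - c_m)$ is a weak subsolution on $B_1$ with $h_+ \le 2^{m+1}\cdot 2^{-m} = 2$ and $\|h_+\|_{L^2(B_1)}^2 = 2^{2m+2}\int_{B_1} g_m^2 \dd x \le 4\mu_m \le \delta^2$; Lemma~\ref{lem:dg1} then gives $\|h_+\|_{L^\infty(B_{1/2})} \le 1/2$, i.e. $(f - c_m)_+ \le 2^{-m-2}$ on $B_{1/2}$, hence
\begin{equation*}
  \sup_{B_{1/2}} f \;\le\; c_m + 2^{-m-2} \;=\; 1 - \tfrac34\,2^{-m} \;=:\; 1 - \lambda,
\end{equation*}
with $\lambda \in (0,1)$ depending only on $\Lambda,\mu,n$. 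The main obstacle is the isoperimetric inequality itself, together with arranging the level decomposition so that the clean ``gap'' $\mu_k - \mu_{k+1}$ appears on the right-hand side of the recursion; once that telescoping structure is secured, the decay $\mu_m \to 0$ and the final appeal to Lemma~\ref{lem:dg1} are routine.
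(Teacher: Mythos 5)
Your proof is correct, and it reorganises the argument in a way that differs from the paper's in its bookkeeping, though the two share the same core ingredients (dyadic levels accumulating at the supremum, a De Giorgi isoperimetric inequality, and a final appeal to Lemma~\ref{lem:dg1}). The paper iterates on the rescaled unknowns $g_k = 2^k[f-(1-2^{-k})]$, applies the intermediate value Lemma~\ref{lemma5} under a dichotomy (either $\|(g_k)_+\|_{L^2}\le\delta$ and the iteration terminates, or the transition set $\{0<g_k<1/2\}$ has measure at least $\alpha|B_1|$), and bounds the number of steps by observing that the disjoint transition sets exhaust at most $(1-\mu)|B_1|$ of measure. You instead keep the isoperimetric inequality in its $L^1$-gradient form on the transition strip $\{c_k<f<c_{k+1}\}$, whose measure is exactly the increment $\mu_k-\mu_{k+1}$, and obtain the clean telescoping recursion $\mu_{k+1}^2\lesssim \Lambda^2\mu^{-2}(\mu_k-\mu_{k+1})$; summing gives the explicit decay $\mu_m\lesssim \Lambda\mu^{-1}m^{-1/2}$ with no stopping-time dichotomy inside the loop. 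Your version buys a slightly more transparent quantitative rate for the superlevel-set measures and avoids having to secure the hypothesis $|B_1\cap\{g_k\ge 1/2\}|\ge\delta^2$ at each step (your inequality only needs the uniform lower bound $|\{f\le c_k\}\cap B_1|\ge\mu|B_1|$, which is free); the paper's version has the advantage that its intermediate value lemma is stated once and reused verbatim in the parabolic and kinetic sections, where the Poincar\'e-based proof replaces the straight-line trajectories. All the supporting details check out: $(f-c_k)_+$ is a subsolution by the convex-nondecreasing composition argument already used in the Moser proof of Lemma~\ref{lem:dg1}, the Caccioppoli bound $\|\nabla_x g_k\|_{L^2(B_1)}^2\lesssim\Lambda^2 2^{-2k}$ follows from~\eqref{eq:ee2} with a cutoff between $B_1$ and $B_{3/2}$, and the final rescaling $h=2^{m+1}(f-c_m)$ satisfies $\|h_+\|_{L^2(B_1)}^2\le 4\mu_m\le\delta^2$ since $h_+\le 2$ is supported in $\{f\ge c_m\}$.
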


\begin{remark}
  The setting is represented in Figure~\ref{fig:m2p}. Note that the infima and suprema are in fact essential infima and suprema. A posteriori however, once the Hölder regularity is established, those would be bounds on a continuous function which is defined everywhere.
\end{remark}

\begin{figure}
  \includegraphics[scale=0.8]{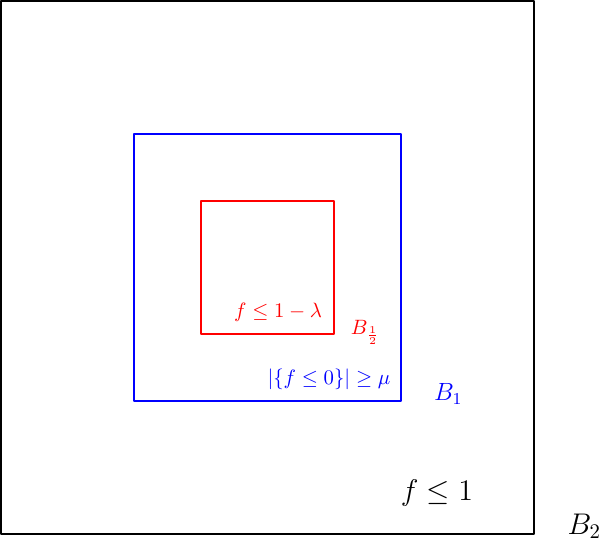}
  \caption{The setting of the decrease of maximum lemma.}
  \label{fig:m2p}
\end{figure}

\begin{proof}[Proof of Lemma~\ref{lem:dg2} when assuming Lemma \ref{lemma4}]
  Take $f$ a solution to~\eqref{eq:EL4} in $H^1(B_3)$. It is then bounded on $B_2$ by the first De Giorgi lemma. Assume $\mathrm{osc}_{B_2} f > 0,$ otherwise the statement is trivial. Then, we define 
  \begin{equation}
    \label{eq:osc-resc}
    g(x) := \frac{2}{\mathrm{osc}_{B_2} f}  \left[ f(x) - \frac{\sup_{B_2} \, f + \inf_{B_2} \, f}{2} \right].
  \end{equation}
  This new unknown $g$ is a solution to~\eqref{eq:EL4}, is valued in $[-1,1]$, and satisfies
  \begin{equation*}
    \mathrm{osc}_{B_2} \, g = 2.
  \end{equation*}

  Then, certainly,
  \begin{equation*}
    \text{ either } \ |B_1 \cap \{g \le 0\}| \ge \frac12 \, |B_1| \quad \text{ or } \quad |B_1 \cap \{g \ge 0\}| \ge \frac12 \, |B_1|.
  \end{equation*}
  
  In the first case, we apply Lemma~\ref{lemma4} to the subsolution $g$, with $\mu := 1/2$:
  \begin{equation*}
    \mathrm{osc}_{B_{\frac12}} g \le (2-\lambda) \le \(1 - \frac{\lambda}{2}\) \, \mathrm{osc}_{B_2} g
  \end{equation*}
  for some $\lambda \in (0,1)$. Going back to $f$, this means 
  \begin{equation*}
    \mathrm{osc}_{B_{\frac12}} f \le \(1- \frac{\lambda}{2}\) \, \mathrm{osc}_{B_2} f,
  \end{equation*}
  yielding Lemma~\ref{lem:dg2} with $\nu = 1-\lambda/2$. In the second case, we perform the same argument on the subsolution $-g$. Note that we are therefore using that both $f$ and $-f$ are subsolutions, i.e. that  $f$ is a solution.
\end{proof}

To show Lemma \ref{lemma4}, the main idea in De Giorgi's approach is to establish an $H^1$ version of the intermediate value lemma. Indeed, if $f$ were \emph{nearly everywhere non-positive} in the sense
\begin{equation*}
  |B_1 \cap \{f \le 0\}| \ge |B_1| - \delta^2,
\end{equation*}
for some small $\delta>0$, we would have 
\begin{equation*}
  \|f_+\|_{\mathrm{L}^2(B_1)} = \left( \int_{B_1} f_+^2 \dd x \right)^{\frac12} \le \left( \int_{B_1 \cap \{f>0\}} 1 \dd x \right)^{\frac12} \le \delta,
\end{equation*}
where we have used $f \le 1.$ Then, provided $\delta$ is small enough to apply Lemma~\ref{lem:dg1}, we would conclude
\begin{equation*}
  \sup_{B_{1/2}} f \le \frac12
\end{equation*}
as required. Our assumption is however merely
\begin{equation*}
  |B_1 \cap \{f\le 0\}| \ge \mu |B_1|.
\end{equation*}
The following $H^1$ intermediate value lemma of De Giorgi allows bridging the gap between these two assumptions by a finite induction.

\begin{lemma}[De Giorgi intermediate value Lemma]
  \label{lemma5}
  There is $C>0$, depending only on the dimension $n$, such that the following holds:
  \smallskip

  Let $g$ be a measurable function on $B_1$ so that  $g_+ \in H^1(B_1)$.

  Then, we have 
  \begin{equation}
    |B_1 \cap \{ 0 < g <1/2 \}| \ge \frac{C}{\|\nabla_x g_+\|^2 _{\mathrm{L}^2(B_1)}} \, |B_1 \cap \{g \ge 1/2\}|^2 \, |B_1 \cap \{ g \le 0\}|^{2-\frac{2}{n}}.  
  \end{equation}
\end{lemma}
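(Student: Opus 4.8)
The statement is the classical De Giorgi isoperimetric-type inequality relating the measure of the intermediate set $\{0<g<1/2\}$ to the measures of $\{g\le 0\}$ and $\{g\ge 1/2\}$, controlled by the Dirichlet energy of $g_+$. The plan is to prove it as a consequence of a quantitative $L^1$–$L^1$ estimate: one controls the ``mass'' of the truncation of $g$ between levels $0$ and $1/2$ by the perimeter-type quantity $\int_{\{0<g<1/2\}}|\nabla_x g_+|\,\dd x$, and then upgrades this to the claimed inequality via Cauchy–Schwarz and the Sobolev–Poincaré inequality on $B_1$. The key geometric input is the following: for a.e. pair of points, if $g(y)\le 0$ and $g(z)\ge 1/2$ then the segment $[y,z]$ must cross the ``transition layer'' $\{0<g<1/2\}$, and the $1/2$-jump of $g$ along that segment is bounded by the line integral of $|\nabla_x g_+|$ over the portion of the segment lying in the transition layer.

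First I would introduce the truncated function $w := \min(g_+,1/2) - \min(g_+,0)$, which equals $0$ on $\{g\le 0\}$, equals $1/2$ on $\{g\ge 1/2\}$, and interpolates linearly on the transition layer, with $\nabla_x w = \nabla_x g_+\,\mathbf 1_{\{0<g<1/2\}}$ a.e. By a standard fundamental-theorem-of-calculus argument along segments (integrating over pairs $(y,z)\in (B_1\cap\{g\le 0\})\times(B_1\cap\{g\ge 1/2\})$ and using the representation $w(z)-w(y)=\int_0^1 \nabla_x w(y+t(z-y))\cdot(z-y)\,\dd t$), one obtains the potential-type bound
\begin{equation*}
  \tfrac12\,|B_1\cap\{g\le 0\}|\,|B_1\cap\{g\ge 1/2\}| \le C_n \int_{B_1} |\nabla_x w(x)|\, \Big( \int_{B_1\cap\{g\le 0\}} |x-y|^{1-n}\,\dd y \Big)\dd x.
\end{equation*}
The inner Riesz potential $\int_{B_1\cap\{g\le 0\}}|x-y|^{1-n}\dd y$ is bounded, up to a dimensional constant, by $|B_1\cap\{g\le 0\}|^{1/n}$ (the extremal configuration being a ball centred at $x$), so after dividing by $|B_1\cap\{g\le 0\}|$ this gives
\begin{equation*}
  |B_1\cap\{g\ge 1/2\}| \lesssim_n |B_1\cap\{g\le 0\}|^{\frac1n - 1} \int_{\{0<g<1/2\}} |\nabla_x g_+|\, \dd x.
\end{equation*}

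To finish, I would apply Cauchy–Schwarz to the right-hand integral:
\begin{equation*}
  \int_{\{0<g<1/2\}} |\nabla_x g_+|\, \dd x \le \|\nabla_x g_+\|_{L^2(B_1)}\, |B_1\cap\{0<g<1/2\}|^{1/2},
\end{equation*}
then square both sides and rearrange to isolate $|B_1\cap\{0<g<1/2\}|$, which yields exactly the claimed inequality with the product of the three measure factors and the reciprocal of $\|\nabla_x g_+\|_{L^2(B_1)}^2$, absorbing all dimensional constants into $C$. The main obstacle — and the only step requiring genuine care rather than bookkeeping — is the potential estimate in the displayed inequality above: one has to justify the segment-integration rigorously for $H^1$ functions (the representation along lines holds for a.e. line by Fubini once $g_+$ is approximated by smooth functions, with the $|x-y|^{1-n}$ Jacobian factor coming from the change of variables to polar-type coordinates centred appropriately), and one must check that truncation does not increase the Dirichlet energy, which is immediate since $\nabla_x w$ is supported in $\{0<g<1/2\}$ and pointwise bounded by $|\nabla_x g_+|$. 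Everything else is Cauchy–Schwarz and the elementary bound on the Riesz potential of a set by a power of its measure.
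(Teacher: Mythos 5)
Your proposal is correct and follows essentially the same route as the paper's constructive proof (Method B, based on straight-line trajectories): truncate $g_+$ at the levels $0$ and $1/2$ (your $w$ coincides with the paper's $\tilde g$, since $\min(g_+,0)=0$), integrate the fundamental theorem of calculus along segments joining $\{g\le 0\}$ to $\{g\ge 1/2\}$, bound the resulting Riesz potential of the set $\{g\le 0\}$ by $|B_1\cap\{g\le 0\}|^{1/n}$ via rearrangement, and conclude with Cauchy--Schwarz on the transition layer. The only cosmetic discrepancy is that your opening plan mentions a Sobolev--Poincar\'e inequality that the actual argument never uses.
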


\begin{remark}
  \begin{enumerate}
  \item The case $\|\nabla_x g_+\|_{\mathrm{L}^2(B_1)}=0$ is trivial and implicitly omitted.
  \item Inequalities of this form would be straightforward to prove if $g$ were regular and the constant were allowed to depend on a $C^1$ norm. 
  \item A weaker statement where $1/2$ is replaced by some $\theta \in (0,1)$ close to $1$ would be sufficient for the sequel, see later and~\cite{golse2019harnack}. 
  \item Unlike its counterpart in the kinetic setting (see later), this result does not rely on the PDE~\eqref{eq:EL4}, but only on the $H^1$ regularity of $g_+$. The non-constructive proof below shows in fact that the result still holds when replacing the bound on $g_+$ in $H^1(B_1)$ by  the weaker fractional Sobolev norm $H^s(B_1)$ with $s \ge 1/2$. However, the statement becomes false with the fractional Sobolev norm $H^s(B_1)$ for $s \in (0,1/2)$, since indicator functions belong to this Sobolev space.
  \item Nevertheless, importantly, the De Giorgi intermediate value lemma still holds, once properly formulated, for subsolutions to nonlocal elliptic equations with rough coefficients, even for orders of fractional ellipticity $2s$ with $s \in (0,1/2)$ (which corresponds to the $H^s$ regularity in the energy estimate). This is because the non-local version of the energy estimate includes an extra ``cross-term'' specific to non-local operators, see~\cite{zbMATH05913780}. A constructive method based on trajectories is also possible in this case, and it extends to the parabolic and kinetic cases, see~\cite{MR4688651,anceschi2024poincare}.  
  \item This lemma is also sometimes called the ``\textit{isoperimetric lemma of De Giorgi}'' because it gives a control of the measure of interior values $\{ 0< \tilde g < 1/2 \}$ by the measure of boundary values $\{ \tilde g =0 \}$ and $\{ \tilde g =1/2 \}$. 
  \end{enumerate}
\end{remark}

\begin{proof}[Proof of Lemma \ref{lemma5}]
  We present different methods of proof.
  \medskip

  \noindent
  \textbf{A. Non-constructive method by contradiction.} It is possible to give a short proof by contradiction at the cost of constructive estimates. Consider a contradiction sequence $g_k$ so that
  \begin{equation*}
    |B_1 \cap \{ 0 < g_k <1/2 \}| \xrightarrow{k \to \infty} 0,
  \end{equation*}
  and
  \begin{align*}
    & |B_1 \cap \{g_k \ge 1/2\}| \ge a_1, \\[1mm]
    & |B_1 \cap \{ g_k \le 0\}| \ge a_2,
  \end{align*}
  for some $a_1,a_2>0$ independent of $k$. Let the sequence $\tilde g_k$ be defined by
  \begin{equation*}
    \tilde g_k(x) = \begin{cases}
      0, \quad &\text{for } g_k(x) \le 0, \\[1mm]
      g_k(x), \quad &\text{for } 0 < g_k(x) < 1/2, \\[1mm]
      1/2, \quad &\text{for } g_k(x) \ge 1/2.
    \end{cases}
  \end{equation*}
  We can assume by compactness that a subsequence of the sequence $\tilde g_k$ is converging to some $g_\infty$ strongly in $L^2(B_1)$, weakly in $H^1(B_1)$, and almost everywhere. These convergences imply
  \begin{equation*}
    g_\infty \in H^1(B_1)
  \end{equation*}
  and 
  \begin{align*}
    & |B_1 \cap \{ 0 < g_\infty <1/2 \}| = 0,\\[1mm]
    & |B_1 \cap \{g_\infty \ge 1/2\}| \ge a_1>0 \\[1mm]
    & |B_1 \cap \{ g_\infty \le 0\}| \ge a_2 >0.
  \end{align*}
  Therefore $2 g_\infty = \mathbf{1}_{\mathcal B}$ is the indicator function of a Borel set $\mathcal B \subset B_1$ whose measure is positive and strictly less than $|B_1|$.

  Let us prove that $\mathbf{1}_{\mathcal B}$ is not in $H^1(B_1)$. A function $g \in H^1(B_1)$ satisfies, for any $\epsilon \in (0,1)$, the following inequality
  \begin{equation*}
    \forall \, h \in B_\epsilon, \quad \int_{B_{1-\epsilon}} |\mathbf{1}_{\mathcal B}(x+h)-\mathbf{1}_{\mathcal B}(x)|^2 \dd x \lesssim \| g \|_{H^1(B_1)} |h|^2. 
  \end{equation*}
  When we consider $g=\mathbf{1}_{\mathcal B}$, the integrand $|\mathbf{1}_{\mathcal B}(x+h)-\mathbf{1}_{\mathcal B}(x)|^2$ is valued in $\{0,1\}$, so the exponent $2$ can be changed to $1$ indifferently. Given $m \in \N^*$ and some fixed $h_0 \in B_\epsilon$, we have then
  \begin{align*}
    \int_{B_{1-\epsilon}} |\mathbf{1}_{\mathcal B}(x+h_0)-\mathbf{1}_{\mathcal B}(x)|^2 \dd x
    & = \int_{B_{1-\epsilon}} |\mathbf{1}_{\mathcal B}(x+h_0)-\mathbf{1}_{\mathcal B}(x)| \dd x \\
    & \le \sum_{j=0} ^{m-1} \int_{B_{1-\epsilon}} |\mathbf{1}_{\mathcal B-jh_0/m}(x)-\mathbf{1}_{\mathcal B - (j+1)h_0/m}(x)| \dd x \\
    & \le \sum_{j=0} ^{m-1} \int_{B_{1-\epsilon}+jh_0/m} |\mathbf{1}_{\mathcal B}(x)-\mathbf{1}_{\mathcal B - h_0/m}(x)| \dd x \\
    & \le \sum_{j=0} ^{m-1} \int_{B_{1-\epsilon}+jh_0/m} |\mathbf{1}_{\mathcal B}(x)-\mathbf{1}_{\mathcal B - h_0/m}(x)|^2 \dd x \\
    & \lesssim \sum_{j=0} ^{m-1} \left| \frac{h_0}{m} \right|^2 \lesssim m^{-1} |h_0|,
  \end{align*}
  where in the last line, we have used the $H^1$ bound on $\mathbf{1}_{\mathcal B}$. Therefore, by letting $m \to \infty$, we deduce
  \begin{equation*}
    \int_{B_{1-\epsilon}} |\mathbf{1}_{\mathcal B}(x+h_0)-\mathbf{1}_{\mathcal B}(x)|^2 \dd x =0.
  \end{equation*}
  This implies that $\mathbf{1}_{\mathcal B}$ is constant in $B_{1-\epsilon}$, and by letting $\epsilon \to 0$, we deduce that $\mathbf{1}_{\mathcal B}$ is constant in $B_1$, which contradicts the fact that the measure of $\mathcal B$ is non-zero and strictly less than that of $B_1$.

  Note that this argument can be easily generalised to assuming only $H^s(B_1)$ for $s \in (1/2,1)$, rather than $H^1(B_1)$. The fractional Sobolev space $H^s(B_1)$ can indeed by defined by requiring that the following seminorm is finite:
  \begin{equation*}
    \int_{x,y \in B_1} \frac{|g(x)-g(y)|^2}{|x-y|^{n+2s}} \dd x \dd y < \infty
  \end{equation*}
  for $g \in H^s(B_1)$. When we consider $g=\mathbf{1}_{\mathcal B}$, again the integrand $|\mathbf{1}_{\mathcal B}(y)-\mathbf{1}_{\mathcal B}(x)|^2$ is valued in $\{0,1\}$, so the exponent $2$ can be changed to $1$ indifferently. Given $m \in \N^*$, we have then
  \begin{align*}
    & \int_{x,y \in B_1} \frac{|\mathbf{1}_{\mathcal B}(y)-\mathbf{1}_{\mathcal B}(x)|^2}{|y-x|^{n+2s}} \dd y \dd x \\
    & = \int_{x \in B_1} \int_{h \in \R^n} \mathbf{1}_{x+h \in B_1} \frac{|\mathbf{1}_{\mathcal B}(x+h)-\mathbf{1}_{\mathcal B}(x)|^2}{|h|^{n+2s}} \dd h \dd x \\
    & = \int_{x \in B_1} \int_{h \in \R^n} \mathbf{1}_{x+h \in B_1} \frac{|\mathbf{1}_{\mathcal B}(x+h)-\mathbf{1}_{\mathcal B}(x)|}{|h|^{n+2s}} \dd h \dd x \\
    & \le \sum_{j=0} ^{m-1} \int_{x \in B_1} \int_{h \in \R^n} \mathbf{1}_{x+h \in B_1} \frac{|\mathbf{1}_{\mathcal B - jh/m}(x)-\mathbf{1}_{\mathcal B-(j+1)h/m}(x)|}{|h|^{n+2s}} \dd h \dd x  \\
    & \le \sum_{j=0} ^{m-1} \int_{x \in B_1 +jh/m} \int_{h \in \R^n} \mathbf{1}_{x+ \frac{(m-j)h}{m} \in B_1} \frac{|\mathbf{1}_{\mathcal B}(x)-\mathbf{1}_{\mathcal B-h/m}(x)|}{|h|^{n+2s}} \dd h \dd x  \\
    & \le m^{-2s} \sum_{j=0} ^{m-1} \int_{x \in B_1 + jh/m} \int_{h \in \R^n}  \mathbf{1}_{x+\frac{(m-j)h}{m} \in B_1} \frac{|\mathbf{1}_{\mathcal B}(x)-\mathbf{1}_{\mathcal B-h/m}(x)|^2}{|h/m|^{n+2s}} \dd \left( \frac{h}{m} \right) \dd x  \\
    & \lesssim m^{1-2s},
  \end{align*}
  where in the last line, we have used the $H^s(B_1)$ bound on $\mathbf{1}_{\mathcal B}$. Therefore, by letting $m \to \infty$, we deduce
  \begin{equation*}
    \int_{x,y \in B_1} \frac{|\mathbf{1}_{\mathcal B}(x)-\mathbf{1}_{\mathcal B}(y)|^2}{|x-y|^{n+2s}} \dd x \dd y =0.
  \end{equation*}
  This implies that $\mathbf{1}_{\mathcal B}$ is constant in $B_1$, which again contradicts the fact that the measure of $\mathcal B$ is non-zero and strictly less than that of $B_1$.

  In the limit case $s=1/2$, it is still possible to prove that $\mathbf{1}_{\mathcal B}$ is not in $H^{1/2}(B_1)$; the indicator function is however in $H^s(B_1)$ for $s \in [0,1/2)$. We refer for instance to~\cite{MR3032092} for these results. 
  \medskip

  \noindent
  \textbf{B. Constructive method based on trajectories.} This proof was given in~\cite{vasseur2016giorgi}. The idea is to connect the points where $g > 1/2$ with those where $g<0$ via trajectories driven by the vector fields $\partial_{x_i}$ inducing the diffusion; in this elliptic setting these trajectories are simply straight lines, see Figure~\ref{fig:ell-traj}.

  Let $\tilde{g}$ be defined by
  \begin{equation*}
    \tilde g(x) = \begin{cases}
      0, \quad &\text{for } g(x) \le 0, \\[1mm]
      g(x), \quad &\text{for } 0 < g(x) < 1/2, \\[1mm]
      1/2, \quad &\text{for } g(x) \ge 1/2.
    \end{cases}
  \end{equation*}
  \begin{figure}
    \includegraphics[scale=0.8]{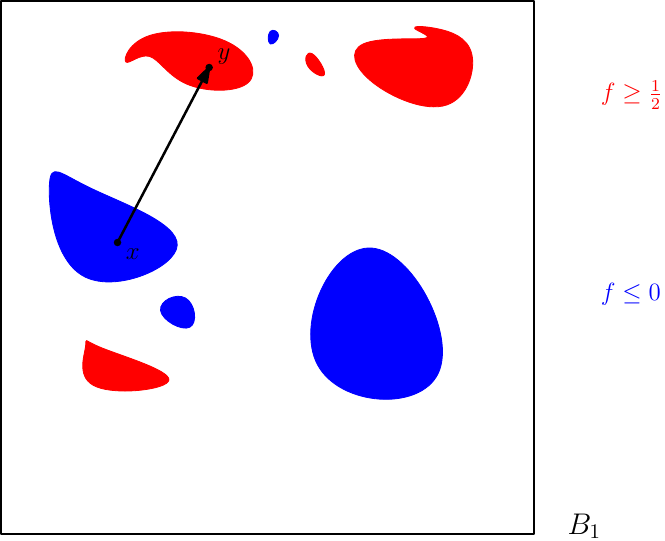}
    \caption{Trajectories in the elliptic case.}
    \label{fig:ell-traj}
  \end{figure}
  
  This modified unknown satisfies $|\nabla_x \tilde g| \le |\nabla_x g_+|$ on $B_1$. Given then any
  \begin{align*}
    & x \in B_1 \cap \{ g \le 0\}, \\[1mm]
    & y \in B_1 \cap \{g \ge 1/2\},
  \end{align*}
  we Taylor-expand between the two points:
  \begin{equation*}
    \frac12 = \tilde g(y) - \tilde g(x) = \int_0^1 (y-x) \cdot \nabla_x \tilde g (x + t (y-x)) \dd t
  \end{equation*}
  (the manipulations can be a posteriori justified by density of smooth functions). Hence, we have
  \begin{align*}
    \frac12
    & \le \int_0^{|y-x|} \left|\nabla_x \tilde g \left(x + s \frac{y-x}{|y-x|} \right) \right| \dd s \\
    & \le \int_0^\infty \left|\nabla_x \tilde g \left( x + s \frac{y-x}{|y-x|} \right) \right| \mathbf 1_{x + s \frac{y-x}{|y-x|} \in B_1} \dd s.
  \end{align*}
  
  Integrating $y$ over the set $B_1 \cap \{g \ge 1/2\}$ yields 
  \begin{align*}
    \frac{1}{2} \, |B_1 \cap \{g \ge 1/2\}| 
    & \le \int_{y \in B_1 \cap \{g\ge 1/2\}} \int_0^\infty \left|\nabla_x \tilde g \left( x + s \frac{y-x}{|y-x|} \right) \right| \mathbf 1_{x + s \frac{y-x}{|y-x|} \in B_1} \dd s \dd y \\
    & \le  \int_{B_1} \int_0^\infty \left|\nabla_x \tilde g \left( x + s \frac{y-x}{|y-x|} \right) \right| \mathbf 1_{x + s \frac{y-x}{|y-x|} \in B_1} \dd s \dd y, 
  \end{align*}
  which, after a spherical change of variables of $y$ around $x$ gives 
  \begin{align*}
    \frac{1}{2} \, |B_1 \cap \{g \ge 1/2\}|
    & \le \left( \int_0^2 r^{n-1} \dd r \right) \, \int_{\sigma \in \mathbb S^{d-1}} \int_0^\infty \left|\nabla_x \tilde g \left( x + s \, \sigma \right) \right| \mathbf 1_{x + s \sigma \in B_1}\dd s \dd \sigma \\
    & \le \frac{2^n}{n} \int_{\R^n} \frac{|\nabla_x \tilde g(z)|}{|x-z|^{n-1}} \mathbf 1_{z \in B_1} \dd z \\
    & \le \frac{2^n}{n} \int_{B_1} \frac{|\nabla_x \tilde g(z)|}{|x-z|^{n-1}} \dd z,
  \end{align*}
  thanks to another spherical change of variables $(s,\sigma) \mapsto z = x + s\, \sigma$. 
  
  Integrating $x$ over $B_1 \cap \{g \le 0\}$ gives 
  \begin{equation*}
    \frac{1}{2} |B_1 \cap \{g\le 0\}| \, |B_1 \cap \{g \ge 1/2\}| \le \frac{2^n}{n} \, \int_{B_1 \cap \{g \le 0\}} \int_{B_1} \frac{|\nabla_x \tilde g(z)|}{|x-z|^{n-1}} \dd z \dd x.
  \end{equation*}
  We now use the standard rearrangement inequality 
  \begin{align*}
    \int_{B_1 \cap \{g \le 0\}} \frac{\mathrm dx}{|z-x|^{n-1}}
    & \le \int_{B_{r_0}} \frac{\mathrm dx}{|x|^{n-1}} \\
    & \le r_0 \\
    & \lesssim |B_1 \cap \{g \le 0\}|^{\frac1n},
  \end{align*}
  with $r_0>0$ is chosen so that
  \begin{equation*}
    |B_{r_0}| = |B_1 \cap \{g\le0\}|.
  \end{equation*}
  
  Altogether, we obtain
  \begin{align*}
    & \frac{1}{2} |B_1 \cap \{g\le 0\}| \, |B_1 \cap \{g \ge 1/2\}| \\
    & \le \frac{2^n}{n} \left( \int_{B_1} |\nabla_x \tilde g| \right) |B_1 \cap \{g \le 0\}|^{\frac1n} \\
    & \le \frac{2^n}{n} \left( \int_{B_1 \cap \{0<g<1/2\}} |\nabla_x \tilde g| \right) |B_1 \cap \{g \le 0\}|^{\frac1n} \\
    & \le \frac{2^n}{n} \|\nabla_x g_+\|_{L^2(B_1)} \, |B_1 \cap \{0<g<1/2\}|^{\frac12} \, |B_1 \cap \{g\le 0\}|^{\frac1n} 
  \end{align*}
  which concludes the proof with $C = n^2 2^{-2-2n}$.
  \medskip

  \noindent
  \textbf{C. Less sharp but simpler constructive method based on the Poincaré inequality.} This argument was proposed in~\cite{MR4398231,guerand2020quantitativeregularityparabolicgiorgi}. It is also---in another respect---close to the method we will deploy in the hypoelliptic setting. We apply the Poincaré inequality in the $L^1$ setting to $\tilde g$ in $B_1$:
  \begin{equation}
    \label{eq:poincare-l1}
    \int_{B_1} \left| \tilde g-\langle \tilde g \rangle_{B_1}\right| \dd x \lesssim \int_{B_1} \left| \nabla_x \tilde g \right| \dd x
  \end{equation}
  where $\langle \tilde g \rangle_{B_1}$ is the average of $\tilde g$ on $B_1$.

  Then, we observe
  \begin{equation*}
    \frac{|B_1 \cap \{ g \ge 1/2\}|}{|B_1|} \le \frac{2}{|B_1|} \int_{B_1 \cap \{g \ge 1/2\}} \tilde g \dd x \le 2 \langle \tilde g \rangle_{B_1}
  \end{equation*}
  so that
  \begin{align*}
    |B_1 \cap \{g\le 0\}| |B_1 \cap \{ g \ge 1/2\}|
    & \le 2 |B_1| \int_{B_1 \cap \{g\le 0\}} \langle \tilde g \rangle_{B_1} \dd x \\
    & \le 2 |B_1| \int_{B_1 \cap \{g\le 0\}} \left| \tilde g(x) - \langle \tilde g \rangle_{B_1} \right| \dd x \\
    & \le 2 |B_1| \int_{B_1} \left| \tilde g(x) - \langle \tilde g \rangle_{B_1} \right| \dd x.
  \end{align*}
  We have added $g(x)$ in the second integrand because it is zero on the domain of integration.

  We then plug the Poincaré inequality~\eqref{eq:poincare-l1} in this last estimate:
  \begin{equation*}
    |B_1 \cap \{g\le 0\}| |B_1 \cap \{ g \ge 1/2\}| \lesssim \int_{B_1} \left| \nabla_x \tilde g \right| \dd x. 
  \end{equation*}
  In this last integrand, we observe that $\nabla_x \tilde g=0$ almost everywhere when restricted to the sets of points where $\tilde g=0$ or $\tilde g=1/2$. These sets are respectively made up of local minima and maxima of $\tilde g$: the gradient would be zero for $g \in C^1$ by standard differential calculus, and the proof in $H^1$ is done by approximation, see~\cite[Section~4.2.2, Theorem~4-(iii), p.~129]{zbMATH08010281}. Therefore
  \begin{align*}
    \int_{B_1} \left| \nabla_x \tilde g \right| \dd x
    & = \int_{B_1 \cap \{0<g<1/2\}} \left| \nabla_x \tilde g \right| \dd x \\
    & \lesssim |B_1 \cap \{0<g<1/2\}|^{\frac12} \left\| \nabla_x \tilde g \right\|_{L^2(B_1)}
      \\
    & \lesssim |B_1 \cap \{0<g<1/2\}|^{\frac12} \left\| \nabla_x g_+ \right\|_{L^2(B_1)}
  \end{align*}
  by Cauchy-Schwarz' inequality and the comparison $|\nabla_x \tilde g| \le |\nabla_x g_+|$. Altogether we deduce the inequality
  \begin{equation*}
    |B_1 \cap \{ 0 < g <1/2 \}| \ge \frac{C}{\|\nabla_x g_+\|^2_{\mathrm{L}^2(B_1)}} \, |B_1 \cap \{g \ge 1/2\}|^2 \, |B_1 \cap \{ g \le 0\}|^{2}
  \end{equation*}
  for some constant $C>0$. This alternative argument proves a slightly weaker inequality where the last exponent is $2$ instead of $2-2/n$, which results in a worse lower bound for sets with small measure. 
\end{proof}

\medskip

\begin{proof}[Proof of the decrease of supremum lemma~\ref{lemma4} from the intermediate value lemma~\ref{lemma5}]
  \text{ } \\
  Let $f \le 1$ subsolution in $H^1(B_2)$ to
  \begin{equation*}
    \nabla_x \cdot ( A \nabla_x f ) \ge 0
  \end{equation*}
  with $A$ symmetric measurable so that $\Lambda^{-1} \le A \le \Lambda$ almost everywhere. We assume that $f$ satisfies the measure condition
  \begin{equation*}
    |B_1 \cap \{f\le 0\}| \ge \mu |B_1|
  \end{equation*}
  for some $\mu>0$.

  Let us define the following sequence of new unknowns on $B_2$:
  \begin{equation*}
    \forall \, k \ge 0, \quad g_k := 2^k \left[ f- \(1-2^{-k}\) \right].
  \end{equation*}
  They satisfy the inductive relation
  \begin{equation*}
    \forall \, k \ge 0, \quad g_{k+1} = 2g_k -1
  \end{equation*}
  and the property $g_k \le 1$ for all $k \ge 0$ (see Figure~\ref{fig:ell-ivl-dg} for the representation of the successive zooms on values of $f$ close to $1$).

  Thus, $(g_k)_+$ is a subsolution on $B_2$ and is always valued in $[0,1]$. By relating the $H^1(B_1)$ norm of $(g_k)_+$ to the $L^2(B_2)$ norm of $(g_k)_+$, the Caccioppoli inequality~\eqref{eq:ee2} (local energy estimate) therefore shows that $(g_k)_+ \in H^1(B_1)$ with a bound independent of $k \ge 0$. 
  \begin{figure}
    \includegraphics[scale=0.8]{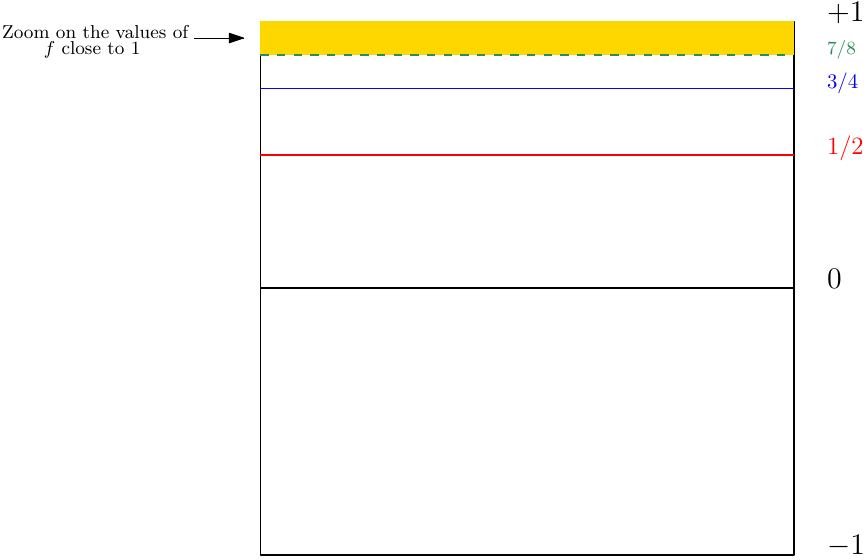}
    \caption{Successive zooms and dilations on the values of $f$ close to $1$.}
    \label{fig:ell-ivl-dg}
  \end{figure}
  
  We now want to apply the intermediate value lemma to each $(g_k)_+$. Since $f(x) \le 0$ implies $g_k(x) \le 0$ for all $k \ge 0$, we have 
  \begin{equation*}
    |B_1 \cap \{g_k \le 0\}| \ge |B_1 \cap \{f \le 0\}| \ge \mu|B_1|.
  \end{equation*}
  If the inequality
  \begin{equation*}
    \int_{B_1} |(g_0)_+|^2 = \int_{B_1} |f_+|^2 \le \delta^2
  \end{equation*}
  holds with $\delta$ as in the first De Giorgi lemma~\ref{lem:dg1}, then this lemma~\ref{lem:dg1} indeed applies and implies $\sup_{B_{1/2}} f \le 1/2$, so we are done. Suppose then, instead, that
  \begin{equation*}
    \int_{B_1} (g_0)_+^2 \dd x > \delta^2,
  \end{equation*}
  and consider any $k_0 \ge 1$ such that
  \begin{equation}
    \label{eq:bad-control}
    \forall \,  0 \le k \le k_0, \quad \int_{B_1} (g_k)_+^2 \dd x > \delta^2.
  \end{equation}
  Then, for $k=0,\cdots,k_0-1$, we have that 
  \begin{align*}
    |B_1 \cap \{g_k \ge 1/2\}|
    & = |B_1 \cap \{g_{k+1} \ge 0\}| \\
    & \ge \int_{B_1} g_{k+1}^2 \dd x > \delta^2,
  \end{align*}
  where we have used $g_k = 1/2 + g_{k+1}/2$ and $g_{k+1}\le1$.

  By applying Lemma~\ref{lemma5} to every $(g_k)_+$ for $k=0,\cdots,k_0-1$, we deduce that there is $\alpha>0$, depending on $n$, $\mu$, $\delta$, and the uniform bound on the $H^1(B_1)$ norm, such that 
  \begin{equation*}
    \forall \,  k = 0,\cdots,k_0-1, \quad |B_1 \cap \{g_k \in (0,1/2)\} \ge \alpha |B_1|.
  \end{equation*}
  Then, 
  \begin{equation*}
    \begin{aligned}
      |B_1 \cap \{g_{k_0} \le 0\}|
      & \ge |B_1 \cap \{ g_{k_0-1} \in (0,1/2) \} | + |B_1 \cap \{g_{k_0-1} \le 0\}| \\
      & \ge \alpha |B_1| + |B_1 \cap \{g_{k_0-1} \le 0\}| \\
      & \ge  \cdots  \\
      & \ge k_0 \alpha |B_1| + |B_1 \cap \{g_0 \le 0\}| \ge ( \mu + \alpha \, k_0 ) |B_1|.
    \end{aligned}
  \end{equation*}
  Therefore $\mu + k_0 \alpha \le 1$ which shows that $k_0$ cannot exceed the value
  \begin{equation*}
    K_0 := \frac{(1-\mu)}{\alpha}.
  \end{equation*}

  We can therefore define $k_0 \ge 1$ as the largest integer so that~\eqref{eq:bad-control} holds. Then, $k_0+1 \le K_0+1$, and we have
  \begin{equation*}
    \int g_{k_0+1}^2 \dd x \le \delta^2.
  \end{equation*}
  This implies, by Lemma~\ref{lem:dg1}, that
  \begin{equation*}
    \sup_{B_{1/2}} g_{k_0+1}\le 1/2 
  \end{equation*}
  which translates into the bound
  \begin{equation*}
    \sup_{B_{1/2}} f\le 1- 2^{-k_0-2}
  \end{equation*}
  on $f$. This concludes the proof with $\lambda := 2^{-k_0-2} \ge 2^{-(1-\mu)/\alpha-2}$.
\end{proof}

\begin{remark}
  By tracking the behaviour of the constant $\alpha$ in terms of $\delta$, and then the behaviour of $\delta$ in terms of $\Lambda$, it is possible to obtain the more explicit formula
  \begin{equation*}
    \lambda \sim C e^{-C' \frac{(1-\mu)}{\mu^{2-2/n}} \Lambda^4}
  \end{equation*}
  for some constant $C,C'>0$ depending only on $n$. The constant $\mu$ is $1/2$ in practice when applying the intermediate value lemma to derive the decrease of maximum, so we deduce
  \begin{equation*}
    \lambda \sim C e^{-C '' \Lambda^4}
  \end{equation*}
  for some new absolute constant $C''>0$. We can also track the Hölder regularity exponent in terms of $\Lambda$, with the help of explicit formulae $\alpha = - (\ln \nu)/(2\, \ln 2)$ and $\nu = 1-\lambda/2$. This leads finally to $\alpha \sim [C/(4\ln 2)] e^{-C '' \Lambda^4}$. These estimates on the constants are obviously not optimal, but the exponential dependency of $\alpha$ in terms of a power of the ellipticity constant $\Lambda$ is likely to be structural.
\end{remark}

\subsection{The method of Moser for controlling the oscillations}

The approach of Moser~\cite{moser1960new,MR159138} regarding the second De Giorgi lemma differs from that of De Giorgi in two crucial aspects:
\begin{enumerate}
\item Moser measures oscillations by \emph{integrals} rather than
  \emph{pointwise}, specifically through a \textbf{Poincaré inequality}.
\item Influenced by Nash's paper~\cite{nash1958continuity} and in particular by the ``G bound'' in~\cite{nash1958continuity}\footnote{See also ``\emph{The masterpieces of John Forbes Nash Jr}'' by De Lellis in~\cite{MR3930576} for a recent detailed rewriting of the proof of Nash.}, Moser derives energy estimates on the logarithm of the solution and exploits the collision of scales at that level.
\end{enumerate}

Moser first ``pushes'' further the first De Giorgi lemma. He denotes
\begin{equation*}
  \Phi(p,r) := \( \frac{1}{|B_r|} \int_{B_r} f_+^p \dd x \)^{\frac1p}
\end{equation*}
which converges to $\sup_{B_r} f_+$ for $p\to +\infty$ and to $\min_{B_r} f_+$ for $p \to -\infty$. He then shows that for $f$ subsolution then
\begin{equation*}
  \forall \, p >0, \quad \Phi(+\infty,1) \lesssim_p \Phi(p,2),
\end{equation*}
and for $f$ supersolution then
\begin{equation*}
  \forall \, p >0, \quad \Phi(-\infty,1) \gtrsim_p \Phi(-p,2).
\end{equation*}
These two estimates are proved by the localised energy estimate, the Sobolev inequality and the De Giorgi--Moser iteration. 

He then invokes Poincaré's inequality to prove a universal bound on the mean oscillation of the logarithm of the solution (see~\eqref{eq:uni-bmo} below), and applies a result by John and Nirenberg~\cite{john1961functions} about functions with bounded mean oscillations (applied to the logarithm of the solution) to deduce
\begin{equation*}
  \Phi(p,2) \lesssim \Phi(-p,2)
\end{equation*}
for $p>0$ small. To satisfy the assumptions of the John-Nirenberg result and prove the latter inequality, Moser must prove that if $f$ is a positive supersolution then $F = \ln f$ satisfies
\begin{equation}
  \label{eq:uni-bmo}
  \int_{B_r(x_0)} \(F-\langle F \rangle_{B_r(x_0)}\)^2 \dd x \lesssim |B_r(x_0)|
\end{equation}
for any ball $B_r(x_0)$ included in the domain, where $\langle F \rangle_{B_r(x_0)}$ is the average on the ball. Thanks to the Poincaré inequality, the inequality~\eqref{eq:uni-bmo} is implied by 
\begin{equation}
  \label{eq:uni-grad}
  \int_{B_r(x_0)} | \nabla_x F |^2 \dd x \lesssim r^{-2} |B_r(x_0)|.
\end{equation}
This universal bound~\eqref{eq:uni-grad} follows from the localised energy estimate on the subequation
\begin{equation*}
  \nabla_x \cdot ( A \nabla_x F) + A \nabla_x F \cdot \nabla_x F \le 0
\end{equation*}
satisfied by $F = \ln f$. The second non-negative term $A \nabla_x F \cdot \nabla_x F$ is produced by having taken logarithm of the solution, and is the key to this universal estimate.

The chain of inequalities then implies $\Phi(+\infty,1) \lesssim \Phi(-\infty,1)$ which finall proves the following \textbf{Harnack inequality}: there is $C_{\mathrm{H}} >0$, depending only on $n$ and $\Lambda$, so that for any $f \ge 0$ non-negative solution to~\eqref{eq:EL4} on $B_2$, then
\begin{equation*}
  \sup_{B_1} f \le C_{\mathrm{H}} \min_{B_1} f.
\end{equation*}

Let us show how this powerful inequality implies the reduction of oscillation~\eqref{eq:dg2}. By scaling the Harnack inequality also holds in $B_{1/2}$. The solution $f$ to~\eqref{eq:EL4} on $B_2$ is bounded on $B_1$ by the first De Giorgi lemma. So we can perform a rescaling similar to~\eqref{eq:osc-resc} and define the new unknown
\begin{equation*}
  g(x) := \frac{2}{\mathrm{osc}_{B_1} f}  \left[ f(x) - \frac{\sup_{B_1} \, f + \inf_{B_1} \, f}{2} \right]
\end{equation*}
which solves~\eqref{eq:EL4} in $B_2$.

This $g$ satisfies
\begin{equation*}
  \min_{B_1} g = -1 \ \text{ and } \ \max_{B_1} g =+1,
\end{equation*}
and we denote
\begin{equation*}
  m_1 := \max_{B_{\frac12}} g \ \text{ and } \ m_0:= \min_{B_{\frac12}} g.
\end{equation*}

We then apply the Harnack inequality in $B_{1/2}$ to the following two non-negative solutions to~\eqref{eq:EL4} in $B_1$: $1-g \ge 0$ and $g+1 \ge 0$. This yields
\begin{align*}
  & 1-m_0 = \max_{B_{1/2}} (1-g) \le C_{\mathrm{H}} \min_{B_{1/2}} (1-g) = C_{\mathrm{H}} (1-m_1), \\
  & 1+ m_1 = \max_{B_{1/2}} (g+1) \le C_{\mathrm{H}} \min_{B_{1/2}} (g+1) = C_{\mathrm{H}} (1+ m_0).
\end{align*}
Summing the two inequalities we get
\begin{equation*}
  m_1-m_0 \le \frac{C_{\mathrm{H}}-1}{C_{\mathrm{H}}+1} 2
\end{equation*}
which is exactly the reduction of oscillation with constant
\begin{equation*}
  \nu := \frac{C_{\mathrm{H}}-1}{C_{\mathrm{H}}+1} \in (0,1).
\end{equation*}
We note that the Moser approach seems to yield better constants in the Harnack inequality and Hölder regularity in terms of the elliptic constant $\Lambda$: we refer to~\cite{dmnz} for a discussion.

\subsection{The method of Kruzhkov for controlling the oscillations}

Kruzhkov's idea~\cite{kruzhkov1963priori} is to push even further the study of the logarithm of the solution that was crucial in both Nash and Moser's arguments. He extracts more information from the ``collision of scales'' on the logarithm of the solution (see below) and simplifies the proof by directly proving the decrease of maximum lemma. (In fact, he proves the equivalent symmetric form: the \emph{increase of the minimum} for supersolutions.) 

Let us consider $f \ge 0$ a non-negative supersolution in $H^1(B_2)$ to
\begin{equation*}
  \nabla_x \cdot ( A \nabla_x f) \le 0
\end{equation*}
with the usual assumptions on $A$, and so that
\begin{equation}
  \label{eq:hyp-mes-kru}
  |B_1 \cap \{f \ge 1\}| \ge \mu |B_1|
\end{equation}
for some $\mu >0$. The following argument of Kruzhkov shows that there is $\lambda \in (0,1)$ so that $f \ge \lambda$ on $B_{1/2}$. This proves Lemma~\ref{lemma4} on $1-f$. 

Let us consider the function
\begin{equation*}
 \forall \, z >0, \quad G(z) = (-\ln z + z -1) \mathbf 1_{z \le 1}.
\end{equation*}
It is twice differentiable, convex, and satisfies
\begin{equation*}
  \forall \, z \in \R_+^*, \quad
  \begin{cases}
    \displaystyle G'(z) = \frac{z-1}{z} \, \mathbf 1_{z \le 1} \le 0, \\[2mm]
    \displaystyle G''(z) = \frac{1}{z^2} \, \mathbf 1_{z \le 1} \ge 0 \\[2mm]
    \displaystyle G''(z) \ge G'(z)^2.
  \end{cases}
\end{equation*}
With the help of this function, we define the new unknown
\begin{equation*}
  g := G \left( f_\delta \right) \quad \text{ with the notation } \quad f_\delta := \frac{f + \delta}{1+\delta},
\end{equation*}
and for $\delta>0$ to be chosen later. By translation and dilation, $f_\delta$ is also a supersolution, and $g$ is a subsolution which satisfies
\begin{align*}
  0 \le A \nabla_x g \cdot \nabla_x g
  & = G''\( f_\delta \) A \nabla_x f_\delta \cdot \nabla_x f_\delta \\
  & \le G''(g) A \nabla_x f_\delta \cdot \nabla_x f_\delta + \underbrace{G'(g) \nabla_x \cdot \( A \nabla_x f_\delta\)}_{\ge 0} = \nabla_x \cdot \( A \nabla_x g \).
\end{align*}
Note that the term $G'(g) \nabla_x \cdot ( A \nabla_x f_\delta)$ is non-negative because $G' \le 0$ and $f$ is a supersolution, so $\nabla_x \cdot ( A \nabla_x f_\delta) \le 0$.

Integrating this inequation against a smooth localisation function $\phi$ so that
\begin{equation*}
  \mathbf{1}_{B_1} \le \phi \le \mathbf{1}_{B_2},
\end{equation*}
we deduce
\begin{equation*}
  \int_{\R^n} \left( A \nabla_x g \cdot \nabla_x g \right) \phi^2 \dd x \le \int_{\R^n} \left[ \nabla_x \cdot \( A \nabla_x g \) \right] \phi^2 \dd x.
\end{equation*}
By integration by parts, Cauchy-Schwarz' inequality, and the bounds $\Lambda^{-1} \le A \le \Lambda$ on the matrix $A$, we deduce from the last estimate:
\begin{equation*}
  \int_{\R^n} \left| \nabla_x g \right|^2 \phi^2 \dd x \le \Lambda^2 \left( \int_{\R^n} \left| \nabla_x g \right|^2 \phi^2 \dd x\right)^{\frac12} \left( \int_{\R^n} \left| \nabla_x \phi \right|^2 \dd x \right)^{\frac12}.
\end{equation*}
We end up with the universal bound
\begin{equation}
  \label{eq:krukru}
  \|\nabla_x g\|^2_{\mathrm{L}^{2}(B_1)} \lesssim \Lambda^4  \int_{B_2} \left| \nabla_x \phi \right|^2 \lesssim \Lambda^4.
\end{equation}

We then want to apply the Poincaré inequality to $G(f_\delta)$ in $B_1$ in order to control the $L^2$ norm. This requires to control the average. This is where we use the assumption~\eqref{eq:hyp-mes-kru} to control the average by the $L^2$ norm. Indeed $f \ge 1$ is equivalent to $f_\delta \ge 1$ which is itself equivalent to $g=G(f_\delta) =0$. Therefore
\begin{equation*}
  \langle g \rangle_{B_1} = \frac{1}{|B_1|} \int_{\{ f < 1\} \cap B_1} g \dd x \le \sqrt{1-\mu} \( \frac{1}{|B_1|} \int_{B_1} g^2 \dd x \)^{\frac12}. 
\end{equation*}
Since by orthogonality we have 
\begin{equation*}
  \frac{1}{|B_1|} \int_{B_1} g^2 \dd x \le \frac{1}{|B_1|} \int_{B_1} \left[ g - \langle g \rangle_{B_1}\right]^2 \dd x + \langle g \rangle_{B_1}^2,
\end{equation*}
we deduce from the two last inequalities
\begin{equation*}
  \frac{1}{|B_1|} \int_{B_1} g^2 \dd x \le \frac{1}{\mu|B_1|} \int_{B_1} \left[ g - \langle g \rangle_{B_1}\right]^2 \dd x.
\end{equation*}
We then apply the Poincaré inequality to $g$ on $B_1$:
\begin{align*}
  \frac{1}{|B_1|} \int_{B_1} g^2 \dd x \le \frac{1}{\mu|B_1|} \int_{B_1} \left[ g - \langle g \rangle_{B_1}\right]^2 \dd x \lesssim  \frac1\mu \|\nabla_x g\|^2_{\mathrm{L}^{2}(B_1)} \lesssim \frac{\Lambda^4}{\mu}
\end{align*}
We finally use the first De Giorgi lemma~\ref{lem:dg1} on the non-negative subsolution $g$: 
\begin{equation*}
  \|g\|_{L^\infty(B_{1/2})} \lesssim \|g\|_{L^2(B_1)} \lesssim \frac{\Lambda^2}{\sqrt{\mu}}. 
\end{equation*}
But the supremum of $g$ on $B_{1/2}$ can be expressed in terms of the infimum of $f$ since $G$ is decreasing and we obtain
\begin{equation*}
  \left| \ln \left( \frac{\inf_{Q_{1/2}} f + \delta}{1+\delta} \right) \right| \lesssim \frac{\Lambda^2}{\sqrt{\mu}}.
\end{equation*}
This proves the lower bound
\begin{equation*}
  \inf_{Q_{1/2}} f \ge \lambda \quad \text{ with } \quad \lambda = C e^{-C' \frac{\Lambda^2}{\sqrt{\mu}}} >0
\end{equation*}
for some constant $C,C'>0$. This concludes the proof.

\begin{remarks}
  \begin{enumerate}
  \item In fact, Kruzhkov's approach also leads to an alternative proof
    of the Harnack inequality. The increase of the minimum is combined with
    an estimate of propagation (spreading) of the lower bound to obtain a ``weak Harnack inequality'' which, combined with the first De Giorgi lemma (gain of integrability), implies the Harnack inequality (see the next remark). We refer to~\cite{MR171086,imbert2019weak,MR4653756} for more details and for the extension to the kinetic case.
    
  \item The Harnack inequality readily follows from the so-called \textbf{weak Harnack inequality} combined with the first De Giorgi lemma. The weak Harnack inequality states as follows: there are $\zeta>0$ and  $C_{\mathrm{WH}} >0$, depending only on $n$ and $\Lambda$, so that for any $f \ge 0$ non-negative solution to~\eqref{eq:EL4} on $B_2$, then
    \begin{equation*}
      \int_{B_{\frac12}} f^\zeta \dd x \le C_{\mathrm{WH}} \inf_{B_1} f.
    \end{equation*}
    
  \item An alternative route to the weak Harnack inequality is to simply deduce it from the measure-to-pointwise upper bound (decrease of supremum lemma) by a covering argument: we refer to~\cite[Proof of Lemma~5.14]{MR2777537} for a non-constructive argument in the elliptic case, and to~\cite{guerand2022quantitative,MR4688651} for a constructive argument that extends to the kinetic case.
    
  \item The dependency of the constants obtained by the Kruzhkov approach in terms of the elliptic constant $\Lambda$ has not been explored to our knowledge.
\end{enumerate}

\end{remarks}

\section{The parabolic case}

This was the setting first considered by Nash~\cite{nash1958continuity}. However, the approaches of De Giorgi, Moser and Kruzhkov were also extended to this case. The parabolic second De Giorgi lemma was however only proved by non-constructive methods, until the recent works~\cite{MR4398231,guerand2020quantitativeregularityparabolicgiorgi,guerand2022quantitative,MR4653756}. We only sketch the proofs when they resemble the elliptic case. 

We consider $\cU \subset \R^n$ open set, $T>0$, and the equation
\begin{equation}
  \label{eq:parabolic}
  \partial_t f = \nabla_x \cdot \( A \, \nabla_x f \) \quad \text{in} \, \, (0,T) \times \cU
\end{equation}
with a matrix-valued function $A=A(t,x)$. We define the Hölder space $C^\alpha((0,T) \times \cU)$ as in~\eqref{eq:holder} by replacing $\cU$ by $(0,T) \times \cU \subset \R \times \R^n$. The parabolic counterpart of the main theorem~\ref{thm:dgnm2} of the elliptic case is the following statement.
\begin{theorem}[Parabolic De Giorgi--Nash--Moser]
  \label{thm:dgnm3}
  Given $\Lambda>0$, there is $\alpha\in (0,1)$, depending only on $\Lambda$ and the dimension $n$, so that the following holds:
  \smallskip
  
  Let $f$ be a weak solution to~\eqref{eq:parabolic} on $(0,T) \times \cU$ with $\cU \subset \R^n$ open set, in the functional setting
  \begin{equation*}
    L^\infty_t((0,T);L^2_x(\cU)) \cap L^2_t(0,T);H^1_x(\cU)),
  \end{equation*}
  and with $A : [0,T] \times \cU \to \mathbb R^{2n}$ a symmetric measurable  matrix-valued function so that
  \begin{equation*}
    \Lambda^{-1} \le A(t,x) \le \Lambda \ \text{ for almost every } t \in [0,T] \text{ and } x \in \cU.
  \end{equation*}

  Then $f \in C^\alpha_{\mathrm{loc}}((0,T) \times \cU)$.

  Moreover, given $\delta>0$ and $0 < \delta< T_0 < T_1 <T-\delta$, there is $C>0$, depending only $\delta$, $\Lambda$ and the dimension $n$, so that for any $\tilde{\cU} \subset \subset \cU$ with $\text{{\rm dist}}(\tilde{\cU},\p\cU) \ge \delta$, we have 
  \begin{equation*}
    \|f\|_{C^\alpha((T_0,T_1) \times \tilde{\cU})} \le C \|f\|_{L^2((0,T) \times \cU)}.
  \end{equation*}
\end{theorem}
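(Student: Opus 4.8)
The plan is to reproduce the three-layer architecture of the elliptic proof---gain of integrability (first De Giorgi lemma), reduction of oscillation (second De Giorgi lemma), iteration to H\"older continuity---after replacing Euclidean balls $B_r$ by parabolic cylinders $Q_r := (-r^2,0\,]\times B_r$, which respect the natural scaling $t\leftrightarrow|x|^2$ of \eqref{eq:parabolic}, and replacing $H^1(\cU)$ by the energy space $L^\infty_t L^2_x\cap L^2_t H^1_x$ in which $f$ lives. First I would establish the \emph{parabolic Caccioppoli inequality}: testing \eqref{eq:parabolic} against $\phi^2 f_+$ with $\phi=\phi(t,x)$ a space--time cutoff, the only term not already present in \eqref{eq:ee2}, namely $\int\!\!\int(\p_t f)\,\phi^2 f_+$, is rewritten as $\tfrac12\int\!\!\int \p_t\bigl(\phi^2 f_+^2\bigr)-\int\!\!\int\phi\,(\p_t\phi)\,f_+^2$ and integrated by parts in $t$ (no contribution at the lower time by the sign), producing
\[
\sup_{t\in(-r^2,0]}\int_{B_r}(\phi f_+)^2(t)\,\mathrm dx \;+\; \int\!\!\!\int_{Q_r}\bigl|\nabla_x(\phi f_+)\bigr|^2 \;\lesssim\; \Lambda^2\int\!\!\!\int_{Q_r}\bigl(|\nabla_x\phi|^2+|\p_t\phi|\bigr)f_+^2 .
\]
As in Moser's variant of Lemma~\ref{lem:dg1}, convex nondecreasing functions of a subsolution are subsolutions, so this also applies to $f_+^{p/2}$.

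\textbf{The parabolic first De Giorgi lemma.} The ingredient that replaces Sobolev's embedding is the \emph{parabolic (Ladyzhenskaya) inequality} $\|u\|_{L^{2(n+2)/n}_{t,x}(Q_1)}\lesssim\|u\|_{L^\infty_t L^2_x(Q_1)}+\|u\|_{L^2_t H^1_x(Q_1)}$. Feeding it, together with the parabolic energy estimate and Chebyshev's inequality, into the De Giorgi--Moser iteration exactly as in the proof of Lemma~\ref{lem:dg1}---now with the truncated energies $E_k:=\sup_t\int_{B_{r_k}}(f-e_k)_+^2(t)+\int\!\!\int_{Q_{r_k}}|\nabla_x(f-e_k)_+|^2$ over nested cylinders---yields a nonlinear recursion $E_k\le C^k E_{k-1}^\beta$ with $\beta=1+\tfrac{2}{n+2}>1$, hence super-exponential decay and the conclusion: there is $\delta(\Lambda,n)>0$ with $\|f_+\|_{L^2(Q_1)}\le\delta\Rightarrow\|f_+\|_{L^\infty(Q_{1/2})}\le\tfrac12$, as well as its scale-invariant improved form in the spirit of Lemma~\ref{lem:dg1+}.

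\textbf{Reduction of oscillation and iteration.} Next I would follow the elliptic chain \eqref{eq:structure-lemmas}. The core is a measure-to-pointwise (decrease of supremum) statement: a subsolution $f\le1$ with $|Q_1^-\cap\{f\le0\}|\ge\mu|Q_1^-|$ in a \emph{lower} cylinder $Q_1^-$ satisfies $\sup_{Q_{1/2}^+}f\le1-\lambda$ in a smaller \emph{forward-shifted} cylinder, the time lag being forced by the irreversibility of \eqref{eq:parabolic}. As in the elliptic case one bridges this from the first lemma by a finite induction over the dyadic level functions $g_k:=2^k[f-(1-2^{-k})]$, but the isoperimetric input---the analogue of Lemma~\ref{lemma5}---is \emph{not} purely functional-analytic here, since $f$ is only $H^1$ in $x$ and merely $L^2$ in $t$, so one must use the equation to recover time regularity; this is achieved either by a \emph{trajectory method} connecting $\{f\ge1/2\}$ to $\{f\le0\}$ along space--time curves that move in $x$ and flow forward in $t$, with the increment bounded by the Caccioppoli estimate, or by a \emph{parabolic Poincar\'e inequality} bounding $f$ minus a space--time average by $\|\nabla_x f\|_{L^2}+\|\p_t f\|_{(H^1)'}$ and controlling the last term through the equation by $\|A\nabla_x f\|_{L^2}\lesssim\Lambda\|\nabla_x f\|_{L^2}$ (see \cite{guerand2020quantitativeregularityparabolicgiorgi,MR4398231}). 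Running the resulting lemma on $f$ and on $-f$ (using that $f$ is a solution) and splitting on the usual measure dichotomy gives, exactly as in the proof of Lemma~\ref{lem:dg2}, the reduction of oscillation $\mathrm{osc}_{Q_{1/2}}f\le\nu\,\mathrm{osc}_{Q_2}f$ for some $\nu(\Lambda,n)\in(0,1)$ once the nested cylinders are matched through the time shift. The proof then closes as in the elliptic corollary: at a point $(t_0,x_0)$ in $(T_0,T_1)\times\tilde\cU$ (which sits at parabolic distance $\gtrsim\delta$ from the parabolic boundary of $(0,T)\times\cU$), rescale $f$ on the nested cylinders $(t_0,x_0)+Q_{\rho 4^{-k}}$ with $\rho\sim\delta$---the rescaled coefficients $A_k$ still obey $\Lambda^{-1}\le A_k\le\Lambda$---to get $\mathrm{osc}_{(t_0,x_0)+Q_{\rho 4^{-k}}}f\lesssim\nu^k\|f\|_{L^\infty}$, choose $\alpha:=-\ln\nu/(2\ln4)$ so that the geometric decay converts into a H\"older modulus in the parabolic metric $d((t,x),(s,y))=|x-y|+|t-s|^{1/2}$, and combine with the localised $L^\infty$-from-$L^2$ bound of the first lemma (the loss absorbed in $\delta$) to obtain $\|f\|_{C^\alpha((T_0,T_1)\times\tilde\cU)}\le C\|f\|_{L^2((0,T)\times\cU)}$.

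\textbf{Main obstacle.} The one genuinely new difficulty---and the reason a quantitative parabolic second De Giorgi lemma appeared only recently---is the intermediate-value/measure-decay step: unlike its elliptic counterpart Lemma~\ref{lemma5}, which is pure function theory on $H^1(B_1)$, it must exploit the PDE to supply regularity in $t$, and the time lag between the cylinder carrying the measure hypothesis and the one carrying the pointwise conclusion has to be propagated consistently through the dyadic induction. Everything else is a faithful transcription of the elliptic arguments with parabolic cylinders in place of balls.
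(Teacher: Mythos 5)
Your proposal is correct and follows essentially the same route as the paper: parabolic cylinders and Caccioppoli inequality, the Ladyzhenskaya-type embedding feeding the De Giorgi--Moser iteration for the first lemma, then decrease of supremum via the dyadic induction from a parabolic intermediate value lemma (proved constructively by trajectories/a parabolic Poincar\'e inequality, exactly as in the paper's quantitative argument, which also records the alternative compactness proof), and finally the rescaling iteration giving the H\"older exponent $\alpha=-\ln\nu/(4\ln 2)$. You also correctly identify the genuinely new point, namely that the intermediate value step must use the equation for time regularity and that the time lag must be tracked through the induction.
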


\subsection{Modification of the scaling and cylinders}

The first modification is required by the scaling of the equation. Because one derivative in time is equated with two derivatives in space, the invariant scaling is different on both variables, namely
\begin{equation*}
  (t,x) \mapsto (r^2t,rx).
\end{equation*}
The different powers on the two variables $t$ and $x$ reflect the orders of the partial derivatives in $t$ and $x$ in the equation. 

Combined with the fact that parabolic regularisation is forward in time, the base \textbf{parabolic cylinders} that replace the balls are
\begin{equation}
  \label{eq:base-cyl-para}
  Q_r := (-r^2,0] \times B_r.
\end{equation}

\subsection{The parabolic first De Giorgi lemma}

The statement of the parabolic version of the first De Giorgi lemma remains quite similar to the elliptic case. Note however the additional assumption $\p_t f \in L^2_t((-1,0);H^{-1}_x(B_1))$ made on the weak subsolutions. This ensures that one can easily performs approximation procedures and a priori estimates. Such regularity on $\p_t f$ is always satisfied by weak solutions by combining the equality $\p_t f = \nabla_x \cdot (A \nabla_x f)$ in the sense of distributions with the estimate $A \nabla_x f \in L^2$ provided by the parabolic Caccioppoli inequality~\eqref{eq:caccio-para} (the energy estimate) and the pointwise bounds on $A$. It is however not always satisfied by subsolutions due to the presence of a defect measure. Such a subsolution $f$ indeed merely satisfies
\begin{equation*}
  \p_t f = \nabla_x \cdot (A \nabla_x f) - m
\end{equation*}
for a non-negative defect measure $m$. In general, this measure $m$ will not belong to $L^2_t((-1,0);H^{-1}_x(B_1))$.

\begin{lemma}[Parabolic first De Giorgi Lemma]
  \label{lem:para-first}
  Given $\Lambda>0$, there is $\delta>0$, depending only on $\Lambda$ and the dimension $n$, so that the following holds:
  \smallskip

  Let $f$ be a weak subsolution in $Q_1$ to
  \begin{equation}
    \label{eq:para-sub}
    \p_t f \le \nabla_x \cdot \( A \nabla_x f \)
  \end{equation}
  in the functional setting
  \begin{align*}
    & f \in L^\infty_t((-1,0);L^2_x(B_1)) \cap L^2_t((-1,0);H^1_x(B_1)) \ \text{ and } \\[1mm]
    & \p_t f \in L^2_t((-1,0);H^{-1}_x(B_1)),
  \end{align*}
  with $A$ symmetric measurable so that
  \begin{equation*}
    \Lambda^{-1} \le A(t,x) \le \Lambda \ \text{ for almost every } (t,x) \in Q_1.
  \end{equation*}

  Then, we have 
  \begin{equation*}
    \left\| f_+ \right\|_{L^2(Q_1)} \le \delta \quad \implies \quad \left\| f_+ \right\|_{L^\infty(Q_{1/2})}  \le \frac12.
  \end{equation*}
  This estimate is translation and scaling-invariant, provided one uses the appropriate parabolic scaling $(t,x) \mapsto (r^2t,rx)$.
\end{lemma}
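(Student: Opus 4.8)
The plan is to run the \textbf{De Giorgi--Moser iteration} of Lemma~\ref{lem:dg1} verbatim, keeping its four ingredients (nested domains and energy levels, a localised energy estimate, a Sobolev-type embedding, Chebyshev's inequality), but with balls replaced by the parabolic cylinders $Q_{r_k}$ for $r_k = \frac12 + 2^{-k-1}$, the same energy levels $e_k = \frac12 - 2^{-k-1}$, and the elliptic Sobolev inequality replaced by the \emph{parabolic energy embedding}. Concretely I would monitor
\[
  E_k := \iint_{Q_{r_k}} (f-e_k)_+^2 \dd x \dd t, \qquad E_0 = \|f_+\|_{L^2(Q_1)}^2,
\]
and prove a nonlinear recursion $E_k \le C^k E_{k-1}^{\beta}$ with $C = C(n,\Lambda)>0$ and $\beta = 1 + \frac{2}{n+2} > 1$. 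Once this holds the conclusion is identical to the elliptic case: induction gives $E_k \le (C' E_0)^{\beta^k}$, and $E_0 < 1/C'$ (guaranteed by $\|f_+\|_{L^2(Q_1)} \le \delta$ for $\delta$ small depending only on $n,\Lambda$) forces $E_\infty = \iint_{Q_{1/2}} (f - \tfrac12)_+^2 = 0$, i.e.\ $\|f_+\|_{L^\infty(Q_{1/2})} \le \tfrac12$.

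The first step is the \textbf{parabolic Caccioppoli inequality}~\eqref{eq:caccio-para}: test the subsolution inequality~\eqref{eq:para-sub} against $\phi_k^2 (f-e_k)_+$, where $\phi_k$ is a space--time cutoff with $\mathbf 1_{Q_{r_k}} \le \phi_k \le \mathbf 1_{Q_{r_{k-1}}}$, $|\nabla_x \phi_k| \lesssim 2^k$, $|\p_t \phi_k| \lesssim 4^k$ (time scales like length squared), and $\phi_k$ vanishing near the bottom and lateral boundary of $Q_{r_{k-1}}$. Integrating in $t$, the $\p_t$ term produces $\tfrac12 \sup_t \int \phi_k^2 (f-e_k)_+^2 \dd x$ plus an error $\lesssim 4^k \iint (f-e_k)_+^2$, while the diffusion term contributes $\tfrac1\Lambda \iint \phi_k^2 |\nabla_x (f-e_k)_+|^2$ minus a cross term absorbed by Young's inequality. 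Using $\Lambda^{-1} \le A \le \Lambda$ and $(f-e_k)_+ \le (f-e_{k-1})_+$ on $Q_{r_{k-1}}$, this gives, with $\mathcal E_k := \sup_{-r_k^2 < t \le 0} \int_{B_{r_k}} (f-e_k)_+^2(t,\cdot)\dd x + \iint_{Q_{r_k}} |\nabla_x (f-e_k)_+|^2$,
\[
  \mathcal E_k \lesssim \Lambda^2\, 4^k\, E_{k-1}.
\]
This is the one genuinely parabolic estimate, and its rigorous justification --- the integration by parts in $t$, carried out by Steklov averaging or time-mollification --- is precisely where the hypothesis $\p_t f \in L^2_t((-1,0);H^{-1}_x(B_1))$ is used.

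The second step is the \textbf{parabolic Sobolev embedding} $L^\infty_t L^2_x \cap L^2_t \dot H^1_x \hookrightarrow L^{2+\frac4n}_{t,x}$, obtained by Gagliardo--Nirenberg in $x$ on each time slice followed by integration in $t$; it yields $\|(f-e_k)_+\|_{L^{2+4/n}(Q_{r_k})}^2 \lesssim \mathcal E_k \lesssim \Lambda^2 4^k E_{k-1}$. Then H\"older's inequality with exponents $1+\tfrac2n$ and $1+\tfrac n2$, together with Chebyshev's inequality on $Q_{r_{k-1}}$ (using $e_k - e_{k-1} = 2^{-k-1}$, hence $|Q_{r_k}\cap\{f > e_k\}| \le 4^{k+1} E_{k-1}$), gives
\[
  E_k \le \|(f-e_k)_+\|_{L^{2+4/n}(Q_{r_k})}^2\, |Q_{r_k}\cap\{f>e_k\}|^{\frac{2}{n+2}} \lesssim C^k\, E_{k-1}^{1+\frac{2}{n+2}},
\]
which is the desired recursion; scale and translation invariance under $(t,x)\mapsto(r^2 t, rx)$ is automatic since every quantity has been written in normalised form. \textbf{The main obstacle} is the parabolic energy estimate itself: one must carry the time-derivative term correctly --- choosing the cutoff to vanish at the \emph{bottom} of the cylinder (forward-in-time regularisation), tracking the extra $4^k$ weight from $|\p_t \phi_k|$, and justifying the computation at the regularity $\p_t f \in L^2_t H^{-1}_x$ --- and, crucially, retaining the $\sup_t L^2_x$ term, which (unlike in the elliptic case) is indispensable for the Sobolev step.
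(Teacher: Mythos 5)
Your proposal is correct and follows essentially the same route as the paper: the De Giorgi--Moser iteration on nested cylinders with energy levels $e_k$, the parabolic Caccioppoli inequality obtained by testing against $(f-e_k)_+\phi_k^2$ (retaining the $\sup_t L^2_x$ term and using $\p_t f \in L^2_t H^{-1}_x$ to justify the time integration by parts), the interpolation $L^\infty_t L^2_x \cap L^2_t \dot H^1_x \hookrightarrow L^{2+4/n}_{t,x}$, and the Hölder--Chebyshev closing of the nonlinear recursion. The only cosmetic difference is your use of exact parabolic cylinders $Q_{r_k}$ where the paper uses cylinders $(T_k,0)\times B_{r_k}$ shrinking to $Q_{1/2}$; both choices work identically.
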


The setting of the parabolic first De Giorgi lemma is represented in Figure~\ref{fig:first-para}.
\begin{figure}
  \includegraphics[scale=0.8]{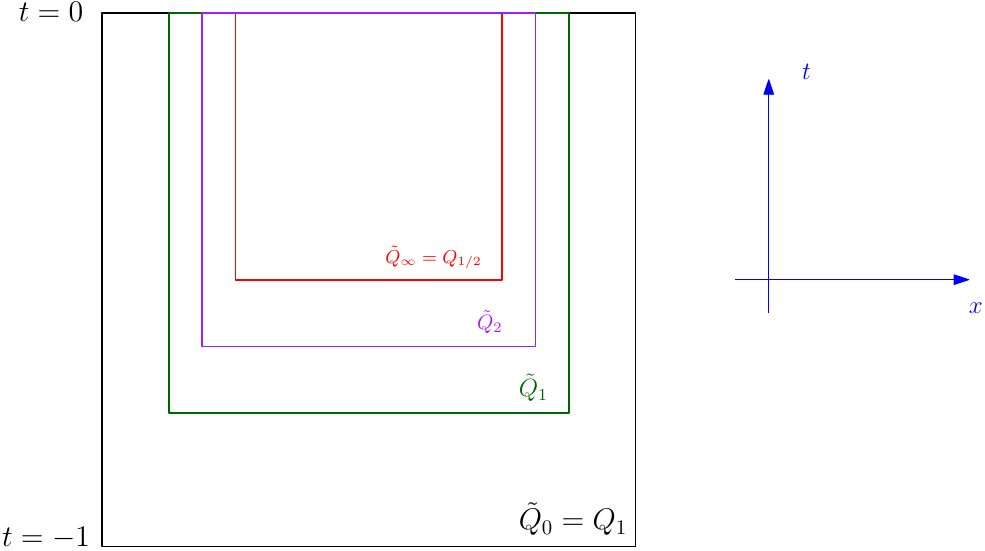}
  \caption{Setting of the first De Giorgi lemma in the parabolic case.}
  \label{fig:first-para}
\end{figure}

\begin{proof}[Proof of Lemma~\ref{lem:para-first}] The De Giorgi--Moser iterative scheme can be easily adapted. For instance, in the De Giorgi viewpoint, the sequence of cylinders and energy levels is 
\begin{equation*}
  \begin{cases}
    &T_k := -\frac{1}{4} - \frac{1}{2^{k+1}}, \quad T_0 =-1, \quad T_k \uparrow -\frac{1}{4}, \\[2mm]
    &r_k = \frac{1}{2} + \frac{1}{2^{k+1}},\quad r_0 =1, \quad r_k \downarrow \frac{1}{2}, \\[2mm]
    &e_k = \frac{1}{2} - \frac{1}{2^{k+1}},  \quad e_0=0, \quad e_k \uparrow \frac{1}{2}, \\[2mm]
    & \tilde Q_k := (T_k,0) \times B_{r_k}, \\[2mm]
    & \displaystyle E_k := \int_{\tilde Q_k} (f-e_k)_+^2 \dd t \dd x.
  \end{cases}
\end{equation*}
Note that the cylinders $\tilde Q_k$ are not exactly of the form~\eqref{eq:base-cyl-para}; instead they are constructed so as to converge to $Q_{1/2}$. 

One establishes again a nonlinear inequality between successive $E_k$ terms, and the proof is based on a localised energy estimate, Sobolev's embedding and Chebyshev's inequality, as in the elliptic case. Only the localised energy estimate and the gain of integrability $L^2 \to L^{p>2}$ in all variables differ from the elliptic proof. We focus only on these two substeps.
\medskip

\noindent
\textbf{A. Localised energy estimate.} We write the energy estimate in a way that is suitable for both the De Giorgi and Moser approaches to the iteration. Consider an energy level $e \ge 0$; it is used in De Giorgi's iteration, but fixed to zero in Moser's iteration). Consider a smooth localisation function $\phi$ with support in $(-1,0] \times B_1$: note that $\phi$ is zero at initial time but not necessarily at final time. We integrate~\eqref{eq:para-sub} against $(f-e)_+ \phi^2$ on $(-1,\tau] \times B_1$ for $\tau \in (-1,0]$:
  \begin{align*}
    & \int_{(-1,\tau] \times B_1} \left( \p_t (f-e)_+ \right) (f-e)_+ \phi^2 \dd t \dd x
    \\  & = - \int_{(-1,\tau] \times B_1} \left(A \nabla_x (f-e)_+ \cdot \nabla_x (f-e)_+ \right) \phi^2 \dd t \dd x\\
    & \, \quad - 2 \int_{(-1,\tau] \times B_1} \left(A \nabla_x (f-e)_+ \cdot \nabla_x \phi \right) \phi (f-e)_+ \dd t \dd x.
  \end{align*}
  The left hand side writes
  \begin{align*}
    & \int_{(-1,\tau] \times B_1} \left( \p_t (f-e)_+ \right) (f-e)_+ \phi^2 \dd t \dd x \\
    & = \frac12 \int_{(-1,\tau] \times B_1} \partial_t (f-e)^2_+  \phi^2 \dd t \dd x \\
    & = \frac12 \left( \int_{B_1} (f-e)^2_+ \phi^2 \dd x \right)\Bigg|_{t=\tau} - \frac12  \int_{(-1,\tau] \times B_1} (f-e)^2_+ \p_t \left( \phi^2 \right) \dd t \dd x,
  \end{align*}
  and we bound the last term as
  \begin{equation*}
    \left| \frac12 \int_{(-1,\tau] \times B_1} (f-e)^2_+ \p_t \left( \phi^2 \right) \dd t \dd x \right|
     \lesssim \| \p_t \phi \|_\infty \|\phi \|_\infty \int_{Q_1 \cap \, \mathrm{supp} \, \phi} (f-e)^2_+ \dd t \dd x.
  \end{equation*}
  Therefore we deduce
  \begin{align*}
    & \frac12 \left( \int_{B_1} (f-e)^2_+ \phi^2 \dd x \right)\Bigg|_{t=\tau} + \int_{(-1,\tau] \times B_1} \left| \nabla_x (f-e)_+ \right|^2 \phi^2 \dd t \dd x \\
    & \lesssim \Lambda \| \p_t \phi \|_\infty \|\phi \|_\infty \int_{Q_1 \cap \, \mathrm{supp} \, \phi} (f-e)^2_+ \dd t \dd x\\
    & \hspace{1cm} + \Lambda \int_{(-1,\tau] \times B_1} \left|A \nabla_x (f-e)_+ \cdot \nabla_x \phi \right| \phi (f-e)_+ \dd t \dd x \\
    & \lesssim \Lambda \| \p_t \phi \|_\infty \|\phi \|_\infty \int_{Q_1 \cap \, \mathrm{supp} \, \phi} (f-e)^2_+  \dd t \dd x \\
    & \hspace{1cm}+ \Lambda^2 \left( \int_{(-1,\tau] \times B_1}  \left|\nabla_x (f-e)_+ \right|^2 \phi \dd t \dd x \right)^{\frac12}  \left( \int_{(-1,\tau] \times B_1} \left|\nabla_x \phi \right|^2 (f-e)_+^2 \dd t \dd x \right)^{\frac12}.
  \end{align*}
  By splitting the last term into two squares, we get
  \begin{multline*}
    \left( \int_{B_1} (f-e)^2_+ \phi^2 \dd x \right)\Bigg|_{t=\tau} + \int_{(-1,\tau] \times B_1} \left| \nabla_x (f-e)_+ \right|^2 \phi^2 \dd t \dd x \\
    \lesssim \Lambda^4 \left( 1+ \|\nabla_x\phi\|_\infty + \|\p_t \phi\|_\infty \right) \, \int_{Q_1 \cap \, \mathrm{supp} \, \phi} (f-e)^2_+ \dd t \dd x.
  \end{multline*}
  By varying the final time $\tau \in (-1,0]$, we finally obtain the following \textbf{parabolic Caccioppoli inequality}, which is the parabolic counterpart to~\eqref{eq:ee2}: 
  \begin{multline}
    \label{eq:caccio-para}
    \frac12 \sup_{-1 < \tau \le 0} \left( \int_{B_1} (f-e)^2_+ \phi^2 \dd x \right)\Bigg|_{t=\tau} + \int_{Q_1} \left| \nabla_x (f-e)_+ \right|^2 \phi^2 \dd t \dd x \\
    \lesssim \Lambda^4 \left( 1+ \|\nabla_x\phi\|_\infty + \|\p_t \phi\|_\infty \right) \, \int_{Q_1 \cap \, \mathrm{supp} \, \phi} (f-e)^2_+ \dd t \dd x.
  \end{multline}
  
  Given $\phi_k$ a sequence of smooth localisation functions such that \begin{equation*}
    \mathbf 1_{\tilde Q_{k+1}} \le \phi_k \le \mathbf 1_{\tilde Q_k} \quad \text{ and } \quad |\nabla_{t,x} \varphi_k| \lesssim 2^k,
  \end{equation*}
  the sequence of localised energy estimate is therefore
  \begin{equation*}
    \sup_{T_{k+1} \le \tau \le 0} \left( \int_{B_1} \phi^2_{k+1} (f-e_{k+1})_+^2 \dd x \right)\Bigg|_{t=\tau} + \int_{Q_1} |\nabla_x (\phi_{k} (f-e_{k+1})_+)|^2 \dd t \dd x \lesssim \Lambda^4 2^k E_k.
  \end{equation*}
  \medskip
  
  \noindent
  \textbf{B. Parabolic Sobolev embedding.} 
  The second term of the left-hand side of the last equation yields some Sobolev regularity in $x$, which leads to a gain of integrability in this variable by the standard Sobolev embedding in $x$ with $t$ fixed. The gain of integrability in $t$ is ensured by the first term on the left-hand side. By combining the two estimates on the $L^2_t L^{p>2}_x$ norm and the $L^\infty_t L^2_x$ norm, one controls the solution in $L^{p_*}_{t,x}$ with $p_*>2$.
\end{proof}

\subsection{The improved parabolic first De Giorgi lemma}

Similarly to the elliptic case, we can improve the first De Giorgi lemma as follows.

\begin{lemma}[Improved parabolic first De Giorgi Lemma]
  \label{lem:dg2+}
  Given $\Lambda>0$ and $\zeta_0 \in (0,2]$, there is $C>0$, depending only on $\Lambda$, $\zeta_0$ and the dimension $n$, so that the following holds:
  \smallskip

  Let $f$ be a weak subsolution in the parabolic cylinder $Q_R$ to
  \begin{equation*}
    \p_t f \le \nabla_x \cdot \( A \nabla_x f \)
  \end{equation*}
  in the functional setting
  \begin{align*}
    & f \in L^\infty_t((-R^2,0);L^2_x(B_R)) \cap L^2_t((-R^2,0);H^1_x(B_R)) \ \text{ and } \\[1mm]
    & \p_t f \in L^2_t((-R^2,0);H^{-1}_x(B_R)),
  \end{align*}
  with $A$ symmetric measurable so that
  \begin{equation*}
    \Lambda^{-1} \le A(t,x) \le \Lambda \ \text{ for almost every } (t,x) \in Q_R.
  \end{equation*}

  Then for any $\zeta \in (\zeta_0,2]$ and $0<r<R$, we have 
  \begin{equation*}
    \|f\|_{L^\infty(Q_r)} \le C (R-r)^{-\frac{2+n}{\zeta}} \left( \frac{1}{|Q_R|} \int_{Q_R} f^\zeta \dd t \dd x \right)^{\frac{1}{\zeta}}.
  \end{equation*}
\end{lemma}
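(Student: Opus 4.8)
The plan is to reproduce the elliptic argument of Lemma~\ref{lem:dg1+} almost word for word, with balls replaced by the parabolic cylinders~\eqref{eq:base-cyl-para} and Euclidean dilations replaced by the parabolic ones $(t,x)\mapsto(\sigma^2t,\sigma x)$. The only change in the final exponent, from $n/\zeta$ to $(2+n)/\zeta$, comes from the fact that parabolic cylinders have homogeneous dimension $2+n$, i.e.\ $|Q_\rho|\sim\rho^{2+n}$. First I would rescale Lemma~\ref{lem:para-first}: given $0<\rho<\rho'\le R$ and a point $z_0=(t_0,x_0)\in Q_\rho$, the translated parabolic cylinder $(t_0-(\rho'-\rho)^2,t_0]\times B_{\rho'-\rho}(x_0)$ is contained in $Q_{\rho'}$ --- for the time interval one uses $\rho'^2-\rho^2\ge(\rho'-\rho)^2$, for the ball one uses $|x_0|+(\rho'-\rho)<\rho'$ --- and all the hypotheses on $f$, including $\p_t f\in L^2_tH^{-1}_x$, are manifestly preserved by translation and parabolic dilation. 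Applying the rescaled, translated Lemma~\ref{lem:para-first} on this cylinder, evaluating at its top vertex $z_0$, and taking the supremum over $z_0\in Q_\rho$ gives the parabolic analogue of the opening estimate in the proof of Lemma~\ref{lem:dg1+},
\begin{equation*}
  \|f\|_{L^\infty(Q_\rho)}\lesssim(\rho'-\rho)^{-\frac{2+n}{2}}\Big(\frac{1}{|Q_{\rho'}|}\int_{Q_{\rho'}}f^2\dd t\dd x\Big)^{\frac12}.
\end{equation*}

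Next I would lower the integrability exponent from $2$ to $\zeta$. Using $f^2\le\|f\|_{L^\infty(Q_{\rho'})}^{2-\zeta}f^\zeta$ for $\zeta\in(\zeta_0,2]$, followed by Young's inequality with exponents $2/(2-\zeta)$ and $2/\zeta$ to absorb the $L^\infty$ factor, and using $Q_{\rho'}\subset Q_R$, one obtains --- exactly as in the elliptic proof --- the one-step inequality, with $\mathcal N(\rho):=\|f\|_{L^\infty(Q_\rho)}$,
\begin{equation*}
  \mathcal N(\rho)\le\tfrac12\,\mathcal N(\rho')+C\,(\rho'-\rho)^{-\frac{2+n}{\zeta}}\Big(\frac{1}{|Q_R|}\int_{Q_R}f^\zeta\dd t\dd x\Big)^{\frac1\zeta},\qquad 0<\rho<\rho'<R,
\end{equation*}
for some $C=C(n,\Lambda)$.

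I would then close the estimate by the same geometric iteration: set $r_0:=r$ and $r_{k+1}:=r_k+\var(1-\alpha^{-1})\alpha^{-k}$ with $\var:=R-r$ (so $r_k\uparrow R$), and pick the ratio $\alpha:=(3/2)^{\zeta/(2+n)}>1$, which makes $\alpha^{(2+n)/\zeta}=3/2$. Iterating the one-step inequality $\ell$ times yields
\begin{equation*}
  \mathcal N(r)\le2^{-\ell}\mathcal N(r_{\ell+1})+C\,\var^{-\frac{2+n}{\zeta}}\,(1-\alpha^{-1})^{-\frac{2+n}{\zeta}}\Big(\sum_{k=0}^{\ell}\big(\tfrac34\big)^k\Big)\Big(\frac{1}{|Q_R|}\int_{Q_R}f^\zeta\dd t\dd x\Big)^{\frac1\zeta}.
\end{equation*}
The series is summable (to $4$ in the limit), the term $2^{-\ell}\mathcal N(r_{\ell+1})$ vanishes since $\mathcal N$ is finite on cylinders compactly contained in $Q_R$ (itself a consequence of Lemma~\ref{lem:para-first}), and using $\zeta>\zeta_0>0$ to control $(1-\alpha^{-1})^{-(2+n)/\zeta}$ by a constant depending only on $n$ and $\zeta_0$, letting $\ell\to\infty$ gives the claimed bound, with constant depending on $n$, $\Lambda$ and $\zeta_0$.

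I do not expect a genuine obstacle here. The only specifically parabolic points are that in the first step one must check that a backward-in-time parabolic subcylinder of the correct radius fits around every point of $Q_\rho$ (which reduces to the elementary inequality $\rho'^2-\rho^2\ge(\rho'-\rho)^2$), and that the powers of the scaling parameter must be counted with the parabolic homogeneous dimension $2+n$; the interpolation, the Young absorption, and the geometric iteration are then verbatim the elliptic argument. If anything is delicate it is purely the bookkeeping of the parabolic scaling across the first two steps.
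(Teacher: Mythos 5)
Your proof is correct and follows essentially the same route as the paper: rescale/translate the parabolic first De Giorgi lemma to get the $L^\infty$--$L^2$ bound with constant $(\rho'-\rho)^{-(2+n)/2}$ between nested cylinders, interpolate $L^2$ between $L^\zeta$ and $L^\infty$, absorb the $L^\infty$ factor by Young, and close with the same geometric iteration of radii. The extra details you supply (the check $\rho'^2-\rho^2\ge(\rho'-\rho)^2$ for fitting the translated subcylinders, and the explicit choice $\alpha=(3/2)^{\zeta/(2+n)}$) are exactly the bookkeeping the paper leaves implicit.
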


\begin{proof}
  By using the scaling of the equation in the parabolic first De Giorgi lemma, one can prove the following inequality between two general cylinders $Q_r$ and $Q_R$:
  \begin{equation*}
     \|f\|_{L^\infty(B_{r})} \lesssim (R-r)^{-1-\frac{n}{2}} \left( \frac{1}{|Q_R|} \int_{Q_R} f^2 \dd t \dd x \right)^{\frac{1}{2}}.
  \end{equation*}
  Together, the Young inequality and the inequality
  \begin{equation*}
    \left( \frac{1}{|Q_R|} \int_{Q_R} f^2 \dd t \dd x \right)^{\frac{1}{2}} \lesssim \left( \frac{1}{|Q_R|} \int_{Q_R} f^\zeta \dd t \dd x \right)^{\frac12} \|f\|^{1-\frac{\zeta}{2}}_{L^\infty(Q_R)}
  \end{equation*}
  imply that the function $\mathcal N(r):=\|f\|_{L^\infty(Q_r)}$ verifies the following inequality for some $C>0$ and any $0<r<r'<R$:
  \begin{equation*}
    \mathcal N(r) \le \frac{1}{2} \mathcal N(r') + C  (r'-r)^{-\frac{2+n}{\zeta}} \left( \frac{1}{|Q_R|} \int_{Q_R} f^\zeta \dd t \dd x \right)^{\frac1\zeta}.
  \end{equation*}
  
  By the same iteration along the sequence of radii
  \begin{equation*}
    r_{k+1} = r_k + \var \(1-\alpha^{-1}\) \alpha^{-k}
  \end{equation*}
  for $k \ge 1$, some $\alpha>1$, $\var:=R-r$ and $r_0:=r$, we deduce as in the elliptic case that
  \begin{equation*}
    \mathcal N(r) \lesssim_{\zeta_0} \var^{-\frac{2+n}{\zeta}} \left( \frac{1}{|Q_R|} \int_{Q_R} f^\zeta \dd t \dd x \right)^{\frac1\zeta},
  \end{equation*}
  from which the conclusion follows.
\end{proof}

\subsection{The parabolic second De Giorgi lemma}

The parabolic second De Giorgi lemma (reduction of oscillation) resembles the elliptic case, and it implies the Theorem~\ref{thm:dgnm3} by an argument also similar to the elliptic case, but accounting for the slightly different geometry of the parabolic cylinders. 

More generally, the overall structure of the proof of the parabolic second De Giorgi lemma is similar to the elliptic case~\eqref{eq:structure-lemmas}, with parabolic versions of each step. 

\begin{lemma}[Parabolic second De Giorgi Lemma: reduction of oscillation]\label{lem:dg2-para}
  Given $\Lambda>0$, there is $\nu \in (0,1)$, depending only on $\Lambda$ and the dimension $n$, so that the following holds:
  \smallskip
  
  Let $f$ be a weak solution in $Q_3$ to
  \begin{equation}
    \label{eq:EL4-para}
    \p_t f = \nabla_x \cdot ( A \nabla_x f )
  \end{equation}
  in the functional setting $L^2((-9,0];H^1(B_3))$, where $A$ is symmetric measurable and so that
  \begin{equation*}
    \Lambda^{-1} \le A(t,x) \le \Lambda \ \text{ for almost every } (t,x) \in Q_3.
  \end{equation*}
  Then, we have 
  \begin{equation}
    \label{eq:dg2-para}
    \mathrm{osc}_{Q_{\frac12}} f \le \nu \, \mathrm{osc}_{Q_2} f,
  \end{equation}
  where we denote $\mathrm{osc}_B f := \sup_B f - \inf_B f$.
\end{lemma}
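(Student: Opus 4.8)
The plan is to transpose the elliptic scheme~\eqref{eq:structure-lemmas} to the parabolic geometry. The core ingredient is a parabolic \textbf{decrease of supremum lemma}, the analogue of Lemma~\ref{lemma4}: given $\Lambda>0$ and $\mu>0$ there is $\lambda\in(0,1)$ so that if $f\le 1$ is a weak subsolution of~\eqref{eq:para-sub} on $Q_2$ and $|\{f\le 0\}\cap Q^-|\ge\mu\,|Q^-|$ on a sub-cylinder $Q^-\subset Q_2$ lying strictly in the \emph{past} of $Q_{1/2}$ (for instance $Q^-=(-4,-1]\times B_1$), then $\sup_{Q_{1/2}} f\le 1-\lambda$. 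The only structural novelty relative to the elliptic case is that the measure hypothesis and the pointwise conclusion are now separated in time: parabolic smoothing is one-directional, so no instantaneous statement can hold, and these ``offset cylinders'' appear everywhere, exactly as they do in the parabolic Harnack inequality.

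Granting this, Lemma~\ref{lem:dg2-para} follows by the rescaling--dichotomy argument of the elliptic case. If $f$ solves~\eqref{eq:EL4-para} on $Q_3$ it is bounded on $Q_2$ by Lemma~\ref{lem:para-first}; assuming $\mathrm{osc}_{Q_2} f>0$ (otherwise trivial) set
\begin{equation*}
  g:=\frac{2}{\mathrm{osc}_{Q_2} f}\left[f-\frac{\sup_{Q_2} f+\inf_{Q_2} f}{2}\right],
\end{equation*}
which solves~\eqref{eq:EL4-para} on $Q_2$, is valued in $[-1,1]$, and has $\mathrm{osc}_{Q_2} g=2$. On $Q^-$ either $|\{g\le 0\}\cap Q^-|\ge\tfrac12|Q^-|$ or $|\{g\ge 0\}\cap Q^-|\ge\tfrac12|Q^-|$; applying the decrease of supremum lemma with $\mu=1/2$ to $g$, respectively to $-g$ (using that $g$ is simultaneously a sub- and a supersolution), gives $\mathrm{osc}_{Q_{1/2}} g\le 2-\lambda=(1-\lambda/2)\,\mathrm{osc}_{Q_2} g$, and undoing the rescaling yields~\eqref{eq:dg2-para} with $\nu:=1-\lambda/2$.

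The proof of the decrease of supremum lemma is where the genuinely parabolic work lies, and it follows the elliptic template closely. One introduces the zoom sequence $g_k:=2^k[f-(1-2^{-k})]$, so that $(g_k)_+$ is a subsolution valued in $[0,1]$ carrying a uniform energy bound in $L^\infty_t L^2_x\cap L^2_t H^1_x$ from the parabolic Caccioppoli inequality~\eqref{eq:caccio-para}. A \textbf{parabolic intermediate value (isoperimetric) lemma} then feeds a finite induction: either $\|(g_0)_+\|_{L^2}$ already lies below the threshold $\delta$ of Lemma~\ref{lem:para-first} and we are done, or at each step the measure of the intermediate set $\{0<g_k<1/2\}$ is bounded from below, so the measure of $\{g_{k_0}\le 0\}$ inside the relevant cylinder cannot keep increasing, which caps the admissible $k_0$ and forces $\|(g_{k_0+1})_+\|_{L^2}\le\delta$; Lemma~\ref{lem:para-first} then gives $\sup_{Q_{1/2}} g_{k_0+1}\le 1/2$, i.e. $\sup_{Q_{1/2}} f\le 1-2^{-k_0-2}$, whence $\lambda:=2^{-k_0-2}$.

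The main obstacle is the parabolic intermediate value lemma. The clean elliptic statement (Lemma~\ref{lemma5}) used only the spatial $H^1$ regularity of $g_+$ and ignored the equation; this is no longer sufficient, because one must \emph{propagate forward in time} the measure information: knowing that $g_k$ is small on a past space-time set must constrain $g_k$ on a later cylinder. I would prove the required space-time bound by combining the slice-wise elliptic isoperimetric estimate of Lemma~\ref{lemma5}, applied at a.e. fixed time $t$ (legitimate since $g_k(t,\cdot)\in H^1(B_1)$ for a.e. $t$), with a differential-inequality control of $t\mapsto|\{f(t,\cdot)>\theta\}\cap B_1|$ obtained by testing~\eqref{eq:para-sub} against $(f-\theta)_+$, i.e. the sharp $L^\infty_t L^2_x$ form of the energy estimate; alternatively, one runs the constructive trajectory or $L^1$-Poincaré argument of variants~B and~C in the proof of Lemma~\ref{lemma5} along space-time paths adapted to the parabolic cylinders. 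Either route requires carefully reconciling the time windows so that the ``good set at earlier times'' indeed governs the solution on $Q_{1/2}$; once this is arranged, everything else --- the parabolic Caccioppoli inequality~\eqref{eq:caccio-para}, the parabolic Sobolev embedding, Chebyshev, and the covering/rescaling that passes from this lemma to Hölder regularity --- is a routine transcription of the elliptic arguments. One could equally obtain~\eqref{eq:dg2-para} from a parabolic Harnack inequality in the manner of Moser and Kruzhkov, again stated with $\sup$ and $\inf$ on time-offset cylinders, and then the dichotomy above closes the proof with $\nu=(C_{\mathrm H}-1)/(C_{\mathrm H}+1)$.
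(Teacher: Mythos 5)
Your proposal is correct and follows essentially the same route as the paper: reduction of oscillation is deduced by the dichotomy/rescaling argument from a parabolic decrease-of-supremum lemma with the measure condition on a time-offset past cylinder, which is in turn obtained by the finite $g_k=2^k[f-(1-2^{-k})]$ induction from a parabolic intermediate value lemma stated for subsolutions. The one step you leave at sketch level, the parabolic IVL, is exactly where the paper also concentrates its effort, and the two routes you indicate (slice-wise elliptic IVL combined with forward-in-time non-increase of the relevant $L^2$ quantity, i.e. the compactness argument; or the trajectory/$L^1$-Poincar\'e method) are precisely the two proofs the paper gives.
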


Note again that this lemma requires $f$ to be \textbf{both} a subsolution and a supersolution, i.e. to be a solution. Let us show that Lemma~\ref{lem:dg2-para} implies the Hölder regularity, which thus concludes the proof of Theorem~\ref{thm:dgnm3}.
\begin{corollary}
  Given $\Lambda>0$, there is $\alpha \in (0,1)$, depending only on $\Lambda$ and the dimension $n$, so that the following holds:
  \smallskip
  
  Let $f$ be a weak solution in $L^2((0,T) ; H^1(\cU))$, with $\cU \subset \R^n$ an open set, to
  \begin{equation*}
    \p_t f = \nabla_x \cdot \( A \nabla_x f \),
  \end{equation*}
  where $A$ is symmetric measurable and so that
  \begin{equation*}
    \Lambda^{-1} \le A(t,x) \le \Lambda \ \text{ for almost every } (t,x) \in (0,T) \times \cU.
  \end{equation*}

  Then, we have $f \in C^\alpha_{\mathrm{loc}}((0,T) \times \cU)$.
  \smallskip
  
  Moreover, given any $\delta \in (0,T/2)$,  there is $C>0$, depending on $\Lambda$, $\delta$ and the dimension $n$, so that for any $\tilde{\cU} \subset \subset \cU$ with $\text{{\rm dist}}(\tilde \cU,\p \cU)\ge \delta$ and $\delta < T_0 < T_1 < T-\delta$, we have 
  \begin{equation}
    \label{eq:gain-reg-loc-para}
    \|f\|_{C^\alpha((T_0,T_1) \times \tilde{\cU})} \le C \|f\|_{L^2(\cU)}.
  \end{equation}
\end{corollary}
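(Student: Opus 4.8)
The plan is to reproduce, almost verbatim, the elliptic corollary, replacing balls by the parabolic cylinders $Q_r=(-r^2,0]\times B_r$ and the isotropic dilation by the parabolic one $(t,x)\mapsto(r^2t,rx)$. First, by the parabolic first De Giorgi lemma (Lemma~\ref{lem:para-first}), together with covering, translation and parabolic rescaling, $f$ is locally bounded, with a scale-invariant estimate
\[
  \|f\|_{L^\infty(\hat Q)}\le C\,\|f\|_{L^2((0,T)\times\cU)}
\]
for every parabolic sub-cylinder $\hat Q\subset\subset(0,T)\times\cU$, the constant $C$ depending only on $\Lambda$, $n$ and the parabolic distance from $\hat Q$ to the parabolic boundary. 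Given $(T_0,T_1)\times\tilde\cU$ as in the statement, I fix an intermediate cylinder $\hat Q=(\hat T_0,\hat T_1)\times\hat\cU$ with $(T_0,T_1)\times\tilde\cU\subset\subset\hat Q\subset\subset(0,T)\times\cU$ and with the ratio of parabolic distances to the two boundaries controlled, exactly as in~\eqref{eq:lien-dist}.

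Next, I fix $(t_0,x_0)\in(T_0,T_1)\times\tilde\cU$. Choosing $d_0>0$ small enough (depending only on $\delta$, through the margins above), the backward cylinder $(t_0-9d_0^2,t_0]\times B(x_0,3d_0)$ lies inside $\hat Q$. I then set
\[
  g_0(s,y):=f\bigl(t_0+d_0^2s,\ x_0+d_0y\bigr),\qquad
  \tilde g_k(s,y):=g_{k-1}\!\left(\tfrac{s}{16},\tfrac{y}{4}\right),
\]
where the spatial ratio $4$ matches the passage from $Q_2$ to $Q_{1/2}$ in Lemma~\ref{lem:dg2-para} and the time ratio $16=4^2$ is its parabolic counterpart. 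Each $g_k$ solves $\p_t g_k=\nabla_x\cdot(A_k\nabla_x g_k)$ on $Q_3$ with $A_k$ a rescaling of $A$, still symmetric, measurable and valued in $[\Lambda^{-1},\Lambda]$; hence Lemma~\ref{lem:dg2-para} applies to each $g_k$, and using the identity $\mathrm{osc}_{Q_2}g_{k+1}=\mathrm{osc}_{Q_{1/2}}g_k$ one gets by induction
\[
  \mathrm{osc}_{Q_{4^{-k}}}g_1\le \nu^{k-1}\,\mathrm{osc}_{Q_2}g_1\le 2\,\nu^{k-1}\,\|f\|_{L^\infty(\hat Q)}.
\]

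To conclude, for a point $(t,x)$ with $t\le t_0$ whose parabolic distance $\rho:=|t-t_0|^{1/2}+|x-x_0|$ to $(t_0,x_0)$ is small, I choose $k$ with $\rho$ comparable (in the rescaled variables) to $4^{-k}$, so that $(t,x)$ sits in the cylinder realizing $\mathrm{osc}_{Q_{4^{-k}}}g_1$, whence $|f(t,x)-f(t_0,x_0)|\le 2\nu^{k-1}\|f\|_{L^\infty(\hat Q)}\lesssim(4^\alpha\nu)^k\rho^\alpha\|f\|_{L^\infty(\hat Q)}$; the choice $\alpha:=-\ln\nu/(2\ln 2)$ makes $4^\alpha\nu=1$, so $f$ is $\alpha$-Hölder at $(t_0,x_0)$ relative to points in the past. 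For a point with $t>t_0$ one repeats the argument with the backward cylinder re-centered at a time slightly after $t$ (still inside $\hat Q$ by the margin $d_0$), which bounds $|f(t,x)-f(t,x_0)|$ and $|f(t,x_0)-f(t_0,x_0)|$, and combining with the previous estimate patches the forward and backward bounds into a single Hölder modulus for the parabolic distance. Since $\alpha$ depends only on $\nu$, hence only on $\Lambda$ and $n$, and the $L^\infty(\hat Q)$ norm is controlled by $\|f\|_{L^2((0,T)\times\cU)}$ via the first step, this yields~\eqref{eq:gain-reg-loc-para}.

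The main obstacle, absent in the elliptic case, is the one-sided-in-time geometry: Lemma~\ref{lem:dg2-para} only compares the top time-slice of a backward cylinder with a larger backward cylinder, so the iteration controls the oscillation of $f$ at $(t_0,x_0)$ only relative to earlier times, and turning this into a genuine modulus of continuity for the parabolic metric requires the re-centering step and a careful check that the shifted cylinders still fit inside $\hat Q$ uniformly in $(t_0,x_0)$. The remaining differences with the elliptic proof — the anisotropic dilation and the resulting time-ratio $16$ between successive cylinders — are purely a matter of bookkeeping.
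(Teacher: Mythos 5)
Your proposal is correct and follows essentially the same route as the paper: local boundedness from the parabolic first De Giorgi lemma, iteration of the oscillation-reduction Lemma~\ref{lem:dg2-para} on parabolically rescaled functions anchored at each point, and conversion of the geometric decay $\nu^k$ into a Hölder modulus (your exponent is stated for the parabolic metric, the paper's for the Euclidean one, a harmless bookkeeping difference). Your explicit re-centering of the backward cylinders to handle points with $t>t_0$ is in fact a more careful treatment of the one-sided-in-time geometry than the paper's fixed forward shift of the anchor time by $\delta/32$, which as written does not keep $(t_0,x_0)$ inside the shrinking cylinders at small scales.
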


\begin{proof}
  We already have that $f \in \mathrm{L}^\infty_{\mathrm{loc}}(\cU)$ by Lemma~\ref{lem:para-first} with the estimate
  \begin{equation}
    \label{eq:gain-localised-para}
    \forall \, \hat{\cU} \subset \subset \cU, \quad \|f\|_{L^\infty((\delta/2,T-\delta/2) \times \hat{\cU})} \le C \|f\|_{L^2((0,T) \times \cU)}
  \end{equation}
  for a constant $C>0$ depending on $\Lambda$, $\text{dist}(\hat \cU,\p \cU)$, $\delta$ and the dimension $n$.
  
  Let $\mathcal{\tilde{U}} \subset\subset \mathcal{\hat U} \subset \subset \cU$ so that
  \begin{equation}
    \label{eq:lien-dist-para}
    \text{dist}(\mathcal{\tilde{U}},\partial\cU) \le \frac12 \text{dist}(\mathcal{\tilde{U}},\partial\mathcal{\hat U}), \quad \text{dist}(\mathcal{\tilde{U}},\partial\cU) \in \left[\frac{\delta}{4},\frac{\delta}{2}\right], \quad \text{dist}(\mathcal{\tilde{U}},\partial\mathcal{\hat U}) \in \left[ \frac{\delta}{2},\delta\right].
  \end{equation}
  Given $(t_0,x_0) \in (T_0,T_1) \times \mathcal{\tilde{U}}$, we define the following sequence of centred rescaled functions
  \begin{align*}
    & \forall \,  k \geq 1, \ \forall \, (s,y) \in Q_1, \quad \tilde g_k(s,y) := g_{k-1}\(\frac{s}{16},\frac{y}{4}\) \\
    & \forall \, (s,y) \in Q_1, \quad g_0(s,y) := f\(t_0+\frac{\delta}{32} + \frac{\delta s}{16},x_0 +\frac{\delta y}{4}\).
  \end{align*}

  Note that each function $g_k$ is a rescaling of $f$ and solves
  \begin{equation*}
    \p_t g_k = \nabla_x \cdot [A_k \nabla_x g_k]
  \end{equation*}
  in $Q_2$, with the diffusion matrix
  \begin{equation*}
    A_k(s,y) := A\(t_0+\frac{\delta}{32} + \frac{\delta s}{2^{4k}},x_0 + \frac{\delta y}{2^{2k}}\).
  \end{equation*}

  This new diffusion matrix is still measurable and satisfies
  \begin{equation*}
    \Lambda^{-1} \le A_k(s,y) \le \Lambda \ \text{ for almost every } (s,y) \in Q_2.
  \end{equation*}

  Therefore, we can apply Lemma~\ref{lem:dg2} to each $g_k$ and deduce
  \begin{equation*}
    \forall \,  k \geq 1, \quad \mathrm{osc}_{Q_{\frac12}} g_k \, \le \, \nu \, \mathrm{osc}_{Q_2} g_k.
  \end{equation*}
  The scaling relation between successive terms in the sequence $g_k$ yields
  \begin{equation*}
    \forall \,  k \geq 1, \quad \mathrm{osc}_{Q_2} \, g_{k+1} = \mathrm{osc}_{Q_{\frac12}} g_k,
  \end{equation*}
  so we obtain by induction
  \begin{equation*}
    \mathrm{osc}_{Q_{4^{-k}}} g_1 \le \nu^{k-1} \mathrm{osc}_{Q_2} g_1 \le 2 \, \nu^{k-1} \, \|f\|_{L^\infty((T_0/2) \times \hat \cU)}.
  \end{equation*}

  Then, given a point
  \begin{equation*}
    (t,x) \in \(t_0- \frac{\delta}{16},t_0+ \frac{\delta}{16}\) \times B\(x_0, \frac{\delta}{4}\),
  \end{equation*}
  we pick up $k \ge 0$ so that
  \begin{equation*}
    |t-t_0| \in \left[\delta 16^{-k-2},\delta 16^{-k-1}\right], \quad |x-x_0| \in \left[\delta 4^{-k-2},\delta 4^{-k-1}\right],
  \end{equation*}
  and write 
  \begin{equation*}
    |f(t,x) - f(t_0,x_0)|\le 2 \, \nu^{k-1} \, \|f\|_{L^\infty(\hat \cU)},
  \end{equation*}
  so that (using $|t-t_0| \ge \delta 16^{-k-2}$ and $|x-x_0| \ge \delta 4^{-k-2}$)
  \begin{equation*}
    |f(t,x) - f(t_0,x_0)| \lesssim \left[ \( 16^\alpha \nu \)^k |t-t_0|^{\alpha} +\(4^{2\alpha} \nu\)^k \, |x-x_0|^{2\alpha} \right] \, \|f\|_{L^\infty((\delta/2,T-\delta/2) \times \hat \cU)}.
  \end{equation*}
  We then choose
  \begin{equation*}
    \alpha := - \frac{\ln \nu}{4\, \ln 2}
  \end{equation*}
  so that $16^\alpha \nu = 4^{2\alpha} \nu =1$, and deduce $f$ is $C^\alpha$ at $(t_0,x_0)$. (In fact, note that the Hölder regularity exponent is $\alpha$ in $t$ and $2\alpha$ in $x$, which reflects the order of the partial derivatives in each coordinate in the equation). The $\alpha$ does \emph{not} depend on $(t_0,x_0)$, but only on the constant $\nu$ of Lemma~\ref{lem:dg2}.

  We have therefore proved
  \begin{equation*}
    \|f\|_{C^\alpha((\delta,T-\delta) \times \tilde{\cU})} \le C \|f\|_{L^\infty((\delta/2,T-\delta/2) \times \hat \cU)}
  \end{equation*}
  with a constant depending only on $\nu$ and $d_0$. Combined with~\eqref{eq:gain-localised-para} and~\eqref{eq:lien-dist-para}, it implies~\eqref{eq:gain-reg-loc-para} and concludes the proof.
\end{proof}

We shall show that Lemma~\ref{lem:dg2-para} is implied by the following lemma.

\begin{lemma}[Parabolic decrease of supremum Lemma]
  \label{lem:dec-sup-para}
  Given $\Lambda>0$, there is $\lambda \in (0,1)$, depending only on $\Lambda$ and the dimension $n$, so that the following holds:
  \smallskip

  Let $f$ be a weak subsolution in $Q_2$ to
  \begin{equation*}
    \p_t f \le \nabla_x \cdot ( A \nabla_x f)
  \end{equation*}
  in the functional setting
  \begin{align*}
    & f \in L^\infty((-4,0);L^2(B_2)) \cap L^2((-4,0);H^1(B_2)) \\[1mm]
    & \p_t f \in L^2((-4,0);H^{-1}(B_2)),
  \end{align*}
  with $A$ symmetric measurable so that
  \begin{equation*}
    \Lambda^{-1} \le A(t,x) \le \Lambda \ \text{ for almost every } t \in [-4,0] \text{ and } x \in B_2,
  \end{equation*}
  and such that $f \le 1$ and, for some $\mu>0$, 
  \begin{equation*}
    \quad |\tilde Q \cap \{f \le 0\}| \ge \mu |\tilde Q| \quad \text{ with } \quad \tilde Q := (-3/2,-1) \times B_1.
  \end{equation*}

  Then, we have 
  \begin{equation*}
    f \le 1 -\lambda \ \text{ on } \ Q_{1/2} = (-1/2,0) \times B_{1/2}.
\end{equation*}
\end{lemma}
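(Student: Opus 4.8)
The plan is to reproduce, essentially verbatim, the proof of the elliptic decrease-of-supremum Lemma~\ref{lemma4} from the intermediate value Lemma~\ref{lemma5}, following the structure~\eqref{eq:structure-lemmas}: run a finite induction on the rescaled unknowns
\[
g_k := 2^k\big[f-(1-2^{-k})\big], \qquad k\ge 0,
\]
each defined on $Q_2$, close the induction with the parabolic first De Giorgi Lemma~\ref{lem:para-first}, and replace the elliptic intermediate value lemma by its parabolic analogue. The only genuinely new point compared to the elliptic case is this parabolic intermediate value lemma, which (unlike Lemma~\ref{lemma5}) must use the equation, because information has to be transported from earlier to later times.

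First I would record the algebra of the $g_k$: they satisfy $g_{k+1}=2g_k-1$, each $g_k\le 1$ on $Q_2$ (since $f\le1$), one has the pointwise inclusions $\{f\le0\}\subset\{g_k\le0\}\subset\{g_{k+1}\le0\}$, and $g_k<1/2$ implies $g_{k+1}<0$. Each $g_k$ is a subsolution of~\eqref{eq:para-sub} (affine transformation of $f$), hence so is $(g_k)_+$ by the usual convexity argument, and $(g_k)_+$ is valued in $[0,1]$. Applying the parabolic Caccioppoli inequality~\eqref{eq:caccio-para} on $Q_2$ with a cutoff equal to $1$ on $Q_{3/2}\supset \tilde Q\cup Q_1$ and vanishing near the initial time gives a bound on $\|(g_k)_+\|_{L^\infty_t L^2_x\cap L^2_t H^1_x(Q_{3/2})}$ that is \emph{independent of $k$}. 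Finally the measure hypothesis propagates: $|\tilde Q\cap\{g_k\le0\}|\ge|\tilde Q\cap\{f\le0\}|\ge\mu|\tilde Q|$ for all $k$.

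Next comes the dichotomy, with $\delta>0$ the threshold of Lemma~\ref{lem:para-first}. If $\|(g_0)_+\|_{L^2(Q_1)}=\|f_+\|_{L^2(Q_1)}\le\delta$, then Lemma~\ref{lem:para-first} gives $f=g_0\le1/2$ on $Q_{1/2}$ and we are done. Otherwise let $k_0\ge1$ be the largest integer with $\|(g_k)_+\|_{L^2(Q_1)}>\delta$ for all $0\le k\le k_0$; then for $0\le k<k_0$, using $g_{k+1}\le1$,
\[
|Q_1\cap\{g_k\ge1/2\}|=|Q_1\cap\{g_{k+1}\ge0\}|\ \ge\ \int_{Q_1}(g_{k+1})_+^2\ >\ \delta^2 .
\]
Now I apply a parabolic intermediate value lemma — the analogue of Lemma~\ref{lemma5} — to each subsolution $(g_k)_+$ for $0\le k<k_0$: the sublevel set $\{g_k\le0\}$ fills a fixed fraction of the \emph{earlier} cylinder $\tilde Q=(-3/2,-1)\times B_1$, the superlevel set $\{g_k\ge1/2\}$ fills a fixed fraction of the \emph{later} cylinder $Q_1$, and together with the uniform energy bound this forces $|\tilde Q\cap\{0<g_k<1/2\}|\ge\alpha\,|\tilde Q|$ for some $\alpha=\alpha(n,\Lambda,\mu,\delta)>0$. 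Since $\{g_k<1/2\}\subset\{g_{k+1}\le0\}$, iterating the inclusion on $\tilde Q$ yields $|\tilde Q\cap\{g_{k_0}\le0\}|\ge(\mu+k_0\alpha)\,|\tilde Q|$, hence $\mu+k_0\alpha\le1$, i.e. $k_0\le K_0:=(1-\mu)/\alpha$. By maximality of $k_0$ we have $\|(g_{k_0+1})_+\|_{L^2(Q_1)}\le\delta$, so Lemma~\ref{lem:para-first} gives $g_{k_0+1}\le1/2$ on $Q_{1/2}$, which translates into $f\le1-2^{-k_0-2}$ on $Q_{1/2}$. This proves the statement with $\lambda:=2^{-K_0-2}>0$, depending only on $n$ and $\Lambda$.

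The hard part is the parabolic intermediate value lemma itself. Its elliptic counterpart Lemma~\ref{lemma5} is a pure $H^1$/isoperimetric fact making no reference to the PDE; here one must genuinely exploit the equation, since the sublevel hypothesis lives on the earlier cylinder $\tilde Q$ while the superlevel hypothesis lives on the later cylinder $Q_1$. The mechanism is that the energy estimate~\eqref{eq:caccio-para} controls $\p_t (f-e)_+^2$, and hence controls how fast a subsolution that is mostly $\le0$ on $\tilde Q$ can become $\ge1/2$ later; concretely one combines the slicewise elliptic isoperimetric inequality (Lemma~\ref{lemma5} applied on time-slices $\{t\}\times B_1$) with this time-propagation estimate, the time gap between $\tilde Q$ and $Q_{1/2}$ and the forward-in-time nature of parabolic regularisation being exactly what makes the bookkeeping close. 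This is the one substep that does not follow the elliptic proof mechanically; alternatively one may run the constructive trajectory- or Poincaré-based variants of Lemma~\ref{lemma5} in their parabolic form, as in the works cited at the beginning of this section.
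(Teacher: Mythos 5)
Your proposal is correct and follows essentially the same route as the paper, which indeed does not write out this proof but states that it is ``exactly similar to the elliptic case'' and reduces the lemma to the parabolic intermediate value Lemma~\ref{lem:para-ivl} (proved there by compactness or by the trajectory/Poincaré method, exactly the two options you point to). The only cosmetic difference is that the paper's Lemma~\ref{lem:para-ivl} locates the intermediate set in $Q_1\cup\tilde Q$ rather than in $\tilde Q$ alone as you assert, so the measure-accumulation induction should be run on $Q_1\cup\tilde Q$; since $\{g_k<1/2\}\subset\{g_{k+1}\le 0\}$ pointwise, this changes nothing in the bookkeeping.
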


\begin{proof}[Proof of Lemma~\ref{lem:dg2-para} when assuming Lemma \ref{lem:dec-sup-para}]
  Take $f$ a weak solution to~\eqref{eq:EL4-para} in $Q_3$. Assume $\mathrm{osc}_{B_2} f > 0,$ otherwise the statement is trivial. Then, can perform a rescaling similar to~\eqref{eq:osc-resc} and define the new unknown
  \begin{equation*}
    g(x) := \frac{2}{\mathrm{osc}_{B_2} f}  \left[ f(x) - \frac{\sup_{B_2} \, f + \inf_{B_2} \, f}{2} \right].
  \end{equation*}
  This new unknown $g$ is a solution to~\eqref{eq:EL4-para}, is valued in $[-1,1]$, and satisfies
  \begin{equation*}
    \mathrm{osc}_{B_2} \, g = 2.
  \end{equation*}

  Then, certainly,
  \begin{equation*}
    \text{ either } \ |Q_1 \cap \{g \le 0\}| \ge \frac12 \, |Q_1| \quad \text{ or } \quad |Q_1 \cap \{g \ge 0\}| \ge \frac12 \, |Q_1|.
  \end{equation*}
  
  In the first case, we apply Lemma~\ref{lem:dec-sup-para} to the subsolution $g$, with $\mu := 1/2$:
  \begin{equation*}
    \mathrm{osc}_{B_{\frac12}} g \le (2-\lambda) \le \(1 - \frac{\lambda}{2}\) \, \mathrm{osc}_{B_2} g
  \end{equation*}
  for some $\lambda \in (0,1)$. Going back to $f$, this means 
  \begin{equation*}
    \mathrm{osc}_{B_{\frac12}} f \le \(1- \frac{\lambda}{2}\) \, \mathrm{osc}_{B_2} f,
  \end{equation*}
  yielding Lemma~\ref{lem:dg2-para} with $\nu = 1-\lambda/2$. In the second case, we perform the same argument on the subsolution $-g$. Note that we are therefore using that both $f$ and $-f$ are subsolutions, i.e. that  $f$ is a solution.
\end{proof}

In turn, the parabolic decrease of supremum lemma~\ref{lem:dec-sup-para} is itself implied by a parabolic version of the De Giorgi intermediate value lemma, which we now state. The proof is exactly similar to the elliptic case and is not repeated.

\begin{lemma}[Parabolic intermediate value Lemma]
  \label{lem:para-ivl}
  Given $\Lambda>0$, $\mu>0$ and $\delta>0$, there is $\alpha>0$, depending only on $\Lambda$, $\mu$, $\delta$ and the dimension $n$, so that the following holds:
  \smallskip
  
  Let $f$ be a weak subsolution in $Q_2$ to
  \begin{equation*}
    \p_t f \le \nabla_x \cdot ( A \nabla_x f)
  \end{equation*}
  in the functional setting
  \begin{align*}
  & f \in L^\infty((-4,0);L^2(B_2)) \cap L^2((-4,0);H^1(B_2)) \\
  & \p_t f \in L^2((-4,0);H^{-1}(B_2)),
  \end{align*}
  with $A$ symmetric measurable so that
  \begin{equation*}
    \Lambda^{-1} \le A(t,x) \le \Lambda  \ \text{ for almost every } t \in [-4,0] \text{ and } x \in B_2,
  \end{equation*}
  and such that $f \le 1$ and, for some $\mu, \delta >0$,
  \begin{align*}
    & |\{f \ge 1/2\} \cap Q_1| \ge \delta |Q_1|, \\[1mm]
    & |\{f\le 0\} \cap \tilde Q| \ge \mu |\tilde Q| \\[1mm]
    & \text{ where } \tilde Q := (-3/2,-1) \times B_1.
  \end{align*}

  Then, we have
  \begin{equation*}
    |\{0<f<1/2\} \cap (Q_1 \cup \tilde Q)| \ge \alpha |Q_1 \cup \tilde Q|.
  \end{equation*}
\end{lemma}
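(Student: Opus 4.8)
The plan is to run the constructive trajectory argument from the proof of Lemma~\ref{lemma5} (method~B), upgraded to the spacetime setting, with the parabolic Caccioppoli inequality~\eqref{eq:caccio-para} supplying the regularity in time that the truncation does not carry on its own. First I would set $\tilde g := \big(\min(f,1/2)\big)_+$, so that $0\le\tilde g\le 1/2$, with $\tilde g=0$ where $f\le 0$ and $\tilde g=1/2$ where $f\ge 1/2$. Since $f\le 1$ and $f$ is a weak subsolution, applying~\eqref{eq:caccio-para} to $(f-e)_+$ with $e=0$ and a cutoff localising $(-3/2,0]\times B_1$ inside $Q_2$ gives $\|\nabla_x f_+\|_{L^2}^2+\sup_t\int(f_+)^2\,\dd x\lesssim_n\Lambda^4$; since $|\nabla_x\tilde g|\le|\nabla_x f_+|\,\mathbf 1_{\{0<f<1/2\}}$ a.e.\ and $\tilde g\le 1/2$ pointwise, $\tilde g$ lies in $L^\infty_t L^2_x\cap L^2_t H^1_x$ on $(-3/2,0]\times B_1$ with a bound depending only on $n$ and $\Lambda$, and its Dirichlet energy is carried entirely by the intermediate set $\{0<f<1/2\}$. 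A Fubini argument then isolates good time slices: from $|\{f\ge 1/2\}\cap Q_1|\ge\delta|Q_1|$ one gets $T_1\subset(-1,0]$ with $|T_1|\gtrsim\delta$ and $|\{\tilde g(t,\cdot)=1/2\}\cap B_1|\gtrsim\delta|B_1|$ for $t\in T_1$; from $|\{f\le 0\}\cap\tilde Q|\ge\mu|\tilde Q|$ one gets $T_0\subset(-3/2,-1)$ with $|T_0|\gtrsim\mu$ and $|\{\tilde g(\tau,\cdot)=0\}\cap B_1|\gtrsim\mu|B_1|$ for $\tau\in T_0$. Every pair $(\tau,t)\in T_0\times T_1$ satisfies $\tau<-1<t$, hence $0<t-\tau\le 3/2$.

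The heart of the proof is a transport estimate linking such a past slice to such a present slice. Fixing $(\tau,t)\in T_0\times T_1$ (both times being Lebesgue points of $t\mapsto\tilde g(t,\cdot)\in H^1_x$), a point $x$ with $\tilde g(\tau,x)=0$ and a point $y$ with $\tilde g(t,y)=1/2$, I would split
\begin{equation*}
  \tfrac12 \;=\; \tilde g(t,y)-\tilde g(\tau,x) \;=\; \big[\tilde g(t,y)-\tilde g(t,x)\big] \;+\; \big[\tilde g(t,x)-\tilde g(\tau,x)\big].
\end{equation*}
The first bracket is a spatial increment at the frozen time $t$ and is bounded by $|y-x|\int_0^1|\nabla_x\tilde g(t,x+\sigma(y-x))|\,\dd\sigma$, exactly as in the elliptic proof. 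The second bracket is the genuinely parabolic term: $\tilde g$ has no pointwise time derivative, so one must use the equation. Testing the weak inequality $\p_t f\le\nabla_x\cdot(A\nabla_x f)$ against a nonnegative spatial mollifier $\psi_\rho$ centred at $x$ at scale $\rho$ and integrating over $(\tau,t)$ bounds the increment of the $\psi_\rho$-average of $f$ (and, after passing to $f_+$ and the truncation, of $\tilde g$) by $\Lambda\rho^{-n-1}\int_\tau^t\!\int_{B_\rho(x)}|\nabla_x f|\,\dd s\,\dd z$, while the error between a pointwise value of $\tilde g$ at the frozen times $\tau$ and $t$ and its $\psi_\rho$-average is controlled by the standard potential estimate by $\rho^{1-n}\int_{B_\rho(x)}|\nabla_x\tilde g|$; optimising $\rho$ against these errors replaces the second bracket by an integral of $|\nabla_x\tilde g|$ and $|\nabla_x f|$ over a spacetime neighbourhood of the segment from $(\tau,x)$ to $(t,x)$.

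Integrating $y$ over $\{\tilde g(t,\cdot)=1/2\}\cap B_1$, then $x$ over $\{\tilde g(\tau,\cdot)=0\}\cap B_1$, then $(\tau,t)$ over $T_0\times T_1$, applying a rearrangement inequality in the spirit of the trajectory method in Lemma~\ref{lemma5}, and using $\int_{\{0<f<1/2\}}|\nabla_x\tilde g|\le|\{0<f<1/2\}|^{1/2}\|\nabla_x\tilde g\|_{L^2}$ (Cauchy--Schwarz, since $\nabla_x\tilde g$ is supported on the intermediate set), one reaches an inequality of the shape
\begin{equation*}
  \mu\,\delta \;\lesssim_{n}\; \|\nabla_x\tilde g\|_{L^2((-3/2,0]\times B_1)}\;\big|\{0<f<1/2\}\cap(Q_1\cup\tilde Q)\big|^{\theta}
\end{equation*}
for some exponent $\theta\in(0,1)$ (comparable to the elliptic $1/2$, possibly slightly worse after the spacetime rearrangement). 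Since $\|\nabla_x\tilde g\|_{L^2}^2\lesssim_n\Lambda^4$ by the first step, this rearranges into $|\{0<f<1/2\}\cap(Q_1\cup\tilde Q)|\ge\alpha\,|Q_1\cup\tilde Q|$ with $\alpha=\alpha(n,\Lambda,\mu,\delta)>0$, which is the claim. One may instead run the softer Poincaré variant (method~C of Lemma~\ref{lemma5}): apply an $L^1$ parabolic Poincaré inequality to $\tilde g$ on a cylinder containing both $\tilde Q$ and $Q_1$ — the time oscillation being again paid for through the equation — and use that the average of $\tilde g$ is $\gtrsim\delta$ there while $\tilde g$ vanishes on a set of relative measure $\gtrsim\mu$; this gives the same conclusion with a worse power of the small set and is the route that transposes to the hypoelliptic setting.

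The main obstacle is precisely the second bracket above. In the elliptic case the two competing sets are connected by a spatial trajectory with no input from the equation; here they live in \emph{different} time slabs, so the information ``$f$ is small on a large set'' must be transported \emph{forward in time}, and the only mechanism for that is the time regularity of $f$, which comes from the weak formulation of the equation and the parabolic Caccioppoli inequality — which is also why $\alpha$ now depends on $\Lambda$, unlike the elliptic constant. Making this transport quantitative — keeping the trade-off between the mollification scale $\rho$ and the potential/commutator errors uniform over the base points and over both time slices, so that the final constant depends only on $n,\Lambda,\mu,\delta$ — is the only delicate bookkeeping; the rearrangement step, the Hölder inequality, and the passage to a measure lower bound are carried out verbatim as in the proof of Lemma~\ref{lemma5}.
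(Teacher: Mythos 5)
Your overall architecture --- freeze a future time for the spatial increment, and use the weak formulation of the equation to transport the smallness of $f$ forward in time from $\tilde Q$ to $Q_1$ --- is the right one, and it is essentially a hand-rolled version of the paper's constructive route, which packages the time-transport step into the parabolic Poincar\'e inequality of Theorem~\ref{thm31para} (trajectories from $Q_r^-$ and $Q_r^+$ into an intermediate cylinder $Q_r^0$, with the subsolution inequality integrated by parts along them) and then derives the intermediate value lemma from it. However, there is a genuine gap in your final assembly. The time bracket $\tilde g(t,x)-\tilde g(\tau,x)$ cannot be estimated through $\tilde g$ itself, because $\tilde g=\min(f_+,1/2)$ is a \emph{concave} truncation of a subsolution and is therefore not a subsolution (for concave increasing $\Phi$ one gets $\p_t\Phi(f)\le\nabla_x\cdot(A\nabla_x\Phi(f))-\Phi''(f)A\nabla_xf\cdot\nabla_xf$, and the extra term has the wrong sign). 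You must run the mollified transport estimate on $f_+$, and it then produces $\int|\nabla_x f_+|$ over a spacetime neighbourhood of the segment --- the \emph{full} gradient, including its part on $\{f\ge 1/2\}$. Writing $f_+=\tilde g+(f-1/2)_+$, the extra contribution $\int|\nabla_x(f-1/2)_+|$ is controlled by the Caccioppoli inequality~\eqref{eq:caccio-para} only by $\|(f-1/2)_+\|_{L^2}\lesssim 1$, i.e.\ by an absolute constant: it is \emph{not} supported on the intermediate set and does \emph{not} shrink with $|\{0<f<1/2\}|$. Your claimed inequality $\mu\delta\lesssim\|\nabla_x\tilde g\|_{L^2}\,|\{0<f<1/2\}|^{\theta}$ therefore degenerates to $\mu\delta\lesssim 1$, which gives no lower bound on the measure of the intermediate set. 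The same objection applies to your alternative route: the parabolic Poincar\'e inequality cannot be applied to $\tilde g$ (not a subsolution), and applied to $f_+$ its right-hand side $\int_{Q_1}|\nabla_x f_+|$ contains the same uncontrolled piece.

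This is exactly the obstruction the paper resolves by raising the upper cutoff from $1/2$ to $1-\theta$ with $\theta$ small: in the decomposition $I_1+I_2+I_3$ of $\int|\nabla f_+|$ (see the proof of Lemma~\ref{lem:kin-ivl} from~\eqref{gmpoi}), the term $I_3=\int_{\{f\ge 1-\theta\}}|\nabla(f-(1-\theta))_+|$ is bounded via Caccioppoli by $C\theta$, which is absorbed into the left-hand side $\gtrsim\mu\delta$ once $\theta\lesssim\mu\delta$; the remark after Lemma~\ref{lem:para-ivl} records that this weaker statement (cutoff close to $1$ instead of $1/2$) suffices for the reduction of oscillation. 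To repair your proof you should either adopt this modified cutoff and absorb the extra term, or fall back on the paper's compactness argument, which proves the lemma at level $1/2$ but non-quantitatively (Aubin--Lions, the elliptic Lemma~\ref{lemma5} applied on time slices of the limit, and the no-upward-jump property of $t\mapsto\|g(t,\cdot)\|_{L^2}$ for subsolutions).
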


The setting of the parabolic decrease of maximum lemma and the parabolic intermediate value lemma is represented in Figure~\ref{fig:para-ivl}. 
\begin{figure}
  \includegraphics[scale=0.65]{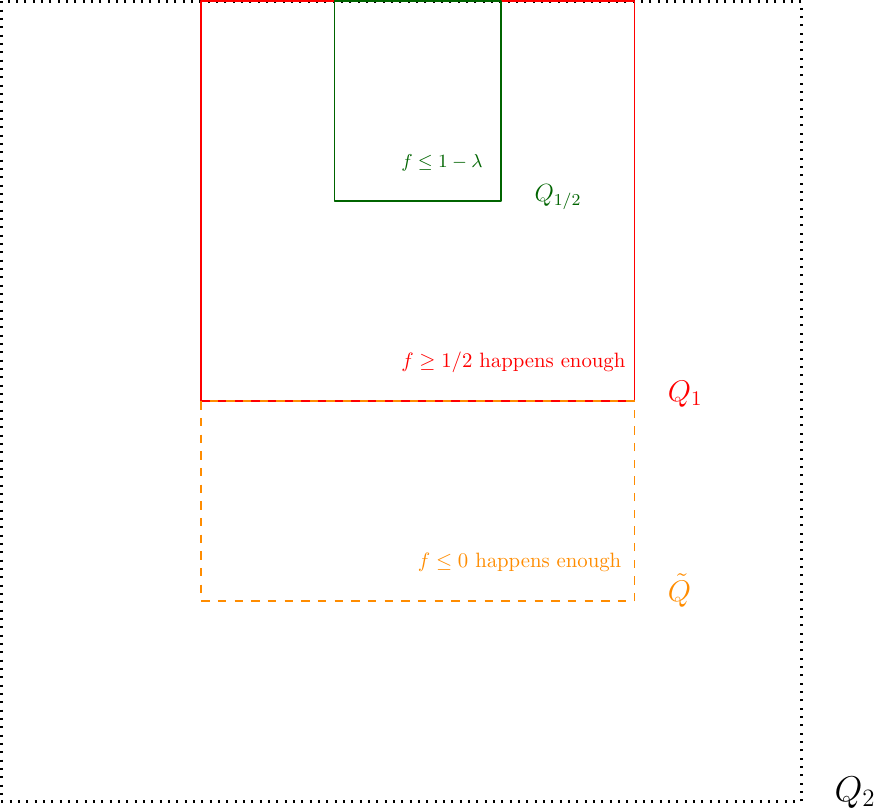}
  \caption{Setting of the second De Giorgi lemma in the parabolic case.}
  \label{fig:para-ivl}
\end{figure}

We first briefly sketch the extension of the non-constructive argument by compactness and contradiction to the parabolic setting, and then turn to the constructive argument based on trajectories. 

\begin{proof}[Proof of Lemma~\ref{lem:para-ivl} by non-constructive compactness method]
  \text{ } \\
  Consider a sequence $f_k$ a of weak subsolutions in $Q_2$ so that $f_k \le 1$ and
  \begin{align*}
    & |\{f_k \ge 1/2\} \cap Q_1| \ge \delta|Q_1| > 0, \\[2mm]
    & |\{f_k\le 0\} \cap (-3/2,-1) \times B_1| \ge \mu |(-3/2,-1) \times B_1|, \\[2mm]
    & |\{0<f_k<1/2\} \cap (Q_1 \cup \tilde Q)| \to 0.
  \end{align*}
  
  Then $(f_k)_+$ is valued in $[0,1]$ so uniformly bounded in $L^2(Q_2)$, and the energy estimate implies it is uniformly bounded in $L^2_tH^1_x (Q_1 \cup \tilde Q)$.
  
  Moreover by integrating the inequation against a localisation function (\emph{without} squaring the unknown) one can establish a uniform bound of $\p_t (f_k)_+$ in
  \begin{equation*}
    M^1(Q_1 \cup \tilde Q) \cap L^\infty_t W^{-1,1}_x(Q_1 \cup \tilde Q),
  \end{equation*}
  where $M^1$ is the space of non-negative measure normed by the total variation. These bounds are sufficient to apply the Aubin-Lions lemma~\cite{MR152860,MR259693} (or equivalent compactness results) and deduce that, for possibly a subsequence that we still denote as the original sequence, $(f_k)_+ \to g$ strongly in $L^2_{t,x}(Q_1 \cup \tilde Q)$, almost everywhere, and weakly in $L^2_tH^1_x (Q_1 \cup \tilde Q)$.

  These convergences are now strong enough to deduce that the limit satisfies
  \begin{align}
    \label{eq:lim-zero}
    & |\{0<g<1/2\} \cap (Q_1 \cup \tilde Q)|=0, \\[2mm]
  \label{eq:lim-pos}
    & |\{g \ge 1/2\} \cap Q_1| \ge \delta|Q_1| > 0, \\[2mm]
    \label{eq:lim-pos2}
    & |\{g\le 0\} \cap \tilde Q| \ge \mu |\tilde Q|.
  \end{align}
  And the weak convergence in $L^2_t H^1_x$ implies that for almost every $t \in (-3/2,0)$:
  \begin{equation*}
    \nabla_x g(t,\cdot) \in L^2(B_1).
  \end{equation*}

  The $H^1(B_1)$ bound in $x$ for almost every $t$, combined with~\eqref{eq:lim-zero}, provide us with the assumptions to apply the elliptic intermediate value lemma~\ref{lemma5} for almost every $t$. It implies, for almost every $t$, that either $g(t,\cdot) = 0$ almost everywhere on $B_1$, or  $g(t,\cdot) \ge 1/2$ almost everywhere on $B_1$. But the lower bound~\eqref{eq:lim-pos2} implies on the one hand that there are $t \in (-3/2,-1)$ such that $g(t,\cdot) = 0$ almost everywhere on $B_1$. On the other hand, the energy estimate shows that the function $t \mapsto \|g(t,\cdot)\|_{\mathrm{L}^2(B_1)}$ cannot jump upwards on $(-3/2,0)$. Therefore, $g(t,x) = 0$ almost everywhere on $(0,1) \times B_1$ which contradicts~\eqref{eq:lim-pos}.
\end{proof}

\begin{remarks}
  \begin{enumerate}
  \item Note that the parabolic intermediate value lemma now assumes $f$ to be a subsolution to the PDE, not simply to be in a certain Sobolev space. It could in fact be traded for the infinite family of energy estimates, which would be sufficient to prove the parabolic first De Giorgi lemma, the parabolic decrease of supremum lemma and the parabolic intermediate value lemma. Functions satisfying such infinite family of energy estimates are said to belong to a \textbf{De Giorgi class}. In the kinetic case, no equivalent of the De Giorgi classes has been identified yet, and the kinetic counterparts of these results are proved only for subsolutions to the PDE itself.
  \item As in the elliptic setting, the cutoff value $1/2$ in the intermediate value lemma could be replaced by $\theta \in (0,1)$ close to $1$, resulting in a slightly weaker statement that is nevertheless sufficient for proving the reduction of oscillation. 
  \item This contradiction argument was extended to the hypoelliptic case in~\cite{golse2019harnack}, as we discuss in the next section.
  \item Note the time delay between $\tilde Q$ and $Q_{1/2}$ in the parabolic decrease of supremum lemma, but the absence of a time delay between $Q_1$ and $\tilde Q$ in the parabolic intermediate value lemma. In the kinetic setting, a gap is needed \textbf{also} in the intermediate value lemma, as first noted in~\cite{guerand2020quantitativeregularityparabolicgiorgi}. Intuitively, this is because when the diffusive term vanishes, the trajectories are simply vertical in the $(t,x)$ representation in the parabolic, whereas they can be tilted in many ways in the kinetic case: we give a counter-example in the next section.
  \item In both the elliptic and parabolic settings (as well as in the kinetic setting in the next section), variants of the previous results hold in the presence of lower order terms, provided they have sufficient integrability. We do not address this issue to keep these notes short, but lower order terms are treated in the references~\cite{MR4398231,guerand2020quantitativeregularityparabolicgiorgi} in the parabolic case for instance. 
  \item The functional setting of the weak subsolutions we consider could be relaxed to
    \begin{equation*}
      f \in L^1_t((0,T);L^2_x(\cU)) \cap L^2_t(0,T);H^1_x(\cU))
    \end{equation*}
    by careful approximation procedures, see~\cite{auscher2024weaksolutionskolmogorovfokkerplanckequations,dmnz}.
  \end{enumerate}
\end{remarks}

\begin{proof}[Proof of Lemma~\ref{lem:para-ivl} by constructive method based on trajectories]
  \text{ } \\
  Lemma~\ref{lem:para-ivl} is in fact implied by the following ``parabolic Poincar\'e inequality'': 
  \begin{theorem}[Parabolic Poincaré inequality]
  \label{thm31para}
  Given $\Lambda>0$, there is $C>0$, depending only on $\Lambda$ and the dimension $d$, so that the following holds:
  \smallskip
  
  Let $r=1/20$ and $f \ge 0$ be a weak subsolution in $Q_1$ to
  \begin{equation*}
    \p_t f \le \nabla_x \cdot \( A \nabla_x f \)
  \end{equation*}
  with $A$ symmetric measurable so that
  \begin{equation*}
    A(t,x) \le \Lambda \ \text{ for almost every } (t,x) \in Q_1.
  \end{equation*}

  Then $f$ satisfies the following inequality
  \begin{equation}
    \label{gmpoi-para}
    \int_{Q_r ^+} \(f- \langle f\rangle_{Q^-_r} \)_+ \dd t \dd x \le C \int_{Q_1} |\nabla_x f| \dd t \dd x,
  \end{equation}
  where $Q_r^-$ is defined by
  \begin{equation*}
    Q_r^- := \left\{ -3 r^2 < t < -2r^2, \quad |x| < r  \right\},
  \end{equation*}
  and the average $\langle f\rangle_{Q^-_r}$ is defined by
   \begin{equation*}
     \langle f\rangle_{Q^-_r} := \frac{1}{|Q^-_r|} \int_{Q^-_r} f \dd t \dd x.
   \end{equation*}
\end{theorem}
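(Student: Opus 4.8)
The plan is to prove this integrated Poincaré estimate by a trajectory argument adapted to the parabolic geometry: to compare a value of $f$ inside $Q_r^+$ with the average of $f$ over $Q_r^-$, I first move in $x$ at a fixed time (following the diffusion directions $\p_{x_i}$) and then move \emph{forward} in $t$ (following the drift $\p_t$). The first kind of move is controlled by the fundamental theorem of calculus, while the second is controlled by the subsolution inequality. The geometry is arranged precisely so that every time occurring in $Q_r^-$, namely $(-3r^2,-2r^2)$, strictly precedes every time occurring in $Q_r^+$, namely $(-r^2,0]$; this time gap is what makes the forward-in-time move possible, and the smallness of $r=1/20$ (only $3r^2<1$ and $B_r\subset B_1$ are really used) keeps all the trajectories inside $Q_1$, where the subsolution inequality and the bound $A\le\Lambda$ hold.

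Concretely, I would fix once and for all a nonnegative bump $\psi(x):=\chi(x/r)$ with $\chi\in C^\infty_c(B_1)$, $0\le\chi\le1$, $\chi\equiv1$ on $B_{1/2}$, and set $M:=\int\psi>0$; then $\supp\psi\subset B_r\subset B_1$, $\|\psi\|_\infty=1$, and $M$ and $\|\nabla_x\psi\|_\infty$ are fixed constants since $r$ is fixed. All the manipulations below are carried out on a.e.\ time slice, using that $f(t,\cdot)\in H^1(B_1)\subset W^{1,1}(B_1)$ for a.e.\ $t$, and that $t\mapsto\int_{B_1}f(t,\cdot)\,\psi$ is continuous (which follows from $f\in L^2_tH^1_x$ together with $\p_tf\in L^2_tH^{-1}_x$; the slice $\{t_1=0\}$ has zero measure in the $Q_r^+$ integral, so one may take $t_1<0$).

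The proof then consists of three elementary steps. First, a \emph{spatial Poincaré at the top}: for a.e.\ $(t_1,x_1)\in Q_r^+$ one writes $f(t_1,x_1)-\tfrac1M\int f(t_1,y)\psi(y)\dd y=\tfrac1M\int(f(t_1,x_1)-f(t_1,y))\psi(y)\dd y$, bounds the integrand by $\int_0^1|x_1-y|\,|\nabla_xf(t_1,y+s(x_1-y))|\dd s$, and applies the elementary $L^1$ two-point Poincaré inequality on $B_r$ (namely $\int_{B_r}\int_{B_r}|h(x)-h(y)|\dd x\dd y\lesssim r|B_r|\int_{B_r}|\nabla h|$) to obtain $\int_{Q_r^+}\big|f-\tfrac1M\int f(t_1,\cdot)\psi\big|\lesssim\int_{Q_1}|\nabla_xf|$. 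Second, a \emph{time step}: testing $\p_tf\le\nabla_x\cdot(A\nabla_xf)$ against $\theta(t)\psi(x)$ with $\theta\ge0$ approximating $\mathbf{1}_{[t_*,t_1]}$ and using $\|A\|_{\mathrm{op}}\le\Lambda$ gives, for a.e.\ $t_*<t_1$,
\[
  \tfrac1M\!\int f(t_1,\cdot)\psi-\tfrac1M\!\int f(t_*,\cdot)\psi\le\tfrac\Lambda M\!\int_{t_*}^{t_1}\!\!\int|\nabla_xf|\,|\nabla_x\psi|\dd x\dd t\le C\!\int_{Q_1}|\nabla_xf|,
\]
which applies to every $t_*\in(-3r^2,-2r^2)$ since these times precede $t_1$. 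Third, a \emph{spatial Poincaré at the bottom plus time averaging}: the same two-point Poincaré on $B_r$ yields $\big|\tfrac1M\int f(t_*,\cdot)\psi-\langle f(t_*,\cdot)\rangle_{B_r}\big|\lesssim\int_{B_r}|\nabla_xf(t_*,\cdot)|$, and averaging over $t_*\in(-3r^2,-2r^2)$ together with $\frac1{r^2}\int_{-3r^2}^{-2r^2}\langle f(t_*,\cdot)\rangle_{B_r}\dd t_*=\langle f\rangle_{Q_r^-}$ and $\frac1{r^2}\int_{Q_r^-}|\nabla_xf|\le\frac1{r^2}\int_{Q_1}|\nabla_xf|$ produces $\frac1{r^2}\int_{-3r^2}^{-2r^2}\tfrac1M\int f(t_*,\cdot)\psi\,\dd t_*\le\langle f\rangle_{Q_r^-}+C\int_{Q_1}|\nabla_xf|$. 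Chaining the three steps, a.e.\ on $Q_r^+$ one gets $f(t_1,x_1)\le\langle f\rangle_{Q_r^-}+R(t_1,x_1)+C\int_{Q_1}|\nabla_xf|$ with $\int_{Q_r^+}|R|\lesssim\int_{Q_1}|\nabla_xf|$; taking positive parts and integrating over $Q_r^+$ (whose measure is a fixed constant) gives the claimed inequality.

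The main obstacle, and the feature genuinely absent from the elliptic case, is that the time evolution can only be used in the weak, $\psi$-averaged form of the second step: one cannot subtract pointwise values of $f$ at two different times, so one is forced to ``enter'' and ``exit'' the fixed test function $\psi$ through the two spatial Poincaré estimates, and correspondingly one needs a genuine time gap between $Q_r^-$ and $Q_r^+$. Everything else — the $L^1$ two-point Poincaré on $B_r$, the approximation of $\mathbf{1}_{[t_*,t_1]}$ by admissible test functions (via the continuity of $t\mapsto\int f(t,\cdot)\psi$), checking that the various $r$-dependent prefactors are fixed constants, and the Fubini bookkeeping — is routine.
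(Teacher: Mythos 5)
Your proof is correct, but it takes a genuinely different route from the paper. The paper proves the parabolic Poincaré inequality as a direct specialisation of its kinetic argument: it introduces an intermediate cylinder $Q^0_r$ between $Q^-_r$ and $Q^+_r$, connects points of $Q^\pm_r$ to points of $Q^0_r$ by curved space-time trajectories $\Gamma_\pm(s)$ with $\ds X_\pm(s)\sim s^{-1/2}$, integrates $\ds f(\Gamma_\pm(s))=\delta_\pm(\cT f)(\Gamma_\pm)+X_\pm'(s)\cdot\nabla_x f(\Gamma_\pm)$ along these paths, uses the subsolution inequality to replace $\cT f$ by $\nabla_x\cdot(A\nabla_x f)$, and then integrates by parts and performs changes of variables whose Jacobians must be controlled (the $O(s^{-1/2})$ bounds). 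You instead decompose the comparison into a \emph{rectangular} path: two purely spatial moves at frozen times (handled by the elementary two-point $L^1$ Poincaré inequality on $B_r$, which needs no PDE) and one purely temporal move carried by the fixed weight $\psi$, for which the subsolution property gives the one-sided monotonicity $\int f(t_1)\psi\le\int f(t_*)\psi+\Lambda\|\nabla_x\psi\|_\infty\int_{Q_1}|\nabla_x f|$. This is simpler and avoids the intermediate cylinder, the singular controls and the Jacobian estimates entirely; the price is that it exploits the full spatial ellipticity of the parabolic equation (slice-wise Poincaré in \emph{all} spatial directions), so unlike the paper's trajectory construction it does not transfer to the kinetic case, where no spatial Poincaré at fixed time is available and the curved trajectories generated by $\cT$ and $\nabla_v$ are genuinely needed. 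Within the parabolic setting your argument is complete: the time gap $-2r^2<-r^2$ guarantees $t_*<t_1$, the defect measure of the subsolution only reinforces the one-sided time estimate, and all $r$-dependent constants are harmless since $r=1/20$ is fixed.
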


\begin{remarks}
  \begin{enumerate}
  \item This parabolic Poincaré inequality is inspired by the kinetic Poincaré inequality in Theorem~\ref{thm31} in the next section, to which we refer for further discussion. A partial version of this parabolic Poincaré inequality was first obtained in~\cite{MR4398231,guerand2020quantitativeregularityparabolicgiorgi}.
  \item Intuitively this theorem measures how much (in $L^1$) the
    solution $f$ can grow above its past average over $Q_r^-$, in terms of a bound on $\nabla_x f$ in $L^1$ in a bigger cube \emph{in the
      present}, provided that $f$ is a subsolution on such a bigger
    cube. This is therefore a \textbf{time-oriented integral control on the oscillation}. For instance if $\nabla_x f \equiv 0$, then $f$ depends only on $t$ and solves $\p_t f \le 0$ so is non-increasing, and the inequality~\eqref{gmpoi-para} is clear.  
  \item This theorem only requires the bound from above on $A$, not the coercivity. However, the coercivity of $A$ is needed to derive the parabolic intermediate value lemma from the parabolic Poincaré inequality. 
\end{enumerate}
\end{remarks}

\begin{proof}[Proof of Theorem~\ref{thm31para} (the parabolic Poincaré inequality)]
  \text{ } \\
  The full, and more powerful, argument will be presented in the next section in the kinetic case. We only stress the modifications and simplifications in the parabolic case with respect to the kinetic case. The setting with the three cylinders $Q_r^-$, $Q_r$ and $Q^0_r$ between $Q_r^-$ and $Q_r$ is similar to the kinetic case and is explained in the next section (see Figure~\ref{fig:trajectories}). The intermediate cylinder is useful for introducing a smooth localisation function.
  \medskip

  \noindent
  \textbf{A. Construction of the trajectories.} 
  We ues the letter ``$z$'' to denote the joint variable $z=(t,x)$.

  Given three points
  $z_+ \in Q^+, z_0 \in Q^0$ and $z_- \in Q^-$, we want to
  construct two paths $s \rightarrow \Gamma_+(s)$ and
  $s \rightarrow \Gamma_-(s)$ for $s \in [0, 1]$ such that
  \begin{equation*}
    \begin{aligned}
      & \Gamma_+(s) = \left(T_+(s),X_+(s)\right), \qquad \Gamma_+(0) = z_+, \qquad \Gamma_+(1) =
      z_0,\\
      & \Gamma_-(s) = \left(T_-(s), X_-(s)\right), \qquad \Gamma_-(0) = z_-, \qquad \Gamma_-(1) =
      z_0,
    \end{aligned}
  \end{equation*}
  where
  \begin{equation}
    \label{eq:control-para}
    \begin{aligned}
      \begin{cases}
        \displaystyle
        \ds X_{\pm}(s)
        = \mathsf m_\pm  s^{-\frac12} \\[3mm]
        \displaystyle 
        \ds T_\pm(s) = \delta_\pm,
      \end{cases}
    \end{aligned}
  \end{equation}
  where $\delta_\pm := t_0 - t_\pm$ and and
  $\mathsf m _\pm \in \R^d$. Solving the differential equations yields
  \begin{equation}
    \label{eq:control-solved-para}
    \begin{aligned}
      \begin{cases}
        X_\pm(s) = x_\pm + 
        2 \delta_\pm s^{\frac12} \mathsf m_\pm,\\[2mm]
        T_\pm(s) = s t_0 + (1-s) t_\pm.
      \end{cases}
    \end{aligned}
  \end{equation}

  The boundary conditions $\Gamma_\pm(1)=z_0$ impose $\mathsf{m}_\pm = (2\delta_\pm)^{-1} (x_0 - x_\pm)$. We deduce $X_\pm(s) = \Phi_\pm^s(x_0)$ with 
  \begin{align*}
    \nabla_{x_0} \left( \Phi_\pm ^s \right)^{-1}
    \sim O\( s^{-\frac12} \)
  \end{align*}
  which remains integrable. Observe also that our choice of control function is such that its derivative
  \begin{equation*}
    X_\pm'(s) \sim O\(s^{-\frac12}\)
  \end{equation*}
  is integrable on $s \in [0,1]$, which implies that the trajectories
  are bounded (with integrable tangent vector field).
  \medskip

  \noindent
  \textbf{B. Proof of the parabolic Poincaré inequality by following the trajectories.}
  We refer again to the next section for the details. Thanks to the estimates in the last two displays, we can argue exactly as in the proof detailed in the kinetic case, to obtain  
  \begin{multline*}
      \int_{Q^+} \Big(f(z_+) - \langle f\rangle_{Q^-}\Big)_+ \dd
      z_+ \lesssim \int_{Q^+} \int_0^1\int_{Q^0}
     \left| \left( \nabla_x f \right)
       (\Gamma_+(s))\right| s^{-\frac12} \dd s \dd z_0 \dd z_+ \\
     + \int_{Q^-} \int_0^1\int_{Q^0} \left| \left( \nabla_x f \right)
       (\Gamma_-(s))\right| s^{-\frac12} \dd s \dd z_0 \dd z_-,
  \end{multline*}
  and the end of the proof is similar: we are now integrating in both $Q^0$ and $Q_\pm$, so we are now in a position to use a change of variable not only with the intermediate variable $z_0$, but also with the future/past variables $z_\pm$. Note that it was not possible to use the integration
  in $Q_\pm$ before because of the positive value around the $Q^0$
  integral. Using these changes of variables, we deduce
  \begin{equation*}
    \int_{Q^\pm} \int_0^1\int_{Q^0}
    \left| \left( \nabla_x f \right)
      (\Gamma_\pm(s))\right| s^{-\frac12} \dd s \dd z_0 \dd z_\pm \lesssim
    \int_{Q_1} \left| \nabla_x f \right| \dd t \dd x,
  \end{equation*}
  which concludes the proof of the parabolic Poincaré inequality~\eqref{gmpoi-para}.
\end{proof}

The parabolic Poincaré inequality~\eqref{gmpoi-para} then implies the parabolic intermediate value Lemma~\ref{lem:para-ivl}. The proof is exactly similar to the argument in the kinetic case in the next section, i.e. the proof of the kinetic intermediate value Lemma~\ref{lem:kin-ivl} from the kinetic Poincaré inequality~\eqref{gmpoi}. One can apply the kinetic proof to a function independent of the space variable to recover the parabolic proof. (In fact, the argument does not depend on the equation but only on the Poincaré-type inequality and the Caccioppoli inequality.) We therefore refer the reader to the next section for the proof.
\end{proof}

\subsection{Parabolic versions of the Moser and Kruzhkov methods}

The Moser approach was extended to the parabolic case in~\cite{moser1964harnack,moser1967correction,moser1971}. The first paper in this series had a mistake (which was corrected a few years later), which illustrates that the John-Nirenberg result is not a convenient tool in the parabolic case. Instead, in the later paper~\cite{moser1971}, Moser  improved and simplified the argument: in order to ``bridge the gap'' between negative and positive exponents and prove
\begin{equation*}
  \Phi(p,2) \lesssim \Phi(-p,2)
\end{equation*}
for $p>0$ small, Moser combines a universal bound on the $L^2$ norm of the gradient of the logarithm of the solution with a lemma by Bombieri and Giusti~\cite{bg1972}. This method provides a sharper dependency of the constant in the Harnack inequality in terms of the ellipticity constant $\Lambda$. We refer to the recent contribution~\cite{nz_trajMBG_2025} for a re-interpretation of this Moser-Bombieri-Giusti approach based on trajectories in the parabolic case, and a discussion of the dependency of the Harnack constant in terms of $\Lambda$. We also refer to~\cite{MR244638,MR226168} for the extension of Moser's method to some quasilinear parabolic equations. 

The Kruzhkov approach was extended to the parabolic case in~\cite{MR171086}. The argument is quite close to the elliptic case. The main difference is that the right-hand side of~\eqref{eq:krukru}, in the parabolic case, also includes the $L^\infty$ norm of $g$, albeit with a power one: 
\begin{equation*}
  \|\nabla_x g\|^2_{L^{2}(Q_1)} \lesssim \Lambda^4  \( \int_{Q_2} \left| \nabla_x \phi \right|^2 + \int_{Q_2} g^2 \left| \p_t \phi \right|^2 \) \lesssim \Lambda^4 \( 1+ \| g\|_{L^\infty(Q_1)} \).
\end{equation*}
This is due to the error terms produced by the time derivative operator.

The different powers between the left-hand and right-hand sides still allows the argument to work. One gets finally
\begin{equation*}
  \|g\|_{L^\infty(Q_{1/2})}^2 \lesssim_\Lambda 1+ \|g\|_{L^\infty(Q_1)}
\end{equation*}
and we then express the norm on the left-hand side in terms of the infimum on the solution $f$ on $Q_{1/2}$, and we bound the norm on the right-hand side by the trivial upper bound on $G(f_\delta)$.

This leads to the inequality
\begin{equation*}
  \left| \ln \left( \frac{\inf_{Q_{1/2}} f + \delta}{1+\delta} \right) \right|^2 \lesssim_\Lambda 1+ \left| \ln \left( \frac{\delta}{1+\delta} \right) \right|
\end{equation*}
which implies a strictly positive lower bound on $\inf_{Q_{1/2}} f$. 

\section{The kinetic case}

We focus in this section on the simplest class of local hypoelliptic equations of type II (in the terminology of Hörmander): the \textbf{Kolmogorov equation}. The underlying motivation is the study of the Landau equation~\eqref{eq:land}-\eqref{eq:land2}, which, we recall, has the following form
\begin{equation*}
  \p_t f + v \cdot \nabla_x f = \nabla_v \cdot \( A[f] \, \nabla_v f \) + B[f] \cdot \nabla_v f + \( \nabla_v \cdot B[f] \) f
\end{equation*}
for an unknown $f =f(t,x,v) \ge 0$, for $t \in \R$, $(x,v) \in \cU \subset \R^d \times \R^d$, where $A[f]$ and $B[f]$ are linear integral operators on the $v$ variable applied to the unknown $f$.

Following the same approach as De Giorgi and Nash in their solution to Hilbert's $19$th problem, i.e. trading off nonlinearity for the study of linear equations with rough coefficients, we consider instead the following linear equation:
\begin{equation}
  \label{kin1}
  \partial_t f + v \cdot \nabla_x f = \nabla_v \cdot [ A \nabla_v f] + B \cdot \nabla_v f + S,
\end{equation}
on $[0,T]$, with $A=A(t,x,v)$ a measurable symmetric matrix-valued function, $B=B(t,x,v)$ a measurable vector-valued function, $S=S(t,x,v)$ a measurable scalar function. We assume that the coefficients and source term satisfy 
\begin{equation}
  \label{kin:h}
  \left\{
    \begin{matrix}
    & \Lambda^{-1} \le A(t,x,v) \le \Lambda \\[1mm]
    & |B(t,x,v)| \le \Lambda \\[1mm]
    & |S(t,x,v)| \le C_S
  \end{matrix}
  \right\} \quad 
  \text{ for almost every } t \in [0,T] \text{ and } (x,v) \in \cU,
\end{equation}
for some $\Lambda>0$ and $C_S>0$.

The main result reviewed this section is the local Hölder regularity of the weak solutions to~\eqref{kin1}, under the assumptions~\eqref{kin:h}. This, in turn, implies the Hölder regularity of weak a priori solutions to the Landau equation~\eqref{eq:land}, provided we assume these solutions to be bounded in $L^\infty$ in all variables and to satisfy uniform pointwise bounds on their hydrodynamic fields. The details of this corollary are included in~\cite{golse2019harnack}. This is one of the steps in the solution the second open problem of the introduction (conditional regularity for the Landau equation). Because of the structure of the Landau equation, this corollary requires the presence of the lower order terms $B \cdot \nabla_v f$ and $S$ in \eqref{kin1}.

In these notes, we however focus only on the simplest form of~\eqref{kin1} when $B=0$ and $S=0$. This simplifies the setting, the calculations and makes the core ideas more visible. The treatment of these lower order terms merely adds technical complication and is not essential to the proofs. Again, the full details are discussed in~\cite{golse2019harnack}, and we also refer to the recent review~\cite{zbMATH07986618} about the work~\cite{golse2019harnack}. We however emphasize that the $L^\infty$ bound on the a priori solutions to the Landau is made because of the assumptions we require on the source term $S$ in the main Hölder regularity result.

The model problem and main statement of the section are therefore as follows. We consider $\cU \subset \R^d \times \R^d$ an open set, $T>0$, and the equation
\begin{equation}
  \label{eq:hypo}
  \partial_t f + v \cdot \nabla_v f = \nabla_v \cdot \( A \, \nabla_v f \) \quad \text{in} \, \, (0,T) \times \cU
\end{equation}
with $A=A(t,x,v)$. We define the Hölder space $C^\alpha((0,T) \times \cU)$ as in~\eqref{eq:holder} by replacing $\cU$ by $(0,T) \times \cU \subset \R \times \R^d \times \R^d$. The kinetic counterpart of the main Hölder regularity result is the following statement. 

\begin{theorem}[Kinetic De Giorgi--Nash--Moser~\cite{wang2009c,golse2019harnack,guerand2022quantitative,MR4653756,niebel2023kinetic,anceschi2024poincare,dmnz}]
  \label{thm:dgnm4}
  Given $\Lambda>0$, there is $\alpha \in (0,1)$, depending only on $\Lambda$ and the dimension $d$, so that the following holds:
  \smallskip
  
  Let $f$ be a weak solution to~\eqref{eq:hypo} on $[0,T] \times \cU$, with $\cU \subset \R^{2d}$ an open set, in the functional setting
  \begin{equation*}
    L^\infty_t((0,T);L^2_{x,v}(\cU)) \cap L^2_t(0,T);L^2_xH^1_v(\cU)),
  \end{equation*}
  with $A : [0,T] \times \cU \to \mathbb R^{n \times n}$ a symmetric measurable  matrix-valued function so that
  \begin{equation}
    \label{eq:hypoA}
    \Lambda^{-1} \le A(t,x,v) \le \Lambda \ \text{ for almost every } t \in [0,T] \text{ and } (x,v) \in \cU.
  \end{equation}

  Then, we have $f \in C^\alpha_{\mathrm{loc}}((0,T) \times \cU)$.

  Moreover, given $\delta>0$ and $\delta < T_0 < T_1 < T-\delta$, there is $C>0$, depending only on $\Lambda$, $\delta$ and the dimension $d$, so that for any $\tilde{\cU} \subset \subset \cU$ with $\text{{\rm dist}}(\tilde{\cU}, \p \cU) \ge \delta$, we have 
  \begin{equation*}
    \|f\|_{C^\alpha((T_0,T_1) \times \tilde{\cU})} \le C \|f\|_{L^2((0,T) \times \cU)}.
  \end{equation*}
\end{theorem}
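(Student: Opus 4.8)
The plan is to transpose the three-layer architecture of the elliptic and parabolic proofs (Theorems~\ref{thm:dgnm2} and~\ref{thm:dgnm3}) to the Kolmogorov equation~\eqref{eq:hypo}, the two new features being the anisotropic \emph{kinetic scaling} $(t,x,v)\mapsto(r^2t,r^3x,rv)$ (forced by matching $\partial_t$, $v\cdot\nabla_x$ and $\Delta_v$) and the non-commutative \emph{Galilean group} $(t_0,x_0,v_0)\circ(t,x,v)=(t_0+t,\,x_0+x+tv_0,\,v_0+v)$, under both of which~\eqref{eq:hypo} is invariant once $A$ is transported accordingly (its bounds being preserved). Balls are replaced by kinetic cylinders $Q_r(z_0)$ adapted to these symmetries. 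As in the previous sections, it then suffices to prove, at the origin $z_0=0$: (i) a \emph{kinetic first De Giorgi lemma} --- if $f$ is a weak subsolution with $\|f_+\|_{L^2(Q_1)}$ small then $\|f_+\|_{L^\infty(Q_{1/2})}\le 1/2$, the estimate being invariant under the kinetic scaling; and (ii) a \emph{kinetic reduction of oscillation} --- $\mathrm{osc}_{Q_{1/2}}f\le\nu\,\mathrm{osc}_{Q_2}f$ for some $\nu\in(0,1)$ depending only on $\Lambda$ and $d$. Granting these, one centres and rescales $f$ at an arbitrary interior point via the group action, iterates (ii) along the geometric sequence of cylinders shrinking to that point exactly as in the elliptic corollary, and reads off $f\in C^\alpha_{\mathrm{loc}}$ for an $\alpha$ determined by $\nu$ and the scaling ratios; combining with the $L^\infty$ bound of (i) yields the quantitative estimate.

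For (i) I would run the De Giorgi iteration on the usual sequences of levels $e_k$ and radii $r_k$, controlling $E_k:=\int_{Q_{r_k}}(f-e_k)_+^2$. The first ingredient is the \textbf{kinetic Caccioppoli inequality}~\eqref{eq:caccio-kin}, obtained by testing the equation against $\phi^2(f-e)_+$: the dissipation controls $\|\nabla_v(\phi(f-e)_+)\|_{L^2}^2+\sup_t\|\phi(f-e)_+\|_{L^2_{x,v}}^2$ by the $L^2$ norm of $(f-e)_+$ on a slightly larger cylinder. The crucial --- and \emph{main} --- obstacle is then the \textbf{gain of integrability}: the elliptic and parabolic proofs simply invoked a Sobolev embedding off an $H^1$ bound, whereas here the energy estimate yields $H^1$ regularity \emph{only in $v$}. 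One must extract the hypoelliptic regularisation produced by the coupling of the transport operator $\partial_t+v\cdot\nabla_x$ with the $v$-diffusion. I would do this with a localised velocity-averaging (equivalently, hypoelliptic) estimate: since $(\partial_t+v\cdot\nabla_x)(\phi(f-e)_+)$ is a localised term in $H^{-1}_v$ plus a non-negative defect measure, one gains a small amount of fractional Sobolev regularity in $(t,x)$, which interpolated against the $H^1_v$ bound upgrades $L^2_{t,x,v}$ to $L^{p_*}_{t,x,v}$ for some $p_*>2$. Chebyshev's inequality then produces a nonlinear recursion $E_k\le C^k E_{k-1}^\beta$ with $\beta>1$, summed exactly as in Lemma~\ref{lem:dg1}. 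I would also record the rescaled/improved version (arbitrary cylinders, initial power $\zeta_0\in(0,2]$) in the manner of Lemma~\ref{lem:dg1+}, needed below.

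For (ii) I follow the chain~\eqref{eq:structure-lemmas}: kinetic intermediate value lemma $\Longrightarrow$ kinetic decrease of supremum (measure-to-pointwise) $\Longrightarrow$ reduction of oscillation. The decrease of supremum is deduced from the intermediate value lemma by the finite-induction argument of Lemma~\ref{lemma4}: setting $g_k:=2^k[f-(1-2^{-k})]$, each $(g_k)_+$ is a subsolution with a uniform kinetic energy bound, and the measure of $\{g_k\le0\}$ on the relevant cylinder must grow by a fixed increment $\alpha|Q_1|$ at each step unless $\|(g_k)_+\|_{L^2}$ has already dropped below the threshold of the first De Giorgi lemma; since a measure cannot exceed $|Q_1|$, this caps the number of steps and forces $\sup_{Q_{1/2}}f\le 1-\lambda$. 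Reduction of oscillation then follows by the rescaling~\eqref{eq:osc-resc}, applied to whichever of $g$ or $-g$ has a large sub- or super-level set --- using that $f$ is a genuine \emph{solution}, hence that both $\pm f$ are subsolutions --- whence Theorem~\ref{thm:dgnm4}.

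The remaining kinetic-specific work is the \textbf{intermediate value lemma}, Lemma~\ref{lem:kin-ivl}. Unlike the parabolic case, one must insert a \emph{time gap} between the cylinder where $f\ge 1/2$ and the cylinder where $f\le 0$ (a tilted-trajectory example shows a gap is genuinely necessary), measuring the interpolating set $\{0<f<1/2\}$ over the union of the two cylinders with an intermediate one in between. I would deduce this from the \textbf{kinetic Poincar\'e inequality}~\eqref{gmpoi} of Theorem~\ref{thm31}, which bounds $\int_{Q^+}\bigl(f-\langle f\rangle_{Q^-}\bigr)_+$ by $\int_{Q_1}|\nabla_v f|$ for subsolutions, and which is itself proved by the trajectory method sketched in the parabolic case (proof of Theorem~\ref{thm31para}): given $z^+$ in the future cylinder, $z^-$ in the past cylinder, and an intermediate point $z^0$, one connects $z^\pm$ to $z^0$ by curves flowing along the free transport $\partial_t+v\cdot\nabla_x$ while being steered in velocity by a control with a mild, integrable singularity at the initial time, chosen so that the trajectories remain in $Q_1$ and the associated changes of variables along them produce only $s$-integrable singular weights; changing variables along these trajectories then turns the line integral of $|\nabla_v f|$ into a bounded multiple of $\int_{Q_1}|\nabla_v f|$. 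Truncating $g$ as in the elliptic proof, using $|\nabla_v\tilde g|\le|\nabla_v g_+|$, Cauchy--Schwarz, and the kinetic Caccioppoli bound on $\|\nabla_v g_+\|_{L^2}$, one obtains the required lower bound on $|\{0<g<1/2\}|$ and closes the chain. A weaker but simpler route replaces the sharp trajectory estimate by a direct application of the kinetic Poincar\'e inequality, in the spirit of part~C of the proof of Lemma~\ref{lemma5}.
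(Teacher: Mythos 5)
Your proposal is correct and follows essentially the same route as the paper: kinetic scaling and Galilean cylinders, a first De Giorgi lemma whose gain of integrability comes from a velocity-averaging/hypoelliptic estimate on top of the kinetic Caccioppoli inequality, and the chain kinetic Poincar\'e inequality (by trajectories) $\Rightarrow$ intermediate value lemma $\Rightarrow$ decrease of supremum $\Rightarrow$ reduction of oscillation $\Rightarrow$ H\"older regularity via rescaling along the group action. The only place you are slightly more optimistic than the paper is the averaging step for \emph{subsolutions}: the defect measure is not in $L^2_{t,x}H^{-1}_v$, so the paper inserts a comparison with an auxiliary genuine solution before applying Bouchut's estimate (one can alternatively exploit the total-variation bound on the defect measure, as the paper remarks), and this small repair should be made explicit.
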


\subsection{Modification of the scaling and cylinders}

In the sequel, we write $z:=(t,x,v) \in \mathbb R^{1+2d}$ for the full set of variables, and $\cT := \partial_t + v \cdot \nabla_x$ for the transport operator.

The invariant transformations of the class of equations~\eqref{eq:hypo} that satisfy~\eqref{eq:hypoA} are slightly more intricate. The scaling now must take into account the balance between the three differential operators in the equation, which yields the \textbf{kinetic scaling}
\begin{equation}
  \label{eq:scaling-kin}
  (t,x,v) \mapsto (r^2t,r^3x,rv) \ \text{ for } \ r>0.
\end{equation}
Besides we still have invariance by translation in time and in space, and we have an additional invariance involving the velocity variable, albeit modified by the inertia principle. This results in the \textbf{invariance by Galilean translations}
\begin{equation}
  \label{eq:translation-kin}
  (t,x,v) \mapsto (t_0+t,x_0+x + tv_0,v_0+v) \ \text{ for } \ (t_0,x_0,v_0) \in \R^{1+2d}.
\end{equation}

The scaling invariance naturally leads to defining the following base \textbf{kinetic cylinders} for $r>0$
\begin{equation}
  \label{eq:hypo-cyl}
  Q_r := rQ_1 := (-r^2,0] \times B_{r^3} \times B_r,
\end{equation}
where the left multiplication $rQ_1$ corresponds to the scaling law. The invariance by Galilean transformation naturally leads to the following more general translated kinetic cylinders
\begin{align*}
  Q_r(z_0)
  & = z_0 \circ [rQ_1] \\
  & =  \left\{(t,x,r) \, : \, -r^2 < t-t_0 \le 0, \ |x-x_0-(t-t_0)v_0| < r^3, \ |v-v_0|<r \right\},
\end{align*}
where the $\circ$ left operation corresponds to the (non-commutative) Galilean translation. 

\subsection{The kinetic first De Giorgi Lemma}

The main difficulty in adapting the first De Giorgi lemma, by comparison with the parabolic case, lies in the degeneracy of~\eqref{eq:hypo}, which lacks ellipticity in the $x$ variable. We sketch two arguments for the core part of the proof, the gain of integrability from $L^2$ to $L^{p_*}$ with some $p_*>2$ in all variables. We first present the approach of~\cite{golse2019harnack}, based on the so-called ``averaging lemma'', and, second, the strategy introduced in~\cite{pascucci2004moser} and used in~\cite{wang2009c,guerand2022quantitative}. The second strategy leads to sharper results. Its drawback however is that it relies on the fundamental solution of the Kolmogorov equation. So it might not be as robust as the first strategy for future applications.

Overall, the statement remains quite similar to the parabolic case. Note the additional assumption $\cT f \in L^2_t((-1,0);L^2_xH^{-1}_v(B_1 \times B_1))$ made on the weak subsolutions. Just like the assumption made on $\p_t f$ in the parabolic case, it ensures that we can easily perform approximation procedures and a priori estimates. Such regularity on $\cT f$ is always satisfied by weak solutions by combining the equality $\cT f = \nabla_v \cdot (A \nabla_v f)$ in the sense of distributions with the estimate $A \nabla_v f \in L^2$ provided by the kinetic Caccioppoli inequality~\eqref{eq:caccio-kin} (the energy estimate) and the pointwise bounds on $A$. It is however not always satisfied by subsolutions due to the presence of a defect measure, as in the parabolic case.

\begin{lemma}[Kinetic first De Giorgi Lemma]
  \label{lem:hypo-first}
  Given $\Lambda>0$, there is $\delta>0$, depending only on $\Lambda$ and the dimension $d$, so that the following holds:
  \smallskip

  Let $f$ be a weak subsolution in $Q_1$ to
  \begin{equation}
    \label{eq:main-dg1}
    \p_t f + v \cdot \nabla_x f \le \nabla_v \cdot \( A \nabla_v f \)
  \end{equation}
  in the functional setting
  \begin{align*}
    & f \in L^\infty_t((-1,0);L^2_{x,v}(B_1 \times B_1)) \cap L^2_t((-1,0);L^2_xH^1_v(B_1 \times B_1)) \\
    & \cT f \in L^2_t((-1,0);L^2_x H^{-1}_v(B_1 \times B_1)),
  \end{align*}
  with $A$ symmetric measurable so that
  \begin{equation*}
    \Lambda^{-1} \le A(t,x,v) \le \Lambda \ \text{ for almost every } (t,x,v) \in Q_1.
  \end{equation*}

  Then, we have 
  \begin{equation*}
    \left\| f_+ \right\|_{L^2(Q_1)} \le \delta \quad \implies \quad \left\| f_+ \right\|_{L^\infty(Q_{1/2})}  \le \frac12.
  \end{equation*}
  This estimate is translation-invariant under the Galilean transformations~\eqref{eq:translation-kin}, and scaling-invariant under the kinetic scaling~\eqref{eq:scaling-kin}.
\end{lemma}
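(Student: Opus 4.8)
The plan is to mimic the De Giorgi--Moser iteration exactly as in the elliptic (Lemma~\ref{lem:dg1}) and parabolic (Lemma~\ref{lem:para-first}) cases: set up a nested sequence of kinetic cylinders and energy levels, prove a nonlinear recursion $E_k \le C^k E_{k-1}^\beta$ with $\beta>1$, and conclude that $E_\infty = 0$ once $\|f_+\|_{L^2(Q_1)} \le \delta$ for $\delta$ small depending only on $\Lambda$ and $d$. Concretely, I would choose radii $r_k = \tfrac12 + 2^{-k-1} \downarrow \tfrac12$, energy levels $e_k = \tfrac12 - 2^{-k-1} \uparrow \tfrac12$, intermediate cylinders $\tilde Q_k$ that shrink down to $Q_{1/2}$ (not exactly of the base form~\eqref{eq:hypo-cyl}, but interpolating), cutoff functions $\phi_k$ with $\mathbf 1_{\tilde Q_{k+1}} \le \phi_k \le \mathbf 1_{\tilde Q_k}$ and $|\nabla_{t,x,v}\phi_k| \lesssim 2^k$ (with the understanding that the $x$-derivative costs more in scaling but is still bounded on these fixed-scale cylinders), and monitor $E_k := \int_{\tilde Q_k}(f-e_k)_+^2 \dd t\dd x\dd v$. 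The same three ingredients drive the recursion: a localised energy estimate (kinetic Caccioppoli inequality~\eqref{eq:caccio-kin}), a gain of integrability from $L^2$ to $L^{p_*}$ with some $p_*>2$ in all variables, and Chebyshev's inequality together with Hölder's inequality applied to the support of $\phi_k(f-e_k)_+$ exactly as in~\eqref{eq:ee4}. The Chebyshev step and the final combination are verbatim the elliptic argument; the only genuinely kinetic inputs are the two substeps I describe next, and I would carry out only those in detail.

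For the localised energy estimate, I would integrate the subsolution inequality~\eqref{eq:main-dg1} against $\phi^2 (f-e)_+$ over $(-1,\tau]\times B_1\times B_1$ for $\tau \in (-1,0]$. Writing $\cT = \partial_t + v\cdot\nabla_x$, the transport term contributes $\int \phi^2(f-e)_+ \cT(f-e)_+ = \tfrac12\int \cT\big(\phi^2 (f-e)_+^2\big) - \tfrac12\int (f-e)_+^2\, \cT(\phi^2)$; since $\cT$ is divergence-free in $(x,v)$ and first-order, the first integral reduces to a boundary term at time $\tau$ (the sliced $L^2$ mass) plus no spatial boundary contribution because $\phi$ is compactly supported in $(x,v)$, and the second is controlled by $(\|\partial_t\phi\|_\infty + \|v\cdot\nabla_x\phi\|_\infty)\int_{Q_1\cap\,\mathrm{supp}\,\phi}(f-e)_+^2$. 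The diffusion term is handled exactly as in the elliptic and parabolic computations via $\Lambda^{-1}\le A\le\Lambda$ and Cauchy--Schwarz, splitting the cross term into two squares. Varying $\tau$ gives the kinetic Caccioppoli inequality: the left-hand side controls $\sup_\tau \int_{B_1\times B_1}\phi^2(f-e)_+^2\big|_{t=\tau}$ plus $\int_{Q_1}\phi^2|\nabla_v(f-e)_+|^2$, and the right-hand side is $\lesssim \Lambda^4(1+\|\nabla_{t,x,v}\phi\|_\infty + \dots)\int_{Q_1\cap\,\mathrm{supp}\,\phi}(f-e)_+^2$. For the gain of integrability, I would use either of the two routes announced in the text: (i) the averaging-lemma argument of~\cite{golse2019harnack}, where the sliced $L^\infty_tL^2_{x,v}$ bound plus the $L^2_tL^2_xH^1_v$ bound on $\phi(f-e)_+$, combined with the transport equation $\cT(\phi(f-e)_+) \in L^2_tL^2_xH^{-1}_v + (\text{commutator terms in }L^2)$, yields by velocity averaging a gain of fractional Sobolev regularity in $(t,x)$, hence by interpolation an integrability gain $L^2 \to L^{p_*}_{t,x,v}$; or (ii) the Kolmogorov fundamental-solution/Nash-type argument of~\cite{pascucci2004moser,wang2009c,guerand2022quantitative}, which directly produces a sharper $p_*$. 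Either way, one obtains $\|\phi_k(f-e_k)_+\|_{L^{p_*}(Q_1)}^2 \lesssim \Lambda^4\, 2^{2k}\, E_{k-1}$.

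The main obstacle, and the only place where the kinetic case is substantially harder than the parabolic one, is precisely this gain of integrability: the equation has no diffusion in $x$, so the naive parabolic Sobolev embedding in $x$ is unavailable and must be replaced by a hypoelliptic regularisation mechanism (velocity averaging, or the explicit Kolmogorov kernel), which is where the full strength of the transport structure $\cT f \in L^2_tL^2_xH^{-1}_v$ enters. Once the $L^2\to L^{p_*}$ estimate is in hand, the rest is mechanical: Chebyshev's inequality applied to $|\{\phi_k(f-e_k)_+>0\}| \le 2^{2(k+1)} E_{k-1}$ (using $\phi_{k-1}\equiv 1$ on $\mathrm{supp}\,\phi_k$ and $f\ge e_k \Rightarrow f - e_{k-1} \ge 2^{-k-1}$), combined with Hölder's inequality as in~\eqref{eq:ee4}, gives $E_k \le C^k E_{k-1}^{1+2/(d+\dots)}$ with an exponent strictly larger than $1$, and the super-exponential convergence $E_k \le (C'E_0)^{\beta^k}$ forces $E_\infty=0$ as soon as $\|f_+\|_{L^2(Q_1)}\le\delta < 1/C'$. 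Translation-invariance under the Galilean transformations~\eqref{eq:translation-kin} and scaling-invariance under~\eqref{eq:scaling-kin} are then immediate, since each operation preserves the class of subsolutions and the functional setting, and maps the nested cylinders to nested cylinders of the same type. I would present the energy estimate and the Chebyshev step in full, and cite~\cite{golse2019harnack} (or~\cite{guerand2022quantitative}) for the averaging/kernel input rather than reproving it.
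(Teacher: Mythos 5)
Your proposal is correct and follows essentially the same route as the paper: nested kinetic cylinders and energy levels, the kinetic Caccioppoli inequality, a gain of integrability in all variables via velocity averaging or the Kolmogorov fundamental solution, and the Chebyshev/H\"older step closing the nonlinear recursion $E_k \le C^k E_{k-1}^\beta$. The one point you gloss over is that the averaging lemma does not apply directly to subsolutions because of the nonnegative defect measure, which fails to lie in $L^2_t L^2_x H^{-1}_v$; the paper resolves this by comparison with an auxiliary exact solution $g$ dominating the localised subsolution before invoking Bouchut's estimate, a detail worth stating explicitly rather than absorbing into the citation.
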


\begin{proof}[Proof of Lemma~\ref{lem:hypo-first}]
  The setting of the kinetic first De Giorgi lemma is represented in Figure~\ref{fig:first-hypo}: it is schematically very similar to the parabolic case, provided the new definition~\eqref{eq:hypo-cyl} of cylinders is adopted. 
  \begin{figure}
    \includegraphics[scale=0.8]{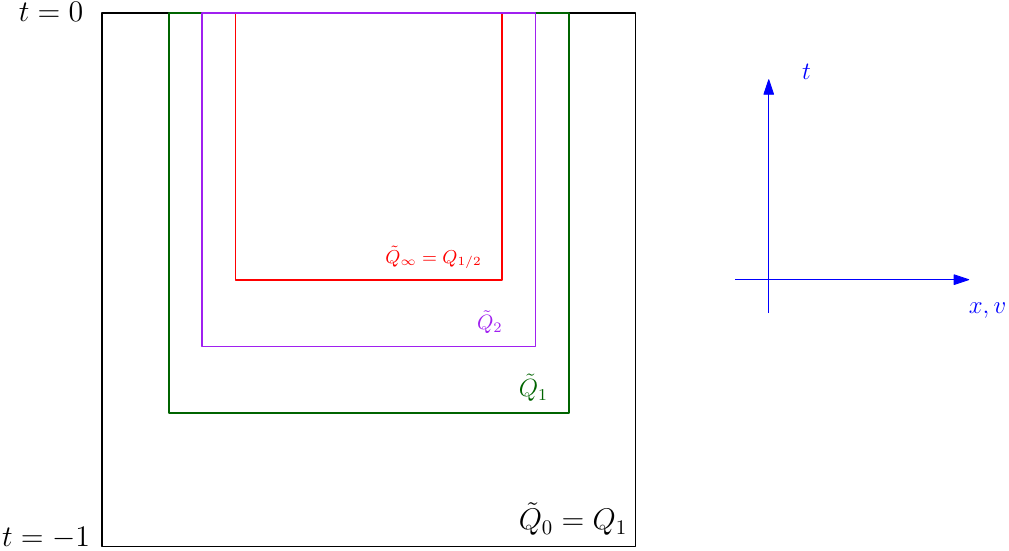}
    \caption{Setting of the first De Giorgi lemma in the kinetic case.}
    \label{fig:first-hypo}
  \end{figure}
  For instance, in the De Giorgi viewpoint, the sequence of cylinders and energy levels in the iteration is 
  \begin{equation*}
    \begin{cases}
      &T_k := -\frac{1}{4} - \frac{1}{2^{k+1}}, \quad T_0 =-1, \quad T_k \uparrow -\frac{1}{4}, \\[2mm]
      &r_k ^x = \frac{1}{8} + \frac{1}{2^{k+1}},\quad r_0 =1, \quad r_k \downarrow \frac{1}{8}, \\[2mm]
      &r_k ^v = \frac{1}{2} + \frac{1}{2^{k+1}},\quad r_0 =1, \quad r_k \downarrow \frac{1}{2}, \\[2mm]
      &e_k = \frac{1}{2} - \frac{1}{2^{k+1}},  \quad e_0=0, \quad e_k \uparrow \frac{1}{2}, \\[2mm]
      & \tilde Q_k := (T_k,0) \times B_{r^x_k} \times B_{r^v_k}, \\[2mm]
      & \displaystyle E_k := \int_{\tilde Q_k} (f-e_k)_+^2 \dd t \dd x \dd v.
    \end{cases}
  \end{equation*}
  Note again that the cylinders $\tilde Q_k$ are not exactly of the form~\eqref{eq:hypo-cyl}; instead they are constructed so as to converge to $Q_{1/2}$.

  One establishes again a nonlinear inequality between successive $E_k$ terms. The proof is based on a \textbf{localised energy estimate}, a small \textbf{gain of integrability in all variables} (playing the role of a ``kinetic Sobolev embedding'') and Chebyshev's inequality. Only the localised energy estimate and the gain of integrability from $L^2$ to $L^{p_*}$ for some $p_*>2$ in all variables---the ``kinetic Sobolev embedding''---differ from the previous proofs. We only discuss these two substeps. The other steps in the De Giorgi--Moser iteration are similar to the elliptic and parabolic cases.
  \medskip

  \noindent
  \textbf{A. Localised energy estimate.} We write the energy estimate in a way that is suitable for both the De Giorgi and Moser approaches to the iteration, as in the previous proofs.

  Consider an energy level $e \ge 0$; it is used in De Giorgi's iteration, but fixed to zero in Moser's iteration. Consider a smooth localisation function $\phi$ with support in $(-1,0] \times B_1 \times B_1$: note that $\phi$ is zero at initial time but not necessarily at final time. We integrate~\eqref{eq:main-dg1} against $(f-e)_+ \phi^2$ on $[-1,\tau]$ for $\tau \in (-1,0]$:
  \begin{align*}
    & \int_{(-1,\tau] \times B_1 \times B_1} \left[ \cT (f-e)_+ \right] (f-e)_+ \phi^2 \dd t \dd x \dd v \\
    & = - \int_{(-1,\tau] \times B_1 \times B_1} \left[ A \nabla_v (f-e)_+ \cdot \nabla_v (f-e)_+ \right] \phi^2 \dd t \dd x \dd v \\
    & \quad - 2 \int_{(-1,\tau] \times B_1 \times B_1} \left[ A \nabla_v (f-e)_+ \cdot \nabla_v \phi \right] \phi (f-e)_+ \dd t \dd x \dd v.
  \end{align*}
  The left-hand side writes
  \begin{align*}
    & \int_{(-1,\tau] \times B_1 \times B_1} \left[ \cT (f-e)_+ \right] (f-e)_+ \phi^2 \dd t \dd x \dd v \\
    & = \frac12 \int_{(-1,\tau] \times B_1 \times B_1} \left[ \partial_t (f-e)^2_+ + v \cdot \nabla_x (f-e)^2_+ \right] \phi^2 \dd t \dd x \dd v \\
    & = \frac12 \left( \int_{B_1 \times B_1} (f-e)^2_+ \phi^2 \dd x \dd v \right)\Bigg|_{t=0} - \frac12  \int_{(-1,\tau] \times B_1 \times B_1} (f-e)^2_+ \cT \left( \phi^2 \right) \dd t \dd x \dd v,
  \end{align*}
  and we bound the last term as
  \begin{multline*}
    \left| \frac12 \int_{(-1,\tau] \times B_1 \times B_1} (f-e)^2_+ \cT \left( \phi^2 \right) \dd t \dd x \dd v \right| \\
    \lesssim \| \cT \phi \|_\infty \|\phi \|_\infty \int_{Q_1 \cap \, \mathrm{supp} \, \phi} (f-e)^2_+ \dd t \dd x \dd v.
  \end{multline*}
  Therefore we deduce
  \begin{align*}
    & \frac12 \left( \int_{B_1 \times B_1} (f-e)^2_+ \phi^2 \dd x \dd v \right)\Bigg|_{t=\tau} + \int_{(-1,\tau] \times B_1 \times B_1} \left| \nabla_v (f-e)_+ \right|^2 \phi^2  \dd t \dd x \dd v \\
    & \lesssim \Lambda \| \cT \phi \|_\infty \|\phi \|_\infty \int_{Q_1 \cap \mathrm{supp} \, \phi} (f-e)^2_+ \dd t \dd x \dd v \\
    & \hspace{1cm} + \Lambda \int_{(-1,\tau] \times B_1 \times B_1} \left|A \nabla_v (f-e)_+ \cdot \nabla_v \phi \right| \phi (f-e)_+ \dd t \dd x \dd v\\
    & \lesssim \Lambda \| \cT \phi \|_\infty \|\phi \|_\infty \int_{Q_1 \cap \mathrm{supp} \, \phi} (f-e)^2_+  \dd t \dd x \dd v \\
    & \hspace{1cm}+ \Lambda^2 \left( \int_{(-1,\tau] \times B_1 \times B_1} \left|\nabla_v (f-e)_+ \right|^2 \phi \dd t \dd x \dd v \right)^{\frac12}  \times \\
    & \hspace{3cm} \left( \int_{(-1,\tau] \times B_1 \times B_1} \left|\nabla_v \phi \right|^2 (f-e)_+^2 \dd t \dd x \dd v \right)^{\frac12}.
  \end{align*}
  By splitting the last term into two squares, we get
  \begin{multline*}
    \left( \int_{B_1 \times B_1} (f-e)^2_+ \phi^2 \dd x \dd v \right)\Bigg|_{t=\tau} + \int_{(-1,\tau] \times B_1 \times B_1} \left| \nabla_v (f-e)_+ \right|^2 \phi^2 \dd t \dd x \dd v \\
    \lesssim \Lambda^4 \left( 1+ \|\nabla_v\phi\|_\infty + \|\mathcal{T} \phi\|_\infty \right) \, \int_{Q_1 \cap \, \mathrm{supp} \, \phi} (f-e)^2_+ \dd t \dd x \dd v.
  \end{multline*}
  Finally we vary the final time $\tau \in (-1,0]$ to deduce the following \textbf{kinetic Caccioppoli inequality}, which is the kinetic counterpart to~\eqref{eq:caccio-para}: 
  \begin{multline}
    \label{eq:caccio-kin}
    \sup_{-1<\tau\le 0} \left( \int_{B_1 \times B_1} (f-e)^2_+ \phi^2 \dd x \dd v \right)\Bigg|_{t=\tau} + \int_{Q_1} \left| \nabla_v (f-e)_+ \right|^2 \phi^2 \dd t \dd x \dd v \\
    \lesssim \Lambda^4 \left( 1+ \|\nabla_v\phi\|_\infty + \|\mathcal{T} \phi\|_\infty \right) \, \int_{Q_1 \cap \, \mathrm{supp} \, \phi} (f-e)^2_+ \dd t \dd x \dd v.
  \end{multline}

  Given $\phi_k$ a sequence of smooth localisation functions such that \begin{equation*}
    \mathbf 1_{\tilde Q_{k+1}} \le \phi_k \le \mathbf 1_{\tilde Q_k} \quad \text{ and } \quad |\nabla_{t,x,v} \phi_k| \lesssim 2^k,
  \end{equation*}
  the sequence of localised energy estimate is therefore
  \begin{multline*}
    \sup_{T_{k+1} \le \tau \le 0} \left( \int_{B_1 \times B_1} \phi^2_{k+1} (f-e_{k+1})_+^2 \dd x \dd v \right)\Bigg|_{t=\tau} \\
    + \int_{Q_1} \left|\nabla_x (\phi_{k} (f-e_{k+1})_+)\right|^2 \dd t \dd x \dd v \lesssim \Lambda^4 2^k E_k.
  \end{multline*}
  
  Note on the one hand that the energy estimate only provides control on the Sobolev regularity in the $v$ variable. It misses part of the variables (the space variable $x$), in contrast to the parabolic case. On the other hand, just like in the parabolic case, the gain of integrability in $t$ is ensured by the first term on the left hand side.   
  \medskip
  
  \noindent \textbf{B. Gain of integrability (proof by averaging lemma).} The goal of this part of the proof is to recover a gain of integrability in all variables.  Once this gain of integrability is obtained, with a different (smaller) exponent $p_*>2$ as compared to the elliptic and the parabolic cases, the structure of the iteration is exactly similar. Let us consider a smooth localisation function $\phi$ so that $\mathbf{1}_{\tilde Q_{k+1}} \le \phi \le \mathbf{1}_{\tilde Q_{k}}$, which satisfies $| \nabla_{t,x,v} \phi | \lesssim 2^{k}$.

  Then, by applying the standard Sobolev inequality in the $v$ variable to~\eqref{eq:caccio-kin} (dropping the first term on the left hand side), we obtain
  \begin{equation*}
    \left\| (f-e)_+ \phi\right\|_{L^2_{t,x}L^p_v(\tilde Q_{k+1})} \lesssim \Lambda^4 2^k \|(f-e)_+\|_{L^2(Q_1 \cap \mathrm{supp} \, \phi)}
  \end{equation*}
  for some $p>2$.
  
  To recover the gain of integrability in $x$ we would like to apply the following standard result in kinetic theory:
  \begin{lemma}[Averaging Lemma~\cite{MR808622,MR923047,zbMATH01975967}]
    Given $f = f(t,x,v) \in L^2(\R^{1+2d})$ which satisfies the equation
    \begin{equation*}
      \cT f = \partial_t f + v \cdot \nabla_x f = \nabla \cdot H_1 + H_0 \quad \text{ with } \quad H_0, H_1 \in L^2(\R^{1+2d}),
    \end{equation*}
    then for any $\varphi =\varphi(v) \in C^\infty_c(\R^d)$, we have
    \begin{equation}
      \label{eq:golse}
      \forall \, s \in \(0,\frac13\), \quad (t,x) \mapsto \int_{\R^d} \varphi \, f \dd v \ \text{ belongs to } \  H^s(\R^{1+d})
    \end{equation}
    with a bound that is controlled linearly by the sum of the $L^2(\R^{1+2d})$ norms of $f$, $H_1$ and $H_2$, and with a constant depending on $\varphi$.

    We also have (Bouchut's version in~\cite[Theorem~1.3]{zbMATH01975967}):
    \begin{multline}
      \label{eq:bouchut}
      \left\| D^{\frac13} _x f \right\|_{L^2(\R^{1+2d})} ^2 + \left\| D^{\frac13} _t f \right\|_{L^2(\R^{1+2d})} ^2 \\
      \lesssim \| f \|_{L^2(\R^{1+2d})} + \left\| \nabla_v f \right\|_{L^2(\R^{1+2d})} ^2 + \left\| H_0 \right\|_{L^2(\R^{1+2d})} ^2 + \left\| (1+|v|^2) H_1 \right\|_{L^2(\R^{1+2d})} ^2
    \end{multline}
    where $D_\cdot^{1/3} = (-\Delta_\cdot)^{1/3}$ is the standard fractional derivative.
  \end{lemma}

  Note that we will review the proof based on~\eqref{eq:bouchut} (as done in~\cite{golse2019harnack}), because it is slightly shorter and cleaner. It is however possible to use~\eqref{eq:golse} instead: for non-negative solutions the estimate gives a control $H^s_{t,x}L^1_v$ provided we localise in velocity, and combined with the energy estimate $L^\infty_t L^2_{x,v} \cap L^2_{t,x} H^1_v$ and some interpolation, this would give the gain of integrability in all variables by a comparison argument as discussed below. The interest of~\eqref{eq:golse}, by comparison with~\eqref{eq:bouchut}, is that such estimate remain true for more general transport operators $\p_t + b(v) \cdot \nabla_x$ where $b$ satisfies weak non-degeneracy assumptions (see~\cite{zbMATH07986618} for a more detailed discussion).
  
  In order to apply~\eqref{eq:bouchut}, we face a difficulty. It does not apply to subsolutions of our problem because of the lack of $L^2_{t,x}H^{-1}_v(\R^{1+2d})$-regularity of the defect measure. Indeed the subsolutions, say $f$, we are dealing with, satisfy merely
  \begin{equation*}
    \cT f \le h 
  \end{equation*}
  with $h \in L^2_{t,x}H^{-1}_v(\R^{1+2d})$. Expressed with a defect measure this means
  \begin{equation*}
    \cT f = h - m 
  \end{equation*}
  with $h \in L^2_{t,x}H^{-1}_v(\R^{1+2d})$ and $m$ a non-negative measure on $\R^{1+2d}$. We will solve this issue by using a comparison principle, since we are \emph{in fine} only interested in integrability properties of $f$, which are preserved by comparison.

  Let us consider a weak subsolution $f$ in $Q_1$, and let us assume without loss of generality that it is non-negative. 
  We define the new unknown $\tilde f := f \phi_k$ on $\R^{1+2d}$, extending it by zero for $t > 0$. It satisfies the inequation
  \begin{equation*}
    \cT \tilde f \le \nabla \cdot ( A \nabla \tilde f ) + \nabla_v \cdot H_1 + H_0 
  \end{equation*}
  on the whole $\R^{1+2d}$, with
  \begin{equation*}
    \begin{cases}
      H_0 := f \( \nabla_v \cdot (A \nabla_v \phi_k) - \cT \phi_k \) \mathbf{1}_{t \le 0}, \\[2mm]
      H_1 :=  A \nabla_v f \cdot \nabla_v \phi_k \mathbf{1}_{t \le 0}.
    \end{cases}
  \end{equation*}
  Note that we have used that $\p_t \mathbf{1}_{t \le 0} \le 0$. 

  The kinetic Caccioppoli inequality~\eqref{eq:caccio-kin} (local energy estimate) on $f$ implies
  \begin{equation}
    \label{eq:energy0-1}
    \| \nabla_v f \|_{L^2(\tilde Q_k)} \lesssim \| f \|_{L^2(\tilde Q_{k-1})}. 
  \end{equation}
  All supports in the equation for $\tilde f$ are included in $\tilde Q_k$ so we use~\eqref{eq:energy0-1} to deduce
  \begin{equation*}
    \| H_0 \|_{L^2(\R^{1+2d})} + \| H_1 \|_{L^2(\R^{1+2d})} \lesssim \| f \|_{L^2(\tilde Q_k)}.
  \end{equation*}
  The unknown $\tilde f$ is zero at $t=-1$ because of the localisation by $\phi_1$. We define $g$ that is zero for $t \le-1$ and satisfies the equation
  \begin{equation}
    \label{eq:comp-g}
    \cT g = \nabla \cdot ( A \nabla g ) + \nabla_v \cdot H_1 + H_0
  \end{equation}
  on $[-1,+\infty) \times \R^{2d}$.
  
  By comparison we have $0 \le \tilde f \le g$ on $Q_1$. Moreover, we integrate~\eqref{eq:comp-g} on $(-\infty,\tau] \times B_1 \times B_1$ for any $\tau > -1$ to get
  \begin{multline}
    \label{eq:energy-sur-g}
    \sup_{\tau > -1} \| g(\tau,\cdot,\cdot) \|_{L^2(\R^{2d})} + \| \nabla_v g \|_{L^2(\R^{1+2d})} \\ \lesssim \Lambda^2 \( \| H_0 \|_{L^2(\R^{1+2d})} + \| H_1 \|_{L^2(\R^{1+2d})} \) \lesssim \| f \|_{L^2(Q_1)}.
  \end{multline}
  Therefore, given a smooth time localisation function $\theta=\theta(t)$ so that
  \begin{equation*}
    \mathbf{1}_{(-1,0]} \le \theta \le \mathbf{1}_{(-1,2]},
  \end{equation*}
  the new unknown $\tilde g(t,x,v) := \theta(t) g(t,x,v)$ satisfies the following modified version of the equation~\eqref{eq:comp-g}:
  \begin{equation*}
    \cT \tilde g = \nabla \cdot ( A \nabla \tilde g ) + \nabla_v \cdot H_1 + \tilde H_0 \quad \text{ where } \quad \tilde H_0 = H_0 + \theta'(t) g.
  \end{equation*}
  And the estimate~\eqref{eq:energy-sur-g} implies
  \begin{equation*}
    \| \tilde g \|_{L^2(\R^{1+2d})} + \| \nabla_v \tilde g \|_{L^2(\R^{1+2d})} + \| \tilde H_0 \|_{L^2(\R^{1+2d})} + \| H_1 \|_{L^2(\R^{1+2d})} \lesssim \| f \|_{L^2(Q_1)}.
  \end{equation*}
  We can therefore apply~\eqref{eq:bouchut} to $\tilde g$ and, together with the last bound, it implies
  \begin{equation}
    \label{eq:estim-g-bis}
    \left\| D^{\frac13} _x \tilde g \right\|_{L^2(\R^{1+2d})}  + \left\| D^{\frac13} _t \tilde g \right\|_{L^2(\R^{1+2d})}  + \| \tilde g \|_{L^2(\R^{1+2d})} + \| \nabla_v \tilde g \|_{L^2(\R^{1+2d})} \lesssim \| f \|_{L^2(Q_1)}.
  \end{equation}
  Therefore by Sobolev's embedding we get $\tilde g \in L^{p_*}(\R^{1+2d})$, for some $p_* \in (2,p)$. This local integrability is finally inherited by $f$ on $\tilde Q_{k+1}$ since $f=\tilde f \le g= \bar g$ on $\tilde Q_{k+1}$. Tracking down all these estimates leads to the inequality
  \begin{equation*}
    \left\|(f-e)_+\right\|_{L^{p_*}(\tilde Q_{k+1})} \lesssim 2^k \|(f-e)_+\|_{L^2(Q_1 \cap \, \mathrm{supp} \, \phi_k)}.
  \end{equation*}
  
  \begin{remark}
    In the final comparison between $f$ and $g$, the Sobolev regularity in $t$ and $x$ seems lost. However, by integrating the inequation on $f$:
    \begin{equation*}
      \cT f = \partial_t f + v \cdot \nabla_x f = \nabla_v \cdot ( A \nabla_x f ) - m
    \end{equation*}
    against a smooth localisation function, and without squaring the unknown, one can deduce a bound on the total variation of the defect measure $m \le 0$ in terms of the $L^2$ norm of $f$. Such a bound in total variation translates into a control on the $W^{-1/2,q}_{t,x,v}$ norm of $m$, for some $q >1$ close to $1$. Then, more sophisticated forms of averaging lemmas can be invoked to obtain a gain of regularity $W^{s,q}$ in $t,x$ for $s>0$ small. It means that even subsolutions have some amount of Sobolev regularity, which can be useful for compactness arguments for instance. This low level of Sobolev regularity for subsolutions is also used quantitatively  in~\cite{guerand2022quantitative}.
  \end{remark}

  \noindent
  \textbf{C. Gain of integrability (proof by fundamental solutions).} 
  This is an alternative proof of the step of ``gain of integrability'' $L^2 \to L^{p_*}$ with $p_*>0$ in all variables. This argument was proposed in~\cite{pascucci2004moser}. The idea is to use the explicit fundamental solutions to the Kolmogorov equation, when $A$ is the identity matrix, and treat the actual operator as a source term. It leads to sharper results but is less robust since it crucially relies on knowning explicitly the fundamental solutions of the equation when the rough coefficients are constant.
  \medskip

  We study the following auxiliary problem:
  \begin{lemma}[Estimates on Kolmogorov's equation]
    \label{prop32}
    Let $f \ge 0$ be a locally integrable function on $\R^{1+2d}$ that satisfies
    \begin{equation}
      \label{kol}
      \mathcal{K} f := (\cT - \Delta_v) f = \nabla_v \cdot F_1 + F_2 -m,
    \end{equation}
    with $F_1,F_2 \in L^1 \cap L^2(\R^{1+2d})$, $m \ge 0$ a measure on $\R^{1+2d}$. Then, we have 
    \begin{align}
        \label{k1}
        &\|f\|_{L^{2 +\frac1d}(\R^{1+2d})} \lesssim \|F_1\|_{L^2(\R^{1+2d})} + \|F_2\|_{L^2(\R^{1+2d})}, \\
        \label{k2} 
        &\|f\|_{L^1_{t,v}W_x^{\sigma_1,1}(\R^{1+2d})} \lesssim \|F_1\|_{L^1(\R^{1+2d})} + \|F_2\|_{L^1(\R^{1+2d})} + \|m\|_{\mathrm{TV}(\R^{1+2d})}, \\
      \label{k3} 
        &\|f\|_{L^1_{x,v}W_t^{\sigma_2,1}(\R^{1+2d})} \lesssim \|F_1\|_{L^1(\R^{1+2d})} + \|F_2\|_{L^1(\R^{1+2d})} + \|m\|_{\mathrm{TV}(\R^{1+2d})},
    \end{align}
    for any $\sigma_1 \in (0,1/3)$ and $\sigma_2 \in (0,1/2)$. In~\eqref{k2} and~\eqref{k3}, the constants are proportional to $(1/3-\sigma_1)^{-1}$ and $(1/2-\sigma_2)^{-1}$, respectively.
  \end{lemma}

  \begin{remark}
    It is an interesting question to know whether the maximal regularity $L^1_{t,v}W_x^{1/3,1}$ is attained in such an $L^1$ setting, in which the Young inequality for weak Lebesgue spaces (see for instance~\cite[Theorem~1.4.25, p.~73]{zbMATH06313565}, and various other standard inequalities, degenerate.
  \end{remark}

\begin{proof}[Proof of Proposition \ref{prop32}]
    We express $f$ in terms of Kolmogorov's fundamental solution:
    \begin{equation}
      \label{eq:convol-kol}
      f(t,x,v) = \int_{\R^{1+2d}} G\(t-t',x-x'-(t-t')v',v-v'\) \mathcal{K}(f) \(t',x',v'\) \dd z',
    \end{equation}
    where the Galilean translation is used to define the modified convolution, and  
    \begin{equation}
      \label{k-fond}
      G(t,x,v) =
      \left\{
        \begin{aligned}
        & \left(\frac{3}{4\pi^2t^4}\right)^{\frac{d}{2}} \, \exp \, \left[ - \frac{3\left|x-\frac{t}{2} v\right|^2}{t^3} - \frac{|v|^2}{4t} \right] , & t>0,\\[3mm]
        & 0, &t \le 0,
        \end{aligned}
        \right.
    \end{equation}
    is the fundamental solution to $\mathcal{K} (G) = \delta_0$. The calculation of $G$ by Kolmogorov is based on the Fourier transform in $(x,v)$ and the characteristic method. This classical result is exposed in~\cite[Chapter~VII, Section~7.6, p.~211]{zbMATH01950198}, and a recent concise re-writing can be found in~\cite[Appendix A]{bouin2019hypocoercivity}.
    
    Since $f,G,m \ge 0$, we deduce that
    \begin{equation*}
      0\le f \le \int_{\R^{1+2d}} G\(t-t',x-x'-(t-t')v',v-v'\) \left[\nabla_v \cdot F_1 +F_2\right]\(t',x',v'\) \dd z'.
    \end{equation*}
    Then, we have, denoting by $L^{p,\infty}$ the weak Lebesgue space,
    \begin{equation*}
      |\nabla_v G(t,x,v)| + t |\nabla_x G(t,x,v)| \le \frac{1}{t^{2d+\frac12}} \, \exp \left[  - \frac{3\left|x-\frac{t}{2} v\right|^2}{t^3} - \frac{|v|^2}{4t}  \right]
    \end{equation*}
    so that
    \begin{equation*}
      G, \ \nabla_v G(t,x,v), \ t \nabla_x G(t,x,v) \in L^{\frac{2d+1}{2d+1/2},\infty}(\R^{1+2d}).
    \end{equation*}
    By integration by parts and Young's inequality (easily extended to our modified convolution product), we deduce~\eqref{k1}.
    
    To prove~\eqref{k2} and~\eqref{k3}, the idea is to use, roughly, that $\nabla_v G$ and $t \nabla_x G$ ``almost'' belong to $L^1_{t,v} W^{1/3,1}_x$ and $L^1_{x,v} W^{1/2,1}_t$, i.e. in the sense of \emph{weak} Lebesgue spaces. We refer to~\cite[Section~1.3, p.~7]{zbMATH03536702} for the definition of weak Lebesgue spaces, as well as the more general family of Lorentz spaces they belong to. We only sketch the argument. Define for $\var >0$ the following decomposition:
    \begin{equation*}
      G := G_\var + G_\var^\perp, \quad \text{ with } \quad
      \begin{cases}
        G_\var(t,x,v) = \chi\(\frac{t}{\var}\) G(t,x,v) \\[2mm]
        G_\var(t,x,v) = \left( 1- \chi\(\frac{t}{\var}\) \right) G(t,x,v)
      \end{cases}
    \end{equation*}
    for some smooth cutoff function $\chi \in C^\infty(\R_+)$ such that $\mathbf 1_{[0,1]} \le \chi \le \mathbf 1_{[0,2]}$. The formula~\eqref{eq:convol-kol} then implies a corresponding decomposition $f = f_\var + f_\var^\perp$ on $f$.
    
    Arguing similarly as above, by integration by parts and Young's inequality, one gets 
    \begin{equation*}
      \begin{cases}
        \|f_\var\|_{L^1_{t,x,v}} \lesssim \var^{\frac12}
        \left( \|F_1\|_{L^1} + \|F_2\|_{L^1} + \|m\|_{\mathrm{TV}} \right) a \\[2mm]
        \|f_\var^\perp\|_{L^1_{t,v}\mathrm{W}^{k,1}_x} \lesssim \var^{-\frac12-\frac{3k}{2}} \left( \|F_1\|_{L^1} + \|F_2\|_{L^1} + \|m\|_{\mathrm{TV}} \right) \\[2mm]
        \|f_\var^\perp\|_{L^1_{x,v}\mathrm{W}^{k,1}_t} \lesssim \var^{-\frac12-k} \left( \|F_1\|_{L^1} + \|F_2\|_{L^1} + \|m\|_{\mathrm{TV}} \right)
    \end{cases}
    \end{equation*}
    for any $k \ge 1$.

    The convergence $O(\var^{1/2})$ to zero in the first line is due to the shrinking domain of integration in time when integrating $|\nabla_v G(t,x,v)| + t |\nabla_x G(t,x,v)| \sim t^{-1/2}$. The blow-up $O(\var^{-1/2-3k/2})$ in the second line follows from integrating
    \begin{equation*}
      \nabla_x ^k \nabla_v G(t,x,v) \ \text{ and } \  t \nabla^{k+1}_x G(t,x,v) \quad \sim \ t^{-\frac12-\frac{3k}{2}}
    \end{equation*}
    away from the singularity $t \ge \var$ (and using the time integrability ensured by $k \ge 1$). The blow-up $O(\var^{-1/2-k})$ in the third line follows from integrating
    \begin{equation*}
      \p_t ^k \nabla_v G(t,x,v) \ \text{ and } \  \p_t^k t \nabla^k_x G(t,x,v) \quad \sim \ t^{-\frac12-k}
    \end{equation*}
    away from the singularity $t \ge \var$ (and using again the time integrability ensured by $k \ge 1$). 

    By the Littlewood-Paley decomposition or any equivalent interpolation argument, one then deduces that $f \in L^1_{t,v} W^{\sigma_1,1}_x$ for $\sigma_1 \in [0,1/3)$, and that $f \in L^1_{x,v} W^{\sigma_2,1}_t$ for $\sigma_2 \in [0,1/2)$.
\end{proof}

We now deduce the gain of integrability on subsolutions, as well as gain of (low level) Sobolev regularity in $x$. We consider $f \ge 0$ a sub-solution to~\eqref{eq:main-dg1} on $\tilde Q_k$, and we claim: 
  \begin{equation}
    \label{k4}
      \|f\|_{L^p(\tilde Q_{k+1})} + \|f\|_{L^1_{t,v} W^{\sigma_1,1}_x(\tilde Q_{k+1}))} + \|f\|_{L^1_{x,v} W^{\sigma_1,1}_t(\tilde Q_{k+1}))} \lesssim 2^k \|f\|_{L^2(\tilde Q_k)},
  \end{equation} 
  with $\sigma_1 \in [0,1/3)$, $\sigma_2 \in [0,1/2)$ and a constant degenerating as $\sigma_1 \to 1/3$ or $\sigma_2 \to 1/2$.

  Given $\phi_k$ the usual smooth localisation function such that $\mathbf 1_{\tilde Q_{k+1}} \le \phi_k \le \mathbf 1_{\tilde Q_k}$, we define $\tilde f := f \phi_k$ for $t \le 0$ and $\tilde f=0$ for $t >0$. Then $\tilde f$ satisfies the equation
  \begin{equation}
    \label{eq:localise-kol}
    \mathcal K \tilde f = \nabla_v \cdot F_1 + F_0 - \tilde m
  \end{equation}
  with 
  \begin{align*}
    & F_1 = (A \nabla_v f) \phi_k - (\nabla_v f) \phi_k -f \nabla_v \phi_k, \\
    & F_0 = - A \nabla_v f \cdot \nabla_v \phi_k + f \cT \phi_k - \nabla_v \phi_k \cdot \nabla_v f,\\
    & \tilde m = \phi_k m.
  \end{align*}
  and $m$ the defect measure of the subsolution $f$.

  The local energy estimate~\eqref{eq:caccio-kin} and the compact support imply
  \begin{equation*}
    \|F_1\|_{L^1(\R^{1+2d})} + \|F_2\|_{L^1(\R^{1+2d})} + \|F_1\|_{L^2(\R^{1+2d})} + \|F_2\|_{L^2(\R^{1+2d})} \lesssim \| f\|_{L^2(\tilde Q_k)}.
  \end{equation*}

  By integrating~\eqref{eq:localise-kol} against $\phi_k$ (without squaring the unknown) we also obtain
  \begin{equation*}
    \| \tilde m \|_{\mathrm{TV}(\R^{1+2d})} \lesssim \| f\|_{L^2(\tilde Q_k)}.
  \end{equation*}

  Therefore we can apply~\eqref{k1}-\eqref{k2}-\eqref{k3} to deduce~\eqref{k4}. 
\end{proof}

\subsection{The improved kinetic first De Giorgi lemma}

Similarly to the elliptic and parabolic cases, we can improve the first De Giorgi lemma as follows.
\begin{lemma}[Improved kinetic first De Giorgi Lemma]
  \label{lem:dg3+}
  Given $\Lambda>0$ and $\zeta_0 \in (0,2]$, there is $C>0$, depending only $\Lambda$, $\zeta_0$ and the dimension $d$, so that the following holds:
  \smallskip

  Let $f$ be a weak subsolution in a kinetic cylinder $Q_R$ to
  \begin{equation*}
    \p_t f + v \cdot \nabla_x f \le \nabla_v \cdot \( A \nabla_v f \)
  \end{equation*}
  in the functional setting
  \begin{align*}
    & f \in L^\infty_t((-R^2,0);L^2_{x,v}(B_{R^3} \times B_R)) \cap L^2_t((-R^2,0);L^2_xH^1_v(B_{R^3} \times B_R)) \\
    & \cT f \in L^2_t((-R^2,0);L^2_x H^{-1}_v(B_{R^3} \times B_R)),
  \end{align*}
  with $A$ symmetric measurable so that
  \begin{equation*}
    \Lambda^{-1} \le A(t,x) \le \Lambda \ \text{ for almost every } (t,x,v) \in Q_R.
  \end{equation*}

  Then, we have, for any $\zeta \in (\zeta_0,2]$ and $0<r<R$, 
  \begin{equation*}
    \|f\|_{L^\infty(Q_r)} \le C (R-r)^{-\frac{2+4d}{\zeta}} \left( \frac{1}{|Q_R|} \int_{Q_R} f^\zeta \dd z \right)^{\frac{1}{\zeta}}.
  \end{equation*}
\end{lemma}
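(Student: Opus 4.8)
The plan is to mimic exactly the iteration argument used in the proof of the improved elliptic (Lemma~\ref{lem:dg1+}) and improved parabolic (Lemma~\ref{lem:dg2+}) first De Giorgi lemmas, adapting the bookkeeping to the kinetic scaling~\eqref{eq:scaling-kin} and kinetic cylinders~\eqref{eq:hypo-cyl}. The only genuinely kinetic input needed is the scaling-invariant $L^2\to L^\infty$ bound already contained in the kinetic first De Giorgi lemma (Lemma~\ref{lem:hypo-first}), applied between two arbitrary concentric kinetic cylinders $Q_r \subset Q_R$. Because the kinetic cylinder $Q_r$ has Lebesgue measure $|Q_r| \sim r^{2+3d+d} = r^{2+4d}$ (one factor $r^2$ in time, $r^{3d}$ in $x$, $r^d$ in $v$), the homogeneity dimension here is $N := 2+4d$; this is precisely the exponent appearing in the statement, just as $n$ appeared in the elliptic case and $n+2$ in the parabolic case.

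First I would record, from Lemma~\ref{lem:hypo-first} via translation and kinetic scaling, the "two-cylinder" estimate: for a weak subsolution $f$ on $Q_R$ and $0<r<R$,
\begin{equation*}
  \|f\|_{L^\infty(Q_r)} \lesssim_\Lambda (R-r)^{-\frac{N}{2}} \left( \frac{1}{|Q_R|} \int_{Q_R} f_+^2 \dd z \right)^{\frac12},
  \qquad N := 2+4d.
\end{equation*}
The power $(R-r)^{-N/2}$ is dictated by the scaling: zooming by the factor $\rho := c(R-r)$ (small enough that a kinetic cylinder of size $\rho$ fits inside the $\rho$-neighbourhood allowed by $R-r$, which works because all three radii $r^2, r^3, r$ are monotone in $r$) turns Lemma~\ref{lem:hypo-first} into the displayed inequality, the $|B_{R^3}\times B_R|$-normalisation producing exactly $\rho^{-N}$ under the square root, hence $\rho^{-N/2}=(R-r)^{-N/2}$ out front. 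Next, exactly as in the proof of Lemma~\ref{lem:dg1+}, I interpolate the $L^2$ average against the $L^\zeta$ average and $L^\infty$ norm:
\begin{equation*}
  \left( \frac{1}{|Q_R|} \int_{Q_R} f^2 \dd z \right)^{\frac12}
  \lesssim \left( \frac{1}{|Q_R|} \int_{Q_R} f^\zeta \dd z \right)^{\frac12} \|f\|_{L^\infty(Q_R)}^{1-\frac{\zeta}{2}},
\end{equation*}
and then Young's inequality (splitting the product with a small coefficient on the $L^\infty$ factor) gives, for $0<r<r'<R$ and $\mathcal N(r) := \|f\|_{L^\infty(Q_r)}$,
\begin{equation*}
  \mathcal N(r) \le \tfrac12 \mathcal N(r') + C\, (r'-r)^{-\frac{N}{\zeta}} \left( \frac{1}{|Q_R|} \int_{Q_R} f^\zeta \dd z \right)^{\frac1\zeta}.
\end{equation*}

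Finally I would run the standard absorption iteration along the radii $r_{k+1} := r_k + \var(1-\alpha^{-1})\alpha^{-k}$ with $r_0 := r$, $\var := R-r$, and $\alpha := (3/2)^{\zeta/N} > 1$, which forces $r_\infty = R$ and makes the geometric series $\sum_k 2^{-k}\alpha^{kN/\zeta}$ converge (its sum is $\le 4$, since $2^{-1}\alpha^{N/\zeta} = 3/4$). Iterating the previous display $\ell$ times and letting $\ell\to\infty$ — the term $2^{-\ell}\mathcal N(r_{\ell+1})$ vanishes because $\mathcal N$ is finite by Lemma~\ref{lem:hypo-first} applied once, say on $Q_R$ versus $Q_{(R+r)/2}$ — produces
\begin{equation*}
  \mathcal N(r) \lesssim \var^{-\frac{N}{\zeta}} (\alpha-1)^{-\frac{N}{\zeta}} \left( \frac{1}{|Q_R|} \int_{Q_R} f^\zeta \dd z \right)^{\frac1\zeta},
\end{equation*}
and since $\zeta \ge \zeta_0 > 0$ the factor $(\alpha-1)^{-N/\zeta}$ is bounded by a constant depending only on $\Lambda$, $\zeta_0$, $d$, yielding the claim with exponent $N/\zeta = (2+4d)/\zeta$. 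There is essentially no obstacle here beyond being careful with the kinetic geometry: the one point requiring a line of justification is that shrinking the radius parameter by $R-r$ indeed leaves room for a full kinetic cylinder (equivalently, that the map $r\mapsto(r^2,r^3,r)$ respects the nesting $Q_r\subset\subset Q_R$ with the expected margin), which follows because each component is increasing and comparable to a power of $R-r$ when $r,R$ are comparable. Everything else — the interpolation, Young, and the geometric iteration — is verbatim the elliptic and parabolic arguments of Lemmas~\ref{lem:dg1+} and~\ref{lem:dg2+}, and I would simply say "as in the elliptic case" for those substeps.
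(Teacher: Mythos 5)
Your proposal is correct and follows essentially the same route as the paper's proof: the scaling-invariant two-cylinder $L^2\to L^\infty$ estimate derived from Lemma~\ref{lem:hypo-first} (your $(R-r)^{-N/2}$ with $N=2+4d$ is the paper's $(R-r)^{-1-2d}$), followed by the $L^\zeta$--$L^\infty$ interpolation, Young's inequality, and the absorption iteration along the radii $r_{k+1}=r_k+\var(1-\alpha^{-1})\alpha^{-k}$. The extra care you take with the nesting of kinetic cylinders under the anisotropic scaling is a worthwhile detail the paper leaves implicit, but it does not change the argument.
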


\begin{proof}
  By using the scaling of the equation in the kinetic first De Giorgi lemma, one can prove the following inequality between two general cylinders $Q_r$ and $Q_R$:
  \begin{equation*}
     \|f\|_{L^\infty(B_{r})} \lesssim (R-r)^{-1-2d} \left( \frac{1}{|Q_R|} \int_{Q_R} f^2 \dd z \right)^{\frac{1}{2}}.
  \end{equation*}
  The Young's inequality and the inequality
  \begin{equation*}
    \left( \frac{1}{|Q_R|} \int_{Q_R} f^2 \dd z \right)^{\frac{1}{2}} \lesssim \left( \frac{1}{|Q_R|} \int_{Q_R} f^\zeta \dd z \right)^{\frac12} \|f\|^{1-\frac{\zeta}{2}}_{L^\infty(Q_R)},
  \end{equation*}
  imply that the function $\mathcal N(r):=\|f\|_{L^\infty(Q_r)}$ verifies the following inequality for some $C>0$ and any $0<r<r'<R$:
  \begin{equation*}
    \mathcal N(r) \le \frac{1}{2} \mathcal N(r') + C  (r'-r)^{-\frac{2+4d}{\zeta}} \left( \frac{1}{|Q_R|} \int_{B(0,R)} f^\zeta \dd z \right)^{\frac1\zeta}.
  \end{equation*}
  
  By the same iteration along the sequence of radii
  \begin{equation*}
    r_{k+1} = r_k + \var \(1-\alpha^{-1}\)\alpha^{-k}
  \end{equation*}
  for $k \ge 0$, some $\alpha>1$, $\var:=R-r$ and $r_0:=r$, we deduce as in the elliptic and parabolic cases that
  \begin{equation*}
    \mathcal N(r) \lesssim_{\zeta_0} \var^{-\frac{2+4d}{\zeta}} \left( \frac{1}{|Q_R|} \int_{Q_R} f^\zeta \dd z \right)^{\frac1\zeta},
  \end{equation*}
  from which the conclusion follows.
\end{proof}

\subsection{The kinetic second De Giorgi lemma}

Inspired by the use of Poincar\'e's inequality as an \emph{integral measure of the oscillation} in the works of Moser \cite{moser1960new,moser1964harnack}, Kruzhkov \cite{kruzhkov1963priori}, and later in \cite{wang2009c} in a modified fashion, we introduce the strategy of \cite{guerand2022quantitative}, in the form simplified and optimised in~\cite{niebel2023kinetic,anceschi2024poincare,dmnz}. This method does not involve the equation verified by the logarithm of a solution to~\eqref{eq:main-dg1}, as in Nash's, Moser's and Kruzhkov's methods.

The kinetic second De Giorgi lemma (reduction of oscillation) resembles the elliptic case, and it implies the Theorem~\ref{thm:dgnm4} by an argument also similar to the parabolic case, but accounting for the slightly different geometry of the kinetic cylinders. 

More generally, the overall structure of the proof of the kinetic second De Giorgi lemma is similar to the elliptic and parabolic cases~\eqref{eq:structure-lemmas}, with kinetic versions of each step. 

\begin{lemma}[Kinetic second De Giorgi Lemma: reduction of oscillation]\label{lem:dg2-kin}
  Given $\Lambda>0$, there is $\nu \in (0,1)$, depending only on $\Lambda$ and the dimension $n$, so that the following holds:
  \smallskip
  
  Let $f$ be a weak solution on $Q_3$ to 
  \begin{equation*}
    \cT f = \nabla_v \cdot ( A \nabla_v f )
  \end{equation*}
  in the functional setting $L^2((-9,0) \times B_{27} ; H^1(B_3))$, where $A$ is symmetric measurable and so that
  \begin{equation*}
    \Lambda^{-1} \le A(t,x,v) \le \Lambda \ \text{ for almost every } (t,x,v) \in Q_3.
  \end{equation*}
  Then, we have 
  \begin{equation}
    \label{eq:dg2-kin}
    \mathrm{osc}_{Q_{\frac12}} f \le \nu \, \mathrm{osc}_{Q_2} f,
  \end{equation}
  where we denote $\mathrm{osc}_B f := \sup_B f - \inf_B f$.
\end{lemma}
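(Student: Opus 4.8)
The plan is to follow the same structure as in the elliptic and parabolic cases, that is, to deduce the reduction of oscillation from a \emph{decrease of supremum lemma} (measure-to-pointwise estimate), which is in turn a consequence of a \emph{kinetic intermediate value lemma}, which itself will follow from a \emph{kinetic Poincaré inequality}. First I would state and use the decrease of supremum lemma for subsolutions of~\eqref{eq:main-dg1}: if $f\le 1$ is a weak subsolution on a suitable kinetic cylinder and $|\{f\le 0\}\cap\tilde Q|\ge\mu|\tilde Q|$ for an appropriately placed past cylinder $\tilde Q$, then $\sup_{Q_{1/2}}f\le 1-\lambda$ for some $\lambda=\lambda(\Lambda,\mu,d)\in(0,1)$. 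Given this, the proof of Lemma~\ref{lem:dg2-kin} is formally identical to the elliptic Lemma~\ref{lem:dg2}: since $f$ is bounded on $Q_2$ by the kinetic first De Giorgi lemma~\ref{lem:hypo-first}, one rescales as in~\eqref{eq:osc-resc} to obtain $g$ valued in $[-1,1]$ with $\mathrm{osc}_{Q_2}g=2$, and observes that either $g$ or $-g$ satisfies the measure condition $|\{\cdot\le 0\}\cap\tilde Q|\ge\tfrac12|\tilde Q|$ on the relevant past cylinder (here one must use that both $f$ and $-f$ are subsolutions, i.e.\ that $f$ is a solution). Applying the decrease of supremum lemma with $\mu=1/2$ yields $\mathrm{osc}_{Q_{1/2}}g\le(1-\lambda/2)\,\mathrm{osc}_{Q_2}g$, hence $\mathrm{osc}_{Q_{1/2}}f\le(1-\lambda/2)\,\mathrm{osc}_{Q_2}f$, i.e.\ \eqref{eq:dg2-kin} with $\nu=1-\lambda/2$.

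The reduction of the decrease of supremum lemma to the kinetic intermediate value lemma will mirror the argument given for Lemma~\ref{lemma4} from Lemma~\ref{lemma5}: define the sequence of dilated unknowns $g_k:=2^k[f-(1-2^{-k})]$, each valued in $(-\infty,1]$ with $g_{k+1}=2g_k-1$; the kinetic Caccioppoli inequality~\eqref{eq:caccio-kin} gives uniform $L^2_xH^1_v$ control of $(g_k)_+$ on the smaller cylinder. If at some stage the $L^2$ mass of $(g_k)_+$ drops below the threshold $\delta$ of Lemma~\ref{lem:hypo-first}, the first De Giorgi lemma closes the argument; otherwise the measure of the ``bad set'' $\{f\le 0\}$ (where all $g_k\le 0$) is at least $\mu|\tilde Q|$, the measure of $\{g_k\ge 1/2\}$ is bounded below by the $L^2$ mass of $g_{k+1}$, and the kinetic intermediate value lemma produces a fixed lower bound $\alpha|Q|$ on the measure of the intermediate set $\{0<g_k<1/2\}$, which accumulates with each step into the set $\{g_{k_0}\le 0\}$. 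Since this measure cannot exceed $|Q|$, the number $k_0$ of bad steps is bounded, and after at most $k_0+1$ steps the first De Giorgi lemma applies, yielding $\lambda\sim 2^{-k_0-2}$. One subtlety specific to the kinetic case, flagged in the excerpt, is that a \emph{time gap} between the cylinder where $\{g_k\ge 1/2\}$ is large and the past cylinder $\tilde Q$ where $\{g_k\le 0\}$ is large is genuinely needed in the kinetic intermediate value lemma (unlike in the parabolic case), because the diffusion-free trajectories can be tilted; the geometry of the chain of cylinders must be chosen accordingly so that the iterated measure estimates stack consistently.

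The genuinely new ingredient — and the main obstacle — is the kinetic intermediate value lemma and, underneath it, the \textbf{kinetic Poincaré inequality} (Theorem~\ref{thm31}, referenced but not yet proved in the excerpt). Its constructive proof rests on connecting points where $\tilde g\ge 1/2$ to points where $\tilde g\le 0$ by explicit trajectories driven by the diffusion vector fields $\partial_{v_i}$ and the transport field $\mathcal T=\partial_t+v\cdot\nabla_x$; unlike the elliptic case where these trajectories are straight segments, here one must solve a small control problem to build paths $\Gamma_\pm(s)$ through an intermediate cylinder $Q^0_r$ whose velocity component behaves like $s^{-1/2}$ (integrable) and whose spatial Jacobian $\nabla(\Phi^s_\pm)^{-1}$ remains integrable, exactly as sketched in~\eqref{eq:control-para}–\eqref{eq:control-solved-para} for the parabolic model. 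Following these trajectories, Taylor-expanding $\tilde g$ along them, and using the change of variables in the intermediate \emph{and} the past/future variables yields the $L^1$ oscillation bound $\int_{Q^+_r}(f-\langle f\rangle_{Q^-_r})_+ \lesssim \int_{Q_1}|\nabla_v f|$. The intermediate value lemma then follows by the Cauchy–Schwarz and Caccioppoli argument already used in method~C of the proof of Lemma~\ref{lemma5}: the set $\{0<g<1/2\}$ carries all the gradient mass of the truncation $\tilde g$, so its measure is bounded below by the oscillation controlled via Poincaré, which in turn is controlled by the measures of $\{g\ge 1/2\}$ and $\{g\le 0\}$. The delicate points are the degeneracy of the geometry (no ellipticity in $x$, so the gain in $x$ must be extracted from the transport term via the trajectories) and the careful bookkeeping of the time-shifted cylinders so that the accumulation of intermediate measures in the proof of the decrease of supremum lemma genuinely terminates.
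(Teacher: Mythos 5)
Your proposal follows exactly the paper's chain of reductions (kinetic Poincaré inequality $\Rightarrow$ intermediate value lemma $\Rightarrow$ decrease of supremum $\Rightarrow$ reduction of oscillation), with the same rescaling argument for the final step, the same finite De Giorgi iteration $g_k = 2^k[f-(1-2^{-k})]$ for the measure-to-pointwise step, and the same trajectory-based constructive proof of the Poincaré inequality; it also correctly flags the need for a time gap in the kinetic intermediate value lemma. This matches the paper's argument, so no further comment is needed.
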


Note again that this lemma requires $f$ to be \textbf{both} a subsolution and a supersolution, i.e. to be a solution. Let us show that Lemma~\ref{lem:dg2-kin} implies the Hölder regularity, which thus concludes the proof of Theorem~\ref{thm:dgnm4}.
\begin{corollary}
  Given $\Lambda>0$, there is $\alpha \in (0,1)$, depending only on $\Lambda$ and the dimension $n$, so that the following holds:
  \smallskip
  
  Let $f$ be a weak solution in $(0,T) \times \cU$, with $\cU \subset \R^{2d}$ and open set, to
  \begin{equation*}
    \cT f = \nabla_v \cdot \( A \nabla_v f \),
  \end{equation*}
  where $A$ is symmetric measurable and so that
  \begin{equation*}
    \Lambda^{-1} \le A(t,x,v) \le \Lambda \ \text{ for almost every } (t,x,v) \in (0,T) \times \cU.
  \end{equation*}

  Then, we have $f \in C^\alpha_{\mathrm{loc}}((0,T) \times \cU)$.
  \smallskip
  
  Moreover, given any $\delta \in (0,T/2)$,  there is $C>0$, depending on $\Lambda$, $\delta$ and the dimension $n$, so that for any $\tilde{\cU} \subset \subset \cU$ with $\text{dist}(\tilde \cU,\p \cU)\ge \delta$ and $\delta < T_0 < T_1 < T-\delta$, we have 
  \begin{equation}
    \label{eq:gain-reg-loc-kin}
    \|f\|_{C^\alpha((T_0,T_1) \times \tilde{\cU})} \le C \|f\|_{L^2(\cU)}.
  \end{equation}
\end{corollary}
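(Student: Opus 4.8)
The plan is to mirror exactly the elliptic and parabolic corollaries, replacing Euclidean balls by the kinetic cylinders $Q_r$ defined in~\eqref{eq:hypo-cyl}, replacing the ordinary translations by the Galilean translations~\eqref{eq:translation-kin}, and using the kinetic scaling~\eqref{eq:scaling-kin}. First I would invoke the kinetic first De Giorgi lemma (Lemma~\ref{lem:hypo-first}), or rather its rescaled localized consequence, to get that $f \in L^\infty_{\mathrm{loc}}$ together with a bound
\begin{equation*}
  \forall \, \hat{\cU} \subset \subset \cU, \quad \|f\|_{L^\infty((\delta/2,T-\delta/2) \times \hat{\cU})} \le C \|f\|_{L^2((0,T) \times \cU)},
\end{equation*}
with $C$ depending on $\Lambda$, $\delta$, $\mathrm{dist}(\hat{\cU},\p\cU)$ and $d$. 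Then I would fix $\tilde{\cU} \subset\subset \hat{\cU} \subset\subset \cU$ and $(t_0,x_0,v_0) \in (T_0,T_1)\times\tilde{\cU}$, and define a sequence of centred, Galilean-translated, kinetically rescaled functions
\begin{equation*}
  g_0(s,y,w) := f\bigl(z_0 \circ (\rho\, (s,y,w))\bigr), \qquad \tilde g_k := g_{k-1}\bigl(\rho^2 s, \rho^3 y, \rho w\bigr)
\end{equation*}
for a suitable scale $\rho>0$ (some fixed power like $\rho = 1/4$ chosen to match the $Q_2 \to Q_{1/2}$ contraction and so that all the cylinders $Q_2$ for $g_k$ stay inside the rescaled domain, taking into account that $v_0$ and the domain of $x$ must be controlled). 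Each $g_k$ solves the same equation $\cT g_k = \nabla_v\cdot(A_k \nabla_v g_k)$ on $Q_2$ with $A_k$ still symmetric, measurable and satisfying $\Lambda^{-1} \le A_k \le \Lambda$.

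Next I would apply Lemma~\ref{lem:dg2-kin} (reduction of oscillation) to each $g_k$, obtaining $\mathrm{osc}_{Q_{1/2}} g_k \le \nu\, \mathrm{osc}_{Q_2} g_k$, and combine this with the scaling relation $\mathrm{osc}_{Q_2} g_{k+1} = \mathrm{osc}_{Q_{\rho}} g_k$ where $Q_\rho \subset Q_{1/2}$ for $\rho=1/4$. An induction then gives geometric decay $\mathrm{osc}_{Q_{\rho^k}} g_1 \le \nu^{k-1}\, \mathrm{osc}_{Q_2} g_1 \le 2\nu^{k-1}\|f\|_{L^\infty(\hat{\cU})}$. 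Given any point $z = (t,x,v) \in Q_{\rho_0}(z_0)$ for a fixed small $\rho_0$, I choose $k$ so that $z \in Q_{\rho^{k}}(z_0) \setminus Q_{\rho^{k+1}}(z_0)$ (which, because of the anisotropic scaling, translates into $|t-t_0| \sim \rho^{2k}$, $|x-x_0-(t-t_0)v_0| \sim \rho^{3k}$, $|v-v_0|\sim\rho^k$), and write $|f(z)-f(z_0)| \le 2\nu^{k-1}\|f\|_{L^\infty(\hat{\cU})}$. Converting $\nu^k$ into a power of the kinetic (quasi-)distance and choosing $\alpha := -\ln\nu/(c\ln\rho)$ with the appropriate constant $c$ yields Hölder continuity at $z_0$ with exponent $\alpha$ in the kinetic metric, which is equivalent to ordinary Hölder continuity (with an exponent depending only on $\alpha$ and $d$, since $x$-increments scale like $\rho^3$, i.e. $C^{\alpha}$ in $t$, $C^{\alpha/3}$ in $x$, $C^{\alpha}$ in $v$; relabelling $\alpha$ as needed). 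Since $\alpha$ and the constant do not depend on $z_0 \in (T_0,T_1)\times\tilde{\cU}$, this gives $\|f\|_{C^\alpha((T_0,T_1)\times\tilde{\cU})} \le C\|f\|_{L^\infty(\hat{\cU})}$, and combined with the first-lemma estimate above this is~\eqref{eq:gain-reg-loc-kin}.

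The main obstacle, compared with the elliptic and parabolic cases, is bookkeeping the Galilean geometry: the translated cylinder $Q_r(z_0)$ is sheared, so one must check that the rescaled-and-translated functions $g_k$ really are defined on (and solve the equation on) a full cylinder $Q_2$ sitting inside $(0,T)\times\cU$ — this requires controlling not just $\mathrm{dist}(\tilde{\cU},\p\cU)$ and the time gap $\delta$, but also the size of $v_0$ (hence a bound on the velocity support of $\tilde{\cU}$, which is available since $\tilde{\cU}\subset\subset\cU$), because the shear term $(t-t_0)v_0$ displaces the spatial centre. One also needs the elementary fact that the kinetic quasi-metric $\rho_{\mathrm{kin}}(z,z_0) \asymp |t-t_0|^{1/2} + |x-x_0-(t-t_0)v_0|^{1/3} + |v-v_0|$ is bi-Hölder-equivalent, on bounded sets, to the Euclidean metric in $(t,x,v)$ with explicit exponents, so that kinetic Hölder regularity upgrades to (anisotropic) Euclidean Hölder regularity. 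Both points are routine but must be stated carefully; everything else is a verbatim transcription of the elliptic/parabolic corollary proofs.
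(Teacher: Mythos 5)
Your proposal is correct and follows essentially the same route as the paper: the localized $L^\infty$ bound from the kinetic first De Giorgi lemma, a sequence of kinetically rescaled and translated copies of $f$ at ratio $1/4$, iterated application of the reduction-of-oscillation Lemma~\ref{lem:dg2-kin}, and conversion of the geometric decay of oscillation into anisotropic Hölder continuity via $\alpha = -\ln\nu/(12\ln 2)$. Your extra care with the Galilean shear $(t-t_0)v_0$ in the translated cylinders is in fact warranted: the paper's displayed formula for $g_0$ uses a plain translation in $x$, which strictly speaking leaves a residual drift $-v_0\cdot\nabla_x$ unless the shear is incorporated, so your version is the more precise one.
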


\begin{proof}
  We already have that $f \in \mathrm{L}^\infty_{\mathrm{loc}}(\cU)$ by Lemma~\ref{lem:hypo-first} with the estimate
  \begin{equation}
    \label{eq:gain-localised-kin}
    \forall \, \hat{\cU} \subset \subset \cU, \quad \|f\|_{L^\infty((T_0/2,T) \times \hat{\cU})} \le C \|f\|_{L^2((0,T) \times \cU)}
  \end{equation}
  for a constant $C>0$ depending on $\Lambda$, $\text{dist}(\hat \cU,\p \cU)$, $T_0$ and the dimension $n$.
  
  Let $\mathcal{\tilde{U}} \subset\subset \mathcal{\hat U} \subset \subset \cU$ so that
  \begin{equation}
    \label{eq:lien-dist-kin}
    \text{dist}(\mathcal{\tilde{U}},\partial\cU) \le \frac12 \text{dist}(\mathcal{\tilde{U}},\partial\mathcal{\hat U}), \quad \text{dist}(\mathcal{\tilde{U}},\partial\cU) \in \left[\frac{\delta}{4},\frac{\delta}{2}\right], \quad \text{dist}(\mathcal{\tilde{U}},\partial\mathcal{\hat U}) \in \left[ \frac{\delta}{2},\delta\right].
  \end{equation}
  Given $(t_0,x_0,v_0) \in (T_0,T_1) \times \mathcal{\tilde{U}}$, we define the sequence of centred rescaled functions
  \begin{align*}
    & \forall \,  k \geq 1, \ \forall \, (s,y,w) \in Q_1, \quad \tilde g_k(s,y,w) := g_{k-1}\(\frac{s}{16},\frac{y}{64},\frac{w}{4}\) \\
    & \forall \, (s,y) \in Q_1, \quad g_0(s,y,w) := f\(t_0+\frac{\delta}{32} + \frac{\delta s}{16},x_0+\frac{\delta y}{64},v_0 +\frac{\delta w}{4}\).
  \end{align*}

  Note that each function $g_k$ is a rescaling of $f$ and solves
  \begin{equation*}
    \cT g_k = \nabla_x \cdot [A_k \nabla_x g_k]
  \end{equation*}
  in $Q_2$, with the diffusion matrix
  \begin{equation*}
    A_k(s,y,w) := A\(t_0+\frac{\delta}{32} + \frac{\delta s}{2^{4k}},x_0 + \frac{\delta y}{2^{6k}},v_0 + \frac{\delta w}{2^{2k}}\).
  \end{equation*}
  
  This new diffusion matrix is still measurable and satisfies
  \begin{equation*}
    \Lambda^{-1} \le A_k(s,y,w) \le \Lambda \ \text{ for almost every } (s,y) \in Q_2.
  \end{equation*}

  Therefore, we can apply Lemma~\ref{lem:dg2-kin} to each $g_k$ and deduce
  \begin{equation*}
    \forall \,  k \geq 1, \quad \mathrm{osc}_{Q_{\frac12}} g_k \, \le \, \nu \, \mathrm{osc}_{Q_2} g_k.
  \end{equation*}
  The scaling relation between successive terms in the sequence $g_k$ yields
  \begin{equation*}
    \forall \,  k \geq 1, \quad \mathrm{osc}_{Q_2} \, g_{k+1} = \mathrm{osc}_{Q_{\frac12}} g_k,
  \end{equation*}
  so we obtain by induction
  \begin{equation*}
    \mathrm{osc}_{Q_{4^{-k}}} g_1 \le \nu^{k-1} \mathrm{osc}_{Q_2} g_1 \le 2 \, \nu^{k-1} \, \|f\|_{L^\infty((\delta/2,T-\delta/2) \times \hat \cU)}.
  \end{equation*}

  Then, given a point
  \begin{equation*}
    (t,x,v) \in \(t_0- \frac{\delta}{16},t_0+ \frac{\delta}{16}\) \times B\(x_0,\frac{\delta}{64}\) \times B\(v_0, \frac{\delta}{4}\),
  \end{equation*}
  we pick up $k \ge 0$ so that
  \begin{equation*}
    \begin{cases}
      \displaystyle
      |t-t_0| \in \left[\delta 16^{-k-2},\delta 16^{-k-1}\right], \\[2mm]
      \displaystyle
      |x-x_0| \in \left[\delta 64^{-k-2},\delta 64^{-k-1}\right], \\[2mm]
      \displaystyle
      |v-v_0| \in \left[\delta 4^{-k-2},\delta 4^{-k-1}\right],
  \end{cases}
  \end{equation*}
  and write 
  \begin{equation*}
    |f(t,x,v) - f(t_0,x_0,v_0)|\le 2 \, \nu^{k-1} \, \|f\|_{L^\infty((\delta/2,T-\delta/2) \times \hat \cU)},
  \end{equation*}
  so that (using $|t-t_0| \ge \delta 16^{-k-2}$, $|x-x_0| \ge \delta 64^{-k-2}$ and $|v-v_0| \ge \delta 4^{-k-2}$)
  \begin{multline*}
    |f(t,x,v) - f(t_0,x_0,v_0)| \lesssim \\
    \left[ \( 16^{3\alpha} \nu \)^k |t-t_0|^{3\alpha} +\(64^{2\alpha} \nu\)^k \, |x-x_0|^{2\alpha} +\(4^{6\alpha} \nu\)^k \, |v-v_0|^{6\alpha} \right] \, \|f\|_{L^\infty((\delta/2,T-\delta/2) \times \hat \cU)}.
  \end{multline*}
  We then choose
  \begin{equation*}
    \alpha := - \frac{\ln \nu}{12\, \ln 2}
  \end{equation*}
  so that $16^{3\alpha} \nu = 64^{2\alpha} \nu = 4^{6\alpha} \nu =1$, and deduce $f$ is $C^\alpha$ at $(t_0,x_0,v_0)$. (In fact, note that the Hölder regularity exponent is $3\alpha$ in $t$, $2\alpha$ in $x$ and $6\alpha$ in $v$, which reflects the scaling of the equation). The $\alpha$ does \emph{not} depend on $(t_0,x_0,v_0)$, but only on the constant $\nu$ of Lemma~\ref{lem:dg2-kin}.

  We have therefore proved
  \begin{equation*}
    \|f\|_{C^\alpha((\delta,T-\delta) \times \tilde{\cU})} \le C \|f\|_{L^\infty((\delta/2,T-\delta/2,\hat \cU)}
  \end{equation*}
  with a constant depending only on $\nu$ and $\delta$. Combined with~\eqref{eq:gain-localised-kin} and~\eqref{eq:lien-dist-kin}, it implies~\eqref{eq:gain-reg-loc-kin} and concludes the proof.
\end{proof}

Lemma~\ref{lem:dg2-kin} is implied by the following kinetic decrease of supremum lemma, whose setting is represented in Figure~\ref{fig:kin-m2p}. The proof is exactly similar to the parabolic case and is not repeated.

\begin{figure}
  \includegraphics[scale=0.8]{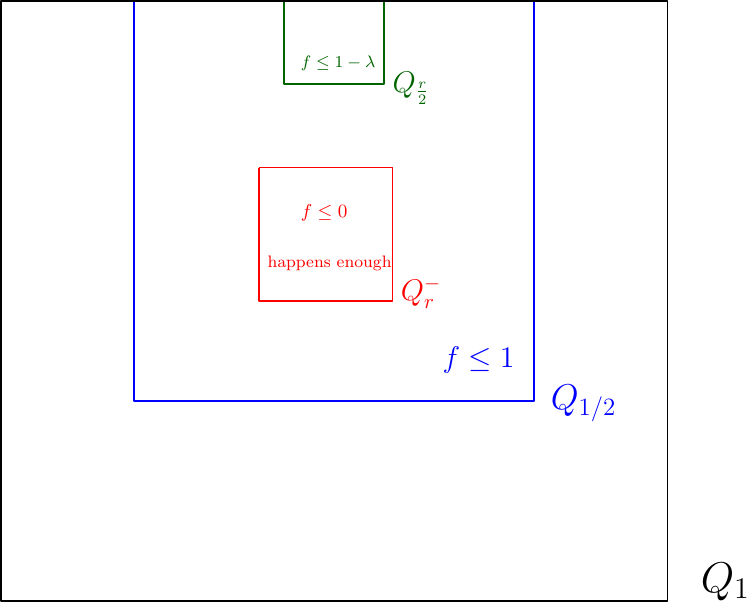}
  \caption{Setting of the decrease of supremum lemma and intermediate value lemma in the kinetic case.}
  \label{fig:kin-m2p}
\end{figure}
\begin{lemma}[Kinetic decrease of supremum lemma]
  \label{lem:kin-dec}
  Given $\Lambda >0$ and $\delta \in (0,1)$, there is $\lambda \in (0,1)$, depending only $\Lambda$, $\delta$ and the dimension $d$, so that the following holds:
  \smallskip

  Let $r =1/20$, and let $f$ be a weak subsolution over $Q_1$ to
  \begin{equation*}
    \p_t f + v \cdot \nabla_x f \le \nabla_v \cdot \( A \nabla_v f \),
  \end{equation*}
  with $A$ symmetric measurable so that
  \begin{equation*}
    \Lambda^{-1} \le A(t,x) \le \Lambda \ \text{ for almost every } (t,x,v) \in Q_1,
  \end{equation*}
  and that satisfies $f\le 1$ on $Q_{1/2}$ and the measure condition
  \begin{equation*}
    |\{f\le0\} \cap Q_{r}^-| \ge \delta |Q_{r}^-|,
  \end{equation*}
  where we have denoted
  \begin{equation}
    \label{eq:Qminus}
    Q_r^- := \left\{ -3 r^2 < t < -2r^2, \quad |x| < r^3, \quad |v|<r \right\}.
  \end{equation}
   
  Then, we have 
  \begin{equation*}
    \sup_{Q_{\frac{r}{2}}^-} f \le 1-\lambda.
  \end{equation*}
\end{lemma}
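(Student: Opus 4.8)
The plan is to deduce this statement from the kinetic intermediate value Lemma~\ref{lem:kin-ivl} (the kinetic analogue of Lemma~\ref{lemma5}, itself a consequence of the kinetic Poincar\'e inequality~\eqref{gmpoi}), by the same dyadic rescaling and finite-induction mechanism that produced Lemma~\ref{lemma4} from Lemma~\ref{lemma5} in the elliptic case and Lemma~\ref{lem:dec-sup-para} from Lemma~\ref{lem:para-ivl} in the parabolic case. Introduce on $Q_1$ the sequence of rescaled unknowns
\begin{equation*}
  \forall \, k \ge 0, \qquad g_k := 2^k \left[ f - \left( 1 - 2^{-k} \right) \right],
\end{equation*}
so that $g_0 = f$, the inductive relation $g_{k+1} = 2 g_k - 1$ holds, and $g_k \le 1$ on $Q_{1/2}$. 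By linearity each $g_k$ is again a subsolution of~\eqref{eq:main-dg1} with the same matrix $A$, hence $(g_k)_+$ is a subsolution valued in $[0,1]$; the kinetic Caccioppoli inequality~\eqref{eq:caccio-kin} then bounds $\nabla_v (g_k)_+$ in $L^2$ on a slightly smaller cylinder uniformly in $k$.

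Let $\delta_0 = \delta_0(\Lambda,d)$ be the threshold of the kinetic first De Giorgi Lemma~\ref{lem:hypo-first}, and fix an intermediate kinetic cylinder $Q^{\mathrm{in}}$ lying in time strictly between $Q_r^-$ and $0$ and containing $Q_{r/2}^-$, chosen compatibly with the scaling~\eqref{eq:scaling-kin} and the Galilean translations~\eqref{eq:translation-kin}. If $\|(g_0)_+\|_{L^2(Q^{\mathrm{in}})} \le \delta_0$, then Lemma~\ref{lem:hypo-first}, applied after a rescaling/translation sending $Q^{\mathrm{in}}$ to $Q_1$ and $Q_{r/2}^-$ to $Q_{1/2}$, already gives $\sup_{Q_{r/2}^-} f \le 1/2$ and we are done. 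Otherwise let $k_0 \ge 1$ be the largest integer with $\int_{Q^{\mathrm{in}}} (g_k)_+^2 > \delta_0^2$ for all $0 \le k \le k_0$. For each $k < k_0$: first, $\{f \le 0\} \subset \{g_k \le 0\}$, so the measure hypothesis descends, $|\{g_k \le 0\} \cap Q_r^-| \ge \delta |Q_r^-|$; second, since $g_k = \tfrac12 + \tfrac12 g_{k+1}$ with $g_{k+1} \le 1$, $|\{g_k \ge 1/2\} \cap Q^{\mathrm{in}}| = |\{g_{k+1} \ge 0\} \cap Q^{\mathrm{in}}| \ge \int_{Q^{\mathrm{in}}} (g_{k+1})_+^2 > \delta_0^2$. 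Feeding these into the kinetic intermediate value Lemma~\ref{lem:kin-ivl} applied to $(g_k)_+$ yields $\alpha = \alpha(\Lambda,d,\delta,\delta_0) > 0$ (also depending on the uniform energy bound) with
\begin{equation*}
  \forall \, k = 0, \dots, k_0 - 1, \qquad |\{0 < g_k < 1/2\} \cap \Omega| \ge \alpha |\Omega|,
\end{equation*}
where $\Omega$ is the union of the two cylinders appearing in that lemma.

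The counting step closes the argument. From $g_k = 2 g_{k-1} - 1$ one has $\{g_k \le 0\} = \{g_{k-1} \le 1/2\} \supset \{g_{k-1} \le 0\} \cup \{0 < g_{k-1} < 1/2\}$, a disjoint union, so iterating gives $|\{g_{k_0} \le 0\} \cap \Omega| \ge \big( c\,\delta + \alpha k_0 \big)|\Omega|$ for a fixed geometric constant $c>0$; since the left side is $\le |\Omega|$ this forces $k_0 \le K_0 := (1 - c\delta)/\alpha$. By maximality of $k_0$, $\int_{Q^{\mathrm{in}}} (g_{k_0+1})_+^2 \le \delta_0^2$, so Lemma~\ref{lem:hypo-first} gives $\sup_{Q_{r/2}^-} g_{k_0+1} \le 1/2$, i.e. $\sup_{Q_{r/2}^-} f \le 1 - 2^{-k_0-2}$, which proves the lemma with $\lambda := 2^{-K_0-2}>0$, depending only on $\Lambda$, $\delta$, $d$.

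The main obstacle is geometric bookkeeping rather than analysis: one must pick the nested kinetic cylinders — the past ``source'' $Q_r^-$, the intermediate $Q^{\mathrm{in}}$, the target $Q_{r/2}^-$ — so that at every stage $k$ (i) the time gap between the region where $(g_k)_+ \le 0$ with positive density and the region where $(g_k)_+ \ge 1/2$ with positive density is exactly the gap required by the kinetic intermediate value lemma (this gap is genuinely needed in the kinetic setting, unlike the elliptic one, since degenerating diffusion produces tilted transport characteristics); (ii) the kinetic first De Giorgi lemma applies on a cylinder squeezed between $Q_r^-$ and $Q_{r/2}^-$; and (iii) the uniform energy bound~\eqref{eq:caccio-kin} holds on all of them after the kinetic rescalings~\eqref{eq:scaling-kin} and translations~\eqref{eq:translation-kin}. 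The choice $r = 1/20$ is precisely what makes these inclusions and gaps fit inside $Q_1$. One also uses, as in the proof of Lemma~\ref{lemma5}, that $\nabla_v (g_k)_+$ vanishes almost everywhere on the level sets $\{(g_k)_+ = 0\}$ and $\{(g_k)_+ = 1/2\}$, so that the gradient term in the intermediate value lemma is supported on $\{0 < g_k < 1/2\}$; and, crucially, that the rescaled $g_k$ stay genuine subsolutions of a Kolmogorov-type equation, since the kinetic intermediate value lemma — unlike its elliptic counterpart — really uses the PDE through~\eqref{gmpoi}.
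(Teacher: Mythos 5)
Your proposal is correct and is exactly the argument the paper intends: the paper itself does not write out this proof, stating only that it is ``exactly similar to the elliptic and parabolic cases,'' and you have faithfully reconstructed the elliptic finite-induction scheme (dyadic rescalings $g_k$, intermediate value lemma at each stage, counting bound on $k_0$, conclusion via the first De Giorgi lemma) adapted to the kinetic cylinders. The only cosmetic adjustment left implicit is that Lemma~\ref{lem:kin-ivl} is stated with cutoff $1-\theta$ rather than $1/2$, so the dyadic ratio $2$ in $g_{k+1}=2g_k-1$ should be replaced by $1/\theta$ (yielding $\lambda \approx \theta^{k_0+2}$), consistent with the paper's remark that $\lambda \approx \delta^{2(1+\delta^{-2})}$.
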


\begin{remark}
  By tracking the dependency of the constants in the proof, one finds $\lambda \approx \delta^{2(1+\delta^{-2})}$. This (bad) exponential dependency is due to the finite De Giorgi inductive argument used to deduce this lemma from the intermediate value lemma, where the number of steps potentially grows exponential as $\delta \to 0$. This lemma was first proved in~\cite{golse2019harnack} by non-constructive method, and then in~\cite{guerand2022quantitative,MR4653756} by constructive methods.
\end{remark}

Finally the kinetic decrease of supremum lemma is itself implied by a kinetic version of De Giorgi intermediate value lemma, whose setting is again represented in Figure~\ref{fig:kin-m2p}. The proof is exactly similar to the elliptic and parabolic cases and is not repeated.

\begin{lemma}[Kinetic De Giorgi intermediate value Lemma]
  \label{lem:kin-ivl}
  Given $\Lambda>0$ and $\delta_1,\delta_2 \in (0,1)$, there are $\theta, \nu \in (0,1)$, depending only on $\Lambda$, $\delta_1$, $\delta_2$ and the dimension $d$, so that the following holds:
  \smallskip
  
  Let $r = 1/20$, and let $f$ be a subsolution over $Q_2$ to
  \begin{equation*}
    \p_t f + v \cdot \nabla_x f \le \nabla_v \cdot \( A \nabla_v f \),
  \end{equation*}
  with $A$ symmetric measurable so that
  \begin{equation*}
    \Lambda^{-1} \le A(t,x) \le \Lambda \ \text{ for almost every } (t,x,v) \in Q_2,
  \end{equation*}
  and that satisfies $f \le 1$ on $Q_2$, and the measure conditions
  \begin{align}
    \label{eq:kin-mc1}
    & |\{f\le0\} \cap Q^-_r| \ge \delta_1 |Q_r^-|, \\[2mm]
    \label{eq:kin-mc2}
    & |\{f\ge1-\theta\} \cap Q_r| \ge \delta_2 |Q_r|
  \end{align}
  where $Q_r^-$ is defined as before, see~\eqref{eq:Qminus}.
  
  Then, we have
  \begin{equation*}
    |\{0<f<1-\theta\} \cap Q_1| \ge \nu |Q_1|.
  \end{equation*}
\end{lemma}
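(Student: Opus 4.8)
The plan is to transplant to the kinetic setting the Poincaré-based proof of the elliptic intermediate value Lemma~\ref{lemma5} (part~C of its proof), with the kinetic Poincaré inequality~\eqref{gmpoi} of Theorem~\ref{thm31} playing the role of the $L^1$--Poincaré inequality. Recall that, for $r=1/20$ and a non-negative weak subsolution $g$ on $Q_1$ to $\cT g\le\nabla_v\cdot(A\nabla_v g)$ with $A\le\Lambda$, that inequality provides a bound
  \begin{equation*}
    \int_{Q_r^+}\big(g-\langle g\rangle_{Q_r^-}\big)_+\dd z \le C\int_{Q_1}|\nabla_v g|\dd z,
  \end{equation*}
  where $Q_r^-$ is the past cylinder~\eqref{eq:Qminus} appearing in~\eqref{eq:kin-mc1} and the present cylinder $Q_r^+$ contains $Q_r$ (up to rescaling $r$ by a fixed dimensional factor, which only changes the constants). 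The single genuinely new point compared with the elliptic case is that~\eqref{gmpoi} is not a purely functional inequality but uses the subequation, so one cannot feed a truncation of $f$ into it; instead I would apply it to $f_+$ and treat the part of $\int|\nabla_v f_+|$ living over $\{f\ge1-\theta\}$ separately, via the kinetic Caccioppoli inequality~\eqref{eq:caccio-kin} applied to the small function $(f-(1-\theta))_+$.

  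First I would bound the left-hand side of~\eqref{gmpoi} from below, with $g=f_+$: since $f\le1$ on $Q_2$, the function $f_+$ is a non-negative weak subsolution on $Q_2\supset Q_1$ with values in $[0,1]$ (a convex nondecreasing function of a subsolution is a subsolution), so~\eqref{gmpoi} is available. The measure condition~\eqref{eq:kin-mc1} forces $\langle f_+\rangle_{Q_r^-}\le 1-\delta_1$, because $f_+$ vanishes on $\{f\le0\}\cap Q_r^-$, a set of measure at least $\delta_1|Q_r^-|$, and is $\le1$ elsewhere. Hence on $\{f\ge1-\theta\}\cap Q_r$, of measure at least $\delta_2|Q_r|$ by~\eqref{eq:kin-mc2}, one has $f_+-\langle f_+\rangle_{Q_r^-}\ge\delta_1-\theta\ge\delta_1/2$ provided $\theta\le\delta_1/2$, so the left-hand side is at least $\tfrac12\delta_1\delta_2|Q_r|$.

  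Next I would bound $\int_{Q_1}|\nabla_v f_+|$ from above by splitting $Q_1$ along the sign of $f$. On $\{f\le0\}$, $\nabla_v f_+=0$ a.e. On $\{f\ge1-\theta\}$, $\nabla_v f_+$ agrees a.e.\ with $\nabla_v(f-(1-\theta))_+$, and~\eqref{eq:caccio-kin} applied between $Q_1$ and $Q_2$ (with energy level $e=1-\theta$) gives $\|\nabla_v(f-(1-\theta))_+\|_{L^2(Q_1)}\lesssim\Lambda^2\|(f-(1-\theta))_+\|_{L^2(Q_2)}\lesssim\Lambda^2\theta$, since $(f-(1-\theta))_+\le\theta$; by Cauchy--Schwarz $\int_{Q_1\cap\{f\ge1-\theta\}}|\nabla_v f_+|\lesssim\Lambda^2\theta$. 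On the intermediate set, Cauchy--Schwarz and~\eqref{eq:caccio-kin} with $e=0$ give $\int_{Q_1\cap\{0<f<1-\theta\}}|\nabla_v f_+|\lesssim\Lambda^2|Q_1\cap\{0<f<1-\theta\}|^{1/2}$. Combining with the lower bound, $\tfrac12\delta_1\delta_2|Q_r|\le C'\Lambda^2\big(|Q_1\cap\{0<f<1-\theta\}|^{1/2}+\theta\big)$ for a dimensional $C'$; since $|Q_r|$ and $|Q_1|$ are dimensional constants ($r=1/20$ fixed), choosing $\theta:=\min\big(\tfrac12,\tfrac{\delta_1}{2},\tfrac{\delta_1\delta_2|Q_r|}{4C'\Lambda^2}\big)\in(0,1)$ absorbs the last term and leaves $\tfrac14\delta_1\delta_2|Q_r|\le C'\Lambda^2|Q_1\cap\{0<f<1-\theta\}|^{1/2}$, i.e.\ the claim with $\nu:=\min\Big(\tfrac12,\, |Q_1|^{-1}\big(\tfrac{\delta_1\delta_2|Q_r|}{4C'\Lambda^2}\big)^2\Big)\in(0,1)$. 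Both $\theta$ and $\nu$ depend only on $\Lambda$, $\delta_1$, $\delta_2$ and $d$.

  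The real difficulty sits upstream of this argument: it is the kinetic Poincaré inequality~\eqref{gmpoi} itself, whose proof requires constructing admissible trajectories joining the past, intermediate and present cylinders with integrable tangent vector fields (the kinetic analogue of the straight lines of Figure~\ref{fig:ell-traj} and of the parabolic trajectories~\eqref{eq:control-solved-para}), and then integrating $f$ along them against the subequation. Granted that inequality, the steps above are routine bookkeeping; within them, the only specifically kinetic subtlety is that the gradient over $\{f\ge1-\theta\}$ must be controlled separately rather than annihilated by a truncation inside the Poincaré inequality, as is possible in the elliptic $L^1$ framework.
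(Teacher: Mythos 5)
Your proposal is correct and follows essentially the same route as the paper: apply the kinetic Poincaré inequality~\eqref{gmpoi} to $f_+$, use~\eqref{eq:kin-mc1} to bound $\langle f_+\rangle_{Q_r^-}\le 1-\delta_1$ and~\eqref{eq:kin-mc2} to get the lower bound $\delta_2(\delta_1-\theta)|Q_r|$, then split $\int_{Q_1}|\nabla_v f_+|$ over $\{f\le 0\}$, $\{0<f<1-\theta\}$ and $\{f\ge 1-\theta\}$, handling the last two via Cauchy--Schwarz and the kinetic Caccioppoli inequality~\eqref{eq:caccio-kin} (the latter applied to $(f-(1-\theta))_+$), and finally choose $\theta\lesssim\delta_1\delta_2$. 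Your identification of the one genuinely kinetic subtlety — that one cannot truncate inside the Poincaré inequality and must instead absorb the gradient over $\{f\ge 1-\theta\}$ separately at cost $O(\theta)$ — matches the paper's treatment exactly.
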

\begin{remarks}
  \begin{enumerate}
  \item By tracking the constants in the proof one finds
    $\theta \approx \delta_1 \delta_2$ and $\nu \approx \delta_1 \delta_2$
    for small $\delta_1,\delta_2$.
  \item This lemma was first proved
    in~\cite{golse2019harnack} by non-constructive method, and then
    in~\cite{guerand2022quantitative} by a constructive method.
  \item The gap in time is required, as shown by the following counter-example. The function $\mathbf{1}_{x+ct<a}$  is a subsolution on $Q_1$ for any $a \in \R$ and $c > 1$. If $Q^-_r$ and $Q_r$ were too close, a line of discontinuity of the form $x + ct = a$ could cross both and the previous subsolution would contradict the conclusion.
\end{enumerate}
\end{remarks}

We first give a proof by contradiction that extends the contradiction argument in the elliptic and parabolic cases, and then turn to a method based on trajectories and Poincaré-type inequalities.

\begin{proof}[Proof of the kinetic intermediate value lemma~\ref{lem:kin-ivl} by compactness]
  \text{ } \\
  This argument was first proposed in~\cite{golse2019harnack}, but is reminiscent of similar compactness arguments in the De Giorgi theory and in homogenisation theory. In this compactness argument, we can prove a slightly stronger statement where $\theta=1/2$.
  \medskip

  \noindent
  \textbf{A. The setting.} We argue by contradiction. Let us consider a contradiction sequence 
  $(f_k)_{k \ge 0}$ of subsolutions in $Q_2$ to 
  \begin{equation}
    \label{eq:fk}
    \cT f_k = \nabla_v \cdot (A_k \nabla_v f_k) - \mu_k
  \end{equation}
  for some non-negative measures $\mu_k$, such that $f_k \le 1$ for all $k \ge 1$ and, for some fixed $\delta_1, \delta_2 >0$:
  \begin{align*}
    & |\{ f_k \ge 1/2 \} \cap Q_r | \ge  \delta_1 |Q_r| \\
    & |\{ f_k \le 0 \} \cap Q^-_r |  \ge \delta_2 |Q_r ^-|\\
    &   |\{ 0 < f_k < 1/2\} \cap Q_1|  \to 0 \quad \text{ as } \quad k \to +\infty.
  \end{align*}

  The same assumptions still hold for the positive parts $(f_k)_+$ of each function $f_k$ in the sequence. 
  \medskip
  
  \noindent
  \textbf{B. A priori estimates on the contradiction sequence.} First, by using the local energy estimate~\eqref{eq:caccio-kin} (the kinetic Caccioppoli inequality) and the first De Giorgi lemma, we obtain uniform bounds on the $L^\infty_{\mathrm{loc}}(Q_2)$ and $(L^2_t H^1_x)_{\mathrm{loc}}(Q_2)$ norms of the $(f_k)_+$'s. Second, by integrating against a localisation function without squaring the unknown, we obtain a uniform bound on the local total variation of the $\mu_k$'s in $Q_2$.   
  \medskip

  \noindent \textbf{C. Limit by compactness.} Using standard compactness theorem, we have (still writing $f_k$ and $A_k$ and $\mu_k$ for the subsequences, in an abuse of notation)
  \begin{equation*}
    (f_k)_+ \stackrel{\ast}{\rightharpoonup} F  \text{ in } L^\infty_{\text{loc}} (Q_2)
  \end{equation*}
  and 
  \begin{equation}
    \label{conv:grad}
    \nabla_v (f_k)_+ \rightharpoonup \nabla_v F \quad \text{ and } \quad
      A_k \nabla_v (f_k)_+  \rightharpoonup H \quad 
    \text{ in } L^2_{\text{loc}} (Q_2) 
  \end{equation}
  for some weak limit $F$ and $H_1$, and
  \begin{equation*}
    \mu_k \rightharpoonup \mu
  \end{equation*}
  in the space of non-negative measure, thanks to the bound on the total variation.

  By weak limit, we deduce uniform bounds on the $L^2_tH^1_x(Q_1)$ norm of $F$, on the $L^2(Q_1)$ norm of $H$, and on the total variation of $\mu$, and we have the equation
  \begin{equation}
    \label{eq:F}
    \cT  F =   \nabla_v \cdot H - \mu.
  \end{equation}
  
  Thanks to the averaging lemma~\eqref{eq:golse}, we then improve the convergence of $(f_k)_+$ (taking if necessary a further subsequence), so that we have the strong convergence
  \begin{equation*}
    f_k^+ \to F \text{ in } L^p_{\text{loc}}(Q_2) \quad \text{
      for } 1 \le p < +\infty.
  \end{equation*}

  It implies the convergence in probability and thus the
  function $F$ satisfies 
  \begin{align}
    \label{eq:P1}
    & |\{ F \ge 1/2 \} \cap Q_r | \ge  \delta_1 |Q_r|\\
    \label{eq:P2}
    & |\{ F = 0 \} \cap \hat Q_r ^- |  \ge \delta_2 |Q_r ^-| \\
    \nonumber
    & \left|\{ 0 < F < 1/2\} \cap Q_1 \right|  = 0.
  \end{align}

  We now use the elliptic version of the intermediate value lemma (Lemma~\ref{lemma5}): for almost every $(t,x) \in (-1,0] \times B_1$,
  \begin{equation*}
    \left\{
      \begin{aligned}
        \text{either } \quad  &  \mbox{for almost every }  v \in B_1, 
        \quad F(t,x,v) \le 0, \\
        \text{ or } \quad \quad & \mbox{for almost every } v\in B_1, 
        \quad F(t,x,v) \ge \frac12.
      \end{aligned}
    \right.
  \end{equation*}

  In other words, $F (t,x,v) = F_0(t,x,v) \un_E (t,x)$ with $F_0 \ge 1/2$ and some measurable set $E \subset B_1 \times (-1,0)$. In view of \eqref{eq:P1} and
  \eqref{eq:P2}, this set $E$ satisfies
  \begin{equation}
    \label{eq:P3}
    \begin{cases}
      |E \cap (-r^2,0] \times B_{r^3}| >0 \\[2mm]
      |(-3r^2,-2r^2] \times B_{r^3}  \setminus E| >0.
    \end{cases}
  \end{equation}
  \medskip
  
  \noindent \textbf{D. Reaching a contradiction.} 
  We thus get from \eqref{eq:F} the following inequality
  \begin{equation*}
    \partial_t F + v \cdot \nabla_x F \le \nabla_v \cdot H
  \end{equation*}
  in $Q_{3/2}$. Then, choosing $v_0 \in B_1$, and averaging locally around $v_0$ with a smooth mollifier $\zeta$ we get (since $F$ only depends on $(t,x)$)
  \begin{equation}
    \label{eq:loc-up}
    \partial_t F + v_0 \cdot \nabla_x F 
    \le \int_{\R^d} \Big[ \left|H (x,v,t) \nabla_v \zeta(v-v_0)\right| 
    \Big] \dd v
  \end{equation}
  in $(t,x) \in B_1 \times (-1,0)$.  Since 
  $H \in L^2_{\text{loc}} (Q_2)$, we deduce, for 
  $v_0 \in B_1$ and $(t,x) \in B_1 \times (-1,0)$, that 
  \begin{equation}
    \label{eq:propagation}
    \partial_t F + v_0 \cdot \nabla_x F \le 0 \ \text{ as long as } F(t,x,\cdot) = 0.
  \end{equation}
  This is due to the fact that $F$ cannot jump from $0$ to $1/2$ without the left-hand side producing a positive Dirac mass, which is not allowed by the $L^2_{\mathrm{loc}}$ upper bound~\eqref{eq:loc-up}. 

  The rest of the argument is similar to the parabolic case: since $F(t,\cdot,\cdot)$ is almost everywhere non-positive for some times in the past cylinder (with non-zero measure), the previous inequality shows that it cannot become positive, which contradicts $F(t,\cdot,\cdot)$ being almost everywhere greater than $1/2$ for some times in the future cylinder (with non-zero measure). 
\end{proof}

We now turn to the constructive method. We will prove that the kinetic intermediate value lemma~\ref{lem:kin-ivl} is implied, with quantitative constants, by the following ``kinetic Poincar\'e inequality''. The setting of the kinetic Poincaré inequality is again given by Figure~\ref{fig:kin-m2p}.

The overall structure of the constructive proof of the kinetic second De Giorgi lemma is therefore as follows:
\begin{equation*}
  \boxed{
  \begin{array}{l}
     \text{Kinetic Poincaré inequality } \Longrightarrow \text{ Kinetic intermediate value lemma } \\[2mm]
    \Longrightarrow \text{ Kinetic decrease of the supremum } \Longrightarrow \text{ Kinetic reduction of oscillation}.
  \end{array}
  }
\end{equation*}
Note that the coercivity of the diffusion matrix $A$ is not required for the proof of the Poincaré inequality, but only for the later steps.

\begin{theorem}[Kinetic Poincaré inequality]
  \label{thm31}
  Given $\Lambda>0$, there is $C>0$, depending only on $\Lambda$ and the dimension $d$, and there is $r \in (0,1)$, so that the following holds:
  \smallskip
  
  Let $f \ge 0$ be a weak subsolution in $Q_1$ to
  \begin{equation*}
    \p_t f + v \cdot \nabla_x f \le \nabla_v \cdot \( A \nabla_v f \)
  \end{equation*}
  with $A$ symmetric measurable so that
  \begin{equation*}
    A(t,x,v) \le \Lambda \ \text{ for almost every } (t,x,v) \in Q_1.
  \end{equation*}

  Then $f$ satisfies the following inequality
  \begin{equation}
    \label{gmpoi}
    \int_{Q_r ^+} \(f- \langle f\rangle_{Q^-_r} \)_+ \dd z \le C \int_{Q_1} |\nabla_v f| \dd z,
  \end{equation}
   where $Q_r^-$ is defined as before, see~\eqref{eq:Qminus}, and the average $\langle f\rangle_{Q^-_r}$ is defined by
   \begin{equation*}
     \langle f\rangle_{Q^-_r} := \frac{1}{|Q^-_r|} \int_{Q^-_r} f \dd z.
   \end{equation*}
\end{theorem}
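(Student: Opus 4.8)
The plan is to mimic the constructive elliptic argument (method C above, with the Poincaré inequality) but to replace straight-line trajectories by trajectories that follow the drift vector field $\cT = \p_t + v\cdot\nabla_x$ and are driven by the diffusion vector fields $\nabla_v$ — i.e. by solving a small control problem connecting a ``future'' point to a ``past'' point. First I would set up three kinetic cylinders $Q_r^- , Q_r^0, Q_r^+$, with $Q_r^-$ in the strict past (as in \eqref{eq:Qminus}), $Q_r^+$ a cylinder near the top, and $Q_r^0$ an intermediate cylinder whose role is to host a smooth cutoff function $\chi$ with $\mathbf 1_{Q_r^0}\le \chi\le \mathbf 1_{Q_1}$ and $|\cT\chi|+|\nabla_v\chi|\lesssim 1$; the value $r=1/20$ is chosen so that all these cylinders, together with the ``tubes'' swept out by the trajectories, stay inside $Q_1$. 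The key algebraic identity is the one-dimensional fundamental theorem of calculus along a trajectory: if $\Gamma(s)=(T(s),X(s),V(s))$ runs from a point $z_0\in Q_r^0$ at $s=1$ to a point $z_\pm\in Q_r^\pm$ at $s=0$, then
\begin{equation*}
  f(z_0)-f(z_\pm) = \int_0^1 \ds\big[f(\Gamma(s))\big]\dd s = \int_0^1 \Big[(\cT f)(\Gamma(s))\,\dot T(s) + (\nabla_v f)(\Gamma(s))\cdot \dot V(s)\Big]\dd s,
\end{equation*}
after having arranged $\Gamma$ so that $\ds X(s) = V(s)\,\dot T(s)$, which kills the $\nabla_x f$ term (this is exactly why the trajectory must follow the transport drift).

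Next I would choose the control explicitly, as in the parabolic sketch \eqref{eq:control-para}–\eqref{eq:control-solved-para}: take $\dot T_\pm(s)=\delta_\pm:=t_0-t_\pm$ constant, so $T_\pm(s)=s t_0+(1-s)t_\pm$, and pick $\dot V_\pm(s)\sim \mathsf m_\pm s^{-1/2}$ so that the $v$-increment behaves like $s^{1/2}$ and the $x$-increment (obtained by integrating $V\dot T$) is polynomial in $s$; the three boundary conditions $\Gamma_\pm(1)=z_0$, $\Gamma_\pm(0)=z_\pm$ then fix $\mathsf m_\pm$ and a second free constant in $V_\pm$. The crucial points to check are: (a) the tangent vectors $\dot T_\pm,\dot X_\pm,\dot V_\pm$ are integrable in $s\in[0,1]$ (they are, since $s^{-1/2}$ is), so the trajectories stay in bounded tubes contained in $Q_1$ for $r$ small; (b) for fixed $z_\pm$ the map $z_0\mapsto \Gamma_\pm(s)$ has Jacobian bounded below uniformly in $s$ away from $0$ and blowing up only like $s^{-1/2}$ near $s=0$ (an integrable singularity), and symmetrically for $z_0\mapsto z_\pm$ with $z_0,s$ fixed; and (c) the cutoff $\chi$ lets us pass from $f(z_0)$ to $\langle f\rangle_{Q_r^-}$. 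Concretely, using that $f$ is a \emph{subsolution}, $\cT f\le \nabla_v\cdot(A\nabla_v f)$, after integrating by parts in $v$ against $\chi$ the drift term contributes with a favourable sign, so one gets
\begin{equation*}
  \big(f(z_+)-f(z_0)\big)_+ \lesssim \int_0^1 \big|(\nabla_v f)(\Gamma_+(s))\big|\, s^{-1/2}\dd s + (\text{similar term along }\Gamma_-),
\end{equation*}
and combining the $+$ and $-$ trajectories through the common endpoint $z_0$ and averaging $z_0$ over $Q_r^0$ (and $z_-$ over $Q_r^-$) yields, after the change of variables in (b),
\begin{equation*}
  \int_{Q_r^+}\big(f(z_+)-\langle f\rangle_{Q_r^-}\big)_+\dd z_+ \lesssim \Big(\int_0^1 s^{-1/2}\dd s\Big)\,\|\nabla_v f\|_{L^1(Q_1)} \lesssim \|\nabla_v f\|_{L^1(Q_1)}.
\end{equation*}

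The main obstacle is step (b): verifying that the family of changes of variables induced by the explicit control has Jacobians with only an integrable ($s^{-1/2}$) singularity, uniformly over the relevant ranges of $z_0\in Q_r^0$, $z_\pm\in Q_r^\pm$, and that one can use a change of variables both in the intermediate variable $z_0$ \emph{and} in the past/future variables $z_\pm$ — the latter being possible only \emph{after} the positive part has been opened up and the $\langle f\rangle_{Q_r^-}$ subtracted, so that no ``$(\cdot)_+$'' obstruction remains around the $z_0$-integral (this is the subtlety already flagged in the parabolic sketch: ``it was not possible to use the integration in $Q_\pm$ before because of the positive value around the $Q^0$ integral''). A secondary technical point is the justification of the trajectory computation at the level of $H^1$ (resp. merely $L^1_v$-gradient) subsolutions rather than smooth functions: this is handled by mollifying $f$ in all variables, carrying out the argument for the smooth approximants, and passing to the limit using the energy estimate \eqref{eq:caccio-kin} to control $\nabla_v f$ in $L^2\subset L^1$ locally; the defect measure of the subsolution only enters with the good sign and can be discarded. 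Once \eqref{gmpoi} is proved, the passage to the kinetic intermediate value Lemma~\ref{lem:kin-ivl} follows the elliptic template: apply \eqref{gmpoi} to the truncation $\tilde f$ of $f$ at levels $0$ and $1-\theta$, bound the average $\langle\tilde f\rangle_{Q_r^-}$ from above using the measure hypothesis \eqref{eq:kin-mc1}, bound $\int_{Q_r^+}(\tilde f-\langle\tilde f\rangle_{Q_r^-})_+$ from below using \eqref{eq:kin-mc2}, and observe that $\nabla_v\tilde f$ is supported in $\{0<f<1-\theta\}$ so that Cauchy–Schwarz plus the Caccioppoli bound converts the right-hand side into $|\{0<f<1-\theta\}\cap Q_1|^{1/2}$ times a constant, giving the claimed lower bound on that measure.
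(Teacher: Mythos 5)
Your overall architecture is the right one and matches the paper's: three cylinders $Q_r^-$, $Q_r^0$, $Q_r^+$, a cutoff living in the intermediate cylinder, the fundamental theorem of calculus along trajectories tangent to the span of $\cT$ and $\nabla_v$, the favourable sign of the subsolution inequality, an integration by parts in $v$ producing an integrable $s^{-1/2}$ weight, and two changes of variables (one parametrised by $z_0$ for $s$ away from $0$, one by $z_\pm$ for $s$ near $0$). The deduction of Lemma~\ref{lem:kin-ivl} at the end also matches the paper. However, there is a genuine gap at the central step, the construction of the trajectories. You transplant the parabolic control \eqref{eq:control-para} verbatim: $\dot V_\pm(s)\sim \mathsf m_\pm s^{-1/2}$, claiming the boundary conditions ``fix $\mathsf m_\pm$ and a second free constant in $V_\pm$''. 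Count the constraints: writing $V_\pm(s)=c_\pm+2\mathsf m_\pm s^{1/2}$, the condition $\Gamma_\pm(0)=z_\pm$ forces $c_\pm=v_\pm$, the condition $V_\pm(1)=v_0$ forces $2\mathsf m_\pm=v_0-v_\pm$, and then $X_\pm(1)=x_0$ is a further $d$ scalar equations with no parameters left --- the system is over-determined and generically unsolvable. This is precisely where the kinetic case departs from the parabolic one: in the parabolic case only the $x$-endpoints must be matched ($d$ conditions, $d$ parameters), whereas here both $x$- and $v$-endpoints must be matched ($2d$ conditions). The paper's fix is to take \emph{two} linearly independent control profiles, $\dot V_\pm(s)=\mathsf m^{(0)}_\pm g_0''(s)+\mathsf m^{(1)}_\pm g_1''(s)$, and to verify that the associated $2d\times 2d$ Wronskian matrix $\sW(s)$ is invertible for all $s\in(0,1]$ --- achieved by the choice $g_0(s)=s^{3/2}\cos\ln s$, $g_1(s)=s^{3/2}\sin\ln s$, which gives $\det\sW(s)=s^{2d}$ while keeping the critical forcing $g_i''(s)=O(s^{-1/2})$. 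Without some such two-parameter family and the invertibility check, your step (b) cannot even be set up, because the trajectories you need do not exist.

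A secondary inaccuracy, which would bite if you carried out the plan as written: the Jacobian of the forward map $z_0\mapsto\Gamma_\pm(s)$ does not ``blow up like $s^{-1/2}$'' near $s=0$; it \emph{degenerates} there (the trajectory collapses onto $z_\pm$ as $s\to 0$ independently of $z_0$), so the inverse Jacobian blows up like $s^{-2d}$, which is not integrable. What is $O(s^{-1/2})$ in the paper is only the quantity $\nabla_{v_0}(\Phi_\pm^s)^{-1}$ needed to differentiate the cutoff after the integration by parts (in which the full Jacobian factors cancel upon changing variables back). The non-integrable degeneracy near $s=0$ is then circumvented exactly by the splitting you allude to --- using the $z_\pm$-parametrisation $\Psi_\pm^s$ on $[0,s_0]$, where that map is a small perturbation of the identity --- but this requires the correct asymptotics of $\sW(s)$ near $s=0$, which again rests on the explicit choice of $g_0,g_1$ that your proposal omits.
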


\begin{remarks}
  \begin{enumerate}
  \item This kinetic Poincaré inequality was developed along the series of works~\cite{guerand2022quantitative,niebel2023kinetic,anceschi2024poincare,dmnz}. The two key inspirations were first a weaker modified form of such inequality proposed before in~\cite{wang2009c}, albeit with a complicated corrector replacing the average $\langle f\rangle_{Q^-_r}$ (this corrector itself solving another hypoelliptic problem), and second the kinetic Poincaré inequality for constant coefficients obtained in~\cite{MR4776290}. 
  \item Intuitively this theorem measures how much (in $L^1$) the
    solution $f$ can grow above its past average over $Q_r^-$, in terms of a bound on $\nabla_v f$ in $L^1$ in a bigger cube \emph{in the
      present}, provided that $f$ is a subsolution on such a bigger
    cube. This is therefore a \textbf{time-oriented integral control on the oscillation}. For instance if $\nabla_v f \equiv 0$, then $f$ solves $\cT f \le 0$ and decays along the transport lines: it cannot increase its local average as time flows from past to future.
  \item This theorem only requires the bound from above on $A$, not the coercivity. However, in the sequel we will use the coercivity of $A$ in order to control the $L^2$ norm of $\nabla_v f$ in the right hand side of the previous display by the $L^2$ norm of $f$ via the energy estimate. 
    
  \item Notice the gap between the two small cubes. It is required for subsolutions, since the kinetic Poincaré inequality~\eqref{gmpoi} implies the kinetic intermediate value Lemma~\ref{lem:kin-ivl}, which fails for subsolutions when no gap is present.
    
  \item The characterisation of the optimal constant in~\eqref{gmpoi} is an interesting question, open to our knowledge.
\end{enumerate}
\end{remarks}

\begin{proof}[Proof of the intermediate value lemma~\ref{lem:kin-ivl} from the Poincaré inequality~\eqref{gmpoi}]
  \text{ } \\
  Given $f$ as in the statement of the intermediate value lemma~\ref{lem:kin-ivl}, its positive part $f_+$ is also a subsolution verifying exactly the same assumptions as $f$.

  We can therefore apply the Poincaré inequality~\eqref{gmpoi} to $f_+$:
  \begin{equation}
    \label{eq:appli-poincare}
    \int_{Q_r} \(f_+ - \langle f_+\rangle_{Q_r^-}\)_+ \dd z 
    \lesssim \int_{Q_1} |\nabla_vf_+| \dd z.
  \end{equation}

  We now study the average on $Q_r^-$ and use the first measure condition~\eqref{eq:kin-mc1}:
  \begin{equation*}
    \langle f_+\rangle_{Q_r^-} = \frac{1}{|Q^-_r|} \int_{Q^-_r} f_+ \dd z \le \frac{|\{f>0\} \cap Q_r^- |}{|Q_r^-|} \le 1-\delta_1.
  \end{equation*}

  We deduce the following lower bound on the left hand side of~\eqref{eq:appli-poincare}:
  \begin{align*}
    \int_{Q_r} \(f_+ - \langle f_+\rangle_{Q_{r}^-}\)_+ \dd z 
    & \ge \int_{Q_r} \(f_+ - (1-\delta_1)\)_+ \dd z \\
    & \ge \int_{Q_r\cap\{f>1-\theta\}} (\delta_1-\theta) \dd z \\
    & \ge |Q_r| \delta_2(\delta_1-\theta).
  \end{align*}
  
  Then, we bound from above the right-hand side of~\eqref{eq:appli-poincare}:
  \begin{align*}
    \int_{Q_1} |\nabla_vf_+| \dd z & = 
    \int_{Q_1 \cap\{f \le 0\}} |\nabla_vf_+| \dd z \\
    & \quad + \int_{Q_1 \cap\{1-\theta>f>0\}} |\nabla_vf_+| \dd z + \int_{Q_1 \cap\{f\ge 1-\theta\}} |\nabla_vf_+| \dd z \\ 
    & =: I_1 + I_2 + I_3.
  \end{align*}

  We now consider the three terms $I_1$, $I_2$ and $I_3$ successively. First, we have $I_1 = 0,$ since $\nabla_v f_+ =0$ almost everywhere over $\{f_+=0\}$, see again~\cite[Section~4.2.2, Theorem~4-(iii), p.~129]{zbMATH08010281}. Second, we have
   \begin{align*}
       I_2 & \lesssim |\{ 0 < f <1-\theta\} \cap Q_1|^{1/2} \left(  \int_{Q_1} |\nabla_vf_+|^2 \dd z \right)^{1/2} \\
       & \lesssim |\{ 0 < f <1-\theta\} \cap Q_1|^{1/2} \left(  \int_{Q_2} f_+^2 \dd z \right)^{1/2} \\
       & \lesssim |\{ 0 < f <1-\theta\} \cap Q_1|^{1/2},
   \end{align*}
   where we have used the energy estimate~\eqref{eq:caccio-kin} (the kinetic Caccioppoli inequality) between the first and second lines, and the bound $0 \le f_+ \le 1$ between the second and third lines.
   
   Third and finally, we have
   \begin{align*}
       I_3 & = \int_{Q_1} \left|\nabla_v \Big[\big(f-(1-\theta)\big)_+ + (1-\theta) \Big]\right| \dd z  \\
       & = \int_{Q_1} |\nabla_v (f-(1-\theta))_+ | \dd z \\ 
       & \lesssim \left( \int_{Q_1} |\nabla_v (f-(1-\theta))_+ |^2 \dd z \right)^{1/2} \\
       & \lesssim \left( \int_{Q_2} |(f-(1-\theta))_+ |^2 \dd z \right)^{1/2} \lesssim \theta,
   \end{align*}
   where we have used the energy estimate~\eqref{eq:caccio-kin} (the kinetic Caccioppoli inequality) between the third and fourth lines on the subsolution $(f-(1-\theta))_+$, and the bound $0 \le (f-(1-\theta))_+ \le \theta$ in the last line.
   
   Altogether, we obtain
   \begin{equation*}
     \delta_2(\delta_1-\theta) \lesssim |\{0<f<1-\theta\} \cap Q_1|^{1/2} + \theta.
   \end{equation*}
   Provided we choose $\theta \le \delta_1/2$, we deduce
   \begin{equation*}
     \delta_1\,\delta_2 \lesssim |\{0<f<1-\theta\} \cap Q_1|^{1/2} + \theta.
   \end{equation*}
   Provided we furthermore impose $\theta \lesssim \delta_1\delta_2$, we finally get
   \begin{equation*}
     \delta_1\,\delta_2 \lesssim |\{0<f<1-\theta\} \cap Q_1|^{1/2},
   \end{equation*}
   from which the conclusion follows. 
\end{proof}

\begin{proof}[Proof of Theorem~\ref{thm31} (the kinetic Poincaré inequality)]
  \text{ } \\
  We present the argument of \cite{anceschi2024poincare,dmnz}, inspired by \cite{guerand2022quantitative}, the subsequent work \cite{niebel2023kinetic} and control theory. The idea is to replace the straight lines of the constructive argument in the elliptic case by appropriate trajectories generated by the Hörmander vector fields $\p_t + v \cdot \nabla_x$ and $\nabla_v$, and use the \emph{upper bound} on $(\p_t + v \cdot \nabla_x)f \le \nabla_v \cdot (A \nabla_v f)$ provided by the assumption that $f$ is a subsolution; the right-hand side is then controlled after integrating by parts but the Jacobian of the parametrisation by the \emph{departing} cylinder must be controlled. We also introduce an intermediate cylinder $Q^0_r$ between $Q_r^-$ and $Q_r^+$ and a smooth localisation function in this intermediate cylinder: this  variable $z_0$ is used for the integration by parts (see Figure~\ref{fig:trajectories}).

  We recall the notation $z=(t,x,v)$.

    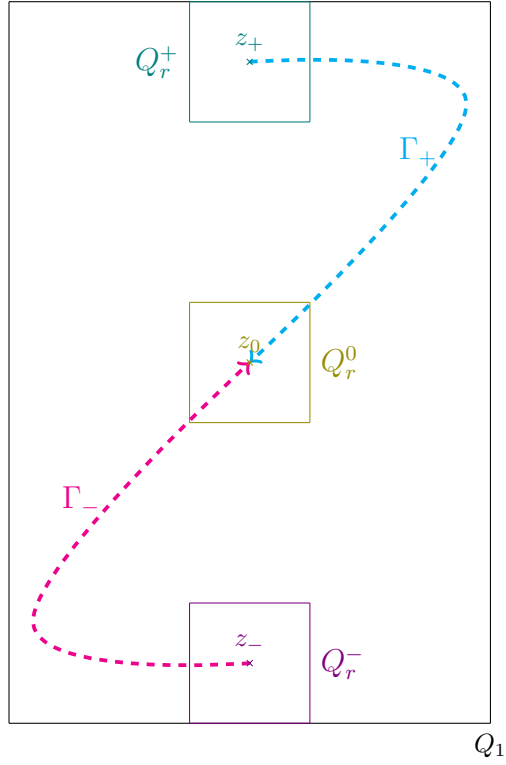
\begin{figure}{
      \centering
      \begin{tikzpicture}[scale =0.8]
        \draw [black] (-4,0) -- (4,0);
        \draw[black] (-4,-12) -- (4,-12);
        \draw [black](-4,0) -- (-4,-12);
        \draw [black](4,0) -- (4,-12) node[anchor=north, scale=1]
        {\footnotesize{$Q_1$}};
        \draw [teal](-1,0) -- (1, 0);
        \draw [teal](-1,-2) -- (1, -2);
        \draw [teal](-1,0) -- node[anchor=east, scale=1] {$Q^+_r$} (-1,
        -2);
        \draw [teal](1,0) -- (1, -2) ;
        \draw[olive] (-1,-5) -- (1, -5);
        \draw [olive](-1,-7) -- (1, -7);
        \draw [olive](-1,-5) -- (-1, -7);
        \draw [olive](1,-5) -- node[anchor=west, scale=1] {$Q^0_r$}(1,
        -7);
        
        \draw[violet] (-1,-10) -- (1, -10);
        \draw [violet](-1,-12) -- (1, -12);
        \draw [violet](-1,-10) -- (-1, -12);
        \draw [violet](1,-10)  -- node[anchor=west, scale=1] {$Q^-_r$}
        (1, -12);
        \draw [->, cyan, dashed, line width=0.5mm]  plot [smooth,
        tension = 0.7] coordinates {(0,-1)(3.6, -1.7)  (0,-6)};
        \node [cyan] at (2.8, -2.5) {$\Gamma_+$};
        \draw [->, magenta, dashed, line width=0.5mm] plot [smooth,
        tension = 0.7] coordinates {(0,-11)(-3.6, -10.3) (0,-6)};
        \node [magenta] at (-2.8, -8.3) {$\Gamma_-$};
        \coordinate[label={[teal]above:\footnotesize{$z_+$}}] (z_+) at
        (0, -1);
        \draw [teal] plot[only marks,mark=x,mark size=2pt]
        coordinates {(0, -1)};
        \coordinate[label={[olive]above:\footnotesize{$z_0$}}] (z_0)
        at (0, -6);
        \draw [olive] plot[only marks,mark=x,mark size=2pt]
        coordinates {(0, -6)};
        \coordinate[label={[violet]above:\footnotesize{$z_-$}}] (z_-)
        at (0, -11);
        \draw [violet] plot[only marks,mark=x,mark size=2pt]
        coordinates {(0, -11)};
      \end{tikzpicture}
    }
    \caption{Construction of the trajectories. The curve $\Gamma_+$
      connects any point $z_+ \in Q^+_r$ to some intermediate point
      $z_0 \in Q^0_r$, whereas the curve $\Gamma_-$ connects any point
      $z_- \in Q^-_r$ to some intermediate point
      $z_0 \in Q^0_r$.}\label{fig:trajectories}
  \end{figure}
  \medskip

  \noindent
  \textbf{A. Construction of the trajectories.} 
  Given three points
  $z_+ \in Q^+_r, z_0 \in Q^0_r$ and $z_- \in Q^-_r$, we want to
  construct two paths $s \rightarrow \Gamma_+(s)$ and
  $s \rightarrow \Gamma_-(s)$ for $s \in [0, 1]$ such that (see
  Figures~\ref{fig:trajectories} and~\ref{fig:trajectories-toy})
  \begin{equation*}
    \begin{aligned}
      & \Gamma_+(s) = \left(T_+(s),X_+(s), V_+(s)\right), \qquad \Gamma_+(0) = z_+, \qquad \Gamma_+(1) =
      z_0,\\
      & \Gamma_-(s) = \left(T_-(s), X_-(s), V_-(s)\right), \qquad \Gamma_-(0) = z_-, \qquad \Gamma_-(1) =
      z_0,
    \end{aligned}
  \end{equation*}
  where
  \begin{equation}
    \label{eq:control}
    \begin{aligned}
      \begin{cases}
        \ds V_{\pm}(s) 
        = \mathsf m_\pm ^{(0)}
        g_0''(s) +\mathsf m^{(1)}_\pm
        g_1 ''(s), \\[2mm]
        \ds X_\pm(s) = \delta_\pm V_\pm(s),\\[2mm]
        \ds T_\pm(s) = \delta_\pm,
      \end{cases}
    \end{aligned}
  \end{equation}
  for $\delta_\pm := t_0 - t_\pm$, some control functions
  $g_i \in C^2((0,1])$ with $g_i(0)=g_i'(0)=0$ for $i=0,1$, and
  constant vectors $\mathsf m^{(i)} _\pm \in \R^d$ for $i=0,1$. Note that the forcing term is independent of the position and velocity. This makes it easy to solve these differential equations.
  \begin{figure}{ \centering
      \begin{tikzpicture}[scale = 0.45,>=stealth]
        \draw[->] (xyz cs:x=-2) -- (xyz cs:x=10) node[above] {$v$};
        \draw[->] (xyz cs:y=-12) -- (xyz cs:y=2) node[right] {$t$};
        \draw[->] (xyz cs:z=-2) -- (xyz cs:z=26) node[left] {$x$};
        \node[fill, circle,inner
        sep=1.5pt,label={left:\footnotesize{$\Gamma_+(0)$}}] at
        (xyz cs:x=0,z=0, y = 0) {};
        \node[fill,circle,inner
        sep=1.5pt,label={right:\footnotesize{$\Gamma_+(1)$}}] at
        (xyz cs:x=4,z=4, y = -10) {};
        \draw[smooth, ->, red, line width=0.45mm] plot coordinates
        {(xyz cs:x=0,z=0, y = 0) (xyz cs:x=-5.56,z=0.732, y = -2.5)
          (xyz cs:x=-7.16,z=2.42, y = -5)  (xyz cs:x=-3.94,z=3.91,
          y = -7.5)  (xyz cs:x=4,z=4, y = -10)};
        \begin{scope}[on background layer]
          \draw[smooth, ->, blue] (xyz cs:x=-7.16,z=2.42, y = -5) --
          node[above] {\tiny{$\nabla_v$}}(xyz cs:x=-4.16, z=2.42,
          y = -5);
          \draw[smooth, ->, blue] (xyz cs:x=-7.16,z=2.42, y = -5) --
          node[left] {\tiny{$\cT$}}(xyz cs:x=-7.16, z=4.92, y =
          -7.5);
          \draw[smooth, ->, blue] (xyz cs:x=-7.16,z=2.42, y = -5) --
          (xyz cs:x=-4.16, z=4.92, y = -7.5)node[right] {\tiny{$\dot
              {\Gamma}_+(s)$}};
          \draw[dashed, blue](xyz cs:x=-7.16, z=4.92, y = -7.5) --
          (xyz cs:x=-4.16, z=4.92, y = -7.5);
          \draw[dashed, blue](xyz cs:x=-4.16, z=2.42, y = -5)--
          (xyz cs:x=-4.16, z=4.92, y = -7.5);
        \end{scope}
      \end{tikzpicture}
    }
    \caption{Construction of the trajectories. The curve $\Gamma_+$
      connects any point $\Gamma_+(0) = z_+ \in Q^+_r$ to some
      intermediate point $\Gamma_+(1) = z_0 \in Q^0_r$ along the
      vector fields $\cT$ and $\nabla_v$. The tangent vector $\dot \Gamma_+(s)$ is in the span of $\cT$ and $\nabla_v$ (the third axis, for the $x$ variable, is orthogonal to the plane of the paper in this schematic representation).} \label{fig:trajectories-toy}
  \end{figure}
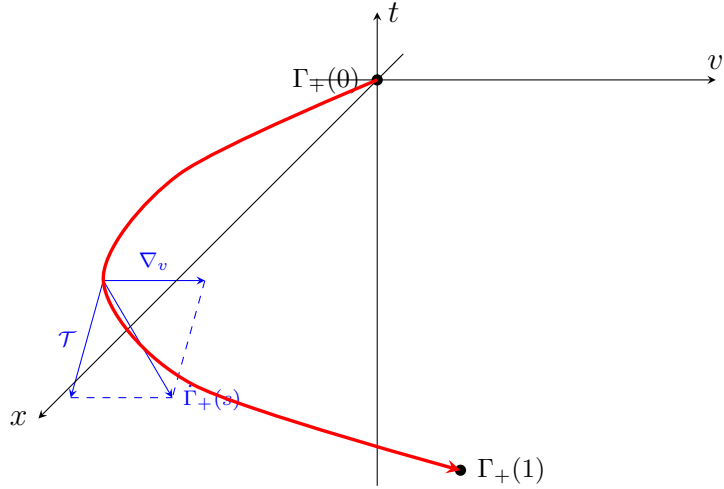

  Solving these differential equations then yields
  \begin{equation}
    \label{eq:control-solved}
    \begin{aligned}
      \begin{cases}
        V_{\pm}(s) = v_\pm + \mathsf m_\pm ^{(0)}
        g_0'(s) + \mathsf m^{(1)}_\pm
        g_1 '(s), \\[2mm]
        X_\pm(s) = x_\pm + s \delta_\pm v_\pm +
        \delta_\pm \left[ \mathsf m_\pm ^{(0)} g_0(s) + \mathsf
          m^{(1)}_\pm
          g_1 (s) \right] ,\\[2mm]
        T_\pm(s) = s t_0 + (1-s) t_\pm.
      \end{cases}
    \end{aligned}
  \end{equation}
  
  Let us denote $\U_d$ the $d$-vector of $1$'s, $\V_d$ the
  $d$-vector of $0$'s, $\I_d$ the $d\times d$ identity matrix,
  $\OO$ a zero block of arbitrary size, and
  \begin{align*}
    & \mathsf M_\pm :=
      \begin{pmatrix}
        \mathsf m^{(0)}_\pm \\
        \mathsf m^{(1)} _\pm
      \end{pmatrix}, \quad
    \sW(s) :=
    \begin{pmatrix}
      g_0'(s) \I_d & g_1 '(s) \I_d \\
      g_0(s) \I_d & g_1(s) \I_d
    \end{pmatrix}, \\
    & \sW_\pm ^\delta (s) :=
      \begin{pmatrix}
        \I_d & \OO \\
        \OO & \delta_\pm \I_d
      \end{pmatrix} \sW(s), \quad
              \mathsf Y_\pm =
              \begin{pmatrix}
                \mathsf y^{(0)} _\pm \\
                \mathsf y^{(1)} _\pm
              \end{pmatrix}
    :=
    \begin{pmatrix}
      v_0 - v_\pm \\
      x_0 - x_\pm - \delta_\pm v_\pm
    \end{pmatrix}.
  \end{align*}
  The boundary conditions $\Gamma_\pm(1)=z_0$ impose
  \begin{align}\label{eq:boundary-condition}
    \sW_\pm ^\delta(1) \mathsf M_\pm = \mathsf Y_\pm \quad \Longrightarrow
    \quad \mathsf M_\pm = \sW_\pm ^\delta(1)^{-1} \mathsf Y_\pm,
  \end{align}
  provided that the Wronskian matrix $\sW(1)$ is invertible at
  $s=1$. We deduce
  \begin{align*}
    \begin{pmatrix}
      V_\pm (s) \\
      X_\pm (s)
    \end{pmatrix}
    =   \sW_\pm ^\delta(s) \mathsf M_\pm +
    \begin{pmatrix}
      \I_d & \OO \\
      \delta_\pm s \I_d & \I_d
    \end{pmatrix}
    \begin{pmatrix}
      v_\pm\\
      x_\pm
    \end{pmatrix}.
  \end{align*}
  Using~\eqref{eq:boundary-condition} we obtain
  \begin{align}
    \nonumber
    \begin{pmatrix}
      V_\pm (s) \\
      X_\pm (s)
    \end{pmatrix}
    &= \sW_\pm ^\delta(s) \sW_\pm ^\delta(1)^{-1} \mathsf Y_\pm+
      \begin{pmatrix}
        \I_d & \OO \\
        \delta_\pm s \I_d & \I_d
      \end{pmatrix}
                            \begin{pmatrix}
                              v_\pm\\
                              x_\pm
                            \end{pmatrix}\\
    \nonumber
    &= W^\delta_\pm(s) \big[W^\delta_\pm(1)\big]^{-1}
      \begin{pmatrix}
        v_0\\
        x_0
      \end{pmatrix}\\
    \nonumber
    &\quad- W^\delta_\pm(s) \big[W^\delta_\pm(1)\big]^{-1}
      \begin{pmatrix}
        v_\pm \\ x_\pm + \delta_\pm v_\pm
      \end{pmatrix}
    +
    \begin{pmatrix}
      \I_d & \OO \\
      \delta_\pm s \I_d & \I_d
    \end{pmatrix}
                          \begin{pmatrix}
                            v_\pm\\
                            x_\pm
                          \end{pmatrix} \\
    \label{eq:mapping}
    & = \mathfrak A^s
      \begin{pmatrix}
        v_0 \\ 
        x_0
      \end{pmatrix} + \mathfrak B^s 
    =: \Phi^s_\pm (x_0,v_0)
  \end{align}
  is an affine function with matrix
  $\mathfrak A^s:=\sW_\pm ^\delta (s) [\sW_\pm ^\delta (1)]^{-1}$
  and a vector $\mathfrak B^s$ that depend only on $s$ and $(x_\pm,v_\pm)$.

  We still have to prove that the matrix $\mathfrak A^s$ is
  invertible for $s \in (0,1]$. If so, given $s \in (0,1]$, the
  derivative along the second variable $v_\pm(s)$ of the
  inverse of $\Phi^s$ is
  \begin{equation*}
    \nabla_{v_0} \left( \Phi_\pm ^s \right)^{-1} =
    \sW^\delta_\pm(1)\big[\sW^\delta_\pm(s)\big]^{-1}
    \left( \begin{matrix}
        \U_d \\ \V_d
      \end{matrix} \right).
  \end{equation*}

  We choose, following a new idea in the recent contribution~\cite{dmnz},
  \begin{equation*}
    \begin{cases}
      \displaystyle
      g_0(s) := s^{\frac32} \cos \ln s,\\[2mm]
      \displaystyle
      g_1(s) := s^{\frac32} \sin \ln s.
    \end{cases}
  \end{equation*}
  Then the Wronskian matrix is invertible for all $s \not =0$ with the simple formula
  \begin{equation}
      \label{toy-jacobian}
      \det \sW(s)
      = \left(g_0'(s) g_1(s)- g_0(s) g_1'(s)\right)^d = s^{2d}.
  \end{equation}

  Our choice of two linearly control functions $g_0$
  and $g_1$ is guided by: (1) obtaining the critical exponent $-1/2$ for the forcings $g_0''$ and $g_1''$ (scaling condition), and (2) ensuring that the Wronskian matrix is invertible (independence condition). We refer to~\cite{dmnz} for a more detailed discussion of these aspects.

  Moreover, we have 
  \begin{equation*}
    \sW_\pm ^\delta (s) :=
    \begin{pmatrix}
      s^{\frac12} \left(\frac32 \cos \ln s - \sin \ln s \right) \, \I_d & s^{\frac12} \left(\frac32 \sin \ln s + \cos \ln s \right) \, \I_d \\
      s^{\frac32} \cos \ln s \, \I_d &
      s^{\frac32} \sin \ln s \, \I_d 
    \end{pmatrix}
  \end{equation*}
  which yields
  \begin{equation}
    \label{eq:bound-jac-kin}
    \nabla_{v_0} \left( \Phi_\pm ^s \right)^{-1}
    = W^\delta_\pm(1)\big[W^\delta_\pm(s)\big]^{-1}
    \begin{pmatrix}
      \U_d \\ \V_d
    \end{pmatrix}
    \sim O\( s^{-\frac12} \).
  \end{equation}
  Note that $\nabla_{v_0} ( \Phi_\pm ^s )^{-1}$ remains integrable on $s \in [0,1]$ will be used in the next step.

  Observe also that our choice of control functions $g_0$ and $g_1$ implies that their derivatives up to order two are integrable on $s \in [0,1]$, which implies that the trajectories are bounded (with integrable tangent vector field). This allows to choose $r \in (0,1)$ so that the trajectories we have constructed between points of $Q_r^\pm$ and $Q^0_r$ stays in the larger base cylinder $Q_1$, provided that $r \in (0,1)$ is chosen small enough.

  Note finally that the scalings $\nabla_{v_0} ( \Phi_\pm ^s )^{-1} \sim O(s^{-1/2})$ and $g_i''(s) = O(s^{-1/2})$ are optimal, as discussed in~\cite{dmnz}. Trajectories achieving this scaling are called \textbf{critical kinetic trajectories} in~\cite{dmnz}.
  \medskip

  \noindent
  \textbf{B. Proof of the kinetic Poincaré inequality by following the trajectories.} We consider the three cylinders $Q^+,Q^-, Q^0 \subset \Omega$ as in Figure~\ref{fig:trajectories}, and our subsolution $f$. Let $\varphi \in C_c^\infty(\R^{2d})$ be a non-negative function of the variables $(x,v)$ (excluding the time component) with compact support in any time-slice of $Q^0$ and such that
  \begin{equation*}
    \frac{1}{|Q_0|} \int_{Q^0} \varphi \dd z = 1.
  \end{equation*}
  
  Then
  \begin{align*}
       \int_{Q^+} \Big(f(z_+) - \langle f\rangle_{Q^-}\Big)_+ \dd 
      z_+ 
      & \le \frac{1}{|Q^-|} \int_{Q^+} \int_{Q^-} \Big(f(z_+) - f(z_{-})
      \Big)_+ \dd z_{-} \dd z_+ \\
      & \le \frac{1}{|Q^-|} \int_{Q^+} \int_{Q^-} \Big(f(z_+) - \langle
      f\varphi\rangle_{Q^0}\Big)_+\dd z_{-} \dd z_+ \\
      &\quad + \frac{1}{|Q^-|} \int_{Q^+} \int_{Q^-} \Big( \langle
      f\varphi\rangle_{Q^0} - f(z_{-})\Big)_+ \dd z_{-}\dd z_+,
  \end{align*}
  which yields the following estimate
  \begin{align}
    \nonumber
      & \int_{Q^+} \Big(f(z_+) - \langle f\rangle_{Q^-}\Big)_+ \dd 
        z_+ \\
    \nonumber
      & \leq \frac{1}{|Q^0|} \int_{Q^+} \Bigg\{\underbrace{\int_{Q^0} \big(f(z_+)
        - f(z_0)\big)\varphi(x_0,v_0) \dd z_0}_{=: \cI^+}
        \Bigg\}_+ \dd z_+ \\
    \label{eq:poincare-aux}
      & \quad + \frac{|Q^+|}{|Q^0| |Q^-|} \int_{Q^-} \Bigg\{\underbrace{\int_{Q^0}
        \big(f(z_0) - f(z_{-})\big)\varphi(x_0,v_0) \dd z_0}_{=:
        \cI^-}\Bigg\}_+ \dd z_{-},
  \end{align}
  where $z_+ = (t_+x_+,v_+), z_- = (t_-,x_-,v_-)$ and
  $z_0 = (t_0,x_0,v_0)$.  Note that $t_{-} < t_0 < t_+$. We now use
  the previous trajectories to estimate the right hand side.
  
  Using the chain rule, we get
  \begin{align*}
    \cI^\pm & = \pm \int_{Q^0} \Big(f(z_\pm) -
      f(z_0)\Big) \, \varphi(x_0,v_0) \dd z_0 \\
      & = \mp \int_{Q^0} \int_0^1 \frac{\dd}{\dd
        s}f\big(\Gamma_\pm(s)\big) \, \varphi(x_0,v_0)\dd s\dd z_0 \\
      & = \mp\delta_\pm  \int_{Q^0} \int_0^1 \left( \mathcal T
        f \right)\big(\Gamma_\pm(s)\big) \, \varphi(x_0,v_0)\dd s\dd z_0 \\
      & \qquad \mp \int_{Q^0} \int_0^1  V'_\pm(s) \cdot \left(\nabla_v f\right)(\Gamma_\pm(s)) \, \varphi(x_0,v_0)\dd s \dd
      z_0,
  \end{align*}
  which yields the following estimate
  \begin{align}
    \nonumber
    \cI^\pm
    & \leq  \underbrace{\pm\delta_\pm \int_{Q^0} \int_0^1
      \left[ \nabla_v ( A \nabla_v  f ) \right]
      \big(\Gamma_\pm(s)\big) \, \varphi(x_0,v_0)\dd s\dd z_0}_{=:
      \cI^\pm_1} \\
    \label{eq:I1-I2}
    & \qquad  \underbrace{\mp\int_{Q^0} \int_0^1 \sum_{i =
      0}^1 g_i''(s) \mathsf m_\pm^{(i)}
      \cdot \left( \nabla_v f \right)(\Gamma_\pm(s)) \,
      \varphi(x_0,v_0)\dd s \dd z_0}_{=: \cI^\pm_2}.
  \end{align}
  Note that the only difference between the two terms $\cI^+$ and
  $\cI^-$ is the role of $z_0$: in the former case it is the past
  variable, in the latter it is the future variable.
  
  We now integrate by parts the terms $\cI^\pm _1$ after a
  change of variables $(x_0,v_0) \mapsto y:= \Phi^s_\pm(x_0,v_0)$ for
  $s,t_0$ fixed. We recall that $\Phi^s_\pm$ is the affine map defined in \eqref{eq:mapping}, and that it is invertible for $s \not =0$.
  \begin{equation*}
    \begin{aligned}
      & \cI_1^\pm  = \pm\delta_\pm \int_{Q^0} \int_0^1
      \left[  \nabla_v \cdot ( A \nabla_v  f ) \right]
      \big(\Gamma_\pm(s)\big) \, \varphi(x_0)\dd s\dd z_0 \\
      & = \pm\delta_\pm \int_{Q^0} \int_0^1
      \left[  \nabla_v \cdot ( A \nabla_v  f ) \right]
      \left( \Phi^s_\pm (x_0),st_0 + (1-s)t_\pm
      \right) \, \varphi(x_0)\dd s \dd t_0 \dd x_0 \dd v_0  \\
      & = \pm \delta_\pm \int_{(\Phi^s_\pm \otimes \I)(Q^0)} \int_0^1
      \left[  \nabla_v \cdot ( A \nabla_v  f ) \right]
      \left( y,st_0 + (1-s)t_\pm \right) \, \varphi
      \left( \left(\Phi^s_\pm\right)^{-1}(y)\right) \frac{\dd s \dd t_0
        \dd y }{\left| \det \mathfrak
          A^s \right|} \\
      & = \pm \delta_\pm \int_{(\Phi^s_\pm \otimes
        \I)(Q^0)} \int_0^1 \left[ A \nabla_v f\right]
      \left( y,st_0 + (1-s)t_\pm \right) \, \nabla_v \left[\varphi
        \left(\left( \Phi^s_\pm \right)^{-1} (y)\right)\right]
      \frac{\dd s \dd t_0 \dd y}{\left| \det \mathfrak A^s \right|} \\
      & = \pm \delta_\pm \int_{Q^0} \int_0^1
      \left[ A \nabla_v f\right]
      \left( \Gamma_\pm(s) \right) \, \left\{ \nabla_v \left[\varphi
          \left(\left( \Phi^s_\pm \right)^{-1} (y)\right)\right]
      \right\}_{|y=\Phi^s_\pm(x_0)} \dd s \dd t_0 \dd x_0 \dd v_0.
    \end{aligned}
  \end{equation*}
  
  We then use~\eqref{eq:bound-jac-kin} to bound
  \begin{align*}
    \left| \left( \nabla_v \left[\varphi
    \left(\left( \Phi^s_\pm \right)^{-1} (y)\right)\right]
    \right)_{|y=(\Phi^s_\pm)^{-1}(x_0)} \right|
    & \lesssim_\varphi
      \left\| \nabla_{v_0} \left( \Phi^s_\pm \right)^{-1}
      \right\|_\infty  \\
    & \lesssim_\varphi s^{-\frac12},
  \end{align*}
  and deduce finally (using the bound on the matrix $A$)
  \begin{equation*}
    \cI_1^\pm \lesssim |\delta_\pm| \int_{Q^0} \int_0^1
    \left| \left[ \nabla_v f\right]
      \left( \Gamma_\pm(s) \right) \right|  s^{-\frac12} \dd s \dd t_0 \dd x_0 \dd
    v_0.
  \end{equation*}

  Now let us turn to $\cI_2^\pm$ in \eqref{eq:I1-I2}. Using the bound $|g_i''(s)|\lesssim s^{-1/2}$, we deduce 
  \begin{align*}
      \cI_2^\pm
      & = \mp\int_0^1  \int_{Q^0}  \sum_{i = 0}^1
      g_i''(s) \mathsf m_\pm^{(i)} \cdot\nabla_v
      f(\Gamma_\pm(s)) \varphi(x_0,v_0) \dd s  \dd t_0  \dd x_0 \dd v_0 \\
      & \lesssim \int_0^1 \int_{Q^0} \left|
      \left(\nabla_v f\right)\left(
        \Gamma_\pm(s) \right) \right| s^{-\frac12} \dd s \dd t_0 \dd x_0 \dd v_0.
  \end{align*}
  
  It follows from \eqref{eq:poincare-aux} and \eqref{eq:I1-I2} that
  we are left with estimating
  \begin{equation*}
    \cJ := \int_{Q^\pm} \big\{\cI^\pm_1\big\}_+ \dd
    z_\pm  + \int_{Q^\pm}  \big\{\cI^\pm_2\big\}_+ \dd
    z_\pm.
  \end{equation*}
  The previous two estimates on $\cI^\pm_1$ and $\cI^\pm_2$ imply
  \begin{equation*}
    \cJ \lesssim \bar \cJ := \int_{Q^\pm} \int_{Q^0} \int_0^1 
     \left| \left( \nabla_v f \right)
        (\Gamma_\pm(s))\right| s^{-\frac12} \dd s \dd z_\pm \dd z_0.
  \end{equation*}
  Since we are now integrating in both $Q^0$ and $Q_\pm$, we are now in a position to use a change of variable not only with the intermediate variable $z_0$, but also with the future/past variables $z_\pm$. Note that it was not possible to use the integration
  in $Q_\pm$ before because of the positive value around the $Q^0$
  integral. 
  
  We split $\bar \cJ$ as follows, given $s_0 \in (0,1)$,
  \begin{equation*}
    \begin{aligned}
      \bar \cJ & = \int_{Q^\pm} \int_{Q^0} \int_0^1
       \Big( \cdots \Big) \dd s \dd z_0 \dd z_\pm
      \\
      & = \underbrace{\int_{Q^\pm} \int_{Q^0} \int_0^{s_0}
        \Big( \cdots \Big)  \dd s \dd z_0 \dd
        z_\pm}_{=: \bar \cJ^\pm_1} + \underbrace{\int_{Q^\pm}
        \int_{Q^0} \int_{s_0}^1 \Big(
        \cdots \Big) \dd s \dd z_0 \dd z_\pm}_{=: \bar \cJ^\pm_2}.
    \end{aligned}
  \end{equation*}
  The two changes of variables on each part are represented in
  Figure~\ref{fig:change-of-var}.
  
  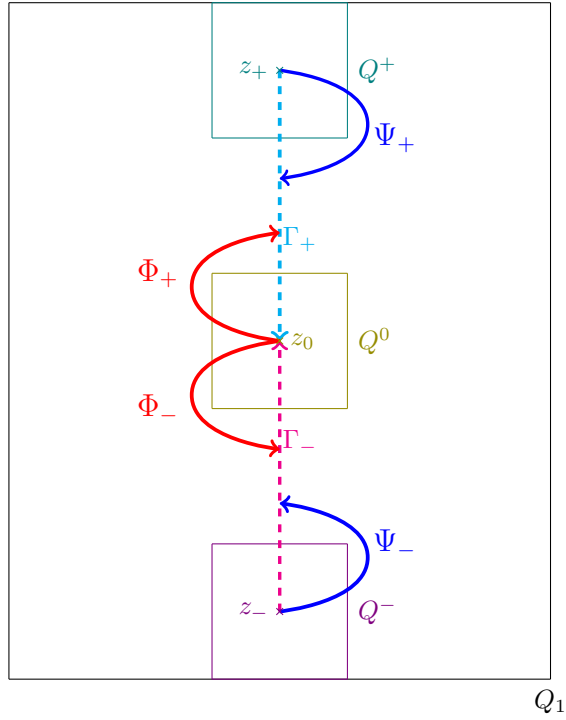
\begin{figure}{
      \centering
      \begin{tikzpicture}[scale =0.9]
        \draw [black] (-4,0) -- (4,0);
        \draw[black] (-4,-10) -- (4,-10);
        \draw [black](-4,0) -- (-4,-10);
        \draw [black](4,0) -- (4,-10) node[anchor=north, scale=1]
        {\footnotesize{$Q_1$}};
        \draw [teal](-1,0) -- (1, 0);
        \draw [teal](-1,-2) -- (1, -2);
        \draw [teal](-1,0) -- (-1, -2);
        \draw [teal](1,0) -- node[anchor=west, scale=1]
        {\footnotesize{$Q^+$}}(1, -2);
        \draw[olive] (-1,-4) -- (1, -4);
        \draw [olive](-1,-6) -- (1, -6);
        \draw [olive](-1,-4) -- (-1, -6);
        \draw [olive](1,-4) -- node[anchor=west, scale=1]
        {\footnotesize{$Q^0$}}(1, -6);
        \draw[violet] (-1,-8) -- (1, -8);
        \draw [violet](-1,-10) -- (1, -10);
        \draw [violet](-1,-8) -- (-1, -10);
        \draw [violet](1,-8)  -- node[anchor=west, scale=1]
        {\footnotesize{$Q^-$}} (1, -10);
        \draw [->, cyan, dashed, line width=0.5mm]  plot [smooth]
        coordinates {(0,-1)(0,-5)};
        \node [cyan] at (0.3, -3.5) {\footnotesize{$\Gamma_+$}};
        \draw [->, magenta, dashed, line width=0.45mm] plot
        [smooth] coordinates {(0,-9) (0,-5)};
        \node [magenta] at (0.3, -6.5) {\footnotesize{$\Gamma_-$}};
        \draw [->, blue, line width=0.45mm] plot [smooth, tension =
        1.5] coordinates {(0,-1) (1.3, -1.8)(0,-2.6)};
        \node [blue] at (1.7, -2) {$\Psi_+$};
        \draw [->, blue, line width=0.5mm] plot [smooth, tension =
        1.5] coordinates {(0,-9) (1.3, -8.2) (0,-7.4)};
        \node [blue] at (1.7, -8) {$\Psi_-$};
        \draw [->, red, line width=0.5mm] plot [smooth, tension =
        1.5] coordinates {(0,-5) (-1.3, -4.2)(0,-3.4)};
        \node [red] at (-1.8, -4) {$\Phi_+$};
        \draw [->, red, line width=0.5mm] plot [smooth, tension =
        1.5] coordinates {(0,-5) (-1.3, -5.8) (0,-6.6)};
        \node [red] at (-1.8, -6) {$\Phi_-$};
        \coordinate[label={[teal]left:\footnotesize{$z_+$}}] (z_+) at (0, -1);
        \draw [teal] plot[only marks,mark=x,mark size=2pt]  coordinates {(0, -1)};
        \coordinate[label={[olive]right:\footnotesize{$z_0$}}] (z_0) at (0, -5);
        \draw [olive] plot[only marks,mark=x,mark size=2pt]
        coordinates {(0, -5)};
        \coordinate[label={[violet]left:\footnotesize{$z_-$}}] (z_-) at (0, -9);
        \draw [violet] plot[only marks,mark=x,mark size=2pt]
        coordinates {(0, -9)};
      \end{tikzpicture}
    }
    \caption{The change of variables that we use in the proof for
      some fixed $s \in (0, 1)$. For $s \in (0,s_0)$ we use
      $\Psi_\pm$ that map $z_\pm$ onto $ \Gamma_\pm$, whereas
      for $s \in (s_0,1)$ we use $\Phi_\pm$ that map $z_0$ onto
      $\Gamma_\pm$.}\label{fig:change-of-var}
  \end{figure}
  
  The part $\bar \cJ^\pm_2$ is controlled by using the same
  change of variables $\Phi^s_\pm$ as before:
  \begin{align}
    \nonumber \bar \cJ^\pm_2
    & = \int_{Q^0} \int_{s_0}^1 s^{-\frac12}
      \left| \left(\nabla_v f\right)\left(
      y, st_0 +(1-s)t_\pm\right) \right| \frac{\dd s \dd t_0 \dd y}{\left| \det \mathfrak A^s \right|} \\
    \label{eq:J2}
    & \lesssim_{s_0} \int_{Q_1}  \left|
      \nabla_v f(z) \right| \dd z,
  \end{align}
  where we have used first that the integral in $s$ avoids the
  singularity at $s=0$, and therefore is finite.

  To control the part $\bar \cJ^\pm_1$ we parametrise
  $\Gamma_\pm(s)$ by the $z_\pm$ coordinates, for $s \in [0,s_0]$
  and $t_0, t_\pm$ all fixed:
  \begin{equation*}
    (t_\pm,x_\pm,v_\pm) \to \Gamma_\pm (s)
    =: (t_0 s +(1-s) t_\pm,\Psi^s_\pm(x_\pm)).
  \end{equation*}
  Since $s$ does not approach $1$, we can prove that this change of
  variables is not singular. The mappings $\Psi_\pm^s$ are
  determined by solving~\eqref{eq:mapping} for $x_\pm$ instead of
  $x_0$. It yields
  \begin{equation*}
    \Psi_\pm^s \( \begin{matrix} v_\pm \\ x_\pm \end{matrix} \) = \mathfrak a^s \( \begin{matrix} v_\pm \\ x_\pm \end{matrix} \) + \mathfrak b^s
  \end{equation*}
  with the matrix $\mathfrak a^s$ and vector $\mathfrak b^s$ given
  by
  \begin{align*}
    & \mathfrak a^s _\pm := \left( \Id -  \sW_\pm ^\delta (s)
      \big[\sW_\pm ^\delta (1)\big]^{-1} \right) \begin{pmatrix}
      \I_d & \OO \\
      \delta_\pm s \I_d & \I_d
    \end{pmatrix}, \\
    & \mathfrak b^s _\pm := \sW_\pm ^\delta (s) \big[\sW_\pm ^\delta
    (1)\big]^{-1} \( \begin{matrix} v_0 \\ x_0 \end{matrix} \),
  \end{align*}
  which depend only on $s$ and $(x_0,v_0)$.

  We have proved in that $\sW(s) = O(s^{2d})$, which readily implies $\sW^\delta_\pm(s) = O(s^{2d})$. Therefore, $\sW^\delta_\pm(s)$ goes to zero as $s \to 0$. Therefore, $\mathfrak a^s_\pm$ is the product of two matrices; the first one is close to identity for $s\in [0,1]$ close enough to zero and so invertible in this range, while the second matrix is a lower triangular matrix with unit diagonal, which is invertible for all $s \in [0,1]$. All in all, we deduce that $\mathfrak a^s_\pm$ is invertible with uniformly bounded inverse
  on $s \in [0,s_0]$ for $s_0>0$ small enough.

  We apply this change of variables to estimate $\bar \cJ_1^\pm$:
  \begin{equation}
    \label{eq:J1}
    \bar \cJ^\pm _1
    = \int_{Q^0} \int_0 ^{s_0} s^{-\frac12}
    \left| \left(\nabla_v f\right)\left(
        st_0 +(1-s)t_\pm,y\right) \right| \frac{\dd s \dd t_0 \dd y}{\left| \det \mathfrak a^s \right|}
    \lesssim \int_{Q_1}  \left|
      \nabla_v f(z) \right| \dd z.
  \end{equation}
  The combination of~\eqref{eq:J2} and~\eqref{eq:J1} yields
  \begin{equation*}
    \int_{Q^+} \Big(f(z_+) - \langle f\rangle_{Q^-}\Big)_+ \dd 
      z_+ \lesssim \cJ \lesssim \bar \cJ \lesssim \bar \cJ^+ _1 + \bar \cJ^- _1 + \bar \cJ^+ _2 + \bar \cJ^- _2 \lesssim \int_{Q_1}  \left| \nabla_v f(z) \right| \dd z.
  \end{equation*}
  This concludes the proof of~\eqref{gmpoi}.
\end{proof}

\subsection{Kinetic versions of the Moser and Kruzhkov methods}

A kinetic version of Moser’s approach had long seemed out of reach, but the critical kinetic trajectories constructed in~\cite{dmnz} have recently made it possible to finally extend this method to the kinetic setting. In this extension, the \emph{critical scaling} of the trajectories is essential in deriving universal bounds on the logarithm of the solution and apply the Bombieri-Giusti result, in the spirit of the paper of Moser~\cite{moser1971}.

Regarding a kinetic version of Kruzhkov's approach, we refer to~\cite{wang2009c,MR4653756} for an extension in the kinetic setting. Note, however, that these papers use a weaker form of the kinetic Poincaré inequality, with a complicated corrector, which adds layers of technical complexity to the proofs. These Kruzhkov-type proofs could be simplified by using the kinetic Poincaré inequality~\eqref{gmpoi}. 


\bibliography{references}
\bibliographystyle{siam}

\end{document}